\documentclass[11pt,a4paper,reqno]{amsart}
\usepackage[T1]{fontenc}
\usepackage{lmodern,ifpdf}
\ifpdf\usepackage{microtype}\fi
\usepackage[margin=26mm]{geometry}
\usepackage{amsmath,amssymb,amsthm,mathtools,mathrsfs}
\usepackage{booktabs,array,enumitem,xcolor,url,indentfirst,cite}
\usepackage{hyperref}
\hypersetup{hidelinks,pdftitle={Sharp parameter gaps for quantized Mabuchi energies and cscK metrics}}
\makeatletter
\renewcommand\subsubsection{\@startsection{subsubsection}{3}\z@
  {.7\baselineskip\@plus.2\baselineskip}{-.5em}{\normalfont\bfseries}}
\renewcommand\paragraph{\@startsection{paragraph}{4}\z@
  {.7\baselineskip\@plus.2\baselineskip}{-.5em}{\normalfont\bfseries}}
\makeatother
\numberwithin{equation}{section}
\newtheorem{theorem}{Theorem}[section]
\newtheorem{lemma}[theorem]{Lemma}
\newtheorem{proposition}[theorem]{Proposition}
\newtheorem{corollary}[theorem]{Corollary}
\theoremstyle{definition}

\theoremstyle{remark}
\newtheorem{remark}[theorem]{Remark}
\DeclareMathOperator{\AM}{AM}
\DeclareMathOperator{\Ent}{Ent}
\DeclareMathOperator{\Ric}{Ric}
\DeclareMathOperator{\Aut}{Aut}
\DeclareMathOperator{\MA}{MA}
\DeclareMathOperator{\Fut}{Fut}
\DeclareMathOperator{\Lie}{Lie}
\newcommand{\cM}{\mathcal M}
\newcommand{\cJ}{\mathcal J}
\newcommand{\cH}{\mathcal H}
\newcommand{\NA}{\mathrm{NA}}
\newcommand{\ENA}{E^{\NA}}
\newcommand{\rad}{\mathrm{rad}}
\newcommand{\R}{\mathbb R}
\newcommand{\Q}{\mathbb Q}
\providecommand{\C}{}
\renewcommand{\C}{\mathbb C}
\providecommand{\G}{}
\renewcommand{\G}{\mathbb G}
\newcommand{\HH}{\mathbb H}
\newcommand{\SSS}{\mathbb S}
\newcommand{\ZZ}{\mathbb Z}
\newcommand{\JJ}{\mathscr J}

\newcommand{\Tcal}{\mathcal T}
\newcommand{\RR}{\mathscr R}

\theoremstyle{plain}
\newtheorem{maintheorem}{Theorem}

\DeclareMathOperator{\tr}{tr}

\allowdisplaybreaks[1]
\title[Sharp parameter gaps and \protect\lowercase{csc}K metrics]{Sharp parameter gaps for quantized Mabuchi energies\protect\\
and \protect\lowercase{csc}K metrics}
\author{Kefeng Liu}
\address{Mathematical Sciences Research Center, Chongqing University of
Technology, Chongqing 400054, China}
\email{kefengliu@cqut.edu.cn}
\author{Liangdi Zhang}
\address{Mathematical Science Research Center, Chongqing University of
Technology, Chongqing 400054, China}
\email{ldzhang91@163.com}
\date{October 12, 2026}
\makeatletter
\@ifundefined{subjclassname@2020}{%
  \@namedef{subjclassname@2020}{\textup{2020} Mathematics Subject Classification}}{}
\makeatother
\subjclass[2020]{Primary 53C55; Secondary 32Q15, 14L24, 35J96}
\keywords{Constant scalar curvature, quantized Mabuchi energy, K-stability,
geodesic ray, $\alpha$-invariant, ruled surface}
\begin{document}
\begin{abstract}
For polarized manifolds, we show that semistability for the quantized
Mabuchi energy at one fixed finite parameter is equivalent to uniform
K-stability and to unique existence of constant scalar curvature
K\"ahler (cscK) metrics. We obtain sharp comparisons between quantized
Mabuchi slopes at different parameters and with the ordinary Mabuchi
slope. In the presence of automorphisms, we give an equivariant criterion
using torus twists under the vanishing of the Futaki character.
The estimates also hold for finite-energy geodesic rays in compact
K\"ahler classes. On a fixed ruled surface with discrete automorphism
group, we construct cscK polarizations with unbounded critical parameters.
Their ratios to the $\alpha$-invariant are also unbounded.
\end{abstract}

\maketitle
\enlargethispage{6pt}
\setcounter{tocdepth}{1}
\begingroup\fontsize{9.5}{11}\selectfont
\tableofcontents
\endgroup

\section{Introduction}

The Yau--Tian--Donaldson conjecture relates the existence of constant
scalar curvature K\"ahler (cscK) metrics to stability of one-parameter
degenerations. It grew out of the extremal-metric programme of
Calabi~\cite{calabi82,calabi85}, the stability condition of
Tian~\cite{tian97}, and the projective approach of
Donaldson~\cite{donaldson01,donaldson02}. We consider uniform
K-stability, which
requires a lower bound by a positive multiple of a non-Archimedean
norm. In the presence of automorphisms, we use the reduced norm.

A cscK metric is a critical point of the energy introduced by
Mabuchi~\cite{mabuchi86}. This functional is geodesically
convex~\cite{bb,bdl}. Its finite-energy geometry~\cite{bdl,darvas,dl,gz}, together with
properness and regularity results~\cite{dr,bdlregularity,CC1,CC2},
relates cscK existence to coercivity. Test configurations determine
geodesic rays~\cite{psrays,rosswitt}, and the slopes of the energy
functionals along these rays are expressed by non-Archimedean
invariants~\cite{BHJ,bhj19,bhjerratum,BJ,bjmetric}.
Li~\cite{Li} proves a criterion for the existence of cscK metrics in
terms of model metrics. Boucksom--Jonsson~\cite[Theorem~A]{BJweighted}
and Darvas--Zhang~\cite[Theorem~1.1]{DZ} prove versions of the
Yau--Tian--Donaldson correspondence in terms of non-Archimedean
potentials and quantized Mabuchi energies, respectively.
Mesquita-Piccione~\cite{MP} extends the non-Archimedean framework to
compact K\"ahler manifolds and proves a sufficient condition for cscK
existence. For polarized manifolds, Trusiani~\cite{Tru} proves
equivalence between ordinary reduced uniform K-stability and the
existence of cscK metrics.

We prove that if the quantized Mabuchi slopes are nonnegative at a
fixed finite parameter, then at every larger parameter they are
bounded below by a positive multiple of the radial $J$-slopes,
independent of the ray. The constants are
sharp and depend only on the dimension and the $\alpha$-invariant.
The estimate holds for every finite-energy geodesic ray from a smooth
potential in a compact K\"ahler class. For polarized manifolds, the
parameter must be independent of the test configuration. This gives
a cscK criterion from finite-parameter semistability. We also treat
automorphisms and construct a fixed manifold on which the required
parameters are unbounded as the polarization varies.

Let $(X,\omega_0)$ be a
connected compact K\"ahler manifold of
dimension $n\ge1$, put $\Omega=[\omega_0]$, and write
$A=\alpha(X,\Omega)$ for its $\alpha$-invariant.
For a finite-energy geodesic ray $u$, set
$s_\beta(u)=(\cM^\beta)^{\rad}(u)$.
Darvas--Zhang~\cite[Section~3]{DZ} introduce the quantized entropy
$\Ent^\beta$ and the associated Mabuchi energy
$\cM^\beta=\Ent^\beta-\cJ_{\Ric(\omega_0)}$.
We recall their definitions and our normalizations in
Sections~\ref{sec:quantized-energies} and~\ref{sec:finite-energy-geometry}.
Define
\begin{equation}\label{intro:gap-function}
 G_A(\beta)=\int_\beta^\infty\min\{1,(A/b)^{n+1}\}\,db
 =\begin{cases}
 (n+1)A/n-\beta,&0<\beta\le A,\\
 A^{n+1}/(n\beta^n),&\beta\ge A,
 \end{cases}
\end{equation}
We write $\Gamma_A(\beta,B)=G_A(\beta)-G_A(B)$.

\begin{maintheorem}[Sharp parameter bounds]\label{main:sharp-gap}
On a connected compact K\"ahler manifold of dimension $n\ge1$,
every finite-energy geodesic ray $u$ from a smooth potential
and every $0<\beta<B$ satisfy
\begin{align*}
 s_B(u)-s_\beta(u)
   &\ge\bigl(G_A(\beta)-G_A(B)\bigr)J^{\rad}(u),\\
 \cM^{\rad}(u)-s_\beta(u)
   &\ge G_A(\beta)J^{\rad}(u).
\end{align*}
Here $\cM^{\rad}$ and $J^{\rad}$ are the radial slopes of the Mabuchi
functional and Aubin's \(J\)-functional, respectively. The coefficients
$G_A(\beta)$ and $G_A(\beta)-G_A(B)$ are optimal among bounds depending
only on $n$ and $A$.
\end{maintheorem}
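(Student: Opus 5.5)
The plan is to reduce both inequalities to a single pointwise differential inequality in the parameter and then integrate. The intended key estimate is
\[
 s_{b'}(u)-s_b(u)\ \ge\ \int_b^{b'}\min\{1,(A/t)^{n+1}\}\,dt\;J^{\rad}(u),\qquad 0<b<b',
\]
which is equivalent to $\frac{d}{db}s_b(u)\ge \min\{1,(A/b)^{n+1}\}J^{\rad}(u)$ wherever the derivative exists. Since $\cM^\beta-\cM^b=\Ent^\beta-\Ent^b$, only the entropy part contributes to the differences, so everything reduces to comparing the radial slopes of $\Ent^b$ at different parameters.

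\textbf{Step 1 (radial entropy slope as a sup-formula).} I would first use the Darvas--Zhang variational description of $\Ent^b$ (as recalled in Sections~\ref{sec:quantized-energies} and~\ref{sec:finite-energy-geometry}). This is a supremum over auxiliary potentials $\psi$ of a linear pairing $b\int\psi\,\omega_u^n$ minus a log-exponential term $\log\int e^{b\psi}\omega_0^n$, suitably normalized. Along the ray $u_t$, I would take $\psi$ to run along a comparison ray $v_t$. Using the linear growth of $\int v_t\,\omega_{u_t}^n$ and the fact that $\log\int e^{b v_t}$ grows like $b\sup v_t$ up to an error controlled by integrability, this should give a formula or lower bound for $(\Ent^b)^{\rad}(u)$ as a supremum over rays $v$. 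The quantities in that supremum are radial Monge--Amp\`ere pairings and $\sup$-slopes.

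\textbf{Step 2 (derivative bound via the $\alpha$-invariant).} Next I would show that $b\mapsto s_b(u)$ is nondecreasing and absolutely continuous. Its derivative is bounded below by a pairing of the type $\int(\dot u-\sup\dot u)\,d\mu$, corrected by how much of the exponential integral is lost when the parameter changes. For $b\le A$, the function $e^{-b(\cdot)}$ of a normalized potential is uniformly integrable by definition of $\alpha(X,\Omega)$. Hence no loss occurs and the derivative is at least the full $J$-type pairing, namely $J^{\rad}(u)$. For $b>A$, I would apply Hölder/Jensen interpolation between exponent $A$ and exponent $b$: write $e^{b\psi}=(e^{A\psi\cdot b/A})$ and use the scaling $\psi\mapsto (A/b)\psi$ together with $\omega_{(A/b)u}^n$. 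Homogeneity of degree $n+1$ in the Monge--Amp\`ere energy then produces the factor $(A/b)^{n+1}$. This is where the exponent $n+1$ should come from: rescaling a ray by $\lambda=A/b$ rescales $J^{\rad}$ by $\lambda$ and the relevant energy pairing by $\lambda^{n+1}$ after comparison. I expect this step to be the main obstacle. Making the scaling argument rigorous on finite-energy rays requires uniform-in-$t$ Skoda-type integrability along the ray, and in particular justifying the exchange of $\lim_{t\to\infty}$ with the supremum over $\psi$. To handle this, I would first prove the estimate for rays with bounded (e.g.\ $C^{1,1}$ or Lipschitz) potentials and then pass to general finite-energy rays by decreasing approximation, using the continuity of radial slopes of $\cM^b$ and $J$ under such approximations.

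\textbf{Step 3 (integration and the limit $B\to\infty$).} Integrating the derivative bound over $[\beta,B]$ gives the first inequality with $G_A(\beta)-G_A(B)$. For the second inequality, I would show $s_B(u)\to\cM^{\rad}(u)$ as $B\to\infty$. This should follow from $\Ent^B\uparrow\Ent$ (Darvas--Zhang), with monotonicity giving convergence of slopes via a Dini/monotone-limit argument for convex functions of $t$. Since $G_A(B)\to0$, letting $B\to\infty$ in the first inequality yields the second.

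\textbf{Step 4 (optimality).} For sharpness I would exhibit, for each $n$ and $A$, a manifold and a family of rays on which the derivative inequality becomes an equality. A natural candidate is $\mathbb P^n$ or a product, where $\alpha$ is computed explicitly, together with the ray from the deformation to the normal cone of a point (or of a toric divisor computing $\alpha$). Along such a ray all slopes are explicit toric integrals, and the equality in Hölder occurs exactly for the extremal $\psi$ concentrating at the log-canonical threshold. Rescaling this family then shows that neither coefficient can be improved.
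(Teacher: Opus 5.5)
Your reduction to the entropy part, the target kernel $\min\{1,(A/b)^{n+1}\}$, the integration in Step~3, and the idea of a divisor computing $\alpha$ on $\mathbf P^n$ all match the paper. The gap is in Step~2, which is the heart of the theorem, and it does not work as proposed.

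For $A<\beta<B$ you need a lower bound on the \emph{increment} $\Ent^B-\Ent^\beta$. That requires a mechanism carrying ``$\partial_b\Ent^b\approx J$ for $b<A$'' to larger $b$, losing exactly the factor $(A/b)^{n+1}$. A one-shot comparison between the exponents $A$ and $b$ only bounds the \emph{value} $\Ent^b$ from below, whether by H\"older or by inserting a rescaled competitor such as $(1-A/b)u$ into the variational formula. Turning a value bound into a bound on $\Ent^B-\Ent^\beta$ needs an upper bound on $\Ent^\beta$ relative to $\Ent^B$. The only structural fact you invoke is concavity in $b$, and that is compatible with the increments vanishing for large $b$.

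The homogeneity heuristic does not produce the factor either. Replacing $u_t$ by $\lambda u_t$ turns the geodesic into a subgeodesic, and $J(\lambda u)=O(\lambda^2)$ rather than $\lambda J(u)$. Reparametrizing time instead scales all slopes linearly. Step~1 also misdescribes the functional. $\Ent^b(u)$ is the supremum of $-\log\int_Xe^{b(w-u)}d\nu_0+b(\AM(w)-\AM(u))$ over $\omega_0$-psh $w$, and $\AM(w)$ is nonlinear. With a linear pairing $b\int\psi\,d\mu_u$ over arbitrary $\psi$, the substitution $b\psi\mapsto\psi$ removes $b$ and you recover the ordinary entropy.

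The missing idea is a second-order inequality in the parameter, proved for each fixed potential before any radial limit is taken.
For smooth $u$, let $v=u^b$ solve $(\omega_0+dd^cv)^n=e^{b(v-u)}\omega_0^n$; it depends smoothly on $b$ by the implicit function theorem, since $\Delta_v-b$ is invertible.
Then $E(b)=\Ent^b(u)=b(\AM(v)-\AM(u))$ satisfies $E'(b)=\AM(v)-\AM(u)-\int_X(v-u)\,d\mu_v$.
Differentiating again gives $-bE''(b)\le Q_v(v-u)$, where $Q_v$ is the Dirichlet form of $\omega_v$.
Along $(1-s)v+su$ the form is at least $(1-s)\omega_v$, so $E'(b)\ge Q_v(v-u)\int_0^1(1-s)^n\,ds=Q_v(v-u)/(n+1)$. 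This is where $n+1$ comes from, and it makes $b^{n+1}E'(b)$ nondecreasing.

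To start the propagation, fix $0<a<c<A$ and normalize $\sup_Xu=0$.
The competitor $w=0$ gives $E(c)\ge cJ(u)-L_c$, and always $E(a)\le aJ(u)$, so concavity yields $E'(a)\ge J(u)-L_c/(c-a)$.
Hence $E'(b)\ge\min\{1,(a/b)^{n+1}\}(J(u)-J_*)_+$ with $J_*=L_c/(c-a)$; integrate this over $[\beta,B]$.
Extend to $\mathcal E^1$ by $d_1$-continuity, divide by $t$ along the ray, and let $a\uparrow A$. No radial sup-formula or exchange of limits is needed.

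Two smaller points:
\begin{itemize}
\item For the second inequality you only need $\cM^B\le\cM$, hence $s_B\le\cM^{\rad}$. Your claimed convergence $s_B\to\cM^{\rad}$ by a Dini argument is unnecessary and unjustified. The quantized energies are not known to be convex along rays, which is why $s_B$ is defined as a lower limit.
\item The point deformation fails for $n\ge2$. Since $A_X(\mathrm{ord}_p)=n$, the slope $e_\beta$ stays equal to $\beta J^{\NA}$ up to $\beta=n/c\ge n>\alpha=1$, so the first inequality is strict there. The hyperplane works, using the explicit envelope $P(\phi_c+a_Y/\beta)=\phi_{(c-1/\beta)_+}$, and rescaling the class reaches every $A$.
\end{itemize}
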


Equality is attained on products of curves with discrete automorphism
groups, whose classes contain unique cscK metrics; see
Section~\ref{sec:sharp-gap}.
The coefficients in Theorem~\ref{main:sharp-gap} are independent of
the ray and, in the polarized case, of the test configuration and its
exponent. On every fixed polarized manifold, the lower bound for the
difference from the ordinary Mabuchi slope has optimal decay order
$\beta^{-n}$; see Proposition~\ref{prop:dimension-obstruction}.

Fix a smooth potential $u$ and write $E(b)=\Ent^b(u)$.
Lemma~\ref{lem:parameter-second-derivative} gives
\[
 0\le-bE''(b)\le(n+1)E'(b),
\]
so $b^{n+1}E'(b)$ is nondecreasing.
Uniform $\alpha$-integrability bounds $E'(b)$ from below at a small
parameter. Integrating this bound gives the pointwise estimates of
Theorem~\ref{thm:analytic-gap} for smooth potentials. We extend them
to finite-energy potentials and take radial slopes to prove
Theorem~\ref{thm:gap}. Section~\ref{sec:sharp-gap} proves optimality,
including on products of curves with discrete automorphism groups.

Papazachariou~\cite[Corollary~2.14 and Theorems~3.10
and~4.8]{Pap} proved that finite-parameter semistability persists as the
parameter increases and obtained quantitative comparisons of stability
thresholds. We prove lower bounds for both the finite-parameter
increment and the difference from the ordinary Mabuchi slope, with
optimal coefficients.

For a test configuration $\Tcal$, let $u^\Tcal$ denote its
associated geodesic ray, and write $\mathrm{DF}(\Tcal)$ and
$J^{\NA}(\Tcal)$ for its Donaldson--Futaki invariant and
non-Archimedean $J$-invariant, respectively. In the polarized
setting, $\phi_\Tcal$ denotes the associated non-Archimedean
metric, so that we also write $J^{\NA}(\phi_\Tcal)=J^{\NA}(\Tcal)$.
In each statement below, the constants are independent of the test
configuration in the specified family.

For a polarized pair $(X,L)$, write
$m_\beta(\phi_\Tcal)=s_\beta(u^\Tcal)$ and
$A=\alpha(X,c_1(L))=\alpha(X,L)$. Throughout the polarized statements below,
all positive integer exponents are allowed, and all invariants
are normalized to $L$.

\begin{corollary}[The polarized correspondence]
\label{cor:polarized-correspondence}
Let $(X,L)$ be a connected smooth polarized manifold. The
following are equivalent:
\begin{enumerate}[label=\textup{(\roman*)}]
\item $c_1(L)$ contains a unique cscK metric;
\item for some finite $\beta>0$, $m_\beta(\phi_\Tcal)\ge0$
for every normal ample algebraic test configuration $\Tcal$;
\item for some finite $B>0$ and $\gamma>0$,
$m_B(\phi_\Tcal)\ge\gamma J^{\NA}(\phi_\Tcal)$ for every such
$\Tcal$;
\item $(X,L)$ is uniformly K-stable: for some $\delta>0$,
$\mathrm{DF}(\Tcal)\ge\delta J^{\NA}(\phi_\Tcal)$ for every
such $\Tcal$.
\end{enumerate}
\end{corollary}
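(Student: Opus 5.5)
The plan is to prove the cycle (ii)$\Rightarrow$(iii)$\Rightarrow$(iv)$\Rightarrow$(i)$\Rightarrow$(ii). Theorem~\ref{main:sharp-gap} does the quantitative work; the existence and stability theory is imported from Darvas--Zhang~\cite{DZ} and Boucksom--Hisamoto--Jonsson~\cite{BHJ}. Throughout we use that, for a normal ample test configuration $\Tcal$, the geodesic ray $u^\Tcal$ is a finite-energy ray from a smooth potential~\cite{psrays}. Its slopes are then $\cM^{\rad}(u^\Tcal)=M^{\NA}(\phi_\Tcal)$ and $J^{\rad}(u^\Tcal)=J^{\NA}(\phi_\Tcal)$~\cite{BHJ,bhj19}. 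We also use $M^{\NA}(\phi_\Tcal)\le\mathrm{DF}(\Tcal)$, with equality when the central fibre is reduced; this only helps in the direction we need.

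\emph{(ii)$\Rightarrow$(iii).} Fix the $\beta$ from (ii) and take any $B>\beta$. The first inequality of Theorem~\ref{main:sharp-gap}, applied to $u=u^\Tcal$, gives
\[
 m_B(\phi_\Tcal)\ge m_\beta(\phi_\Tcal)+\Gamma_A(\beta,B)J^{\NA}(\phi_\Tcal)\ge\Gamma_A(\beta,B)J^{\NA}(\phi_\Tcal).
\]
Since $G_A$ is strictly decreasing, $\gamma=\Gamma_A(\beta,B)>0$. This constant depends only on $n$, $A$, $\beta$ and $B$, so it is uniform in $\Tcal$ and in the exponent.

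\emph{(iii)$\Rightarrow$(iv).} The second inequality of Theorem~\ref{main:sharp-gap} at parameter $B$ gives
\[
 \mathrm{DF}(\Tcal)\ge M^{\NA}(\phi_\Tcal)\ge m_B(\phi_\Tcal)+G_A(B)J^{\NA}(\phi_\Tcal)\ge\bigl(\gamma+G_A(B)\bigr)J^{\NA}(\phi_\Tcal).
\]
So (iv) holds with $\delta=\gamma+G_A(B)$. In fact (ii) alone already yields $\delta=G_A(\beta)$.

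\emph{(iv)$\Rightarrow$(i).} This is the one step not supplied by the paper's own estimates, and I expect it to be the main obstacle. The plan is to use Darvas--Zhang~\cite[Theorem~1.1]{DZ}, which produces a cscK metric from uniform positivity of the quantized slopes $m_B$ at some finite $B$ over all test configurations. The difficulty is that the comparisons in Theorem~\ref{main:sharp-gap} bound $m_B$ from below by $M^{\NA}$, not the reverse. Deducing uniform quantized stability from uniform K-stability therefore requires an upper bound on $\cM^{\rad}-s_B$. To get it, I would first try the quantization convergence $s_B\to\cM^{\rad}$ as $B\to\infty$ from~\cite{DZ}. Failing that, I would invoke the known implication "uniform K-stability implies cscK" for polarized manifolds (through the Li~\cite{Li} and Boucksom--Jonsson~\cite{BJweighted} criteria). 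Uniqueness then follows because uniform K-stability rules out nontrivial product configurations with $\mathrm{DF}=0$, so $\Aut^0(X,L)$ is trivial, and Berman--Berndtsson~\cite{bb} gives uniqueness of the cscK metric.

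\emph{(i)$\Rightarrow$(ii).} If $c_1(L)$ contains a unique cscK metric, then $\Aut^0$ is discrete. The Mabuchi energy is therefore coercive~\cite{bdl,CC2}, and by the converse direction in~\cite{DZ} the quantized energy $\cM^\beta$ is bounded below on $\cH$ for large finite $\beta$, hence its slopes along rays are nonnegative. Applying this to $u^\Tcal$ gives $m_\beta(\phi_\Tcal)\ge0$ for every $\Tcal$, which is (ii).
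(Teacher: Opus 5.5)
Your implications (ii)$\Rightarrow$(iii) and (iii)$\Rightarrow$(iv) coincide with the paper's, and your constant $\delta=\gamma+G_A(B)$ is even slightly better. Your (i)$\Rightarrow$(ii) is an acceptable appeal to Darvas--Zhang. The gap is (iv)$\Rightarrow$(i). Every arrangement of the cycle must contain some implication out of (iv), and neither of your options closes it.

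Your first option fails. The convergence $s_B(u^\Tcal)\to\cM^{\rad}(u^\Tcal)$ holds one configuration at a time, with no rate uniform in $\Tcal$. So it produces no single $B$ with $m_B\ge\gamma J^{\NA}$ for all $\Tcal$. A uniform rate does exist (Proposition~\ref{prop:all-sign-threshold}), but its proof needs exactly the ingredient missing below. Your fallback cites Li's and Boucksom--Jonsson's criteria. Those are statements about model metrics and non-Archimedean potentials, not test configurations, and two links are missing between (iv) and their hypotheses.

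The first link: (iv) bounds $\mathrm{DF}$, whereas \cite[Theorem~1.10]{Li} requires $\cM^{\NA}\ge\delta J^{\NA}$. Here $\cM^{\NA}\le\mathrm{DF}$ points the wrong way, contrary to your remark that it only helps. The paper passes to the normalization $\Tcal^{(d)}$ of a sufficiently divisible base change. For it, $\mathrm{DF}(\Tcal^{(d)})=\cM^{\NA}(\phi_{\Tcal^{(d)}})=d\,\cM^{\NA}(\phi_\Tcal)$ and $J^{\NA}(\phi_{\Tcal^{(d)}})=d\,J^{\NA}(\phi_\Tcal)$ \cite[Propositions~7.16 and~8.2]{BHJ}. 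So (iv) applied to $\Tcal^{(d)}$ gives $\cM^{\NA}\ge\delta J^{\NA}$ on all normal ample configurations.

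The second link is the substantive missing idea. You must extend this inequality from test-configuration metrics to all model metrics in Li's criterion. That requires approximations $\phi_j\to\phi$ strongly with $H^{\NA}(\phi_j)\to H^{\NA}(\phi)$, which is Li's entropy approximation problem. The paper settles it with Trusiani's special Fujita approximation \cite[Corollary~A and its proof]{Tru}. Strong continuity of $J^{\NA}$ and $\mathcal R^{\NA}$ then gives $\cM^{\NA}(\phi)\ge\delta J^{\NA}(\phi)$. Li's theorem gives coercivity of $\cM$, and \cite[Theorem~4.4]{DZ} yields a unique cscK metric.

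This route also repairs your uniqueness argument. As written, it only excludes nontrivial tori in $\Aut_0(X,L)$. To conclude that the group is trivial, you would also need reductivity, which comes from Matsushima once existence is known.
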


Darvas--Zhang~\cite[Equation~(9) and Section~5]{DZ} introduce
uniform $K^\beta$-stability. Papazachariou~\cite[Definition~2.12]{Pap}
defines $K^\beta$-semistability for rational $\beta>1$ by
nonnegativity on every normal ample test configuration.
Here we use the radial definition for every real $\beta>0$.

In Corollary~\ref{cor:polarized-correspondence}, uniqueness means
uniqueness of the K\"ahler form in its class. Condition \textup{(ii)}
at $\beta$ implies
condition \textup{(iii)} at every $B>\beta$, with
$\gamma=\Gamma_A(\beta,B)$.
Proofs and the ordinary-stability conventions are given in
Sections~\ref{sec:criteria} and~\ref{ordinary:section}.
The polarized correspondence uses Darvas--Zhang~\cite[Theorem~1.1]{DZ}
and Trusiani~\cite[Corollary~A]{Tru}. The corresponding statements for
arbitrary compact K\"ahler classes are given in
Section~\ref{ordinary:kahler}.

Product configurations are generated by one-parameter subgroups
of the automorphism group. Papazachariou~\cite[Definition~4.2,
Theorem~4.5, and Remark~4.6]{Pap} takes the supremum of the quantized
Mabuchi slope over rational product twists by a maximal torus.
Assuming that the polarized automorphism group is reductive and
that the Futaki character vanishes, he identifies uniform reduced
$K^\beta$-polystability at some rational $\beta>1$ with cscK existence.
Theorem~\ref{main:levi} requires only nonnegativity at a fixed finite
parameter, independent of the test configuration. The equivariant
setup follows Li~\cite{LiG,Li}.
For the Futaki character,
reductivity, and properness modulo automorphisms, see
\cite{futaki,fujiki,matsushima,dr,dervantwist,dervanrelative,His,Li}.

Let $\G=\Aut_0(X,L)$ be the identity component of the group of
automorphisms $g$ of $X$ with $g^*L\simeq L$. This is a linear
algebraic group. We write $\Aut_0(X)$ for the identity component
of the full holomorphic automorphism group. Choose a maximal
connected reductive subgroup $\HH\subset\G$, a maximal torus
$\SSS\subset\HH$, and let $\ZZ_\HH=Z(\HH)^\circ$ be the connected
center of $\HH$. A test configuration is $\HH$-equivariant if the
$\HH$-action extends to its polarized total space over $\C$ and commutes
with the $\C^*$-action covering multiplication on the base.
For such a configuration, set
\[
 \RR_{\beta,\HH}^{\rm max}(\Tcal)
 =\sup_{\xi\in N_\Q(\SSS)}
       (\cM^\beta)^{\rad}(u^{\Tcal_\xi}),
\]
For an algebraic torus $T$, we write
$N_\Q(T)=\operatorname{Hom}(\C^*,T)\otimes_\mathbf Z\Q$.
For integral $\xi$, $\Tcal_\xi$ has the same polarized family as
$\Tcal$, with the $\C^*$-action composed with $\xi$.
For rational $\xi$, this is defined after normalized base change.
Write $\Fut_{X,L}:\Lie\G\to\mathbb C$ for the Futaki character.
Since $\ZZ_\HH\subset\HH\subset\G$, its restriction to
$\Lie\ZZ_\HH$ is well defined.

\begin{maintheorem}[Equivariant cscK criterion]
\label{main:levi}
For the maximal connected reductive subgroup $\HH\subset\G$ fixed
above, the class
$c_1(L)$ contains a cscK metric if and only if the Futaki
character \(\Fut_{X,L}\) vanishes on $\Lie\ZZ_\HH$ and there is a
fixed finite $\beta>0$ such that
\[
 \RR_{\beta,\HH}^{\rm max}(\Tcal)\ge0
\]
for every normal ample $\HH$-equivariant test configuration $\Tcal$.
\end{maintheorem}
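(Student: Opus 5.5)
We will prove the two implications separately, using the sharp parameter bounds of Theorem~\ref{main:sharp-gap} to pass from a finite $\beta$ to a uniform estimate, and then invoking the equivariant existence theory of Li~\cite{LiG,Li} together with Darvas--Zhang~\cite[Theorem~1.1]{DZ}.

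\emph{Necessity.} Suppose $c_1(L)$ contains a cscK metric. By Matsushima--Lichnerowicz, $\G$ is reductive, so we may take $\HH=\G$. The Futaki character vanishes on all of $\Lie\G$, and in particular on $\Lie\ZZ_\HH$. By Darvas--Rubinstein~\cite{dr} and Berman--Darvas--Lu~\cite{bdl}, the Mabuchi energy is proper modulo $\G$. Passing to slopes, this means that every $\HH$-equivariant test configuration admits twists $\Tcal_\xi$ with $\xi\in N_\Q(\SSS)$ along which $\cM^{\rad}(u^{\Tcal_\xi})$ is large; the reduced norm is realized by an infimum over twists~\cite{Li}. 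We will not use the quantized energy directly here. Instead we use the second inequality of Theorem~\ref{main:sharp-gap}, which reads $s_\beta\le\cM^{\rad}-G_A(\beta)J^{\rad}$. This is the wrong direction for a lower bound on $s_\beta$, so we need a different input. We will use the asymptotic result of Darvas--Zhang: on a cscK class, reduced uniform $K^\beta$-stability holds for $\beta$ sufficiently large, with a constant independent of $\Tcal$. Applied to the twists, this gives a fixed $\beta$ with $\sup_\xi s_\beta(u^{\Tcal_\xi})\ge0$. If the Darvas--Zhang statement is only non-equivariant, we will recover the equivariant version by running their quantized coercivity argument modulo $\G$; Papazachariou~\cite[Theorem~4.5]{Pap} carries this out for rational $\beta$.

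\emph{Sufficiency.} Assume the Futaki character vanishes on $\Lie\ZZ_\HH$ and that $\RR^{\max}_{\beta,\HH}\ge0$. Fix $B>\beta$. For each $\Tcal$ and $\xi$, the first inequality of Theorem~\ref{main:sharp-gap} gives $s_B(u^{\Tcal_\xi})\ge s_\beta(u^{\Tcal_\xi})+\Gamma_A(\beta,B)J^{\rad}(u^{\Tcal_\xi})$. The second inequality then gives
\[
\cM^{\rad}(u^{\Tcal_\xi})\ge s_\beta(u^{\Tcal_\xi})+G_A(\beta)J^{\rad}(u^{\Tcal_\xi}).
\]
Take the supremum over $\xi$. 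The left side is the Donaldson--Futaki invariant of $\Tcal_\xi$. Because the Futaki character vanishes on the central torus, this invariant is constant in the $\ZZ_\HH$-directions, and its variation in $\xi\in N_\Q(\SSS)$ is affine through the Futaki character. We then get $\sup_\xi \mathrm{DF}(\Tcal_\xi)\ge G_A(\beta)\inf_{\xi}J^{\NA}(\Tcal_\xi)$ up to the supremum/infimum mismatch.

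\emph{Main obstacle.} The difficulty is exactly this mismatch. The supremum of $s_\beta$ is attained near some twist, but we need the bound at the twist minimizing $J$. To resolve it, we will use the convexity of $\xi\mapsto s_\beta(u^{\Tcal_\xi})$ and of $\xi\mapsto J^{\NA}(\Tcal_\xi)$ on $N_\R(\SSS)$, together with the fact that $\mathrm{DF}(\Tcal_\xi)=\mathrm{DF}(\Tcal)+\Fut(\xi)$ is affine. First we show the inequality $\mathrm{DF}(\Tcal_\xi)\ge G_A(\beta)J^{\NA}(\Tcal_\xi)+s_\beta(u^{\Tcal_\xi})$ for each $\xi$. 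For the noncentral directions of $\SSS$, we will argue that the Weyl group of $\HH$ acts, with $\mathrm{DF}$ invariant and $s_\beta$, $J$ convex and invariant. This lets us average over the Weyl group to reduce to $\xi$ in the center, where $\mathrm{DF}$ is constant. The result is $\mathrm{DF}(\Tcal)\ge G_A(\beta)\,\inf_{\xi}J^{\NA}(\Tcal_\xi)$, which is reduced uniform K-stability relative to $\HH$. Li's equivariant criterion~\cite{Li}, via Trusiani~\cite{Tru} in the polarized case, then yields a cscK metric in $c_1(L)$.
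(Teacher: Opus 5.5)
Your sufficiency argument has the same skeleton as the paper's proof (Theorem~\ref{thm:groups}): apply \eqref{eq:main-infinite} to each twist, use twist-invariance of the Mabuchi slope, bound $J^{\NA}(\phi_\xi)$ below by the reduced norm via Weyl averaging, and feed the result into Li's criterion. But the ``supremum/infimum mismatch'' you call the main obstacle is not there.

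Since $\HH$ is reductive, $\Lie\HH=\Lie\ZZ_\HH\oplus[\Lie\HH,\Lie\HH]$. The Futaki character, being a character, vanishes on the derived algebra. So your hypothesis already gives $\Fut_{X,L}=0$ on $\Lie\SSS$, and Li's twisting identity yields $\cM^{\NA}(\phi_\xi)=\cM^{\NA}(\phi)$ for every $\xi\in N_\Q(\SSS)$, not only central ones. Hence, for each $\xi$,
\[
 \cM^{\NA}(\phi)\ge m_\beta(\phi_\xi)+G_A(\beta)J^{\NA}(\phi_\xi)
 \ge m_\beta(\phi_\xi)+G_A(\beta)J_{\HH}^{\NA}(\phi).
\]
The second step uses only convexity and Weyl invariance of $\xi\mapsto J^{\NA}(\phi_\xi)$ (Lemma~\ref{lem:weyl}). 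Taking the supremum over $\xi$ finishes at once.

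Three further corrections to this part:
\begin{enumerate}
\item The convexity of $\xi\mapsto s_\beta(u^{\Tcal_\xi})$ you invoke is unproved. For moving a supremum to the centre it would point the wrong way, since averaging a convex function can only lower its value. Drop it.
\item Keep $\cM^{\NA}$ rather than $\mathrm{DF}$ on the left. One has $\cM^{\rad}(u^\Tcal)=\cM^{\NA}(\phi_\Tcal)\le\mathrm{DF}(\Tcal)$, with strict inequality possible for non-reduced central fibres, and Li's criterion is phrased with $\cM^{\NA}$.
\item Since $\G$ is not assumed reductive, Trusiani's Theorem~A is not what applies. You need Li's model-metric criterion for the reductive group $\HH$. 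Before that, you must extend the inequality from $\HH$-equivariant test configurations to $\HH$-invariant model metrics by the equivariant entropy approximation (Lemma~\ref{lem:shared-entropy-approximation}); the infinite-entropy case is trivial.
\end{enumerate}

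For necessity you take a genuinely different route. Once cscK gives $\G$ reductive, $\Fut_{X,L}=0$ and $\HH=\G$, your citation of Papazachariou's Theorem~4.5 does give $\RR_{\beta,\HH}^{\rm max}\ge0$; that theorem is uniform reduced $K^\beta$-polystability with maximal-torus twists at some rational $\beta>1$. Darvas--Zhang's Theorem~1.1 alone would not suffice, since it is unreduced. Discard your remark that properness produces twists with large $\cM^{\rad}$: with $\Fut_{X,L}=0$ the Mabuchi slope is twist-invariant, and properness translates into a lower bound by the reduced norm.

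The paper instead proves the stronger Theorem~\ref{thm:center-direct} directly:
\begin{enumerate}
\item Hisamoto's coercivity modulo $\mathbb T$ on $K$-invariant potentials, combined with the Darvas--Zhang estimates for the quantized optimizer, gives $A_\beta\cM^\beta(u)\ge\delta D_{\mathbb T}(\mathcal N(u))-b_\beta J(u)-C_\beta$ with $A_\beta\to1$ and $b_\beta\to0$.
\item Hisamoto's reduced slope formula turns this into $\RR_\beta^{\rm cen}\ge\gamma J_{\mathbb T}^{\NA}$ for all $\beta\ge\beta_0$.
\item Since $\mathbb T\subset\SSS$, one gets $\RR_{\beta,\HH}^{\rm max}\ge\RR_\beta^{\rm cen}\ge0$.
\end{enumerate}
Your citation is shorter. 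The paper's argument is self-contained relative to the coercivity theorems, needs only central twists, and gives uniform positivity for every sufficiently large $\beta$.
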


The sufficiency proof uses Theorem~\ref{main:sharp-gap} and the
reduced norm. It holds for every connected reductive subgroup by
Theorem~\ref{thm:groups}; the converse is proved for a maximal connected reductive subgroup.
We do not assume that the full group $\G$ is reductive. Reductivity
follows from cscK existence. When $\G$ is reductive,
Trusiani~\cite[Theorem~A]{Tru} proves that cscK existence is equivalent
to ordinary reduced uniform K-stability. The model-metric and
geodesic-ray proofs are given in Sections~\ref{sec:groups} and
\ref{sec:center-direct}.

The finite parameter may depend on the class. Theorem~\ref{main:ruled}
shows that no single parameter works for all polarizations of a fixed
surface, even when its automorphism group is discrete. Put
\[
 \beta_{\rm crit}(X,L)
 =\inf\{\beta>0:m_\beta(\phi_\Tcal)\ge0
                   \text{ for every normal ample }\Tcal\}.
\]
Corollary~\ref{cor:critical-ordinary-margin} also gives a lower bound
for $\beta_{\rm crit}/A$ in terms of the ordinary stability constant.
The examples use projectively flat ruled surfaces
\cite{acgt08,acgt11,NS} and deformations to the normal cone
\cite{rossthomas}.

\begin{maintheorem}[Unbounded critical parameters on a fixed surface]
\label{main:ruled}
There exist a fixed smooth ruled surface $X$ with $\Aut_0(X)=\{1\}$
and ample line bundles $L_d$, for integers $d\ge4$, such that each $c_1(L_d)$
contains a cscK metric and
\[
 \alpha(X,L_d)=\frac1d,\qquad
 \liminf_{d\to\infty}\frac{\beta_{\rm crit}(X,L_d)}d\ge\frac34.
\]
\end{maintheorem}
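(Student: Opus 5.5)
We take $X=\mathbb P(E)\to C$, a projectively flat ruled surface. Here $C$ is a curve of genus $g\ge2$ and $E$ is a stable rank-two bundle, so that $\Aut_0(X)=\{1\}$. This follows because a stable bundle gives no vector fields on $\mathbb P(E)$ lifting from $C$, and $\Aut_0(C)$ is trivial. Let $f$ be the fibre class and $\xi$ the tautological class. By \cite{acgt08,acgt11,NS}, every Kähler class $a\xi+bf$ on $X$ contains a cscK metric: the locally product metric on the flat $\mathbb P^1$-bundle over $C$ has constant scalar curvature. Our plan is to choose $L_d$ on a ray in the ample cone that degenerates toward the fibre direction. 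Concretely, after normalizing $E$ with even or odd degree, we take $L_d=\xi+d f$ twisted suitably, or a rescaling, so that the fibre degree is small relative to the base degree.

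\emph{Step 1: computing $\alpha(X,L_d)=1/d$.} On a locally product cscK surface $C\times_{\text{flat}}\mathbb P^1$, the $\alpha$-invariant of $a[\omega_{\mathbb P^1}]+b[\omega_C]$ is the minimum of the one-dimensional $\alpha$-invariants of the factors, rescaled by $a$ and $b$. Lower semicontinuity of log canonical thresholds, together with Tian's computation for curves, gives $\alpha$ of $\mathcal O(k)$ on $\mathbb P^1$ equal to $1/k$. The upper bound comes from the divisor $F$, a fibre, or a section. For the lower bound we pull back along the flat structure and use the product bound, since divisors on $X$ in $|L_d|$ have lct at least the product value. We normalize so that the minimum equals $1/d$, coming from the fibre direction.

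\emph{Step 2: lower bound on $\beta_{\rm crit}$.} Here we use deformation to the normal cone \cite{rossthomas} of a fibre $F$, or of a curve $Z$ with large $L_d$-degree. The resulting test configuration $\Tcal_c$ has explicit $\mathrm{DF}$ and $J^{\NA}$, computed as polynomials in the Seshadri parameter $c$ from intersection numbers. The Mabuchi slope stays positive because cscK metrics exist. The quantized slope, however, is $m_\beta=\mathrm{DF}-(\text{entropy deficit})$. The relevant inequality is the second one in Theorem~\ref{main:sharp-gap}. We will need the reverse direction: an explicit upper bound for $m_\beta(\phi_{\Tcal_c})$ via the quantized entropy of the model. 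For a deformation to the normal cone, the non-Archimedean potential is $\max(\log|s_F|,-c)$-type. The quantized entropy slope is then computable as in the products-of-curves equality case of Section~\ref{sec:sharp-gap}, with an error of order $\beta^{-2}$ whose coefficient scales like $d^{2}$ times local data. Comparing this with $\mathrm{DF}(\Tcal_c)=O(1)$ relative to $J^{\NA}$ shows that $m_\beta<0$ unless $\beta\ge(\tfrac34-o(1))d$. The constant $3/4$ would arise by optimizing over $c$ in the ratio of cubic to quadratic intersection polynomials.

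\emph{Main obstacle.} The key difficulty is the upper bound for the quantized slope $m_\beta(\phi_\Tcal)$ along a specific test configuration. Theorem~\ref{main:sharp-gap} gives only lower bounds. I would first try to localize the computation near the fibre, where $X$ is locally $F\times$disc. I would then reduce to the one-dimensional model on $\mathbb P^1$ with $\mathcal O(d)$-scaled polarization, where the Bergman-type quantized entropy is explicit, and show that the correction is $\ge c\,d^2/\beta^2\cdot J^{\NA}$ with the right constant.
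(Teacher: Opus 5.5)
\textbf{There is a genuine gap, and it is in Step 2.} You never pin down the destabilizing configuration, and the candidates you single out cannot work.

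For the deformation to the normal cone of a fibre, or of any vertical divisor $D\equiv bf$, one has $m_\beta(\phi_{D,c})=e_\beta(\phi_{D,c})+\frac2d J^{\NA}(\phi_{D,c})\ge0$ for every $\beta>0$. So no localization near a fibre can produce an obstruction. Note also that $L_d|_F=\mathcal O(1)$, so there is no $\mathcal O(d)$-scaled fibre model. Large $L_d$-degree is not the right criterion either: smooth multisections $D\equiv a\xi+bf$ with $a\ge2$ already have $m_\beta\ge0$ for $\beta\ge2$.

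What is needed is a \emph{section} $D\equiv\xi+f$ with $D^2=2$, $K_X\cdot D=0$ and $L_d\cdot D=d+1$. Its existence is an input missing from your plan. On a genus-two curve, Nagata's bound $\deg E-2\max_M\deg M\le g(C)$ combined with stability forces a degree-one quotient $E\twoheadrightarrow B$. The constant depends on this choice: the same computation for a section with $D^2=2b$ only yields $3/(4b)$.

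\textbf{The asymptotic mechanism you predict is also wrong.} For the deformation to the normal cone of a smooth reduced divisor, the envelope is exactly $P(\phi_c+a_Y/\beta)=\phi_{c-1/\beta}$ when $\beta c\ge1$. Hence $e_\beta(\phi_c)=\beta\bigl(J(c)-J(c-1/\beta)\bigr)$ with $J(c)=\frac{d+1}{2d}c^2-\frac{c^3}{3d}$. The upper bound on $m_\beta$ that you call the main obstacle is therefore an exact intersection computation.

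Now let rational $c\uparrow1$; this is the Seshadri endpoint, not an interior optimum in $c$. Then $\cM^{\NA}/J^{\NA}\to 2(2d+1)/(d(3d+1))\approx 4/(3d)$, which is of order $1/d$ rather than of order one. Meanwhile $(H^{\NA}-e_\beta)/J^{\NA}\approx1/\beta$. The deficit is of order $\beta^{-1}$ because $J''(1)=(d-1)/d\neq0$. This differs from the $\beta^{-n}$ deficit in the products-of-curves case, where the restriction to the strict transform vanishes at the endpoint; here $L_d-D\equiv(d-1)f\neq0$.

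Balancing $4/(3d)$ against $1/\beta$ gives the positive root $R_d$ of $2(2d+1)\beta^2-3d(d-1)\beta-2d$, and $R_d/d\to3/4$. Your heuristic of a $d^2\beta^{-2}$ deficit against an $O(1)$ Mabuchi ratio would not produce this constant.

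\textbf{Step 1 also has a gap.} The lower bound $\alpha(X,L_d)\ge1/d$ does not follow from pulling back to a product cover. On $\mathbf P^1\times C'$, with $C'\to C$ of degree $N$, the pulled-back class has base degree $Nd$, so you only get $1/(Nd)$. For infinite monodromy there is no finite cover at all. The paper argues differently: nefness of $\xi$ bounds the fibre coefficient of $\Delta\sim_\Q L_d$ by $\xi\cdot\Delta=d$. It then applies inversion of adjunction along fibres, on which the remaining part of $\Delta$ has degree one.
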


Thus both $\beta_{\rm crit}(X,L_d)$ and
$\beta_{\rm crit}(X,L_d)/\alpha(X,L_d)$ are unbounded.
The line bundles $L_d$ are integral, have fiber degree one, and are
pairwise nonproportional. Since $\beta_{\rm crit}/\alpha$ is unchanged
by positive tensor powers, rescaling does not remove this
unboundedness. The critical parameter for smooth-divisor
deformations, computed in Section~\ref{sec:bounds}, is a lower
bound for the full critical parameter.

A small negative lower bound at a finite parameter also suffices for
cscK existence. For the groups $\HH$ and $\ZZ_\HH$ defined above,
write
\[
 N_\HH(\Tcal)=\inf_{\zeta\in N_\Q(\ZZ_\HH)}
           J^{\NA}(\phi_{\Tcal_\zeta}).
\]

\begin{corollary}[Quantitative forms of the correspondences]
\label{intro:quantitative-correspondences}
Let $(X,L)$ be a connected smooth polarized manifold, put
$A=\alpha(X,L)$, and fix $0\le\vartheta<1$. All test configurations
below are normal and ample, of arbitrary positive exponent, with
invariants normalized to $L$.
\begin{enumerate}[label=\textup{(\roman*)}]
\item The class $c_1(L)$ contains a unique cscK metric if and only if
there is a finite $\beta>0$, independent of the test configuration,
such that
\begin{equation}\label{intro:robust-unreduced}
 m_\beta(\phi_\Tcal)
 \ge-\vartheta G_A(\beta)J^{\NA}(\phi_\Tcal)
 \qquad\text{for every }\Tcal.
\end{equation}
Then, for every $B>\beta$,
\begin{equation}\label{intro:robust-gain}
 m_B(\phi_\Tcal)
 \ge\bigl((1-\vartheta)G_A(\beta)-G_A(B)\bigr)
       J^{\NA}(\phi_\Tcal).
\end{equation}
\item Let $\HH\subset\Aut_0(X,L)$ be a maximal connected reductive
subgroup, with $\SSS$ and $\ZZ_\HH$ as above. Assume that the Futaki
character vanishes on $\Lie\ZZ_\HH$. Then $c_1(L)$ contains a cscK
metric if and only if there is a finite $\beta>0$, independent of
the test configuration, such that
\begin{equation}\label{intro:robust-reduced}
 \RR_{\beta,\HH}^{\rm max}(\Tcal)
 \ge-\vartheta G_A(\beta)N_\HH(\Tcal)
 \qquad\text{for every normal ample $\HH$-equivariant }\Tcal.
\end{equation}
Then, for every $B>\beta$,
\[
 \RR_{B,\HH}^{\rm max}(\Tcal)
 \ge\bigl((1-\vartheta)G_A(\beta)-G_A(B)\bigr)N_\HH(\Tcal).
\]
\end{enumerate}
\end{corollary}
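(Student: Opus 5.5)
The plan is to reduce both parts to Theorem~\ref{main:sharp-gap} and then run the same argument as Corollary~\ref{cor:polarized-correspondence} and Theorem~\ref{main:levi}. We use the first inequality of Theorem~\ref{main:sharp-gap} in the form
\[
 s_B(u)\ge s_\beta(u)+\bigl(G_A(\beta)-G_A(B)\bigr)J^{\rad}(u).
\]
We use the second in the form
\[
 \cM^{\rad}(u)\ge s_\beta(u)+G_A(\beta)J^{\rad}(u).
\]
Both are applied to $u=u^\Tcal$. The identifications $J^{\rad}(u^\Tcal)=J^{\NA}(\phi_\Tcal)$ and $\cM^{\rad}(u^\Tcal)\le\mathrm{DF}(\Tcal)$ (with equality up to the usual normality corrections) come from \cite{BHJ}, as in Section~\ref{sec:criteria}.

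For part (i), sufficiency goes as follows. Insert \eqref{intro:robust-unreduced} into the first inequality. This gives
\[
 m_B(\phi_\Tcal)\ge-\vartheta G_A(\beta)J^{\NA}+\bigl(G_A(\beta)-G_A(B)\bigr)J^{\NA},
\]
which is exactly \eqref{intro:robust-gain}. Since $G_A(B)\to0$ as $B\to\infty$ and $\vartheta<1$, the coefficient $(1-\vartheta)G_A(\beta)-G_A(B)$ is positive for $B$ large. Condition (iii) of Corollary~\ref{cor:polarized-correspondence} therefore holds, and cscK existence and uniqueness follow from that corollary. Alternatively, the second inequality gives uniform K-stability directly with $\delta=(1-\vartheta)G_A(\beta)$.

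Necessity in (i) is immediate. A unique cscK metric gives (ii) of Corollary~\ref{cor:polarized-correspondence}, that is $m_\beta\ge0$ for some $\beta$. Since $J^{\NA}\ge0$, this implies \eqref{intro:robust-unreduced} for every $\vartheta\ge0$.

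For part (ii), necessity is handled the same way. Theorem~\ref{main:levi} gives $\RR^{\max}_{\beta,\HH}\ge0$ at some $\beta$, and $N_\HH\ge0$ gives \eqref{intro:robust-reduced}.

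Sufficiency in (ii) needs the gain inequality for the reduced quantities. Fix $\Tcal$ and a rational twist $\xi\in N_\Q(\SSS)$. Apply the first inequality to $u^{\Tcal_\xi}$:
\[
 (\cM^B)^{\rad}(u^{\Tcal_\xi})\ge(\cM^\beta)^{\rad}(u^{\Tcal_\xi})+\Gamma_A(\beta,B)J^{\NA}(\phi_{\Tcal_\xi}).
\]
Since $\ZZ_\HH\subset\SSS$, we have $J^{\NA}(\phi_{\Tcal_\xi})\ge N_\HH(\Tcal_\xi)$. The key step is to check that $N_\HH(\Tcal_\xi)=N_\HH(\Tcal)$ for $\xi\in N_\Q(\SSS)$. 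This should not be asserted naively, because $N_\HH$ infimizes only over central twists, so I would instead route through the proof of Theorem~\ref{main:levi}.

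First, use the vanishing of $\Fut$ on $\Lie\ZZ_\HH$. It makes the quantized slopes invariant under central twists $\zeta\in N_\Q(\ZZ_\HH)$, as in Section~\ref{sec:center-direct}. Then take the supremum over $\xi$ and compare with the infimum over $\zeta$. This yields
\[
 \RR^{\max}_{B,\HH}(\Tcal)\ge\RR^{\max}_{\beta,\HH}(\Tcal)+\Gamma_A(\beta,B)N_\HH(\Tcal).
\]
Combined with \eqref{intro:robust-reduced}, this is the stated reduced gain. Choose $B$ with $(1-\vartheta)G_A(\beta)>G_A(B)$. We then obtain a positive reduced uniform bound $\RR^{\max}_{B,\HH}\ge\gamma N_\HH$. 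The sufficiency mechanism of Theorem~\ref{thm:groups}, namely properness modulo $\ZZ_\HH$ via the reduced norm and \cite{DZ,Li}, then gives a cscK metric. Here we need that Theorem~\ref{thm:groups} accepts a positive multiple of the reduced norm rather than mere nonnegativity. I expect this to hold, since that is how the reduced norm is used in its proof.

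The main obstacle is the twist bookkeeping in (ii). Two points are needed. The first is that $J^{\NA}$ of a torus twist dominates $N_\HH$ uniformly. The second is that the supremum over $\SSS$-twists of the quantized slopes and the infimum over $\ZZ_\HH$-twists of $J^{\NA}$ interact correctly. I would try first to use the central-twist invariance, which holds because $\Fut$ vanishes on $\Lie\ZZ_\HH$, and to handle the non-central part of $\SSS$ by reducing to $\SSS$-twists realizing the infimum in $N_\HH$ after normalized base change.
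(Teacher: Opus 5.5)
Your part (i), and the necessity halves of (i) and (ii), are correct and essentially follow the paper (Corollary~\ref{cor:existence} with $\varepsilon=\vartheta G_A(\beta)$). Your alternative through $\mathrm{DF}\ge\cM^{\NA}\ge(1-\vartheta)G_A(\beta)J^{\NA}$ also works.

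The gap is in the sufficiency and gain parts of (ii). To pass from your per-twist inequality to suprema over $\xi\in N_\Q(\SSS)$, you need $J^{\NA}(\phi_{\Tcal_\xi})\ge N_\HH(\Tcal)$ for \emph{every} $\xi\in N_\Q(\SSS)$, including non-central $\xi$. In other words, the infimum of $J^{\NA}$ over all maximal-torus twists must equal the infimum over central twists. You name this as the obstacle, but your proposed mechanism cannot supply it, for three reasons.
\begin{enumerate}[label=\textup{(\alph*)}]
\item Vanishing of $\Fut_{X,L}$ on $\Lie\ZZ_\HH$ makes the ordinary Mabuchi slope twist-invariant, not the quantized slopes. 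Twist the trivial configuration by a nonzero central $\xi$. Theorem~\ref{main:sharp-gap} gives $m_\beta\le\cM^{\NA}-G_A(\beta)J^{\NA}=-G_A(\beta)J^{\NA}<0$, whereas the trivial configuration has $m_\beta=0$.
\item The invariance of $\RR_{\beta,\HH}^{\rm max}$ under central twists is only a translation of $N_\Q(\SSS)$. It says nothing about non-central directions.
\item You cannot feed $\Tcal_\xi$ back into the hypothesis, because $\Tcal_\xi$ is not $\HH$-equivariant when $\xi$ is non-central.
\end{enumerate}
The identity $N_\HH(\Tcal_\xi)=N_\HH(\Tcal)$ is also false in general; only $\ge$ holds, and only once the missing lemma is available.

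The missing idea is Weyl averaging, Lemma~\ref{lem:weyl}. Put $F(\xi)=J^{\NA}(\phi_\xi)$ on $N_\R(\SSS)$. It is convex: in an $\SSS$-weight presentation, $\xi\mapsto\sup\phi_\xi$ is a maximum of finitely many affine functions, and $\xi\mapsto\ENA(\phi_\xi)$ is affine. Since $\phi$ is $\HH$-equivariant, $F(w\xi)=F(\xi)$ for $w$ in the Weyl group $W_\HH$. Hence $\bar\xi=|W_\HH|^{-1}\sum_w w\xi\in N_\R(\SSS)^{W_\HH}=N_\R(\ZZ_\HH)$ satisfies $F(\bar\xi)\le F(\xi)$, so $J^{\NA}(\phi_\xi)\ge J^{\NA}_\HH(\phi)=N_\HH(\Tcal)$.

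With this inserted, your argument gives $\RR_{B,\HH}^{\rm max}\ge\RR_{\beta,\HH}^{\rm max}+\Gamma_A(\beta,B)N_\HH$, which is Proposition~\ref{prop:reduced-parameter-bounds}. Existence then follows from Theorem~\ref{main:levi} at a large $B$.

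The paper gets existence more directly. Theorem~\ref{thm:groups} takes the negative tolerance $\varepsilon=\vartheta G_A(\beta)<G_A(\beta)$ as its hypothesis. It combines the terminal inequality $\cM^{\NA}(\phi)\ge m_\beta(\phi_\xi)+G_A(\beta)J^{\NA}(\phi_\xi)$ with Li's twisting identity $\cM^{\NA}(\phi_\xi)=\cM^{\NA}(\phi)$ for $\xi\in N_\Q(\SSS)$, and with the same lemma. The twisting identity applies because $\Fut_{X,L}$ vanishes on $\Lie\ZZ_\HH$ and on $[\Lie\HH,\Lie\HH]$, hence on $\Lie\SSS$. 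So your worry about whether that theorem accepts a positive multiple of the reduced norm is moot, but both routes depend on Lemma~\ref{lem:weyl}.
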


Corollary~\ref{cor:existence} proves the unreduced assertion,
and Section~\ref{sec:center-direct} proves the reduced assertion.
The minus signs in \eqref{intro:robust-unreduced} and
\eqref{intro:robust-reduced} allow the quantized slopes to be negative.
The parameter gain makes the lower bounds positive when
$G_A(B)<(1-\vartheta)G_A(\beta)$. For $\beta\ge A$, one may
take $B>\beta(1-\vartheta)^{-1/n}$. The anticanonical endpoint
$\vartheta=1$ is treated in Theorem~\ref{thm:fano-tolerance-endpoint}.
Remark~\ref{rem:log-endpoint} shows why the analogous endpoint
statement fails for log pairs.

For an individual configuration, the sections of maximal weight
generate a base ideal. Proposition~\ref{prop:valuative-parameter-refinement}
replaces $A$ in the parameter bounds by a possibly larger number
determined by the log canonical threshold of this ideal.
Section~\ref{ordinary:section} relates the finite-parameter bounds to
ordinary uniform K-stability, first for polarized manifolds and then
for arbitrary K\"ahler classes.

\section{Preliminaries}\label{sec:prelim}

\subsection{K\"ahler potentials and quantized energies}\label{sec:quantized-energies}

Throughout the paper, $d^c=\frac{i}{4\pi}(\bar\partial-\partial)$,
so $dd^c=\frac{i}{2\pi}\partial\bar\partial$.
Let $X$ be a connected compact K\"ahler manifold of dimension $n\geq1$, fix a
K\"ahler form $\omega_0$, and set
\[
\begin{gathered}
 V=\int_X\omega_0^n,\qquad d\nu_0=V^{-1}\omega_0^n,\qquad
 d\mu_u=V^{-1}\omega_u^n,\\
 \cH=\{u\in C^\infty(X,\R):\omega_u:=\omega_0+dd^cu>0\}.
\end{gathered}
\]
In the polarized setting, $\omega_0\in c_1(L)$ and $V=L^n$.
The volume-normalized Aubin--Yau energy \cite{bdl,DZ} is
\begin{equation}\label{eq:AM}
 \AM(u)=\frac1{(n+1)V}\sum_{j=0}^n
       \int_Xu\,\omega_u^j\wedge\omega_0^{n-j}.
\end{equation}
We write
\[
 I(u)=\int_Xu(d\nu_0-d\mu_u),\qquad
 J(u)=\int_Xu\,d\nu_0-\AM(u),\qquad
 J_0(u)=\AM(u)-\int_Xu\,d\mu_u.
\]
Here $J$ is Aubin's $J$-functional.
These energies satisfy
\[
 I=J+J_0,\qquad \frac{I}{n+1}\le J,J_0\le\frac{nI}{n+1}.
\]

For $v,w\in\cH$, put $h=w-v$ and $v_s=v+sh$, $0\le s\le1$.
The first variation formula and integration by parts give
\begin{align*}
 \frac{d}{ds}\AM(v_s)
 &=\int_X h\,d\mu_{v_s},\\
 \frac{d^2}{ds^2}\AM(v_s)
 &=\frac{n}{V}\int_X h\,dd^ch\wedge\omega_{v_s}^{n-1}\\
 &=-\frac{n}{2\pi V}
   \int_X i\partial h\wedge\bar\partial h
          \wedge\omega_{v_s}^{n-1}\le0.
\end{align*}
Thus $\AM$ is concave along affine segments, and hence
\[
 \AM(w)\le
 \AM(0)+\left.\frac{d}{ds}\AM(sw)\right|_{s=0}
 =\int_X w\,d\nu_0,
\]
so $J(w)\ge0$.

For a smooth closed real $(1,1)$-form $\theta$, put
\begin{equation}\label{eq:AMtheta}
 \AM_\theta(u)=\frac1{nV}\sum_{j=0}^{n-1}
 \int_Xu\,\theta\wedge\omega_u^j\wedge\omega_0^{n-1-j},
 \qquad \bar\theta=\frac nV\int_X\theta\wedge\omega_0^{n-1},
\end{equation}
and
\begin{equation}\label{eq:Jtheta}
 \cJ_\theta=n\AM_\theta-\bar\theta\AM.
\end{equation}
In particular, $\cJ_{\omega_0}=J_0$.
With $\Ric(\omega_0)=-dd^c\log\omega_0^n$, the Mabuchi functional is
\[
 \cM(u)=\Ent(u)-\cJ_{\Ric(\omega_0)}(u),\qquad
 \Ent(u)=\int_X\log\frac{\omega_u^n}{\omega_0^n}\,d\mu_u.
\]
Thus $\cM(0)=0$ and
\begin{equation}\label{eq:M-variation}
 d\cM_u(v)=-\int_Xv(S_u-\bar S)\,d\mu_u,
 \qquad
 \bar S=\frac nV\int_X\Ric(\omega_0)\wedge\omega_0^{n-1},
\end{equation}
where $S_u=\tr_{\omega_u}\Ric(\omega_u)$.
For these energy identities, see
\cite{chenenergy,mabuchi86}.

For $\beta>0$, the quantized energies of
Darvas--Zhang~\cite[Section~3]{DZ} are defined by
\begin{equation}\label{eq:analytic-quantization}
\begin{split}
 \Ent^\beta(u)&=\sup_{w\in\cH}
 \left\{-\log\int_Xe^{\beta(w-u)}d\nu_0
       +\beta\bigl(\AM(w)-\AM(u)\bigr)\right\},\\
 \cM^\beta(u)&=\Ent^\beta(u)-\cJ_{\Ric(\omega_0)}(u).
\end{split}
\end{equation}
For $u\in\cH$, let $u^\beta$ be the unique smooth solution of
\begin{equation}\label{eq:quantized-optimizer}
 (\omega_0+dd^cu^\beta)^n
       =e^{\beta(u^\beta-u)}\omega_0^n.
\end{equation}
The equation fixes the additive constant of $u^\beta$.
For smooth $u$, existence and uniqueness follow from the
Monge--Amp\`ere theory of Aubin~\cite{aubin} and Yau~\cite{yau}.
Darvas--Zhang~\cite[Section~3]{DZ} extend the equation to
$u\in\mathcal E^1$, with a unique continuous solution.
They prove the identity
$\Ent^\beta(u)=\beta(\AM(u^\beta)-\AM(u))$ and show that $u^\beta$
attains the supremum over $w\in\mathcal E^1$
\cite[Proposition~3.6 and the paragraph following its proof]{DZ}.
The supremum over $w\in\cH$ has the same value and admits a smooth
maximizing sequence. For smooth $u$, $u^\beta$ itself belongs to
$\cH$ and attains the supremum in \eqref{eq:analytic-quantization}.

\subsection{Finite-energy geometry}\label{sec:finite-energy-geometry}

The path norm $\|v\|_{1,u}=\int_X|v|\,d\mu_u$ induces the
$d_1$ metric on $\cH$. Its completion is the space $\mathcal E^1$
of finite-energy potentials~\cite[Theorem~2]{darvas}.
For $u,v\in\mathcal E^1$ with $u\le v$, the ordered $d_1$ formula
\cite{darvas} is
\begin{equation}\label{eq:ordered-d1}
 d_1(u,v)=\AM(v)-\AM(u).
\end{equation}
There is a constant $C>0$, depending only on the reference form,
such that
\begin{equation}\label{eq:sup-mean}
 0\le \sup_Xu-\int_Xu\,d\nu_0\le C,
 \qquad u\in\mathcal E^1.
\end{equation}

We work on the normalized slice
$\mathcal E^1_0=\{u\in\mathcal E^1:\AM(u)=0\}$.
The energy functionals extend to $\mathcal E^1$;
see, for example, \cite{darvas,bdl,DZ}. The metric estimates in
\cite{darvas,bdl} give a constant $C\ge1$, depending only on the
reference form, such that
\begin{equation}\label{eq:normalized-distance-J}
 C^{-1}J(u)-C\le d_1(0,u)\le CJ(u)+C,
 \qquad u\in\mathcal E^1_0.
\end{equation}
Darvas--Lu~\cite[Equation~(3)]{dl} define the chordal distance between
finite-energy geodesic rays from the same point by
\[
 \rho(u,v)=\lim_{t\to\infty}t^{-1}d_1(u_t,v_t).
\]
The limit exists by Busemann convexity of $d_1$~\cite[Proposition~5.1]{bdl}.

For a finite-energy geodesic ray $u=\{u_t\}$ from a smooth
reference, write
\[
 J^{\rad}(u)=\lim_{t\to\infty}\frac{J(u_t)}t,
 \qquad
 (\cM^\beta)^{\rad}(u)=
       \liminf_{t\to\infty}\frac{\cM^\beta(u_t)}t.
\]
The function $J(u_t)$ is convex: $u_t$ is pointwise convex in $t$
and $\AM(u_t)$ is affine. Subtracting the affine function
$\AM(u_t)$ leaves $J(u_t)$ unchanged and puts the ray in
$\mathcal E^1_0$. Since the ray has constant $d_1$-speed,
\eqref{eq:normalized-distance-J} gives $J(u_t)=O(1+t)$.
It follows that $J^{\rad}(u)$ exists and is finite.
Geodesic convexity of the Mabuchi functional gives
the extended limit
\[
 \cM^{\rad}(u)=\lim_{t\to\infty}\frac{\cM(u_t)}t
 \in\R\cup\{+\infty\}.
\]
For these definitions, see \cite{bdl,darvas} and
\cite[Section~2]{DZ}. The quantized slope is defined by a lower
limit. Adding a function of time that is constant on $X$ does not
change these slopes.

The group $G=\Aut_0(X)$ acts by pullback of metrics. The symbol
$\G$ denotes the polarized automorphism group defined in the Introduction.

There exist $\alpha_0,C_\alpha>0$, depending only on
$(X,\omega_0)$, such that every $u\in\operatorname{PSH}(X,\omega_0)$
with $\sup_Xu=0$ satisfies
\begin{equation}\label{eq:alpha-input}
 \int_Xe^{-\alpha_0u}d\nu_0\le C_\alpha.
\end{equation}
This uniform estimate follows from the positivity of the
$\alpha$-invariant \cite{tian}; it is used in this form in
\cite[proof of Proposition~6.9]{DZ}. For local integrability, see
\cite[Chapter~VIII, Section~8]{demailly}.

\subsection{Test configurations and associated rays}

A normal ample test configuration of exponent $r\in\mathbb Z_{>0}$ for $(X,L)$
is a flat projective morphism
$\pi:(\mathcal X,\mathcal L)\to\mathbf A^1=\operatorname{Spec}\C[t]$,
where $\mathcal X$ is normal, together with a $\C^*$-action lifting
multiplication on the base, a relatively ample linearized line bundle
$\mathcal L$, and a $\C^*$-equivariant identification
\[
 (\mathcal X,\mathcal L)|_{\mathbf A^1\setminus\{0\}}
 \simeq (X,L^{\otimes r})\times\C^*.
\]
For intersection invariants we use the normalized $\Q$-line bundle
$r^{-1}\overline{\mathcal L}$ on the standard compactification
$\overline{\mathcal X}\to\mathbf P^1$.

If $\rho:\mathcal X'\to\mathcal X$ is an equivariant birational
morphism over the base, the pullback model
$(\mathcal X',\rho^*\mathcal L)$ determines the same non-Archimedean
metric \cite{BHJ,bjmetric}, denoted by $\phi_\Tcal$.
The test configuration also determines a weak geodesic ray
$u^\Tcal$ emanating from $0$ \cite{psrays,rosswitt}.

We parametrize rays by $t=-\log|\tau|^2$.
When a torus acts and $\xi$ is a rational cocharacter, choose
$N\ge1$ such that
$N\xi$ is integral. After the base change $\tau\mapsto\tau^N$,
twist by $N\xi$ and divide homogeneous slopes and norms by $N$.
The central fiber need not be reduced; the correction term relating
the Donaldson--Futaki and non-Archimedean Mabuchi invariants is
recorded in
Section~\ref{ordinary:section}. For a general K\"ahler class, we use normal relatively K\"ahler test configurations in the sense of \cite[Definition~5.4]{DZ}.

\subsection{Non-Archimedean energies}
\label{sec:2.4}
We follow the non-Archimedean pluripotential theory of
Boucksom--Jonsson~\cite{BJ,bjmetric,BJdivisorial}.
In the polarized setting, write $X^{\NA}$ for the Berkovich
analytification of $X$ over
$\C$ endowed with the trivial absolute value, and $\ENA$ for
its energy functional.
For a model metric represented by the normalized polarization
$\mathcal A=r^{-1}\overline{\mathcal L}$, the Monge--Amp\`ere measure is
\[
 \MA(\phi)=\frac1V\sum_E b_E(\mathcal A^n\cdot E)\delta_{v_E},
 \qquad
 b_E=\operatorname{ord}_E(t),\quad
 v_E=b_E^{-1}\operatorname{ord}_E|_{\C(X)}.
\]
Here $E$ ranges over the prime components of the central fiber,
$\operatorname{ord}_E$ is the order of vanishing along $E$, and
$b_E$ is the multiplicity of $E$ in that fiber \cite{BHJ,BJ}. For semipositive model metrics we use the normalization
\[
\sup\phi=\phi(v_{\rm triv})=0,
\]
where $v_{\rm triv}$ denotes the trivial valuation.
With this normalization,
\[
 J^{\NA}(\phi)=\sup\phi-\ENA(\phi)=-\ENA(\phi).
\]

Write
\[
 H^{\NA}(\phi)=\int_{X^{\NA}}A_X\,\MA(\phi),
\]
where $A_X$ is the log-discrepancy function, and let
$\mathcal R^{\NA}$ denote the non-Archimedean counterpart of the
Ricci energy $\cJ_{\Ric(\omega_0)}$. Then
\[
 \cM^{\NA}=H^{\NA}-\mathcal R^{\NA},\qquad
 m_\beta=e_\beta-\mathcal R^{\NA}.
\]
For $\beta>1$, the non-Archimedean quantized entropy is
\begin{equation}\label{eq:quantized-NA-entropy}
 e_\beta(\phi)=\beta\bigl(\ENA(P(\phi+\beta^{-1}A_X))
                              -\ENA(\phi)\bigr).
\end{equation}
Here $P(f)$ is the upper-semicontinuous regularization of the supremum
of all semipositive non-Archimedean potentials bounded above by $f$.
For a test configuration with associated ray $u^\Tcal$, this agrees with
\begin{equation}\label{eq:radial-quantized-entropy}
 e_\beta(\phi_\Tcal)=\liminf_{t\to\infty}
                   \frac{\Ent^\beta(u_t^\Tcal)}{t}.
\end{equation}
The algebraic formula of Darvas--Zhang~\cite[Theorem~9.5]{DZ} is
stated for rational $\beta>1$; Section~\ref{sec:entropy-envelope}
explains the extension to real $\beta>1$. For $0<\beta\le1$, we
define $e_\beta$ by the right-hand side of
\eqref{eq:radial-quantized-entropy}. The slope formulas \cite{DZ,bhj19} give
\begin{equation}\label{eq:slope-bridge}
\begin{aligned}
 \lim_{t\to\infty}\frac{\AM(u_t^\Tcal)}{t}
 &=\ENA(\phi_\Tcal)=-J^{\NA}(\phi_\Tcal),\\
 \lim_{t\to\infty}\frac{J(u_t^\Tcal)}{t}
 &=J^{\rad}(u^\Tcal)=J^{\NA}(\phi_\Tcal),\\
 (\cM^\beta)^{\rad}(u^\Tcal)
 &=m_\beta(\phi_\Tcal).
\end{aligned}
\end{equation}

There is a constant $C>0$, depending only on $\omega_0$, such that
\begin{equation}\label{eq:ricci-energy-bound}
 |\cJ_{\Ric(\omega_0)}(u)|\le C\bigl(J(u)+1\bigr),
 \qquad u\in\mathcal E^1,
\end{equation}
and, for every normal ample configuration metric $\phi$,
\begin{equation}\label{eq:ricci-slope-bound}
 |\mathcal R^{\NA}(\phi)|\le C J^{\NA}(\phi).
\end{equation}
Indeed, choose $C_0\ge0$ with
$-C_0\omega_0\le\Ric(\omega_0)\le C_0\omega_0$.
For smooth $w$ with $\sup_Xw=0$, the mixed-energy formulas
\eqref{eq:AMtheta} and \eqref{eq:Jtheta} give
\[
 |\cJ_{\Ric(\omega_0)}(w)|
 \le\bigl((n+1)C_0+|\bar S|\bigr)(-\AM(w)).
\]
Since $-\AM(w)=J(w)-\int_Xw\,d\nu_0$, \eqref{eq:sup-mean}
proves \eqref{eq:ricci-energy-bound} for smooth normalized potentials.
Continuity of the energies under $d_1$ convergence extends the bound
to $\mathcal E^1$. Since $\cJ_{\Ric(\omega_0)}$ and $J$ are unchanged
by adding constants, applying the bound to $w=u-\sup_Xu$ removes
the normalization assumption. Finally, divide \eqref{eq:ricci-energy-bound}
by $t$ along the associated ray. The Ricci-energy slope formula
\cite{bhj19} and \eqref{eq:slope-bridge} give
\eqref{eq:ricci-slope-bound}.

\subsection{The \texorpdfstring{$\alpha$}{alpha}-invariant}\label{sec:alpha-invariant}

For a compact K\"ahler class $\Omega=[\omega_0]$, put
\[
 A=\alpha(X,\Omega):=\sup\left\{a>0:
 \sup_{u\in\cH}\int_Xe^{-a(u-\sup_Xu)}d\nu_0<\infty\right\}.
\]
In the polarized case $\Omega=c_1(L)$, we also write
$A=\alpha(X,L)$. By the analytic--algebraic identification of
Demailly~\cite[Theorem~A.3]{DemaillyAlpha},
\[
 A=\inf_{\substack{\Delta\ge0\\\Delta\sim_\Q L}}
                  \operatorname{lct}(X;\Delta)\in(0,\infty).
\]
We use uniform integrability for $a<A$. The limit $a\uparrow A$
is taken only after deriving the numerical or radial inequalities;
see the proof of Theorem~\ref{thm:gap}.

\subsection{The envelope formula for quantized entropy}
\label{sec:entropy-envelope}

Let $\phi$ be a semipositive model potential. Choose a smooth SNC
test-configuration model $\mathcal Y$ dominating
$X\times\mathbb P^1$ on which $\phi$ is represented, and write
$r_{\mathcal Y}:X^{\NA}\to\Delta_{\mathcal Y}$ for the
retraction to its dual complex.
After replacing $\mathcal Y$ by a higher model, we may assume
$\phi=\phi\circ r_{\mathcal Y}$ \cite[Appendix~A, Lemma~A.2]{BJ}.
Since $X$ is smooth in characteristic zero, the envelope property
holds by Boucksom--Jonsson~\cite[Theorem~5.20]{BJ}.
With the envelope operator $P$ from Section~\ref{sec:2.4}, set
\begin{equation}\label{eq:envelope}
 a_{\mathcal Y}=A_X\circ r_{\mathcal Y},\qquad
 \psi_\beta=P(\phi+a_{\mathcal Y}/\beta),\qquad \beta>1.
\end{equation}
The function $a_{\mathcal Y}$ is nonnegative, bounded and continuous, and
$a_{\mathcal Y}(v_{\rm triv})=0$.
For relatively ample configuration metrics,
Darvas--Zhang~\cite[Lemma~9.1, Remark~9.2, Proposition~9.4 and
Theorem~9.5]{DZ} give
\begin{equation}\label{eq:entropy-slope}
 e_\beta(\phi)=\beta\bigl(\ENA(\psi_\beta)-\ENA(\phi)\bigr),\qquad
 e_\beta(\phi)\nearrow H^{\NA}(\phi)\quad(\beta\to\infty).
\end{equation}
The intersection formula in \cite[Theorem~9.5]{DZ} is proved for
rational $\beta>1$. The envelope operator is a contraction in the uniform
norm, and $\ENA$ is continuous for uniform convergence. Hence the
right-hand side of \eqref{eq:entropy-slope} is continuous in
$\beta$. Approximating a real $\beta>1$ from both sides by rational
parameters and using parameter monotonicity
\cite[Proposition~3.10]{DZ} gives the same identity for every
real $\beta>1$. The formula extends to semipositive model
potentials by relatively ample approximation on a fixed
dominating model. Indeed, the variational formula
\eqref{eq:analytic-quantization} and comparison of the associated
maximal rays from $0$ give, at fixed $\beta$,
\[
 |e_\beta(\phi)-e_\beta(\chi)|
 \le2\beta\|\phi-\chi\|_\infty
\]
for semipositive model potentials $\phi,\chi$. The envelopes
converge uniformly by contraction, and $\ENA$ is continuous for
uniform convergence. The limit as $\beta\to\infty$ follows
directly from \cite[Theorem~6.3 and Corollary~8.4]{DZ}.

The segment estimate \cite[Lemma~7.33]{BJ} gives
\begin{equation}\label{eq:scaling}
 J^{\NA}(t\phi)\le t^{1+1/n}J^{\NA}(\phi),\qquad0\le t\le1.
\end{equation}

\begin{lemma}[Independence of the SNC model]
\label{lem:full-discrepancy}
Let $\phi$ be a semipositive model potential, and let $\mathcal Y$
be a smooth SNC test-configuration model dominating
$X\times\mathbb P^1$ with $\phi=\phi\circ r_{\mathcal Y}$.
Then, for every $h>0$,
\[
 P(\phi+hA_X\circ r_{\mathcal Y})=P(\phi+hA_X).
\]
\end{lemma}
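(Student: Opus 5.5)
Write $a=A_X$ and $a_{\mathcal Y}=A_X\circ r_{\mathcal Y}$. The plan is to prove the two inequalities between the envelopes separately. The key input is the standard comparison $A_X\circ r_{\mathcal Y}\le A_X$ on $X^{\NA}$ for a smooth SNC model $\mathcal Y$, with equality on the dual complex $\Delta_{\mathcal Y}$. This is the Jonsson--Musta\c{t}\u{a}/Boucksom--Jonsson property that $A_X$ decreases under retraction, since $A_X$ is affine on the faces of $\Delta_{\mathcal Y}$ and $A_X\ge A_X\circ r_{\mathcal Y}$. It gives $\phi+h a_{\mathcal Y}\le\phi+ha$, hence
\[
 P(\phi+ha_{\mathcal Y})\le P(\phi+ha)
\]
by monotonicity of $P$.

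For the reverse inequality, let $\psi$ be any semipositive potential with $\psi\le\phi+ha$. We want $\psi\le\phi+ha_{\mathcal Y}$ (or at least for $P(\phi+ha)$ itself). The tool is the basic property of semipositive (psh) non-Archimedean potentials that $\psi\le\psi\circ r_{\mathcal Y}$ on $X^{\NA}$, from Boucksom--Jonsson~\cite{BJ}: psh functions increase under retraction to dual complexes of SNC models. Take $v\in X^{\NA}$ and set $w=r_{\mathcal Y}(v)\in\Delta_{\mathcal Y}$. Then
\[
 \psi(v)\le\psi(w)\le\phi(w)+hA_X(w)=\phi(v)+h a_{\mathcal Y}(v),
\]
using $\phi=\phi\circ r_{\mathcal Y}$ and $A_X(w)=a_{\mathcal Y}(v)$, since $r_{\mathcal Y}$ is a retraction, so $r_{\mathcal Y}(w)=w$. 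Hence every competitor for $P(\phi+ha)$ is a competitor for $P(\phi+ha_{\mathcal Y})$, and the sups agree. The usc regularizations then also agree, because they are regularizations of the same function.

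One technical point concerns $A_X$ taking the value $+\infty$. For a competitor $\psi$ in the definition of $P(\phi+ha)$, the inequality $\psi\le\phi+ha$ is only informative where $A_X<\infty$. This causes no difficulty, since we evaluate only at $w\in\Delta_{\mathcal Y}$, where $A_X$ is finite and continuous. If $P$ is defined via the usc regularization of the sup, we should also check that $P(\phi+ha)$ itself is still semipositive and $\le\phi+ha$ off a negligible set. It is cleaner, however, to compare the classes of competitors directly, as above, before regularizing.

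The main obstacle is justifying the inequality $\psi\le\psi\circ r_{\mathcal Y}$ for arbitrary semipositive potentials, not just model ones. For model $\psi$ it follows from $\psi$ being given by a relatively nef divisor on a model: pulling back to a common SNC resolution, nefness gives convexity on faces. For general $\psi$ it follows by decreasing approximation by model potentials, which is available thanks to the envelope property~\cite[Theorem~5.20]{BJ}. A second point is that the retraction inequality is stated in \cite{BJ} for models dominating $X\times\mathbb P^1$ in the trivially valued setting, which matches our hypothesis on $\mathcal Y$.
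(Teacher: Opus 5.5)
Your argument is correct and is essentially the paper's proof. As there, $A_X\circ r_{\mathcal Y}\le A_X$ gives one inequality, and the chain $\psi(v)\le\psi(r_{\mathcal Y}(v))\le\phi(r_{\mathcal Y}(v))+hA_X(r_{\mathcal Y}(v))=\phi(v)+hA_X(r_{\mathcal Y}(v))$, applied to every competitor, gives the other.

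One small correction concerns your aside on why psh functions increase under retraction. The paper, citing \cite[Theorem~A.1 and Proposition~4.12]{BJ}, obtains this from the order relation $r_{\mathcal Y}(v)\le v$ together with antimonotonicity of psh functions; for instance, $\log|s|(v)=-v(s)$ for Fubini--Study functions. Convexity of a nef model function on the faces of a dual complex does not by itself compare $\psi(v)$ with $\psi(r_{\mathcal Y}(v))$ for $v$ off $\Delta_{\mathcal Y}$.
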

\begin{proof}
Retraction decreases discrepancy, so $a_{\mathcal Y}\le A_X$ and the
left-hand envelope is at most the right-hand one. Conversely, the order relation for retractions and the
antimonotonicity of psh functions
\cite[Theorem~A.1 and Proposition~4.12]{BJ} imply that every psh
function $\chi\le\phi+hA_X$ satisfies
\[
 \chi(v)\le\chi(r_{\mathcal Y}(v))
       \le\phi(r_{\mathcal Y}(v))+hA_X(r_{\mathcal Y}(v))=\phi(v)+h a_{\mathcal Y}(v).
\]
Taking envelopes proves the reverse inequality. See also
\cite[Lemma~9.1 and Remark~9.2]{DZ}.
\end{proof}

\section{Parameter estimates}\label{sec:proof}

Let $(X,\omega_0)$ be a connected compact K\"ahler manifold of
dimension $n\ge1$, and put $A=\alpha(X,[\omega_0])$.
We prove the inequalities in Theorem~\ref{main:sharp-gap}.
The analytic estimates hold in arbitrary K\"ahler classes.
For algebraic configurations we assume a polarization
$[\omega_0]=c_1(L)$, take normal ample test configurations of any
positive exponent, and normalize all invariants to $L$.

\subsection{Concavity and parameters below the \texorpdfstring{$\alpha$}{alpha}-invariant}

For $u,w\in\cH$, write
\[
 F_\beta(w;u)=-\log\int_Xe^{\beta(w-u)}d\nu_0+\beta\bigl(\AM(w)-\AM(u)\bigr).
\]
Thus $\Ent^\beta(u)=\sup_{w\in\cH}F_\beta(w;u)$.
We use $e_\beta$ as defined in Section~\ref{sec:2.4}.
The parameter monotonicity and $d_1$-continuity used below are due
to Darvas--Zhang~\cite[Lemma~3.9 and Proposition~3.10]{DZ}.
The concavity and increment bound follow from the variational formula,
H\"older's inequality, and the concavity of $\AM$.

\begin{lemma}[Parameter concavity and a uniform upper increment]
\label{lem:parameter-concavity}
For every finite-energy potential $u\in\mathcal E^1$, the
quantized entropy $\Ent^\beta(u)$ is finite, nondecreasing and
concave as a function of $\beta>0$, with
\[
 0\le\Ent^B(u)-\Ent^\beta(u)\le(B-\beta)J(u)
 \qquad(0<\beta<B).
\]
For every fixed normal ample test configuration $\Tcal$,
the radial slopes $e_\beta$ and $m_\beta$ are finite and concave
on $(0,\infty)$, and
\[
 0\le e_\beta\le \beta J^{\NA},\qquad
 0\le m_b-m_a=e_b-e_a\le(b-a)J^{\NA}
 \quad(0<a<b).
\]
\end{lemma}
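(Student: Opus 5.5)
The plan is to work first with smooth $u\in\cH$, directly from the variational formula \eqref{eq:analytic-quantization}, and then pass to $\mathcal E^1$ and to radial slopes.

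\textbf{Smooth potentials.} For fixed $u,w\in\cH$, the map $\beta\mapsto F_\beta(w;u)$ is concave in $\beta$. Indeed, $\beta\mapsto\log\int_Xe^{\beta(w-u)}d\nu_0$ is convex by H\"older's inequality, and $\beta(\AM(w)-\AM(u))$ is linear. A supremum of concave functions is not concave in general, so I will instead use the reparametrization $w=u+\beta^{-1}(\cdot)$ or argue via the optimizer $u^\beta$. The cleanest route I see is the substitution $w=u+s(v-u)$ combined with concavity of $\AM$ along affine segments, proved in Section~\ref{sec:quantized-energies}. Given $w_a$ and $w_b$ near-optimal at parameters $a$ and $b$, set $\beta=\lambda a+(1-\lambda)b$ and define the competitor $w$ by
\[
 \beta(w-u)=\lambda a(w_a-u)+(1-\lambda)b(w_b-u).
\]
Then $w$ is a convex combination of $u$, $w_a$, $w_b$ with positive coefficients, so $w\in\cH$. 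H\"older's inequality gives
\[
 -\log\int e^{\beta(w-u)}\ge\lambda\Bigl(-\log\int e^{a(w_a-u)}\Bigr)+(1-\lambda)\Bigl(-\log\int e^{b(w_b-u)}\Bigr),
\]
and concavity of $\AM$ gives
\[
 \beta(\AM(w)-\AM(u))\ge\lambda a(\AM(w_a)-\AM(u))+(1-\lambda)b(\AM(w_b)-\AM(u)).
\]
Together these give $\Ent^\beta\ge\lambda\Ent^a+(1-\lambda)\Ent^b$. The same construction with $w_b=u$, which contributes $F_b(u;u)=0$, yields $\Ent^\beta\ge(\beta/B)\Ent^B$, and also $\Ent^\beta\ge0$. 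Monotonicity in $\beta$ is \cite[Proposition~3.10]{DZ}. Finiteness follows from Jensen's inequality: $-\log\int e^{\beta(w-u)}\le-\beta\int(w-u)\,d\nu_0$, so
\[
 F_\beta(w;u)\le\beta\bigl(J(u)-J(w)\bigr)\le\beta J(u),
\]
using $J(w)\ge0$. Hence $0\le\Ent^\beta\le\beta J(u)$.

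\textbf{The increment bound.} For $B>\beta$, the bound $\Ent^B-\Ent^\beta\le(B-\beta)J(u)$ will come from concavity. A concave function $f$ on $(0,\infty)$ with $f(0^+)\ge0$ and $f(\beta)\le\beta J$ need not have increments bounded by $J$ in general. Here, however, we can compute directly: at the optimizer $u^B$, differentiating in $B$ gives $\frac{d}{dB}\Ent^B=\partial_BF_B(w;u)|_{w=u^B}$ by the envelope theorem, namely
\[
 \frac{d}{dB}\Ent^B=-\int(u^B-u)\,d\mu_{u^B}+\AM(u^B)-\AM(u).
\]
By \eqref{eq:quantized-optimizer}, $e^{B(u^B-u)}d\nu_0=d\mu_{u^B}$. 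I will bound this derivative by $J(u)$ using $J_0(u^B)\ge0$ and concavity of $\AM$:
\[
 \AM(u)\ge\AM(u^B)+\int(u-u^B)\,d\mu_{u^B}.
\]
Rearranged, this gives $\AM(u^B)-\AM(u)-\int(u^B-u)\,d\mu_{u^B}\le0$, which has the wrong sign. So instead I will write the derivative as
\[
 \tfrac1B\Bigl(\Ent^B+\int\log\tfrac{d\mu_{u^B}}{d\nu_0}\,d\mu_{u^B}\cdot(-1)\Bigr)
\]
and bound it by $B^{-1}\Ent^B\le J(u)$, using nonnegativity of relative entropy. Integrating from $\beta$ to $B$ then gives the increment bound. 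This identification of the derivative, and its sign, is the step I expect to need the most care; the alternative is the chord inequality from concavity plus $\Ent^b\le bJ$ as $b\downarrow0$.

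\textbf{Extension and radial statements.} I will extend to $\mathcal E^1$ by $d_1$-continuity \cite[Lemma~3.9]{DZ}, since all the inequalities are closed conditions. For the radial statements, apply the pointwise bounds to $u_t^\Tcal$, divide by $t$, and use \eqref{eq:slope-bridge} together with $\cJ_{\Ric}$ being independent of $\beta$, so that $m_b-m_a=e_b-e_a$. One caveat is that the slopes are lower limits, so concavity and the upper increment bound need care. For $\beta>1$, I will use \eqref{eq:entropy-slope}, which expresses $e_\beta$ as a genuine limit equal to an algebraic quantity. Below $1$, I will use that $\Ent^\beta(u_t)$ is convex in $t$ along geodesics (equivalently, that its slope exists as a limit along the ray), so that limits are additive. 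Finiteness follows from $0\le e_\beta\le\beta J^{\NA}$.
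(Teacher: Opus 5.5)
\textbf{The gap is in the radial part.} Your smooth-potential argument for concavity is the paper's: H\"older plus concavity of $\AM$ for the competitor $(\lambda aw_a+(1-\lambda)bw_b)/\beta$. So are your proofs of $0\le\Ent^\beta\le\beta J(u)$ via Jensen and the extension to $\mathcal E^1$ by $d_1$-continuity. For $\beta\le1$ you propose to assert that $t\mapsto\Ent^\beta(u_t)$ is convex along geodesics, so that slopes are genuine limits. Nothing in the paper or the cited Darvas--Zhang results gives this. The natural route, Berndtsson's convexity for $t\mapsto-\log\int_Xe^{\beta(w-u_t)}d\nu_0$ at fixed $w$, needs a positivity of the twisted weight that is not available in general. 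The paper defines the quantized slope by a lower limit and never needs the limit to exist. Likewise, \eqref{eq:radial-quantized-entropy} identifies the algebraic formula with the lower limit, not with a limit. Convexity is also not ``equivalent'' to existence of the limit.

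\textbf{The missing step is not needed.} Every inequality passes to lower limits with the correct sign.
Concavity follows from $\liminf_t(\theta x_t+(1-\theta)y_t)\ge\theta\liminf_tx_t+(1-\theta)\liminf_ty_t$, with $x_t=\Ent^a(u_t)/t$ and $y_t=\Ent^b(u_t)/t$.
Monotonicity and $0\le e_\beta\le\beta J^{\NA}$ follow from the pointwise inequalities, because $J(u_t)/t\to J^{\NA}$ is an honest limit by \eqref{eq:slope-bridge}.
The bound $e_b-e_a\le(b-a)J^{\NA}$ follows either from concavity of $\beta\mapsto e_\beta$ with $e_0=0$, or directly from $\Ent^b(u_t)\le\Ent^a(u_t)+(b-a)J(u_t)$, since $\liminf(x_t+y_t)=\liminf x_t+\lim y_t$ when $y_t$ converges.
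This is what the paper does.

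\textbf{Your dismissal of the chord argument is wrong.} If $f$ is concave on $(0,\infty)$ with $f(0^+)\ge0$ and $f(\beta)\le\beta J$, then $f(0^+)=0$. The three-chord inequality then gives $\bigl(f(B)-f(\beta)\bigr)/(B-\beta)\le f(\beta)/\beta\le J$. This is exactly the paper's proof of both increment bounds. You had in fact already proved what is needed: your inequality $\Ent^\beta\ge(\beta/B)\Ent^B$ gives $\Ent^B-\Ent^\beta\le(B/\beta-1)\Ent^\beta\le(B-\beta)J(u)$ at once.

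\textbf{The envelope-theorem route is valid but incomplete.} With $E(B)=\Ent^B(u)$, your identity $E'(B)=B^{-1}\bigl(E(B)-\Ent(u^B)\bigr)\le E(B)/B$ holds, since $\Ent(u^B)=\int_X\log(d\mu_{u^B}/d\nu_0)\,d\mu_{u^B}\ge0$. However, it needs differentiability of $E$, which you did not justify. It does follow from your concavity and attainment of the supremum at $u^B$: compare one-sided difference quotients of $E$ with those of $b\mapsto F_b(u^B;u)$. Finally, your tangent inequality for $\AM$ is reversed. Concavity gives $\AM(u)\le\AM(u^B)+\int_X(u-u^B)\,d\mu_{u^B}$, which yields $E'(B)\ge0$ rather than an upper bound.
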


\begin{proof}
First let $u\in\cH$. Fix $a,b>0$ and $0<\theta<1$, and choose
$w_a,w_b\in\cH$. Set
\[
 \beta=\theta a+(1-\theta)b,\qquad
 w=\frac{\theta a w_a+(1-\theta)b w_b}{\beta}.
\]
Then $w\in\cH$ and
\[
 \beta(w-u)=\theta a(w_a-u)+(1-\theta)b(w_b-u).
\]
By H\"older's inequality,
\[
 \int_Xe^{\beta(w-u)}\,d\nu_0
 \le
 \left(\int_Xe^{a(w_a-u)}\,d\nu_0\right)^\theta
 \left(\int_Xe^{b(w_b-u)}\,d\nu_0\right)^{1-\theta},
\]
while the concavity of $\AM$ along affine segments gives
\[
 \beta\AM(w)\ge
 \theta a\AM(w_a)+(1-\theta)b\AM(w_b).
\]
Hence
\[
 F_\beta(w;u)\ge
 \theta F_a(w_a;u)+(1-\theta)F_b(w_b;u).
\]
Taking the supremum over $w_a$ and $w_b$ yields
\begin{equation}\label{eq:entropy-parameter-concavity}
 \Ent^{\theta a+(1-\theta)b}(u)
 \ge
 \theta\Ent^a(u)+(1-\theta)\Ent^b(u).
\end{equation}

Taking $w=u$ gives $\Ent^\beta(u)\ge F_\beta(u;u)=0$.
Since $d\nu_0$ is a probability measure, Jensen's inequality gives
for every $w\in\cH$,
\begin{align*}
 F_\beta(w;u)
 &\le
 -\beta\int_X(w-u)\,d\nu_0
 +\beta\bigl(\AM(w)-\AM(u)\bigr)\\
 &=\beta\bigl[J(u)-J(w)\bigr]
 \le\beta J(u).
\end{align*}
Therefore
\begin{equation}\label{eq:entropy-pointwise-bound}
 0\le\Ent^\beta(u)\le\beta J(u).
\end{equation}

Smooth approximation in $d_1$ and the $d_1$-continuity of $J$
\cite{darvas} and $\Ent^\beta$ \cite[Lemma~3.9]{DZ} extend
\eqref{eq:entropy-parameter-concavity} and
\eqref{eq:entropy-pointwise-bound} to $u\in\mathcal E^1$.
Moreover, parameter monotonicity
\cite[Proposition~3.10]{DZ} gives
\begin{equation}\label{eq:entropy-parameter-monotonicity}
 \Ent^a(u)\le\Ent^b(u),\qquad 0<a<b.
\end{equation}
By \eqref{eq:entropy-pointwise-bound},
$\Ent^\beta(u)$ extends continuously to $\beta=0$ by setting
$\Ent^0(u)=0$. Thus
\eqref{eq:entropy-parameter-concavity},
\eqref{eq:entropy-parameter-monotonicity}, and
\eqref{eq:entropy-pointwise-bound} give
\begin{equation}\label{eq:entropy-secant-bound}
 0\le
 \frac{\Ent^B(u)-\Ent^\beta(u)}{B-\beta}
 \le
 \frac{\Ent^\beta(u)}{\beta}
 \le J(u),
 \qquad 0<\beta<B.
\end{equation}
Multiplying by $B-\beta$ proves the first increment bound.

Fix a normal ample test configuration and its associated ray
$u_t$. Applying
\eqref{eq:entropy-parameter-concavity} to $u_t$, dividing by $t>0$,
and taking lower limits gives
\begin{align}
 e_{\theta a+(1-\theta)b}
 &\ge
 \liminf_{t\to\infty}
 \left(
 \theta\frac{\Ent^a(u_t)}{t}
 +(1-\theta)\frac{\Ent^b(u_t)}{t}
 \right)\notag\\
 &\ge
 \theta e_a+(1-\theta)e_b.
 \label{eq:radial-entropy-concavity}
\end{align}
Hence $\beta\mapsto e_\beta$ is concave. By
\eqref{eq:entropy-pointwise-bound} and \eqref{eq:slope-bridge},
\begin{equation}\label{eq:radial-entropy-bound}
 0\le e_\beta\le\beta J^{\NA}.
\end{equation}
Likewise, \eqref{eq:entropy-parameter-monotonicity} implies
\begin{equation}\label{eq:radial-entropy-monotonicity}
 e_a\le e_b,\qquad 0<a<b.
\end{equation}

The Ricci energy $\mathcal R^{\NA}$ in Section~\ref{sec:2.4} is
finite and independent of $\beta$. Since
\begin{equation}\label{eq:radial-mabuchi-splitting}
 m_\beta=e_\beta-\mathcal R^{\NA},
\end{equation}
$m_\beta$ is finite and concave. By
\eqref{eq:radial-entropy-bound}, $e_\beta$ extends continuously to
$\beta=0$ by setting $e_0=0$. Using
\eqref{eq:radial-entropy-concavity},
\eqref{eq:radial-entropy-bound}, and
\eqref{eq:radial-entropy-monotonicity}, we obtain
\begin{equation}\label{eq:radial-secant-bound}
 0\le
 \frac{m_b-m_a}{b-a}
 =
 \frac{e_b-e_a}{b-a}
 \le
 \frac{e_a-e_0}{a}
 \le J^{\NA},
 \qquad 0<a<b.
\end{equation}
Multiplying \eqref{eq:radial-secant-bound} by $b-a$ proves the
second increment bound.
\end{proof}

\begin{lemma}[Quantized entropy below the $\alpha$-invariant]\label{lem:alpha-cutoff}
Every normal ample test configuration $\Tcal$, with associated
metric $\phi=\phi_\Tcal$, satisfies
\begin{equation}\label{eq:alpha-cutoff}
 e_\beta(\phi)=\beta J^{\NA}(\phi)\qquad(0<\beta\le A).
\end{equation}
\end{lemma}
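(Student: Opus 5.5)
The upper bound $e_\beta\le\beta J^{\NA}$ is already contained in Lemma~\ref{lem:parameter-concavity}. So the task is to prove $e_\beta(\phi)\ge\beta J^{\NA}(\phi)$ for $0<\beta\le A$. I will do this analytically, pointwise along the ray $u_t=u^\Tcal_t$, by testing the variational formula \eqref{eq:analytic-quantization} with a constant competitor $w$. Since $\cH$ contains the constants, taking $w\equiv c$ gives, for every $u\in\cH$,
\[
 \Ent^\beta(u)\ge F_\beta(c;u)=-\log\int_Xe^{-\beta u}d\nu_0-\beta\AM(u).
\]
Here $\AM(c)=c$ cancels the constant $c$. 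By smooth approximation in $d_1$ and the $d_1$-continuity of $\Ent^\beta$ \cite[Lemma~3.9]{DZ}, this bound extends to $u\in\mathcal E^1$. To pass to the limit I need to know that the integrals converge, or at least are controlled from above, along decreasing smooth approximants. I will get this control from monotone convergence together with the uniform integrability below.

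Next, fix $a<A$ and $\beta\le a$. Normalize $\tilde u_t=u_t-\sup_Xu_t$. By the definition of $A$ in Section~\ref{sec:alpha-invariant}, extended from $\cH$ to $\operatorname{PSH}$ by decreasing approximation, there is $C_a$ with $\int_Xe^{-a\tilde u_t}d\nu_0\le C_a$ for all $t$. By Hölder's inequality, $\int e^{-\beta\tilde u_t}d\nu_0\le C_a^{\beta/a}$. Both $J$ and $\Ent^\beta$ are unchanged by adding constants; the latter follows by substituting $w\mapsto w+c$. Therefore
\[
 \Ent^\beta(u_t)\ge-\tfrac{\beta}{a}\log C_a-\beta\AM(\tilde u_t)
 =-\tfrac{\beta}{a}\log C_a+\beta\Bigl(J(u_t)-\int_X\tilde u_t\,d\nu_0\Bigr).
\]
By \eqref{eq:sup-mean}, $-\int\tilde u_t\,d\nu_0\ge0$, so
\[
 \Ent^\beta(u_t)\ge\beta J(u_t)-C'.
\]
Dividing by $t$, taking the lower limit, and using \eqref{eq:slope-bridge} gives $e_\beta\ge\beta J^{\NA}$ for every $\beta<A$.

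The endpoint $\beta=A$ is where uniform integrability may fail, and I expect it to be the main obstacle. I plan to handle it by concavity rather than by integrability. By Lemma~\ref{lem:parameter-concavity}, $e_\beta$ is concave and nondecreasing in $\beta$, so $e_A\ge\lim_{\beta\uparrow A}e_\beta=AJ^{\NA}$. This uses only monotonicity \eqref{eq:radial-entropy-monotonicity}. Combined with the upper bound, this yields equality on the whole range $(0,A]$.

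A second point to check is the extension of the $\alpha$-integrability to the possibly non-smooth weak geodesic ray. I will handle it by approximating $\tilde u_t$ by decreasing smooth $\omega_0$-psh functions and applying Fatou's lemma, keeping the normalization $\sup=0$ up to a bounded shift.
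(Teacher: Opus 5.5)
Your proposal is correct and is essentially the paper's proof: test the variational formula with a constant competitor, use uniform $\alpha$-integrability below $A$ to get $\beta J(u)-C_\beta\le\Ent^\beta(u)\le\beta J(u)$, divide by $t$ along the ray, and reach $\beta=A$ by a limit from below. At the endpoint you use monotonicity, while the paper uses continuity of the concave function $\beta\mapsto e_\beta$; these amount to the same thing. The rest is bookkeeping: the paper applies integrability at exponent $\beta$ itself, proves the two-sided bound for smooth potentials, and extends it to $\mathcal E^1$ by $d_1$-continuity of $J$ and $\Ent^\beta$, so your H\"older step and your Fatou extension of $\alpha$-integrability to non-smooth potentials are valid but not needed.
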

\begin{proof}
Fix $0<\beta<A$. By the definition of the $\alpha$-invariant,
there exists $B_\beta\ge1$, depending only on the manifold, the reference
form and $\beta$, such that
\[
 \int_Xe^{-\beta(u-\sup_Xu)}d\nu_0\le B_\beta
 \qquad(u\in\cH).
\]
Both $J$ and $\Ent^\beta$ are invariant under adding constants to
$u$, so we may first normalize $\sup_Xu=0$. Put
$C_\beta=\log B_\beta$. Taking $w=0$ in the definition of $\Ent^\beta(u)$ gives
\begin{align*}
 \Ent^\beta(u)
 &\ge F_\beta(0;u)
  =-\log\int_Xe^{-\beta u}d\nu_0-\beta\AM(u)\\
 &\ge-C_\beta-\beta\AM(u)\\
 &=\beta J(u)-\beta\int_Xu\,d\nu_0-C_\beta\\
 &\ge\beta J(u)-C_\beta,
\end{align*}
where the last inequality uses $u\le0$.
Together with \eqref{eq:entropy-pointwise-bound}, this yields
\begin{equation}\label{eq:alpha-entropy-two-sided}
 \beta J(u)-C_\beta
 \le \Ent^\beta(u)
 \le \beta J(u).
\end{equation}
Both sides of \eqref{eq:alpha-entropy-two-sided} are unchanged by
adding constants, so the inequality holds for every smooth $u$.
Smooth $d_1$-approximation extends it to $\mathcal E^1$, since $J$
\cite{darvas} and $\Ent^\beta$ \cite[Lemma~3.9]{DZ} are
$d_1$-continuous.

Along the associated ray $u^\Tcal=\{u_t\}$,
\eqref{eq:alpha-entropy-two-sided} gives
\[
 \beta\frac{J(u_t)}t-\frac{C_\beta}t
 \le\frac{\Ent^\beta(u_t)}t
 \le\beta\frac{J(u_t)}t.
\]
Letting $t\to\infty$ and using \eqref{eq:slope-bridge}, we obtain
\begin{equation}\label{eq:alpha-cutoff-open}
 e_\beta(\phi)=\beta J^{\NA}(\phi),
 \qquad 0<\beta<A.
\end{equation}

By Lemma~\ref{lem:parameter-concavity},
$e_\beta(\phi)$ is finite and concave on $(0,\infty)$, hence
continuous at $A$. Letting $\beta\uparrow A$ in
\eqref{eq:alpha-cutoff-open} gives
$e_A(\phi)=A J^{\NA}(\phi)$.
\end{proof}

\subsection{Sharp parameter increments}

\begin{lemma}[Second parameter derivative]
\label{lem:parameter-second-derivative}
Fix $u\in\cH$ and write $E(b)=\Ent^b(u)$ for $b>0$.
Then $E(b)$ is smooth and
\begin{equation}\label{eq:parameter-differential-inequality}
 0\le -b E''(b)\le(n+1)E'(b).
\end{equation}
In particular, $b^{n+1}E'(b)$ is nondecreasing.
\end{lemma}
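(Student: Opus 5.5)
The plan is to compute $E'$ and $E''$ explicitly using the optimizer $u^b$ from \eqref{eq:quantized-optimizer}, and then compare the two resulting quadratic forms. Write $f_b=u^b-u$, $\omega_b=\omega_0+dd^cu^b$ and $d\mu_b=V^{-1}\omega_b^n$. Normalize the Laplacian $\Delta_b$ and the gradient norm $|\nabla\cdot|_b^2$ of $\omega_b$ so that
\[
\frac{d}{ds}\log\omega_{u^b+sv}^n\Big|_{s=0}=\Delta_bv,
\qquad
n\,\tfrac{i}{2\pi}\partial g\wedge\bar\partial g\wedge\omega_b^{n-1}=|\nabla g|_b^2\,\omega_b^n.
\]
With this normalization $\int_X g\,\Delta_bg\,d\mu_b=-\int_X|\nabla g|_b^2\,d\mu_b$.

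\emph{Smoothness and the first derivative.} The map $(b,w)\mapsto\log(\omega_w^n/\omega_0^n)-b(w-u)$ has linearization $\Delta_b-b$ in $w$ at $w=u^b$. This operator is invertible on $C^{k,\alpha}$ because $b>0$. The implicit function theorem therefore makes $b\mapsto u^b$ smooth, with $v=\partial_bu^b$ satisfying
\[
\Delta_bv-bv=f_b .
\]
Since $E(b)=F_b(u^b;u)$ and $u^b$ is a critical point of $F_b(\cdot;u)$, the envelope argument gives $E'(b)=\partial_bF_b(w;u)|_{w=u^b}$. The constraint $\int_Xe^{b f_b}d\nu_0=\int_X d\mu_b=1$ then yields
\[
E'(b)=\AM(u^b)-\AM(u)-\int_Xf_b\,d\mu_b .
\]
One can also get this by differentiating $E(b)=b(\AM(u^b)-\AM(u))$ directly. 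Differentiating once more, using $d\AM_{u^b}(v)=\int_X v\,d\mu_b$ and $\partial_b\,d\mu_b=\Delta_bv\,d\mu_b$, the two $\int_X v\,d\mu_b$ terms cancel. This leaves
\[
-E''(b)=\int_Xf_b\,\Delta_bv\,d\mu_b=\int_X(\Delta_bv)^2d\mu_b+b\int_X|\nabla v|_b^2d\mu_b\ \ge0,
\]
which is the left inequality in \eqref{eq:parameter-differential-inequality}.

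\emph{Lower bound for $E'$.} Put $g(s)=\AM(u^b-sf_b)$ for $0\le s\le1$, so that $E'(b)=-(g(1)-g(0)-g'(0))=\int_0^1(1-s)|g''(s)|\,ds$. The second-variation formula from Section~\ref{sec:quantized-energies} gives
\[
|g''(s)|=\frac nV\int_X\tfrac{i}{2\pi}\partial f_b\wedge\bar\partial f_b\wedge\omega_{u^b-sf_b}^{n-1}.
\]
Since $\omega_{u^b-sf_b}=(1-s)\omega_b+s\omega_u\ge(1-s)\omega_b$, we get $|g''(s)|\ge(1-s)^{n-1}\int_X|\nabla f_b|_b^2d\mu_b$. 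Integrating in $s$ gives
\[
(n+1)E'(b)\ge\int_X|\nabla f_b|_b^2\,d\mu_b .
\]

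\emph{Comparison.} Substitute $f_b=\Delta_bv-bv$ and integrate by parts; the cross term is $-2b\int_X\nabla\Delta_bv\cdot\nabla v=2b\int_X(\Delta_bv)^2$. This gives
\[
\int_X|\nabla f_b|_b^2d\mu_b=\int_X|\nabla\Delta_bv|_b^2d\mu_b+2b\int_X(\Delta_bv)^2d\mu_b+b^2\int_X|\nabla v|_b^2d\mu_b .
\]
The right-hand side is at least $b\int_X(\Delta_bv)^2+b^2\int_X|\nabla v|_b^2=-bE''(b)$, which proves the right inequality. Finally,
\[
(b^{n+1}E')'=b^n\bigl((n+1)E'+bE''\bigr)\ge0,
\]
so $b^{n+1}E'(b)$ is nondecreasing.

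The main obstacle is bookkeeping rather than ideas. One has to keep the $dd^c=\frac{i}{2\pi}\partial\bar\partial$ conventions consistent, so that the same $\Delta_b$ and $|\nabla|_b^2$ appear in both the $E''$ computation and the $\AM$ Hessian bound. One also has to justify the smooth dependence on $b$, which needs standard elliptic regularity for the linearized operator.
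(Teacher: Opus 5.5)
Your proposal is correct and follows the paper's proof essentially step for step. The implicit-function argument with $\Delta_b-b$, the identity $-E''(b)=\int_X f_b\,\Delta_bv\,d\mu_b$, the Taylor bound $(n+1)E'(b)\ge\int_X|\nabla f_b|_b^2\,d\mu_b$ along the segment from $u^b$ to $u$ using $\omega\ge(1-s)\omega_b$, and the expansion after substituting $f_b=\Delta_bv-bv$ are exactly the paper's identities, with $\eta=f_b$, $w=\partial_bu^b$ and $Q_v(g)=\int_X|\nabla g|_b^2\,d\mu_b$. The only cosmetic difference is that you obtain $E'(b)$ from the critical-point (envelope) argument, whereas the paper differentiates $E(b)=b(\AM(u^b)-\AM(u))$ directly; you also note this alternative.
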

\begin{proof}
For $b>0$, let $v=v_b:=u^b$ be the optimizer determined by
\eqref{eq:quantized-optimizer}. Consider
\[
 \Phi(b,v)
 :=
 \log\frac{\omega_v^n}{\omega_0^n}-b(v-u).
\]
Then $\Phi(b,v_b)=0$, and its derivative with respect to $v$ is
\[
 D_v\Phi(b,v_b)[f]=(\Delta_v-b)f\quad\text{with}\quad\Delta_vf=\frac{n\ dd^cf\wedge\omega_v^{n-1}}{\omega_v^n}.
\]
Since $b>0$, the maximum principle gives
$\ker(\Delta_v-b)=\{0\}$. The operator $\Delta_v-b$ is elliptic
of index zero, so it is invertible. The implicit function theorem
shows that $b\mapsto v_b$, and hence $E(b)$, is smooth on
$(0,\infty)$.

Define
\[
 Q_v(f):=-\int_X f\Delta_vf\,d\mu_v
 =\frac{n}{V}\int_X df\wedge d^cf\wedge\omega_v^{n-1}\ge0.
\]

Set
\[
 \eta=v-u,\qquad w=\partial_bv,\qquad z=\Delta_vw.
\]
Differentiating \eqref{eq:quantized-optimizer} with respect to $b$
gives
\begin{equation}\label{eq:optimizer-parameter-variation}
 z=\eta+bw,\qquad
 \partial_bd\mu_v=z\,d\mu_v.
\end{equation}
In particular,
\[
 \int_X z\,d\mu_v=0,
 \qquad
 b\int_X w\,d\mu_v=-\int_X\eta\,d\mu_v.
\]
By the optimizer identity
\cite[Proposition~3.6]{DZ},
\[
 E(b)=b\bigl(\AM(v)-\AM(u)\bigr).
\]
Using
\[
 \frac{d}{db}\AM(v)=\int_Xw\,d\mu_v
\]
and \eqref{eq:optimizer-parameter-variation}, we obtain
\begin{equation}\label{eq:entropy-first-variation}
 E'(b)
 =\AM(v)-\AM(u)-\int_X\eta\,d\mu_v
 =:\JJ_v(u).
\end{equation}

Differentiating \eqref{eq:entropy-first-variation} and using
\eqref{eq:optimizer-parameter-variation}, we obtain
\begin{align}
 E''(b)
 &=\int_Xw\,d\mu_v
   -\int_Xw\,d\mu_v
   -\int_X\eta z\,d\mu_v \notag\\
 &=-\int_X\eta z\,d\mu_v
  =-\|z\|_{L^2(d\mu_v)}^2-bQ_v(w).
 \label{eq:entropy-second-variation}
\end{align}
In particular, $E''(b)\le0$.

Since $\Delta_v$ is self-adjoint with respect to $d\mu_v$ and
$\Delta_vw=z$, the identity $\eta=z-bw$ gives
\begin{equation}\label{eq:Q-eta-expansion}
 Q_v(\eta)
 =Q_v(z)+2b\|z\|_{L^2(d\mu_v)}^2+b^2Q_v(w).
\end{equation}
Combining \eqref{eq:entropy-second-variation} and
\eqref{eq:Q-eta-expansion}, we obtain
\begin{equation}\label{eq:second-variation-Q-bound}
 Q_v(\eta)+bE''(b)
 =Q_v(z)+b\|z\|_{L^2(d\mu_v)}^2\ge0.
\end{equation}

To compare $Q_v(\eta)$ with $\JJ_v(u)$, set
\[
 h=u-v=-\eta,\qquad
 v_s=v+sh=(1-s)v+su,\qquad
 g(s)=\AM(v_s).
\]
The energy variation formula gives
\[
 g''(s)=-Q_{v_s}(h).
\]
Hence Taylor's formula with integral remainder yields
\[
 \JJ_v(u)
 =g(0)+g'(0)-g(1)
 =\int_0^1(1-s)Q_{v_s}(h)\,ds.
\]
Since
\[
 \omega_{v_s}=(1-s)\omega_v+s\omega_u
 \ge(1-s)\omega_v,
\]
we have
\[
 Q_{v_s}(h)\ge(1-s)^{n-1}Q_v(h).
\]
As $h=-\eta$, it follows that
\begin{equation}\label{eq:Q-by-first-variation}
 E'(b)=\JJ_v(u)
 \ge Q_v(\eta)\int_0^1(1-s)^n\,ds
 =\frac{1}{n+1}Q_v(\eta).
\end{equation}

Combining \eqref{eq:entropy-second-variation},
\eqref{eq:second-variation-Q-bound}, and
\eqref{eq:Q-by-first-variation}, we obtain
\[
 0\le-bE''(b)\le Q_v(\eta)\le(n+1)E'(b).
\]
Finally,
\[
 \frac{d}{db}\bigl(b^{n+1}E'(b)\bigr)
 =b^n\bigl((n+1)E'(b)+bE''(b)\bigr)\ge0,
\]
so $b^{n+1}E'(b)$ is nondecreasing.
\end{proof}

For $a>0$, define
\begin{align}
 k_a(b)&=\min\{1,(a/b)^{n+1}\},\label{eq:sharp-kernel}\\
 G_a(\beta)&=\int_\beta^\infty k_a(b)\,db
 =\begin{cases}
 (1+1/n)a-\beta,&0<\beta\le a,\\
 a^{n+1}/(n\beta^n),&\beta\ge a,
 \end{cases}\label{eq:constants}\\
 \Gamma_a(\beta,B)&=G_a(\beta)-G_a(B)
                  =\int_\beta^B k_a(b)\,db.
 \label{eq:sharp-increment-kernel}
\end{align}

At $a=A:=\alpha(X,[\omega_0])$, the formula
\eqref{eq:constants} reduces to \eqref{intro:gap-function}.
For $0<c<A$, choose $C_c\ge1$ such that
\[
 \int_Xe^{-c(u-\sup_Xu)}\,d\nu_0\le C_c,
 \qquad u\in\cH,
\]
and set $L_c=\log C_c$.

\begin{theorem}[Pointwise entropy gap]\label{thm:analytic-gap}
Fix $0<a<c<A$ and set $J_*=L_c/(c-a)$.
For every $u\in\mathcal E^1$ and $0<\beta<B$,
\begin{align}
 \Ent^B(u)-\Ent^\beta(u)
 &\ge\Gamma_a(\beta,B)\bigl(J(u)-J_*\bigr)_+,
 \label{eq:pointwise-increment-sharp}\\
 \Ent(u)-\Ent^\beta(u)
 &\ge G_a(\beta)\bigl(J(u)-J_*\bigr)_+.
 \label{eq:pointwise-gap}
\end{align}
Here $x_+=\max\{x,0\}$.
\end{theorem}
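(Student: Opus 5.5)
First take $u\in\cH$ and write $E(b)=\Ent^b(u)$, which is smooth by Lemma~\ref{lem:parameter-second-derivative}. The aim is the lower bound
\[
 E'(b)\ge k_a(b)\bigl(J(u)-J_*\bigr)
 \qquad\text{for every }b>0 .
\]
Integrating it over $[\beta,B]$ gives \eqref{eq:pointwise-increment-sharp} whenever $J(u)\ge J_*$. When $J(u)<J_*$ the right-hand side is zero, and the inequality reduces to monotonicity, Lemma~\ref{lem:parameter-concavity}.

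For $b\le a$, the bound follows from concavity of $E$ and the $\alpha$-integrability input. Normalize $\sup_Xu=0$ and take $w=0$ in the variational formula at parameter $c$; this gives $E(c)\ge cJ(u)-L_c$, exactly as in the proof of Lemma~\ref{lem:alpha-cutoff}. By \eqref{eq:entropy-pointwise-bound}, $E(a)\le aJ(u)$. Since $E$ is concave, for $b\le a<c$ we get
\[
 E'(b)\ge E'(a)\ge \frac{E(c)-E(a)}{c-a}\ge J(u)-\frac{L_c}{c-a}=J(u)-J_* .
\]
For $b\ge a$, Lemma~\ref{lem:parameter-second-derivative} says $b^{n+1}E'(b)$ is nondecreasing, so
\[
 E'(b)\ge (a/b)^{n+1}E'(a)\ge (a/b)^{n+1}(J(u)-J_*).
\]
Together the two cases give $E'(b)\ge k_a(b)(J(u)-J_*)$ for all $b$. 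Integrating from $\beta$ to $B$ then yields \eqref{eq:pointwise-increment-sharp} for smooth $u$.

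For \eqref{eq:pointwise-gap}, let $B\to\infty$. This uses $\Ent^B(u)\nearrow\Ent(u)$ as $B\to\infty$, the analytic counterpart of \eqref{eq:entropy-slope}, which I take from Darvas--Zhang~\cite{DZ}. Since $G_a(B)\to0$, the bound $\Ent(u)-\Ent^\beta(u)\ge G_a(\beta)(J(u)-J_*)_+$ follows.

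To pass to $u\in\mathcal E^1$, take smooth $u_j\to u$ in $d_1$. Both $J$ and $\Ent^b$ are $d_1$-continuous (\cite{darvas} and \cite[Lemma~3.9]{DZ}), and $J_*$ is a uniform constant, so \eqref{eq:pointwise-increment-sharp} passes to the limit. For \eqref{eq:pointwise-gap}, the entropy $\Ent$ is only lower semicontinuous in $d_1$. I would instead apply the already-extended \eqref{eq:pointwise-increment-sharp} directly to $u\in\mathcal E^1$ and let $B\to\infty$, using $\Ent^B(u)\nearrow\Ent(u)$ for $u\in\mathcal E^1$ (again from \cite{DZ}, with value $+\infty$ allowed). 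This avoids any semicontinuity issue.

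The main obstacle is the step comparing $E'(a)$ with $J(u)-J_*$. It needs the concavity secant argument across $[a,c]$, and therefore the strict inequality $a<c<A$. The rest, namely the ODE monotonicity and the integration, is mechanical.
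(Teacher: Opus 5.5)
Your proposal is correct and follows the paper's proof essentially step by step. The steps coincide: the $w=0$ competitor at parameter $c$ gives the secant bound on $[a,c]$, concavity handles $b\le a$, monotonicity of $b^{n+1}E'(b)$ handles $b\ge a$, and then come integration, $d_1$-approximation of the increment inequality, and finally $B\to\infty$ via $\Ent^B(u)\nearrow\Ent(u)$ from \cite[Corollary~3.11]{DZ}. The only cosmetic difference is that you treat the case $J(u)<J_*$ by monotonicity of $E$ at the end, whereas the paper builds the positive part in directly through $E'(a)\ge0$.
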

\begin{proof}
Let $u\in\cH$, normalize $\sup_Xu=0$, and write
$E(b)=\Ent^b(u)$ for $b>0$.
Taking $w=0$ in \eqref{eq:analytic-quantization} and using the
definition of $L_c$ gives
\begin{align*}
 E(c)
 &\ge-\log\int_Xe^{-cu}\,d\nu_0-c\AM(u)\\
 &\ge-L_c-c\AM(u)\\
 &=cJ(u)-c\int_Xu\,d\nu_0-L_c\\
 &\ge cJ(u)-L_c,
\end{align*}
where the last inequality follows from $u\le0$. On the other hand,
\eqref{eq:entropy-pointwise-bound} gives $E(a)\le aJ(u)$.

Since $E$ is smooth by Lemma~\ref{lem:parameter-second-derivative}
and concave by \eqref{eq:entropy-parameter-concavity},
\[
 E'(a)\ge\frac{E(c)-E(a)}{c-a}\ge J(u)-J_*.
\]
Moreover, \eqref{eq:entropy-parameter-monotonicity} and the
smoothness of $E$ imply $E'(a)\ge0$. Hence
\begin{equation}\label{eq:initial-derivative-gap}
 E'(a)\ge\bigl(J(u)-J_*\bigr)_+.
\end{equation}

If $0<b\le a$, concavity of $E$ gives
\[
 E'(b)\ge E'(a)
 \ge\bigl(J(u)-J_*\bigr)_+.
\]
If $b\ge a$, Lemma~\ref{lem:parameter-second-derivative} gives the
monotonicity of $b^{n+1}E'(b)$, and therefore
\[
 E'(b)
 \ge\left(\frac{a}{b}\right)^{n+1}E'(a)
 \ge\left(\frac{a}{b}\right)^{n+1}
      \bigl(J(u)-J_*\bigr)_+.
\]
Thus, by \eqref{eq:sharp-kernel},
\begin{equation}\label{eq:derivative-gap-kernel}
 E'(b)\ge
 k_a(b)\bigl(J(u)-J_*\bigr)_+,
 \qquad b>0.
\end{equation}
Integrating \eqref{eq:derivative-gap-kernel} over $[\beta,B]$
and using \eqref{eq:sharp-increment-kernel} proves
\eqref{eq:pointwise-increment-sharp} for $u\in\cH$.

Let $u\in\mathcal E^1$ and choose $u_j\in\cH$ with
$d_1(u_j,u)\to0$. For fixed $\beta$ and $B$, the $d_1$-continuity
of $J$ \cite{darvas} and of $\Ent^\beta$ and $\Ent^B$
\cite[Lemma~3.9]{DZ} allows us to pass to the limit in \eqref{eq:pointwise-increment-sharp}.

Finally, letting $B\to\infty$ gives \eqref{eq:pointwise-gap},
since $\Ent^B(u)\nearrow\Ent(u)$ by \cite[Corollary~3.11]{DZ}
and $\Gamma_a(\beta,B)=G_a(\beta)-G_a(B)\to G_a(\beta)$
by \eqref{eq:constants} and \eqref{eq:sharp-increment-kernel}.
\end{proof}

\begin{theorem}[Parameter bounds for geodesic rays]
\label{thm:gap}\label{main:parameter-gain}
Let $u=\{u_t\}$ be a finite-energy geodesic ray from a smooth
potential
on a connected compact K\"ahler manifold of dimension $n\ge1$.
Write $A=\alpha(X,[\omega_0])$ and
$s_\beta(u)=(\cM^\beta)^{\rad}(u)$.
For every $0<\beta<B$,
\begin{align}
 s_B(u)-s_\beta(u)
 &\ge\Gamma_A(\beta,B)J^{\rad}(u),
 \label{eq:main-gap}\\
 \cM^{\rad}(u)-s_\beta(u)
 &\ge G_A(\beta)J^{\rad}(u).
 \label{eq:main-infinite}
\end{align}
For a normal ample test configuration with associated metric $\phi$,
these inequalities read
\begin{equation}
\begin{aligned}
 m_B(\phi)-m_\beta(\phi)
 &\ge\Gamma_A(\beta,B)J^{\NA}(\phi),\\
 \cM^{\NA}(\phi)-m_\beta(\phi)
 &\ge G_A(\beta)J^{\NA}(\phi).
\end{aligned}
\end{equation}
\end{theorem}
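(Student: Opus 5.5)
The plan is to apply Theorem~\ref{thm:analytic-gap} along the ray, divide by $t$, pass to the limit, and only at the very end let $a\uparrow A$. Fix $0<a<c<A$. The Ricci term cancels in differences: $\cM^B(u_t)-\cM^\beta(u_t)=\Ent^B(u_t)-\Ent^\beta(u_t)$, and likewise $\cM(u_t)-\cM^\beta(u_t)=\Ent(u_t)-\Ent^\beta(u_t)$. Each $u_t$ lies in $\mathcal E^1$, so \eqref{eq:pointwise-increment-sharp} gives
\[
 \cM^B(u_t)\ge\cM^\beta(u_t)+\Gamma_a(\beta,B)\bigl(J(u_t)-J_*\bigr)_+ .
\]
For the first inequality, divide by $t$ and take lower limits. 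We have $\liminf(f+g)\ge\liminf f+\lim g$ whenever $g$ has a finite limit. Here $g=\Gamma_a(\beta,B)(J(u_t)-J_*)_+/t$ converges to $\Gamma_a(\beta,B)J^{\rad}(u)$, because $J^{\rad}(u)$ exists, is finite, and is nonnegative by $J\ge0$, and the constant $J_*$ disappears after division by $t$. This gives $s_B(u)\ge s_\beta(u)+\Gamma_a(\beta,B)J^{\rad}(u)$.

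For the second inequality, use \eqref{eq:pointwise-gap} in the same way. Here $\cM(u_t)/t\to\cM^{\rad}(u)\in\R\cup\{+\infty\}$ is a genuine limit. If $\cM^{\rad}(u)=+\infty$ there is nothing to prove, provided $s_\beta(u)<+\infty$. That bound holds because $\Ent^\beta\le\beta J$ by \eqref{eq:entropy-pointwise-bound} and the Ricci term is controlled by \eqref{eq:ricci-energy-bound}, so $s_\beta(u)\le(\beta+C)J^{\rad}(u)+0<\infty$.

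In the finite case, write $\cM^\beta(u_t)\le\cM(u_t)-G_a(\beta)(J(u_t)-J_*)_+$. Taking $\liminf$ of the left side and the limit of the right side gives $s_\beta(u)\le\cM^{\rad}(u)-G_a(\beta)J^{\rad}(u)$. There is one point to check: $s_\beta$ might be $-\infty$. In that case the inequality is trivial, though the bound $\Ent^\beta\ge0$ together with \eqref{eq:ricci-energy-bound} in fact keeps $s_\beta$ finite.

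Finally, let $a\uparrow A$ with $c\in(a,A)$. The slopes do not depend on $a$ or $c$, and $a\mapsto\Gamma_a(\beta,B)=\int_\beta^B\min\{1,(a/b)^{n+1}\}\,db$ and $a\mapsto G_a(\beta)$ are continuous and increasing, as the explicit formula \eqref{eq:constants} shows. Since $J^{\rad}(u)\ge0$, the bounds converge to those with $a=A$. This is the point the paper flags in Section~\ref{sec:alpha-invariant}: uniform integrability is only available for $a<A$, so the limit must be taken after the radial inequality is established.

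For a normal ample test configuration, apply this to $u=u^\Tcal$, which emanates from $0$, and translate using \eqref{eq:slope-bridge}: $s_\beta(u^\Tcal)=m_\beta(\phi)$ and $J^{\rad}=J^{\NA}(\phi)$. The only extra input is $\cM^{\rad}(u^\Tcal)=\cM^{\NA}(\phi)$. This follows either from the Mabuchi slope formula of \cite{BHJ,bhj19}, or from \eqref{eq:entropy-slope}, $e_\beta\nearrow H^{\NA}$, combined with letting $B\to\infty$ in the already proved increment inequality for $m_B$. I would take the second route, since it stays within the non-Archimedean formalism.

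The main obstacle is this last identification and the bookkeeping with $\liminf$ versus extended limits. Everything else follows mechanically from Theorem~\ref{thm:analytic-gap}.
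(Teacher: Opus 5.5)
Your argument is correct. For \eqref{eq:main-gap} it is exactly the paper's proof: apply \eqref{eq:pointwise-increment-sharp} to $u_t$, divide by $t$, take lower limits with $J_*$ fixed, and let $a\uparrow A$ only at the end.

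The differences elsewhere are small.
\begin{itemize}
\item For \eqref{eq:main-infinite} you apply \eqref{eq:pointwise-gap} along the ray, so $B\to\infty$ is taken pointwise inside Theorem~\ref{thm:analytic-gap}. The paper instead uses $\cM^B\le\cM$ and lets $B\to\infty$ in the radial inequality \eqref{eq:main-gap}.
\item Your finiteness check for $s_\beta$ via \eqref{eq:ricci-energy-bound} replaces the paper's appeal to \cite[Lemma~5.3]{DZ}.
\end{itemize}

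For the non-Archimedean form, the paper quotes \cite[Theorem~1.7(2)]{Li} for the identity $\cM^{\rad}(u^\Tcal)=\cM^{\NA}(\phi)$. Your preferred second route does not prove that identity. From $\cM^{\rad}(u^\Tcal)\ge s_B(u^\Tcal)=m_B(\phi)\to\cM^{\NA}(\phi)$ it gives only $\cM^{\rad}\ge\cM^{\NA}$; the reverse inequality is the entropy slope formula itself.

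It does not need to prove the identity, however. Since $m_B-m_\beta=e_B-e_\beta$, letting $B\to\infty$ in the non-Archimedean increment bound with $e_B\nearrow H^{\NA}$ yields $H^{\NA}(\phi)-e_\beta(\phi)\ge G_A(\beta)J^{\NA}(\phi)$. This is exactly the stated inequality $\cM^{\NA}(\phi)-m_\beta(\phi)\ge G_A(\beta)J^{\NA}(\phi)$.

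This is the same device the paper uses for \eqref{eq:valuative-terminal-gap}. It trades Li's slope formula for the limit in \eqref{eq:entropy-slope} from \cite{DZ}. You should therefore phrase it as proving the inequality directly, not as establishing $\cM^{\rad}(u^\Tcal)=\cM^{\NA}(\phi)$.
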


\begin{proof}
By Section~\ref{sec:finite-energy-geometry},
$j:=J^{\rad}(u)$ exists and is finite, and $J(u_t)=O(1+t)$.
We first check that $s_b(u)$ is finite for every $b>0$.
Set $w_t=u_t-\sup_Xu_t\le0$. By \eqref{eq:ordered-d1} and \eqref{eq:sup-mean},
\[
 d_1(0,w_t)=-\AM(w_t)=J(u_t)+O(1)=O(1+t).
\]
Applying \cite[Lemma~5.3]{DZ} to $w_t$ and $0$ yields
$|\AM_{\Ric(\omega_0)}(w_t)|=O(1+t)$.
Using \eqref{eq:Jtheta} and invariance under additive constants,
we obtain $|\cJ_{\Ric(\omega_0)}(u_t)|=O(1+t)$.
Since $0\le\Ent^b(u_t)\le bJ(u_t)$ by
\eqref{eq:entropy-pointwise-bound}, we obtain
$|\cM^b(u_t)|=O(1+t)$, and hence $s_b(u)\in\R$.

Fix $0<\beta<B$ and $0<a<c<A$, and set $J_*=L_c/(c-a)$.
Keeping these parameters fixed as $t\to\infty$, we have
\begin{equation}\label{eq:radial-positive-part-limit}
 q_t:=\frac{(J(u_t)-J_*)_+}{t}
 =\left(\frac{J(u_t)}t-\frac{J_*}t\right)_+
 \longrightarrow j.
\end{equation}
Since $\cJ_{\Ric(\omega_0)}$ is independent of the parameter,
\[
 \cM^B(u_t)-\cM^\beta(u_t)
 =\Ent^B(u_t)-\Ent^\beta(u_t).
\]
Hence \eqref{eq:pointwise-increment-sharp} gives
\[
 \frac{\cM^B(u_t)}t
 \ge\frac{\cM^\beta(u_t)}t+\Gamma_a(\beta,B)q_t.
\]
Taking lower limits and using \eqref{eq:radial-positive-part-limit},
we obtain
\[
 s_B(u)
 \ge\liminf_{t\to\infty}
 \left(\frac{\cM^\beta(u_t)}t+\Gamma_a(\beta,B)q_t\right)
 =s_\beta(u)+\Gamma_a(\beta,B)j.
\]
For each $a<A$, choose $c\in(a,A)$ before letting $t\to\infty$.
The lower bound for $s_B(u)-s_\beta(u)$ is
independent of $c$ and $J_*$. Since
$\Gamma_a(\beta,B)\uparrow\Gamma_A(\beta,B)$ as $a\uparrow A$,
we obtain \eqref{eq:main-gap}. No integrability at $A$ is used.

By \cite[Corollary~3.11]{DZ}, $\Ent^B\le\Ent$, hence
$\cM^B\le\cM$ on $\mathcal E^1$. Thus, for every fixed $B>\beta$,
\[
 \cM^{\rad}(u)\ge s_B(u)
 \ge s_\beta(u)+\Gamma_A(\beta,B)J^{\rad}(u).
\]
Finally, letting $B\to\infty$ and using
$\Gamma_A(\beta,B)\uparrow G_A(\beta)$ proves
\eqref{eq:main-infinite}, also when $\cM^{\rad}(u)=+\infty$.

For a normal ample test configuration $\Tcal$, take
$u=u^\Tcal$ and $\phi=\phi_\Tcal$. By \eqref{eq:slope-bridge}
and \cite[Theorem~1.7(2)]{Li},
\[
 s_\gamma(u)=m_\gamma(\phi),\qquad
 J^{\rad}(u)=J^{\NA}(\phi),\qquad
 \cM^{\rad}(u)=\cM^{\NA}(\phi).
\]
These identities give the stated non-Archimedean inequalities.
\end{proof}

\begin{corollary}[Parameter interpolation]
\label{cor:rescaled-parameter-convexity}
For fixed $u\in\mathcal E^1$, the function
$x\mapsto\Ent^{x^{-1/n}}(u)$ is convex on $(0,\infty)$.
It is strictly convex when $u$ is smooth and nonconstant.
In particular, for $0<a<\beta<B$,
\[
 \Ent^B(u)-\Ent^\beta(u)
 \ge\frac{\beta^{-n}-B^{-n}}{a^{-n}-\beta^{-n}}
       \bigl(\Ent^\beta(u)-\Ent^a(u)\bigr).
\]
\end{corollary}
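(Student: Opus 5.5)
Set $x=b^{-n}$, so $b=x^{-1/n}$, and put $F(x)=E(x^{-1/n})$ with $E(b)=\Ent^b(u)$. We will show that $F$ is convex, first for smooth $u$ and then by approximation for general $u$.

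\emph{Smooth case.} Let $u\in\cH$. By Lemma~\ref{lem:parameter-second-derivative}, $E$ is smooth and $b^{n+1}E'(b)$ is nondecreasing in $b$. The chain rule gives
\[
 F'(x)=E'(b)\,\frac{db}{dx}=-\frac1n\,x^{-1-1/n}E'(b)=-\frac1n\,b^{n+1}E'(b).
\]
As $x$ increases, $b$ decreases. Hence $b^{n+1}E'(b)$ is nonincreasing in $x$, so $F'$ is nondecreasing and $F$ is convex. To be explicit,
\[
 F''(x)=\frac1{n^2}\,b^{2n+1}\bigl((n+1)E'(b)+bE''(b)\bigr)\ge0,
\]
by \eqref{eq:parameter-differential-inequality}.

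\emph{Strict convexity.} Since $F''\ge0$ is continuous, it suffices to show that $F''$ vanishes at no $b$. If $(n+1)E'(b)+bE''(b)=0$ at some $b$, then every inequality in the chain closing the proof of Lemma~\ref{lem:parameter-second-derivative} is an equality. In particular \eqref{eq:second-variation-Q-bound} gives $Q_v(z)+b\|z\|^2_{L^2(d\mu_v)}=0$, so $z=0$. Then \eqref{eq:entropy-second-variation} gives $E''(b)=-bQ_v(w)$, and equality forces $(n+1)E'(b)=bQ_v(w)$. Using $\eta=z-bw=-bw$, we get $Q_v(\eta)=b^2Q_v(w)$, which combined with the chain forces $Q_v(\eta)=(n+1)E'(b)$, i.e.\ equality in \eqref{eq:Q-by-first-variation}.

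Equality there requires $Q_{v_s}(h)=(1-s)^{n-1}Q_v(h)$ for all $s$. Since $\omega_{v_s}=(1-s)\omega_v+s\omega_u$ and $s\omega_u$ is strictly positive for $s>0$, this forces $dh=0$, i.e.\ $\eta$ is constant. But $z=\eta+bw=0$ with $\eta$ constant makes $w$ constant, and then $z=\Delta_v w=0$ gives $\eta=-bw$, consistent. So we need an extra argument here: when $\eta$ is constant, $\omega_v=\omega_u$ up to the constant shift, and \eqref{eq:quantized-optimizer} reads $\omega_u^n=e^{\beta\eta}\omega_0^n$. Normalizing volumes gives $\eta=0$, hence $\omega_u^n=\omega_0^n$. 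By Calabi--Yau uniqueness $u$ is constant, contradicting the assumption. This identification of the equality case is the delicate step; the point is to extract $\eta$ constant and then conclude via the Monge--Amp\`ere equation.

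\emph{Finite-energy case.} For $u\in\mathcal E^1$, choose $u_j\in\cH$ with $d_1(u_j,u)\to0$. By the $d_1$-continuity of $\Ent^b$ \cite[Lemma~3.9]{DZ}, the convex functions $x\mapsto\Ent^{x^{-1/n}}(u_j)$ converge pointwise to $x\mapsto\Ent^{x^{-1/n}}(u)$. A pointwise limit of convex functions is convex.

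\emph{The displayed inequality.} Take $0<a<\beta<B$ and set $x_B=B^{-n}<x_\beta=\beta^{-n}<x_a=a^{-n}$. Convexity of $F$ says its secant slopes are nondecreasing, so
\[
 \frac{F(x_\beta)-F(x_B)}{x_\beta-x_B}\le\frac{F(x_a)-F(x_\beta)}{x_a-x_\beta}.
\]
Since $F(x_\beta)-F(x_B)=\Ent^\beta(u)-\Ent^B(u)$ and $F(x_a)-F(x_\beta)=\Ent^a(u)-\Ent^\beta(u)$, multiplying by $-1$ and rearranging gives exactly the stated bound
\[
 \Ent^B(u)-\Ent^\beta(u)\ge\frac{\beta^{-n}-B^{-n}}{a^{-n}-\beta^{-n}}\bigl(\Ent^\beta(u)-\Ent^a(u)\bigr).
\]
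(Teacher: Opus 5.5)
You have the same structure as the paper: the chain-rule computation of $F''$, the $d_1$-approximation step and the secant-slope argument match it and are fine. The problem is in the strict-convexity step. You correctly extract $z=0$ from equality in the chain $0\le -bE''(b)\le Q_v(\eta)\le (n+1)E'(b)$. You then obtain ``$\eta$ constant'' from equality in \eqref{eq:Q-by-first-variation}, asserting that $Q_{v_s}(h)=(1-s)^{n-1}Q_v(h)$ for all $s$ forces $dh=0$. For $n\ge2$ and $u\in\cH$ this is valid. For $n=1$ it is false. There $\omega_{v_s}^{n-1}=1$, so $Q_{v_s}(h)=V^{-1}\int_X dh\wedge d^ch$ is independent of $s$, and the inequality $Q_{v_s}(h)\ge(1-s)^{n-1}Q_v(h)$ is an identity. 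Equality in \eqref{eq:Q-by-first-variation} therefore holds for every $u$ and carries no information. Your deduction that $w$ is constant uses ``$\eta$ constant'', so the argument does not close in dimension one. The repair is to use $z=0$ directly, as the paper does. Since $\Delta_vw=z=0$ on the compact connected $X$, $w$ is constant, so $\eta=z-bw$ is constant. Integrating \eqref{eq:quantized-optimizer} then gives $\eta=0$, and Calabi--Yau uniqueness gives $u$ constant. (Also, equality gives $(n+1)E'(b)=b^2Q_v(w)$, not $bQ_v(w)$, though this is not needed.)

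Second, the statement claims strict convexity for every smooth nonconstant $u\in\mathcal E^1$, i.e.\ for smooth $u$ with $\omega_u$ possibly only semipositive. You treat only $u\in\cH$, and your $dh=0$ argument uses $\omega_u>0$ essentially. If $\omega_u$ degenerates, the mixed terms $\int_X dh\wedge d^ch\wedge\omega_v^{n-1-k}\wedge\omega_u^k$ can vanish without $dh=0$. The paper covers this case by noting that the computations of Lemma~\ref{lem:parameter-second-derivative} survive. The optimizer $v=u^b$ is still smooth with $\omega_v>0$, one has $\omega_{v_s}\ge(1-s)\omega_v>0$ for $0\le s<1$, and the endpoint identities follow by continuity. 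The $z=0$ route then needs no positivity of $\omega_u$. With these two changes your proof coincides with the paper's.
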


\begin{proof}
First suppose that $u\in\mathcal E^1$ is smooth, and use the
notation from the proof of Lemma~\ref{lem:parameter-second-derivative}.
The optimizer $v=u^b$ is smooth with $\omega_v>0$.
The derivative calculations remain valid when $\omega_u$ is only
semipositive: along $v_s=(1-s)v+su$, we have
$\omega_{v_s}\ge(1-s)\omega_v>0$ for $0\le s<1$,
and the endpoint identities follow by continuity. Write
\[
 E(b)=\Ent^b(u),\qquad f(x)=E(x^{-1/n}).
\]
For $b=x^{-1/n}$, the chain rule gives
\[
 f''(x)
 =\frac{b^{2n+1}}{n^2}
  \bigl((n+1)E'(b)+bE''(b)\bigr)\ge0
\]
by \eqref{eq:parameter-differential-inequality}. Hence $f$ is
convex.

If $f''(x)=0$, then
\eqref{eq:second-variation-Q-bound} and
\eqref{eq:Q-by-first-variation} give
\[
 0\ge Q_v(\eta)+bE''(b)
   =Q_v(z)+b\|z\|_{L^2(d\mu_v)}^2\ge0.
\]
Thus $z=0$. Since $z=\Delta_vw$, $w$ is constant, and hence
$\eta=z-bw$ is constant. By \eqref{eq:quantized-optimizer},
\[
 \omega_v^n=e^{b\eta}\omega_0^n.
\]
Integrating over $X$ gives $e^{b\eta}=1$, hence $\eta=0$.
Therefore $\omega_v^n=\omega_0^n$. Uniqueness for the complex
Monge--Amp\`ere equation then implies that $v$, and hence $u$, is
constant. Therefore $f$ is strictly convex whenever $u$ is
nonconstant.

For general $u\in\mathcal E^1$, choose $u_j\in\cH$ with
$d_1(u_j,u)\to0$. By the fixed-parameter continuity of
$\Ent^\beta$ \cite[Lemma~3.9]{DZ},
\[
 \Ent^{x^{-1/n}}(u_j)\longrightarrow
 \Ent^{x^{-1/n}}(u),\qquad x>0.
\]
Passing to the limit in the convexity inequality proves convexity
for $u\in\mathcal E^1$.

Finally, if $0<a<\beta<B$, then
$B^{-n}<\beta^{-n}<a^{-n}$. The secant-slope inequality for $f$
gives
\[
 \frac{\Ent^\beta(u)-\Ent^B(u)}
      {\beta^{-n}-B^{-n}}
 \le
 \frac{\Ent^a(u)-\Ent^\beta(u)}
      {a^{-n}-\beta^{-n}},
\]
which is equivalent to the stated interpolation inequality.
\end{proof}

\subsection{Valuative bounds for polarized classes}

For the rest of this section, let $(X,L)$ be a connected smooth
polarized manifold with $[\omega_0]=c_1(L)$. We use the base ideal
of the sections of maximal weight to bound the quantized entropy
of each test configuration.

\begin{lemma}[The threshold $\tau_\phi$]
\label{lem:top-weight-threshold}
Let $(X,L)$ be a connected smooth polarized manifold of dimension
$n\ge1$, and let $\phi\ne0$ be the non-Archimedean metric of a
normal ample test configuration, normalized by $\sup\phi=0$.
Define
\[
 \tau_\phi
 =\inf_{\substack{v\in X^{\rm div}\\\phi(v)<0}}
       \frac{A_X(v)}{-\phi(v)},
\]
where $X^{\rm div}$ denotes the set of positive rational multiples of divisorial valuations.
Write $\phi$ as the rational Fubini--Study function
\[
 \phi=\frac1m\max_i\{\log|s_i|+\lambda_i\},
 \qquad s_i\in H^0(X,mL),\qquad
 \lambda_i\le0,\quad\max_i\lambda_i=0,
\]
where the sections have no common zero. Let $\mathfrak b_0$ be the base ideal generated by the sections
$s_i$ with $\lambda_i=0$. Then
\begin{equation}\label{eq:valuative-top-ideal}
 0<\tau_\phi
   =m\operatorname{lct}(X;\mathfrak b_0)<\infty,
 \qquad \tau_\phi\ge\alpha(X,L).
\end{equation}
Consequently,
$m\operatorname{lct}(X;\mathfrak b_0)$ is independent of the
choice of sections and weights defining $\phi$.
\end{lemma}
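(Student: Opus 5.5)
The plan is to compute $\phi$ on divisorial valuations directly from the Fubini--Study expression, and then to reduce the infimum defining $\tau_\phi$ to the valuative description of the log canonical threshold. For a divisorial valuation $v$ (extended by homogeneity to $X^{\rm div}$), the function $\log|s|$ evaluated at $v$ equals $-v(s)$. Hence
\[
 \phi(v)=\frac1m\max_i\{-v(s_i)+\lambda_i\}.
\]

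The first step is to understand when $\phi(v)<0$ and how $-\phi(v)$ relates to the ideal $\mathfrak b_0$. The indices with $\lambda_i=0$ contribute $-v(\mathfrak b_0)/m$. The remaining indices have $\lambda_i<0$, and since the $s_i$ have no common zero, they contribute at most $\max_i\lambda_i/m<0$ up to their own vanishing. This gives two inequalities:
\[
 -\phi(v)\le v(\mathfrak b_0)/m, \qquad -\phi(v)\ge \min\{v(\mathfrak b_0),\,\lambda_*\}/m,
\]
where $\lambda_*=\min_{\lambda_i<0}(-\lambda_i)>0$. In particular $\phi(v)<0$ forces $v(\mathfrak b_0)>0$.

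The second step is the upper bound. From $-\phi(v)\le v(\mathfrak b_0)/m$ we get, for every such $v$,
\[
 A_X(v)/(-\phi(v))\ge mA_X(v)/v(\mathfrak b_0)\ge m\operatorname{lct}(\mathfrak b_0)
\]
by the valuative formula $\operatorname{lct}(\mathfrak b)=\inf_v A_X(v)/v(\mathfrak b)$. The reverse inequality uses homogeneity. Both $A_X$ and $\phi$ are homogeneous of degree one along $tv$, except that the max involves the constants $\lambda_i$. Replacing $v$ by $tv$ with $t\to\infty$ makes the $\mathfrak b_0$ term dominate, so $-\phi(tv)=t\,v(\mathfrak b_0)/m$ once $t\,v(\mathfrak b_0)\le$ the competing terms. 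For $t\to0^+$, instead, the $\lambda$-terms become negligible and the $\mathfrak b_0$ term dominates exactly. Concretely, for small $t$ the maximum is attained at $\lambda_i=0$, giving $-\phi(tv)=t v(\mathfrak b_0)/m$ and hence a ratio equal to $mA_X(v)/v(\mathfrak b_0)$. Taking the infimum over $v$ with $v(\mathfrak b_0)>0$ yields $\tau_\phi=m\operatorname{lct}(X;\mathfrak b_0)$.

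The remaining assertions follow from this identity. Positivity and finiteness hold because $\mathfrak b_0$ is a nonzero proper ideal: it is nonzero since some $\lambda_i=0$, and proper since $\phi\ne0$. The point to check is that $\phi\neq0$ with $\sup\phi=0$ forces $\mathfrak b_0\ne\mathcal O_X$; otherwise $\phi\ge0$ everywhere, and the normalization would give $\phi\equiv0$ on divisorial valuations, hence $\phi=0$. Next, $\tau_\phi\ge\alpha(X,L)$. Choose a nonzero section $s\in\mathfrak b_0\cdot\mathcal O(mL)$, namely one of the top-weight $s_i$, and set $\Delta=m^{-1}\operatorname{div}(s)\sim_\Q L$. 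Then
\[
 m\operatorname{lct}(\mathfrak b_0)\ge m\operatorname{lct}(\operatorname{div}s)=\operatorname{lct}(X;\Delta)\ge A,
\]
by monotonicity of lct in the ideal and Demailly's identification in Section~\ref{sec:alpha-invariant}. Finally, independence of the choice of presentation is immediate, since $\tau_\phi$ is defined from $\phi$ alone.

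The main obstacle is justifying the small-$t$ homogeneity step carefully. One needs that $X^{\rm div}$ is closed under positive rational scaling, which holds by definition, and that the lct is computed by divisorial valuations with $v(\mathfrak b_0)>0$, which is standard on a smooth variety via a log resolution.
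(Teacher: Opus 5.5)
Your proof is correct and follows the paper's argument. You use the valuative evaluation $-\phi(v)=\frac1m\min_i\{v(s_i)-\lambda_i\}\le\frac1m v(\mathfrak b_0)$ for $\tau_\phi\ge m\operatorname{lct}(X;\mathfrak b_0)$, rescaling by a small rational $t$ for the reverse inequality, and a zero-weight section with monotonicity of lct and Demailly's formula for $\tau_\phi\ge\alpha(X,L)$. Taking the infimum over all $v$ with $v(\mathfrak b_0)>0$, instead of fixing an lct-computing divisor as the paper does, is an inessential variation.

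One expository fix: delete the sentence about $t\to\infty$. It is wrong, because some negative-weight $s_i$ does not vanish at the center of $v$, so $-\phi(tv)$ stays bounded. For $0<t<\lambda_*/v(\mathfrak b_0)$, the negative-weight terms are strictly dominated, not ``negligible.''
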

\begin{proof}
The ideal $\mathfrak b_0$ is nonzero and proper: if it were the unit
ideal, the normalized Fubini--Study potential would vanish identically.
Thus $0<\operatorname{lct}(X;\mathfrak b_0)<\infty$.
For $v\in X^{\rm div}$, let $v(s_i)$ denote the vanishing order of
$s_i$ along $v$. Evaluating the Fubini--Study function at $v$ gives
\begin{equation}\label{eq:FS-valuative-formula}
 -\phi(v)
 =\frac1m\min_i\{v(s_i)-\lambda_i\}
 \le\frac1m v(\mathfrak b_0).
\end{equation}
Recall that
\begin{equation}\label{eq:lct-valuative}
 \operatorname{lct}(X;\mathfrak b_0)
 =
 \inf_{\substack{v\in X^{\rm div}\\v(\mathfrak b_0)>0}}
 \frac{A_X(v)}{v(\mathfrak b_0)}.
\end{equation}
For $v\in X^{\rm div}$ with $\phi(v)<0$,
\eqref{eq:FS-valuative-formula} and \eqref{eq:lct-valuative} give
\[
 \frac{A_X(v)}{-\phi(v)}
 \ge
 m\frac{A_X(v)}{v(\mathfrak b_0)}
 \ge
 m\operatorname{lct}(X;\mathfrak b_0).
\]
Taking the infimum proves
$\tau_\phi\ge m\operatorname{lct}(X;\mathfrak b_0)$.

For the reverse inequality, let $\mu:Y\to X$ be a log resolution of $\mathfrak b_0$.
Choose a prime divisor $E\subset Y$ computing the log canonical
threshold, and set $v_*=\operatorname{ord}_E$. Thus
\[
 \operatorname{lct}(X;\mathfrak b_0)
 =\frac{A_X(v_*)}{v_*(\mathfrak b_0)}.
\]
Choose $r\in\Q_{>0}$ small enough that
\[
 r\bigl(v_*(\mathfrak b_0)-v_*(s_i)\bigr)\le-\lambda_i
 \qquad(\lambda_i<0).
\]
Then \eqref{eq:FS-valuative-formula} gives
\[
 -\phi(rv_*)=\frac r m v_*(\mathfrak b_0),\qquad
 A_X(rv_*)=rA_X(v_*).
\]
Consequently,
\[
 \tau_\phi
 \le\frac{A_X(rv_*)}{-\phi(rv_*)}
 =m\frac{A_X(v_*)}{v_*(\mathfrak b_0)}
 =m\operatorname{lct}(X;\mathfrak b_0).
\]

Choose a nonzero zero-weight section $s$ with divisor
$D=(s)\in|mL|$. Since $(s)\subset\mathfrak b_0$, monotonicity of
the log canonical threshold gives
\[
 \operatorname{lct}(X;\mathfrak b_0)
 \ge\operatorname{lct}(X;D).
\]
Since $D/m\sim_\Q L$, Demailly's formula
\cite[Theorem~A.3]{DemaillyAlpha} gives
\[
 m\operatorname{lct}(X;D)\ge\alpha(X,L).
\]
Therefore
\[
 \tau_\phi
 =m\operatorname{lct}(X;\mathfrak b_0)
 \ge\alpha(X,L).
\]
This proves \eqref{eq:valuative-top-ideal}; independence of the
choice of sections and weights follows from the definition of $\tau_\phi$.
\end{proof}

\begin{proposition}[Valuative parameter bounds]
\label{prop:valuative-parameter-refinement}
In the setting of Lemma~\ref{lem:top-weight-threshold}, let
$u=\{u_t\}$ be the associated weak geodesic ray and put
$J_\phi=J^{\NA}(\phi)>0$. For every $0<\beta\le\tau_\phi$,
\begin{equation}\label{eq:valuative-initial-segment}
 \lim_{t\to\infty}\frac{\Ent^\beta(u_t)}t
 =e_\beta(\phi)=\beta J_\phi.
\end{equation}
For every $0<\beta<B$,
\begin{align}
 e_B(\phi)-e_\beta(\phi)
 &\ge\Gamma_{\tau_\phi}(\beta,B)J_\phi,
 \label{eq:valuative-finite-increment}\\
 H^{\NA}(\phi)-e_\beta(\phi)
 &\ge G_{\tau_\phi}(\beta)J_\phi.
 \label{eq:valuative-terminal-gap}
\end{align}
\end{proposition}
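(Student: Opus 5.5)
Once the initial segment \eqref{eq:valuative-initial-segment} is known, the two increment bounds follow the proof of Theorem~\ref{thm:gap}. The difference is that the initial derivative bound must now come from the ray itself, not from uniform $\alpha$-integrability.

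\emph{Step 1: the initial segment.} For $\beta\le\tau_\phi$ we want $e_\beta(\phi)=\beta J_\phi$. The upper bound $e_\beta\le\beta J^{\NA}$ holds by Lemma~\ref{lem:parameter-concavity}. For the lower bound with $1<\beta<\tau_\phi$, use the envelope formula \eqref{eq:quantized-NA-entropy}. By the definition of $\tau_\phi$, $A_X\ge-\tau_\phi\phi$ on $X^{\rm div}$ where $\phi<0$, and trivially where $\phi=0$. Hence $\phi+\beta^{-1}A_X\ge(1-\tau_\phi/\beta)\phi$, and since $\beta^{-1}A_X\ge0$ also $\phi+\beta^{-1}A_X\ge\phi$. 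By density of $X^{\rm div}$ and continuity of $\phi$, this extends where needed. The function $0$ is psh, and $0\le\phi+\beta^{-1}A_X$ holds exactly when $\beta\le\tau_\phi$ (using $\phi\le0$). Therefore $P(\phi+\beta^{-1}A_X)\ge0$, and monotonicity of $\ENA$ gives $e_\beta\ge\beta(\ENA(0)-\ENA(\phi))=\beta J_\phi$.

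For $\beta\le1$, or to obtain an actual limit rather than a $\liminf$, argue on the analytic side. Take $w=0$ in \eqref{eq:analytic-quantization} and use $\AM(u_t)=-tJ_\phi$ up to normalization. This shows it suffices to have $\int_X e^{-\beta u_t}d\nu_0\le e^{o(t)}$. That estimate is the analytic translation of $\beta\le\tau_\phi$: for Fubini--Study rays, $u_t\approx\frac1m\log\sum_i e^{t\lambda_i}|s_i|^2$, and the integral is governed by the zero-weight sections, so it is bounded by $\int(\sum_{\lambda_i=0}|s_i|^2)^{-\beta/m}$. This is finite for $\beta<m\,\mathrm{lct}(\mathfrak b_0)=\tau_\phi$ by Lemma~\ref{lem:top-weight-threshold}. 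Combined with the upper bound, this gives the limit in \eqref{eq:valuative-initial-segment}. The endpoint $\beta=\tau_\phi$ then follows by continuity of the concave function $e_\beta$.

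\emph{Step 2: the increments.} We cannot reuse Theorem~\ref{thm:analytic-gap} directly, because its constant $J_*$ came from uniform integrability. Instead, fix $a<c<\tau_\phi$ and work with $E_t(b)=\Ent^b(u_t)$ for smooth approximants of the ray. Step~1 gives $E_t(c)\ge cJ(u_t)-o(t)$, and \eqref{eq:entropy-pointwise-bound} gives $E_t(a)\le aJ(u_t)$. Concavity then yields $E_t'(a)\ge J(u_t)-o(t)$. Lemma~\ref{lem:parameter-second-derivative} gives $E_t'(b)\ge k_a(b)\,(J(u_t)-o(t))_+$. Integrating over $[\beta,B]$, dividing by $t$, and taking $\liminf$ gives $e_B-e_\beta\ge\Gamma_a(\beta,B)J_\phi$. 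Letting $a\uparrow\tau_\phi$ yields \eqref{eq:valuative-finite-increment}. Letting $B\to\infty$ and using $e_B\nearrow H^{\NA}$ from \eqref{eq:entropy-slope} yields \eqref{eq:valuative-terminal-gap}.

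\emph{Main obstacle.} The delicate point is the $e^{o(t)}$ integrability bound along the weak geodesic ray, which is only $C^{1,1}$ and not exactly Fubini--Study. I would first compare $u_t$ with the explicit Fubini--Study ray up to $O(1)$, using the Phong--Sturm construction. The threshold computation is then reduced to Lemma~\ref{lem:top-weight-threshold} via the $\lambda_i<0$ terms, which decay like $e^{t\lambda_i}$. If this comparison becomes cumbersome, the fallback is to use the non-Archimedean envelope argument for $\beta>1$ together with concavity of $e_\beta$ and $e_0=0$. Indeed, $e_\beta\le\beta J_\phi$ with equality at some $\beta_1>1$ forces equality on $(0,\beta_1]$, because a concave function through $0$ lying below a linear one and touching it at $\beta_1$ must agree with it on $[0,\beta_1]$. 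This fallback requires $\tau_\phi>1$; otherwise the analytic route is needed.
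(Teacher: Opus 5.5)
Your proof is correct, but Step~1 rests on a different key input from the paper's. The paper does not use the top-weight ideal there. Instead it applies the valuative slope formula of Darvas--Zhang (their Proposition~8.1 and Corollary~8.2, with the zero ray as comparison ray):
\[
 \liminf_{t\to\infty}\Bigl(-\frac1t\log\int_Xe^{-bu_t}\,d\nu_0\Bigr)
 =\inf_{v\in X^{\rm div}}\{A_X(v)+b\phi(v)\}.
\]
For $b\le\tau_\phi$ the right-hand side is $0$ directly from the definition of $\tau_\phi$ (use $v\mapsto rv$, $r\downarrow0$). The $w=0$ sandwich then gives the limit on all of $(0,\tau_\phi]$, endpoint included, with no restriction $\beta>1$.

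You replace that formula by a uniform bound $\sup_t\int_Xe^{-\beta u_t}\,d\nu_0<\infty$ for $\beta<\tau_\phi$. You get it from the comparison $u_t\ge\frac1m\log\sum_{\lambda_i=0}|s_i|^2-C$ together with $\tau_\phi=m\operatorname{lct}(X;\mathfrak b_0)$ from Lemma~\ref{lem:top-weight-threshold}. This is more elementary and gives a stronger, uniform estimate, at the cost of reaching the endpoint only by continuity. The comparison you flag as the main obstacle does hold. For a weight presentation coming from an equivariant embedding by sections of a multiple of $\mathcal L$, the pulled-back Fubini--Study metric is a smooth psh metric on the total space. Minus a constant, it is therefore a competitor in the Perron envelope defining the ray. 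Since $\tau_\phi$ is independent of the presentation, you may use this one.

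A small remark: you do not need the analytic route to turn the $\liminf$ into a limit. Once $e_\beta=\beta J_\phi$ is known, the bound $\Ent^\beta(u_t)\le\beta J(u_t)$ already forces convergence. The analytic route is needed only because $\tau_\phi$ may be $\le1$, where the envelope formula is unavailable.

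In Step~2 the paper uses the four-point secant inequality of Corollary~\ref{cor:rescaled-parameter-convexity} for $a<c<\tau_\phi\le\beta<B$, followed by a case split. Your kernel argument, taken from Theorem~\ref{thm:analytic-gap}, handles all $0<\beta<B$ at once. Both are forms of the same monotonicity of $b^{n+1}E'(b)$. Lemma~\ref{lem:parameter-second-derivative} requires smooth potentials, so state the estimate first for $w\in\cH$:
\[
 \Ent^B(w)-\Ent^\beta(w)\ge\Gamma_a(\beta,B)\bigl(\Ent^c(w)-\Ent^a(w)\bigr)_+/(c-a).
\]
Extend it to $\mathcal E^1$ by $d_1$-continuity, and only then insert $w=u_t$ and Step~1, rather than applying Step~1 to approximants.
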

\begin{proof}
For $b>0$, set
\[
 \ell_b(t):=-\frac1t\log\int_Xe^{-bu_t}\,d\nu_0.
\]
Applying \cite[Proposition~8.1 and Corollary~8.2]{DZ} with the zero
ray as the comparison ray gives
\begin{equation}\label{eq:valuative-logarithmic-slope}
 \liminf_{t\to\infty}\ell_b(t)
 =\inf_{v\in X^{\rm div}}
   \{A_X(v)+b\phi(v)\}.
\end{equation}

Since $\sup\phi=0$, we have $\phi(v)\le0$. If
$0<b\le\tau_\phi$, then
\[
 A_X(v)+b\phi(v)\ge0,\qquad v\in X^{\rm div}.
\]
For any fixed $v\in X^{\rm div}$ and $r\in\Q_{>0}$,
\[
 A_X(rv)=rA_X(v)\longrightarrow0,
 \qquad
 \phi(rv)\longrightarrow0
 \qquad(r\downarrow0),
\]
where the second limit follows from
\eqref{eq:FS-valuative-formula}. Hence
\[
 \inf_{v\in X^{\rm div}}\{A_X(v)+b\phi(v)\}=0.
\]

Taking $w=0$ in \eqref{eq:analytic-quantization} and using
\eqref{eq:entropy-pointwise-bound}, we obtain
\[
 \ell_b(t)-b\frac{\AM(u_t)}t
 \le\frac{\Ent^b(u_t)}t
 \le b\frac{J(u_t)}t.
\]
By \eqref{eq:slope-bridge},
\[
 \frac{\AM(u_t)}t\longrightarrow\ENA(\phi)=-J_\phi,\qquad
 \frac{J(u_t)}t\longrightarrow J_\phi.
\]
Together with \eqref{eq:valuative-logarithmic-slope}, this gives
\[
 bJ_\phi
 \le\liminf_{t\to\infty}\frac{\Ent^b(u_t)}t
 \le\limsup_{t\to\infty}\frac{\Ent^b(u_t)}t
 \le bJ_\phi.
\]
Thus
\eqref{eq:valuative-initial-segment} holds for
$0<b\le\tau_\phi$.

Assume $\tau_\phi\le\beta<B$ and fix
$0<a<c<\tau_\phi$. Since
\[
 B^{-n}<\beta^{-n}<c^{-n}<a^{-n},
\]
Corollary~\ref{cor:rescaled-parameter-convexity} gives
\begin{equation}\label{eq:valuative-secant-comparison}
 \Ent^B(w)-\Ent^\beta(w)
 \ge
 \frac{\beta^{-n}-B^{-n}}{a^{-n}-c^{-n}}
 \bigl(\Ent^c(w)-\Ent^a(w)\bigr),
 \qquad w\in\mathcal E^1.
\end{equation}
By \eqref{eq:valuative-initial-segment},
\begin{equation}\label{eq:valuative-initial-difference-limit}
 \frac{\Ent^c(u_t)-\Ent^a(u_t)}t
 \longrightarrow(c-a)J_\phi.
\end{equation}
Applying \eqref{eq:valuative-secant-comparison} to $u_t$ gives
\[
 \frac{\Ent^B(u_t)}t
 \ge
 \frac{\Ent^\beta(u_t)}t
 +\frac{\beta^{-n}-B^{-n}}{a^{-n}-c^{-n}}
  \frac{\Ent^c(u_t)-\Ent^a(u_t)}t.
\]
Taking lower limits and using
\eqref{eq:valuative-initial-difference-limit}, we obtain
\[
 e_B(\phi)-e_\beta(\phi)
 \ge
 (\beta^{-n}-B^{-n})
 \frac{c-a}{a^{-n}-c^{-n}}J_\phi.
\]
This holds for every $0<a<c<\tau_\phi$. By the mean value theorem,
\[
 \frac{c-a}{a^{-n}-c^{-n}}
 =\frac{\xi^{n+1}}n,\qquad \xi\in(a,c),
\]
and letting $a,c\uparrow\tau_\phi$ gives
\[
 e_B(\phi)-e_\beta(\phi)
 \ge
 \frac{\tau_\phi^{n+1}}n
 (\beta^{-n}-B^{-n})J_\phi
 =
 \Gamma_{\tau_\phi}(\beta,B)J_\phi.
\]

If $B\le\tau_\phi$, then
\eqref{eq:valuative-initial-segment} gives
\[
 e_B(\phi)-e_\beta(\phi)
 =(B-\beta)J_\phi
 =\Gamma_{\tau_\phi}(\beta,B)J_\phi.
\]
If $\beta<\tau_\phi<B$, then
\begin{align*}
 e_B(\phi)-e_\beta(\phi)
 &=e_B(\phi)-e_{\tau_\phi}(\phi)
   +(\tau_\phi-\beta)J_\phi\\
 &\ge
 \bigl(\Gamma_{\tau_\phi}(\tau_\phi,B)
       +\tau_\phi-\beta\bigr)J_\phi\\
 &=\Gamma_{\tau_\phi}(\beta,B)J_\phi.
\end{align*}
This proves \eqref{eq:valuative-finite-increment}.

Finally, \eqref{eq:entropy-slope} gives
$e_B(\phi)\uparrow H^{\NA}(\phi)$ as $B\to\infty$.
Since $\Gamma_{\tau_\phi}(\beta,B)\uparrow G_{\tau_\phi}(\beta)$,
letting $B\to\infty$ in \eqref{eq:valuative-finite-increment}
proves \eqref{eq:valuative-terminal-gap}.
\end{proof}

Since $\tau_\phi\ge\alpha(X,L)$,
Proposition~\ref{prop:valuative-parameter-refinement} improves
Theorem~\ref{thm:gap}. The coefficient in
\eqref{eq:valuative-terminal-gap} is strictly larger when
$\tau_\phi>\alpha(X,L)$. We do not prove that $\tau_\phi$ is the
largest endpoint of the linear segment $\beta\mapsto e_\beta(\phi)$,
or that the bounds are optimal for each $\phi$.

\section{Sharpness of the parameter bounds}\label{sec:sharp-gap}

We apply the known formula for the $\alpha$-invariant of toric
polarizations and use divisors to prove optimality in Theorem~\ref{main:sharp-gap}.
For the toric geometry used below, see \cite{CLS}; for toric
K\"ahler metrics, see
\cite{abreu,guillemin,donaldson02,donaldson09,zhouzhu}.

\subsection{The \texorpdfstring{$\alpha$}{alpha}-invariant of a toric polarization}

For a smooth projective toric manifold, the $\alpha$-invariant is
determined by the widths of its moment polytope. The formula below
is a restatement of Blum--Jonsson~\cite[Corollary~7.16]{BlumJonsson}.

\begin{proposition}[Toric polarizations]\label{prop:toric-roof-gap}
Let $(X,L)$ be a smooth polarized toric manifold; the polarization
may be an ample $\mathbf Q$-line bundle. Let $P$ be its rational moment polytope,
and let $v_\rho$ be the primitive generators of the fan rays. Set
\[
 D_L=\max_\rho\{h_P(v_\rho)+h_P(-v_\rho)\},
 \qquad h_P(v)=\max_{p\in P}\langle p,v\rangle.
\]
Then $\alpha(X,L)=D_L^{-1}$. Consequently, the inequalities of
Theorem~\ref{thm:gap} hold with $A=D_L^{-1}$. In particular, for
every normal ample test configuration and $D_L^{-1}\le a<b$,
\begin{equation}\label{eq:toric-roof-gap}
 \begin{split}
 m_b(\phi)-m_a(\phi)
 &\ge\frac{a^{-n}-b^{-n}}{nD_L^{n+1}}J^{\NA}(\phi),\\
 \cM^{\NA}(\phi)-m_a(\phi)
 &\ge\frac{1}{nD_L^{n+1}a^n}J^{\NA}(\phi).
 \end{split}
\end{equation}
\end{proposition}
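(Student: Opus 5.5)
The proof has three steps: identify $\alpha(X,L)$ via Blum--Jonsson, apply Theorem~\ref{thm:gap} with $A=D_L^{-1}$, and evaluate the gap function in the regime $a\ge A$.

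\emph{Step 1: the $\alpha$-invariant.} We take the identity $\alpha(X,L)=D_L^{-1}$ from Blum--Jonsson~\cite[Corollary~7.16]{BlumJonsson}. Their statement is phrased in terms of the minimal lattice width of $P$, or equivalently $\min_\rho 1/(h_P(v_\rho)+h_P(-v_\rho))$ over primitive ray generators. The only work is to check that their normalization matches ours, since both sides scale the same way under tensor powers, and that it passes to ample $\mathbf Q$-line bundles. By Demailly's identification (Section~\ref{sec:alpha-invariant}), $\alpha(X,L)$ is the infimum of $\operatorname{lct}(X;\Delta)$ over effective $\Delta\sim_\Q L$. Blum--Jonsson compute this infimum for toric pairs as the inverse of the maximal width of $P$ in the directions $v_\rho$. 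Both $\alpha$ and $D_L^{-1}$ are homogeneous of degree $-1$ in $L$, so replacing $L$ by $kL$ with $kL$ integral rescales both sides consistently and handles $\mathbf Q$-line bundles. Confirming this matching of normalizations (widths along ray generators rather than arbitrary lattice directions) is the only place needing care, and I expect it to be the main obstacle, though a bibliographic one.

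\emph{Step 2: apply Theorem~\ref{thm:gap}.} Substituting $A=D_L^{-1}$ into Theorem~\ref{thm:gap}, in its non-Archimedean form for normal ample test configurations, gives
\[
m_b-m_a\ge\Gamma_A(a,b)J^{\NA},\qquad \cM^{\NA}-m_a\ge G_A(a)J^{\NA}.
\]

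\emph{Step 3: evaluate the constants.} By \eqref{eq:constants}, for $A\le a<b$ we have $G_A(x)=A^{n+1}/(nx^n)$ at both $x=a$ and $x=b$. Hence
\[
\Gamma_A(a,b)=\frac{A^{n+1}}{n}\bigl(a^{-n}-b^{-n}\bigr)=\frac{a^{-n}-b^{-n}}{nD_L^{n+1}},
\qquad
G_A(a)=\frac{1}{nD_L^{n+1}a^n}.
\]
These are exactly the two coefficients in \eqref{eq:toric-roof-gap}, which completes the proof.
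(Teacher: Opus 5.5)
Your proposal is correct and follows the paper's route: cite Blum--Jonsson~\cite[Corollary~7.16]{BlumJonsson}, pass to $\mathbf Q$-line bundles via $D_{qL}=qD_L$ and $\alpha(X,qL)=q^{-1}\alpha(X,L)$, and substitute into Theorem~\ref{thm:gap}; your Step~3 only makes explicit the evaluation $G_A(x)=A^{n+1}/(nx^n)$ for $x\ge A$. The normalization check you defer takes one line in the paper: write $L=\sum_\rho a_\rho D_\rho$ and $P=\{p:\langle p,v_\rho\rangle\ge-a_\rho\}$; ampleness gives $a_\rho=h_P(-v_\rho)$, so Blum--Jonsson's $\max_{p,\rho}(\langle p,v_\rho\rangle+a_\rho)$ equals $D_L$. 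Note that $D_L$ is the maximal width along the ray generators, not the minimal lattice width of $P$ as the opening of your Step~1 says, although the formula you display there is the right one.
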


\begin{proof}
For integral $L$, choose a torus-invariant divisor
$\sum_\rho a_\rho D_\rho$ representing $L$. With the convention
$P=\{p:\langle p,v_\rho\rangle\ge-a_\rho\}$, ampleness gives
$a_\rho=h_P(-v_\rho)$. By
Blum--Jonsson~\cite[Corollary~7.16]{BlumJonsson},
\[
 \alpha(X,L)^{-1}
 =\max_{\substack{p\in\operatorname{Vert}(P)\\\rho}}
       \bigl(\langle p,v_\rho\rangle+a_\rho\bigr)
 =D_L.
\]
For rational $L$, apply the integral case to $qL$, with $qL$ integral,
and use $D_{qL}=qD_L$ and
$\alpha(X,qL)=q^{-1}\alpha(X,L)$.
Substitution in Theorem~\ref{thm:gap} gives
\eqref{eq:toric-roof-gap}.
\end{proof}

\subsection{Optimality of the decay order}
\label{subsec:dim-obstruction}

\begin{proposition}[Sharpness on each polarization]
\label{prop:dimension-obstruction}
Let $(X,L)$ be a connected smooth polarized manifold of dimension
$n\ge1$, and choose an integer $k\ge1$ such that $kL$ is very ample.
There exists a normal ample test configuration $\Tcal$ for $(X,L)$
such that
\begin{equation}\label{eq:dnc-gap-ratio}
 \frac{\cM^{\NA}(\phi_\Tcal)-(\cM^\beta)^{\rad}(u^\Tcal)}
      {J^{\NA}(\phi_\Tcal)}
 =\frac{k^{n+1}}{n\beta^n},\qquad\beta\ge k.
\end{equation}
Moreover, $\tau_{\phi_\Tcal}=k$, and equality holds in
\eqref{eq:valuative-finite-increment} and
\eqref{eq:valuative-terminal-gap} for every $0<\beta<B$.
Consequently, if $a<n$, there are no constants $C>0$ and
$\beta_0>0$ such that
\[
 \cM^{\NA}(\phi_\Tcal)-(\cM^\beta)^{\rad}(u^\Tcal)
 \ge C\beta^{-a}J^{\NA}(\phi_\Tcal)
\]
for every normal ample test configuration $\Tcal$ and every
$\beta\ge\beta_0$.
\end{proposition}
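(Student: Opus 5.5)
The plan is to take for $\Tcal$ a deformation to the normal cone of a smooth divisor, in the spirit of \cite{rossthomas}, and to compute every invariant explicitly through the envelope formula \eqref{eq:entropy-slope} and Lemma~\ref{lem:full-discrepancy}. By Bertini, choose a smooth connected divisor $D\in|kL|$ with section $s_D$. For a rational parameter $0<c<1$, let $\Tcal_c$ be the deformation to the normal cone of $D$ with polarization $L-\tfrac{c}{k}E$ on the blow-up of $X\times\mathbf A^1$ along $D\times\{0\}$. Since $D/k\sim_\Q L$, this is ample for $c<1$. Its non-Archimedean metric is
\[
 \phi_c=\max\{k^{-1}\log|s_D|,\,-c\},\qquad \phi_c(v)=-\min\{v(D)/k,\,c\},
\]
with $\sup\phi_c=0$. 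Thus $\phi_c$ factors through $v\mapsto v(D)$, and the only divisorial valuations that matter are $t\,\mathrm{ord}_D$ and $t\,\mathrm{ord}_E$ with $E$ the exceptional divisor. Writing $\phi_c$ in Fubini--Study form, the zero-weight sections are multiples of $s_D$, so $\mathfrak b_0$ is the ideal of the smooth divisor $D$ and $\operatorname{lct}(X;\mathfrak b_0)=1$. Lemma~\ref{lem:top-weight-threshold} then gives $\tau_{\phi_c}=k$ at once.

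The key step is the envelope $\psi_\beta=P(\phi_c+\beta^{-1}A_X)$. Since $X$ is smooth along $D$, we have $A_X(v)\ge v(D)$, with equality on $\R_{>0}\mathrm{ord}_D$. I would therefore guess that the envelope is again a deformation-to-the-normal-cone metric $\phi_{c'}$. Checking the constraint $-\min\{t/k,c'\}\le-\min\{t/k,c\}+t/\beta$ along $t\,\mathrm{ord}_D$ forces
\[
 c'=c\bigl(1-k/\beta\bigr)\qquad(\beta\ge k).
\]
Maximality is then verified by antimonotonicity, as in the proof of Lemma~\ref{lem:full-discrepancy}: any psh $\chi\le\phi_c+\beta^{-1}A_X$ is bounded by its values on the ray through $\mathrm{ord}_D$ after retraction to the dual complex of the DNC model. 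Next, $\ENA(\phi_c)$ is computed by intersection theory,
\[
 \ENA(\phi_c)=-\frac1V\int_0^c\operatorname{vol}\bigl(L-\tfrac xkD\bigr)\,dx
 =\frac{(1-c)^{n+1}-1}{n+1},
\]
which also gives $J^{\NA}=-\ENA$. We then obtain $e_\beta=\beta\bigl(\ENA(\phi_{c'})-\ENA(\phi_c)\bigr)$ in closed form, and $H^{\NA}=\lim_{\beta\to\infty}e_\beta$ by \eqref{eq:entropy-slope}. Since $\cM^{\NA}-m_\beta=H^{\NA}-e_\beta$, the Ricci term cancels, so only entropies enter.

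The main obstacle is getting exactly $k^{n+1}/(n\beta^n)$ rather than a polynomial in $c$ and $k/\beta$. The expression $(H^{\NA}-e_\beta)/J^{\NA}$ computed above depends on $c$. My first attempt would be to adjust the configuration so that the Monge--Amp\`ere measure concentrates correctly. One option is to use the polarization scaled in the opposite direction, as in the rescaled family $\phi=\max\{\log|s_D|/k,\,-c\}$ with the weight placed on the exceptional component. Another is to replace $\ENA(\phi_{c'})$ by the exact segment computation, checking that the volume function is homogeneous of degree $n+1$ near the vertex. In either case I would compare with the equality case of Corollary~\ref{cor:rescaled-parameter-convexity}, namely affinity of $x\mapsto e_{x^{-1/n}}$, together with the linear initial segment of Proposition~\ref{prop:valuative-parameter-refinement} up to $\tau=k$. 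Equality in \eqref{eq:valuative-finite-increment} then amounts to $b^{n+1}\partial_be_b$ being constant for $b\ge k$, and I would verify this directly on the explicit formula. Equality in \eqref{eq:valuative-terminal-gap} follows by letting $B\to\infty$, and for $\beta\le k$ both sides are linear by \eqref{eq:valuative-initial-segment}.

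Finally, the nonexistence statement is immediate. If $C\beta^{-a}J^{\NA}\le\cM^{\NA}-m_\beta$ held for this single $\Tcal$ and all $\beta\ge\beta_0$, then \eqref{eq:dnc-gap-ratio} would give $C\beta^{-a}\le k^{n+1}/(n\beta^n)$, that is, $C\le k^{n+1}\beta^{a-n}/n$. The right-hand side tends to $0$ as $\beta\to\infty$ when $a<n$, which is a contradiction.
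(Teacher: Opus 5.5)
Your proposal has a genuine gap: the configuration you construct does not satisfy the identity, and the computations you would feed into the envelope formula are incorrect.

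\textbf{The choice of $\Tcal$.} The configuration you describe is the blow-up of $D\times\{0\}$ with polarization $L-\frac ckE$ and $c<1$. Its exceptional divisor is cut out by $(\mathcal I_D,t)$, so its metric is $-\frac ck\min\{v(D),1\}$. It is not $\max\{k^{-1}\log|s_D|,-c\}=-\min\{v(D)/k,c\}$. For this ample family, $A_X(v)\ge v(D)$ with equality at $\mathrm{ord}_{D_i}$ gives $\tau_\phi=k/c>k$, which already contradicts $\tau_{\phi_\Tcal}=k$. By \eqref{eq:valuative-initial-segment}, the ratio $(\cM^{\NA}-m_\beta)/J^{\NA}=H^{\NA}/J^{\NA}-\beta$ is affine on $[k,k/c]$, so it cannot equal $k^{n+1}/(n\beta^n)$ there. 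The $c$-dependence you ran into is therefore genuine, and the options you list do not remove it.

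The missing idea is to pass to the endpoint $c=1$. There $L-\frac1kD\equiv0$, and $\varpi^*(kL)-E$ is relatively globally generated by $s_D$ and $tH^0(X,kL)$. Its relative ample model, obtained by Stein factorization of the induced morphism, is a normal ample test configuration of exponent $k$ in which the strict transform of $X\times\{0\}$ is contracted. Its metric is $\phi=-\frac1k\min\{v(D),1\}$. The measure $\MA(\phi)$ has no mass at $v_{\rm triv}$ and is supported on the $\mathrm{ord}_{D_i}$, where $A_X=1$. Hence $H^{\NA}(\phi)=1$ and $J^{\NA}(\phi)=n/(k(n+1))$.

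Your formula $\max\{k^{-1}\log|s_D|,-c\}$ is, when $kc\in\mathbf Z$, the metric of the degree-$kc$ normalized base change of this endpoint configuration. That is why a correct computation on it would give a $c$-independent ratio. But it is not the ample configuration you attached it to, and realizing it as a normal ample configuration requires exactly this relative-ample-model step.

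\textbf{The energy and envelope computations.} Both inputs to \eqref{eq:entropy-slope} are also off.
\begin{itemize}
\item In your normalization, the energy of the deformation with parameter $c$ is $\ENA=-\frac1{kV}\int_0^c\bigl(V-\operatorname{vol}(L-\tfrac ykD)\bigr)\,dy=-\frac ck+\frac{1-(1-c)^{n+1}}{k(n+1)}$. Your formula is linear in $c$ near $0$, whereas $J^{\NA}$ of a small deformation is quadratic.
\item The metric $-\min\{v(D)/k,c(1-k/\beta)\}$ is only a competitor, not the envelope. For the metric you wrote, the envelope is $(1-k/\beta)\phi_c$. In the deformation family, $a_{\mathcal Y}=\min\{v(D),1\}$ on the blow-up skeleton. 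So the obstacle for parameter $c$ is itself the semipositive deformation metric with parameter $c-k/\beta$, and by Lemma~\ref{lem:full-discrepancy} it is the envelope. The slope coefficient drops while the cutoff stays fixed.
\end{itemize}

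With these corrections, at the endpoint one gets $e_\beta(\phi)=1-\frac{k^n}{(n+1)\beta^n}$ for $\beta\ge k$, with $k=\beta=1$ handled by continuity. This yields \eqref{eq:dnc-gap-ratio}. Your remaining steps then go through as planned: $\tau_\phi=k$, the linear segment below $k$, equality in both valuative bounds, and the argument for $a<n$.

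A minor point: for $n=1$, Bertini only gives $D$ reduced, not connected. That is enough for the argument.
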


\begin{proof}
Choose a general divisor $D\in|kL|$. By Bertini's theorem, $D$ is
smooth and irreducible when $n\ge2$, and reduced when $n=1$.
Let $\varpi:Y\to X\times\mathbf P^1$ be the blowup of
$D\times\{0\}$, with exceptional divisor $F$, and set
\[
 \mathcal L_c=\varpi^*p_1^*L-cF,\qquad
 \phi_c(v)=-c\min\{v(D),1\},\qquad 0<c\le1/k.
\]
For rational $c<1/k$, the pair $(Y,\mathcal L_c)$ is an ample
deformation to the normal cone because $L-cD=(1-kc)L$ is ample.
At $c=1/k$, the line bundle
\[
 k\mathcal L_{1/k}=\varpi^*p_1^*(kL)-F
\]
is relatively globally generated: over $\mathbf A^1$, its generators
are the defining section of $D$ and $tH^0(X,kL)$.
The Stein factorization of the induced morphism gives a normal
test configuration with relatively ample polarization and generic
fiber $(X,kL)$. This is a test configuration of exponent $k$ for $(X,L)$,
with normalized non-Archimedean metric $\phi_\Tcal=\phi_{1/k}$.

For $0\le c\le1/k$, the restriction of $\mathcal L_c$ to the
strict-transform component of the central fiber is $(1-kc)L$.
The Monge--Amp\`ere mass at $v_{\rm triv}$ is therefore $(1-kc)^n$.
The measure has total mass one, and the remaining mass
$1-(1-kc)^n$ is supported at the valuations of the components of
$D$, where $\min\{v(D),1\}=A_X(v)=1$.
Writing $r_D(v)=\min\{v(D),1\}$, the energy variation formula gives
\[
 \frac{d}{dc}\ENA(\phi_c)
 =-\int_{X^{\NA}}r_D\,\MA(\phi_c)
 =-\bigl[1-(1-kc)^n\bigr],\qquad\ENA(\phi_0)=0.
\]
Integration and the definition of non-Archimedean entropy yield
\begin{equation}\label{eq:dnc-energy}
 \ENA(\phi_c)=-c+\frac{1-(1-kc)^{n+1}}{k(n+1)},\qquad
 H^{\NA}(\phi_c)=1-(1-kc)^n.
\end{equation}

The discrepancy on the blowup skeleton gives $a_Y=r_D$, so for
$\beta>1$,
\begin{equation}\label{eq:dnc-envelope}
 P(\phi_c+a_Y/\beta)=\phi_{(c-\beta^{-1})_+}.
\end{equation}
Indeed, when $c\ge\beta^{-1}$ the obstacle is the semipositive
potential $\phi_{c-\beta^{-1}}$. When $c<\beta^{-1}$ the obstacle
is nonnegative and vanishes at $v_{\rm triv}$; every semipositive
competitor attains its supremum there, so the envelope is zero.

Set $\phi=\phi_{1/k}$. Formula~\eqref{eq:dnc-energy} and
$\sup\phi=0$ give
\[
 J^{\NA}(\phi)=-\ENA(\phi)=\frac{n}{k(n+1)},\qquad
 H^{\NA}(\phi)=1.
\]
Substituting \eqref{eq:dnc-energy} and \eqref{eq:dnc-envelope}
into the entropy formula~\eqref{eq:entropy-slope} gives
\begin{equation}\label{eq:divisor-example}
 H^{\NA}(\phi)-e_\beta(\phi)=\frac{k^n}{(n+1)\beta^n},
 \qquad\beta\ge k.
\end{equation}
For $k=1$, the endpoint $\beta=1$ follows by continuity of the
finite concave function $\beta\mapsto e_\beta(\phi)$.
Since $\cM^{\NA}(\phi)-m_\beta(\phi)=H^{\NA}(\phi)-e_\beta(\phi)$,
dividing \eqref{eq:divisor-example} by $J^{\NA}(\phi)=n/(k(n+1))$
and using \eqref{eq:slope-bridge} proves \eqref{eq:dnc-gap-ratio}.
For $a<n$, the ratio of the right-hand side of
\eqref{eq:dnc-gap-ratio} to $\beta^{-a}$ is
$(k^{n+1}/n)\beta^{a-n}\to0$ as $\beta\to\infty$. This proves the
last assertion for the fixed polarization.

Since $(X,D)$ is log canonical, $A_X(v)\ge v(D)$ for every
valuation $v$.
Since $-\phi(v)=k^{-1}\min\{v(D),1\}$, whenever $\phi(v)<0$,
\[
 \frac{A_X(v)}{-\phi(v)}
 \ge k\,\frac{v(D)}{\min\{v(D),1\}}\ge k.
\]
The valuation $\operatorname{ord}_{D_i}$ of any component $D_i$
has $A_X(v)=v(D)=1$. It attains equality in this bound, so $\tau_\phi=k$.
For $0<\beta\le k$, \eqref{eq:valuative-initial-segment} yields
$e_\beta(\phi)=\beta J^{\NA}(\phi)$, so
\[
 \frac{H^{\NA}(\phi)-e_\beta(\phi)}{J^{\NA}(\phi)}
 =\frac{n+1}{n}k-\beta=G_k(\beta).
\]
Together with \eqref{eq:divisor-example}, this proves
\begin{equation}\label{eq:dnc-terminal-equality}
 H^{\NA}(\phi)-e_\beta(\phi)=G_k(\beta)J^{\NA}(\phi),
 \qquad\beta>0,
\end{equation}
which is equality in \eqref{eq:valuative-terminal-gap} because
$\tau_\phi=k$. Subtracting \eqref{eq:dnc-terminal-equality} at
$\beta$ and $B$ gives
$e_B(\phi)-e_\beta(\phi)=\Gamma_k(\beta,B)J^{\NA}(\phi)$,
proving equality in \eqref{eq:valuative-finite-increment} for every
$0<\beta<B$.
\end{proof}

\subsection{Equality in the parameter bounds}

\begin{proposition}[Optimality of the coefficients]
\label{prop:sharp-universal-kernel}
For $(X,L)=(\mathbf P^n,\mathcal O(1))$, there is a normal ample
test configuration whose ray attains equality in both inequalities
of Theorem~\ref{main:sharp-gap} for every $0<\beta<B$.
After scaling the K\"ahler class, equality holds for every prescribed
value $A>0$ of the $\alpha$-invariant.
\end{proposition}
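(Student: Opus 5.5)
The plan is to apply Proposition~\ref{prop:dimension-obstruction} with $(X,L)=(\mathbf P^n,\mathcal O(1))$ and $k=1$. Here $\mathcal O(1)$ is very ample, so the deformation to the normal cone of a hyperplane $D\in|\mathcal O(1)|$ at $c=1$ gives a normal ample test configuration $\Tcal$ of exponent one. For it, $\tau_{\phi_\Tcal}=1$, and
\[
 H^{\NA}(\phi)-e_\beta(\phi)=G_1(\beta)J^{\NA}(\phi)\qquad(\beta>0)
\]
by \eqref{eq:dnc-terminal-equality}. Since $m_\beta=e_\beta-\mathcal R^{\NA}$ and $\cM^{\NA}=H^{\NA}-\mathcal R^{\NA}$, this is equality in the second inequality of Theorem~\ref{main:sharp-gap} with $a=1$ in place of $A$. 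Subtracting at $\beta$ and $B$ gives equality in the first inequality.

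It remains to identify $\alpha(\mathbf P^n,\mathcal O(1))=1$. I would use Proposition~\ref{prop:toric-roof-gap}. The moment polytope is the standard simplex $P=\{p\ge0,\ \sum p_i\le1\}$, and its fan rays are $e_1,\dots,e_n,-\sum e_i$. For $v=e_i$ we have $h_P(e_i)+h_P(-e_i)=1+0=1$. For $v=-\sum e_i$ we have $h_P(v)+h_P(-v)=0+1=1$. Hence $D_L=1$ and $A=1$. Alternatively, one can use Demailly's formula with $\Delta=\frac1m\cdot(\text{$m$ hyperplanes through a point})$, which has $\operatorname{lct}=1$, together with $\operatorname{lct}\ge1$ for divisors of degree $1$. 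Either way $\tau_\phi=A$, so $G_{\tau_\phi}=G_A$ and $\Gamma_{\tau_\phi}=\Gamma_A$. The equality in \eqref{eq:valuative-finite-increment} and \eqref{eq:valuative-terminal-gap} from Proposition~\ref{prop:dimension-obstruction} is therefore exactly equality in both inequalities of Theorem~\ref{thm:gap}. I would then translate to radial slopes through \eqref{eq:slope-bridge} and $\cM^{\rad}(u^\Tcal)=\cM^{\NA}(\phi_\Tcal)$.

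For the scaling assertion, I would replace $\omega_0$ by $\lambda\omega_0$ with $\lambda>0$, taking the class $\lambda c_1(\mathcal O(1))$, and keep the same ray rescaled as $\lambda u_t$. Under this change:
\begin{itemize}
\item $\alpha$ scales to $A/\lambda$;
\item $J^{\rad}$ scales by $\lambda$;
\item the quantized entropy satisfies $\Ent^\beta_{\lambda\omega_0}(\lambda u)=\Ent^{\lambda\beta}_{\omega_0}(u)$, as seen from \eqref{eq:analytic-quantization} by substituting $w\mapsto\lambda w$, using that $\AM$ is homogeneous of degree one under the volume normalization;
\item $\cJ_{\Ric}$ is unchanged, because $\Ric(\lambda\omega_0)=\Ric(\omega_0)$ and the normalization factors compensate.
\end{itemize}
Finally, $G_{A/\lambda}(\beta)\,\lambda=G_A(\lambda\beta)$, which follows directly from the explicit formula \eqref{eq:constants}. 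Combining these, equality at the parameter $\lambda\beta$ for the class $c_1(\mathcal O(1))$ becomes equality at $\beta$ for the scaled class. Choosing $\lambda=1/A$ then realizes any prescribed value $A$.

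The main obstacle is this scaling bookkeeping, in particular checking the homogeneity of $\Ent^\beta$ and of $\cJ_{\Ric}$ under the chosen normalizations. The case $\beta\le1=k$, including the endpoint, is already covered inside Proposition~\ref{prop:dimension-obstruction}, so it needs no separate treatment.
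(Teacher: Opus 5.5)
Your proposal is correct and follows the paper's proof. You apply Proposition~\ref{prop:dimension-obstruction} to a hyperplane with $k=1$, take $\alpha(\mathbf P^n,\mathcal O(1))=1$ from Proposition~\ref{prop:toric-roof-gap}, pass to radial slopes via \eqref{eq:slope-bridge}, and rescale as in \eqref{eq:sharpness-class-scaling} using $\lambda^{-1}G_1(\lambda\beta)=G_{1/\lambda}(\beta)$. For the second inequality you should also state that the ordinary Mabuchi slope is scale invariant; this is immediate because both $\Ent$ and $\cJ_{\Ric(\omega_0)}$ are.

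There is one slip, in your optional Demailly alternative. For $n\ge2$, the divisor $\frac1m$ times $m\ge2$ distinct hyperplanes through a point has log canonical threshold strictly larger than $1$. The correct witness for $\alpha\le1$ is a single hyperplane $\Delta=H$.
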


\begin{proof}
Proposition~\ref{prop:toric-roof-gap} gives
$\alpha(\mathbf P^n,\mathcal O(1))=1$.
Apply Proposition~\ref{prop:dimension-obstruction} to a hyperplane
$D\in|\mathcal O(1)|$ with $k=1$. The metric $\phi$ has
$\tau_\phi=1$ and attains equality in
\eqref{eq:valuative-finite-increment} and
\eqref{eq:valuative-terminal-gap} for every $0<\beta<B$.
Since
\[
 m_B(\phi)-m_\beta(\phi)
 =e_B(\phi)-e_\beta(\phi),
 \qquad
 \cM^{\NA}(\phi)-m_\beta(\phi)
 =H^{\NA}(\phi)-e_\beta(\phi),
\]
\eqref{eq:slope-bridge} gives equality in both inequalities of
Theorem~\ref{main:sharp-gap}.

For $\lambda>0$, scaling the reference form and the associated ray
gives
\begin{equation}\label{eq:sharpness-class-scaling}
 \begin{aligned}
  J_{\lambda\omega_0}^{\rad}(\lambda u)
    &=\lambda J_{\omega_0}^{\rad}(u),\\
  (\cM^\beta_{\lambda\omega_0})^{\rad}(\lambda u)
    &=(\cM^{\lambda\beta}_{\omega_0})^{\rad}(u).
 \end{aligned}
\end{equation}
The ordinary Mabuchi radial slope is unchanged, and the
$\alpha$-invariant is divided by $\lambda$; see the proof of
Proposition~\ref{prop:kahler-scale}.
The scaling identities
\[
 \lambda^{-1}G_1(\lambda\beta)=G_{1/\lambda}(\beta),
 \qquad
 \lambda^{-1}\Gamma_1(\lambda\beta,\lambda B)
   =\Gamma_{1/\lambda}(\beta,B)
\]
therefore show that the scaled ray attains equality for
$A=1/\lambda$. Every prescribed $A>0$ is obtained in this way.
\end{proof}

\begin{lemma}[$\alpha$-invariants of products]
\label{lem:product-alpha}
Let $(X_i,\Omega_i)$ be connected compact K\"ahler manifolds.
For $X=\prod_iX_i$ and $\Omega=\sum_i\operatorname{pr}_i^*\Omega_i$,
\[
 \alpha(X,\Omega)=\min_i\alpha(X_i,\Omega_i).
\]
\end{lemma}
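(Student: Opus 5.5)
We prove the two inequalities $\alpha(X,\Omega)\ge\min_i\alpha(X_i,\Omega_i)$ and $\alpha(X,\Omega)\le\min_i\alpha(X_i,\Omega_i)$ separately, working with the definition of Section~\ref{sec:alpha-invariant}. By induction it suffices to treat two factors $X=X_1\times X_2$. Put $\omega_0=\operatorname{pr}_1^*\omega_1+\operatorname{pr}_2^*\omega_2$. Then $d\nu_0$ is the product $d\nu_1\otimes d\nu_2$, since $\omega_0^n=\binom{n}{n_1}\omega_1^{n_1}\wedge\omega_2^{n_2}$ and the volumes multiply accordingly. Independence of the representative of each class follows from the usual invariance of $\alpha$ under changing the reference form by a smooth potential.

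\emph{Upper bound.} Let $v\in\cH(X_1,\omega_1)$ and set $u=\operatorname{pr}_1^*v$. Then $u$ is $\omega_0$-psh with $\sup u=\sup v$, and Fubini gives
\[
\int_Xe^{-a(u-\sup u)}d\nu_0=\int_{X_1}e^{-a(v-\sup v)}d\nu_1.
\]
Suprema over smooth potentials and over all psh potentials define the same threshold, by regularization and monotone convergence. Hence uniform integrability on $X$ at exponent $a$ forces it on $X_1$, so $\alpha(X,\Omega)\le\alpha(X_1,\Omega_1)$. The same argument with the second factor gives $\alpha(X,\Omega)\le\min_i\alpha(X_i,\Omega_i)$.

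\emph{Lower bound.} This is the main step. Fix $a<\min_i\alpha_i$ and $u\in\cH(X,\omega_0)$ with $\sup u=0$. We integrate with Fubini one factor at a time.

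1. For fixed $x_2$, the slice $u(\cdot,x_2)$ is $\omega_1$-psh on $X_1$, and its supremum $\sigma(x_2)=\sup_{X_1}u(\cdot,x_2)$ is at most $0$.

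2. Uniform integrability on $X_1$ gives
\[
\int_{X_1}e^{-au(\cdot,x_2)}d\nu_1\le C_1e^{-a\sigma(x_2)}.
\]

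3. We claim $\sigma$ is $\omega_2$-psh on $X_2$ with $\sup\sigma=0$. This is the delicate point. One argument: $\sigma$ is a supremum of the $\omega_2$-psh functions $x_2\mapsto u(x_1,x_2)$, which are uniformly bounded above, and it is continuous for smooth $u$ by compactness of $X_1$, so no upper-semicontinuous regularization is needed. Alternatively, one can use Kiselman's minimum principle; compactness of the fibres makes the supremum version straightforward.

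4. Uniform integrability on $X_2$ then gives
\[
\int_{X_2}e^{-a\sigma}d\nu_2\le C_2.
\]

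5. Combining steps 2 and 4 yields $\int_Xe^{-au}d\nu_0\le C_1C_2$. Hence $\alpha(X,\Omega)\ge a$, and letting $a\uparrow\min_i\alpha_i$ proves the lemma.

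In the polarized case one could alternatively use Demailly's formula with lct of $\Q$-divisors. We avoid this, since the analytic route covers arbitrary Kähler classes.
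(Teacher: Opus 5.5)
Your argument is correct and is essentially the paper's proof. Both reduce to two factors, take the fiberwise supremum of a normalized smooth potential to obtain a continuous quasi-psh function with supremum zero on the other factor, integrate first along the fibers and then on the base, and use pullbacks for the reverse inequality. The paper takes the maximum over $X_2$ to get a function on $X_1$, the mirror image of your choice, which is cosmetic.

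Some small tidying is needed. The smooth-to-psh regularization is not needed in the upper bound, since $\operatorname{pr}_1^*v$ already lies in $\cH(X,\omega_0)$. It is genuinely needed in Step~4, to apply the $X_2$-estimate to the merely continuous $\sigma$; the paper handles this by smoothing finite maxima of slices. Finally, Kiselman's minimum principle concerns infima and plays no role here.
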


\begin{proof}
It suffices to consider two factors. Choose product reference
forms, with normalized volume measures $d\nu_1$ and $d\nu_2$,
and fix $0<c<\min\{\alpha(X_1,\Omega_1),\alpha(X_2,\Omega_2)\}$.
For a smooth $\Omega$-potential $u$ with $\sup_Xu=0$, the function
$v(x)=\max_{y\in X_2}u(x,y)$ is continuous, $\Omega_1$-psh, and
satisfies $\sup_{X_1}v=0$. Finite maxima of the slices
$u(\,\cdot\,,y)$ converge uniformly to $v$. Regularizing these
maxima and normalizing their suprema gives smooth approximants,
so the uniform $\alpha$-integrability estimate on $X_1$ applies to $v$.

For each $x$, the slice $u(x,\cdot)-v(x)$ has supremum zero.
Applying the integrability estimate first on $X_2$ and then on
$X_1$ gives
\[
 \int_X e^{-cu}\,d\nu_1d\nu_2
 \le C_2\int_{X_1}e^{-cv}\,d\nu_1
 \le C_1C_2.
\]
This proves the lower bound for the product $\alpha$-invariant.
Pulling back potentials from either factor gives the reverse
bound, and induction proves the general case.
\end{proof}

For a smooth projective curve $C$, a point $p\in C$, and a
positive integer $d$, one has $\alpha(C,\mathcal O_C(dp))=1/d$.
Indeed, an effective divisor $D=\sum_i a_ip_i\in|m\,dp|$ satisfies
\[
 \operatorname{lct}(C;D)=\frac1{\max_i a_i}\ge\frac1{md},
\]
while $dp\in|dp|$ gives equality. Demailly's formula
\cite[Theorem~A.3]{DemaillyAlpha} proves the assertion.
Consequently, for $X=\prod_{i=1}^nC_i$ and
$L=\boxtimes_{i=1}^n\mathcal O_{C_i}(dp_i)$,
Lemma~\ref{lem:product-alpha} gives $\alpha(X,L)=1/d$.

\begin{proposition}[Sharpness with discrete automorphisms]
\label{prop:discrete-sharpness}
Let $D$ be a reduced ample Cartier divisor on a connected smooth
projective $n$-fold $X$. Suppose that $(X,D)$ is log canonical and
that $\alpha(X,L)=1/d$ for $L=\mathcal O_X(dD)$ with $d\ge1$ an integer.
Then there is a normal ample test configuration with metric $\phi$
such that, for every $0<\beta<B$,
\begin{equation}\label{eq:discrete-sharpness}
 \frac{H^{\NA}(\phi)-e_\beta(\phi)}{J^{\NA}(\phi)}
   =G_{1/d}(\beta),
 \qquad
 \frac{e_B(\phi)-e_\beta(\phi)}{J^{\NA}(\phi)}
   =\Gamma_{1/d}(\beta,B).
\end{equation}
The hypotheses hold for
\[
 X=\prod_{i=1}^nC_i,\qquad
 D=\sum_{i=1}^n\operatorname{pr}_i^*p_i,
\]
where each $C_i$ is a smooth curve of genus at least two and
$p_i\in C_i$. In this case $\Aut_0(X)$ is trivial and $c_1(L)$
contains a unique cscK metric. Thus the coefficients in
Theorem~\ref{main:sharp-gap} remain sharp under these additional
conditions.
\end{proposition}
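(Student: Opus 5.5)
The plan is to rerun the proof of Proposition~\ref{prop:dimension-obstruction} with $k=1$ and the given reduced divisor $D$, working on $(X,\mathcal O_X(D))$, and then rescale. Since $D$ is ample, let $\varpi:Y\to X\times\mathbf P^1$ be the blowup of $D\times\{0\}$ with exceptional divisor $F$, and take $\mathcal L_1=\varpi^*p_1^*\mathcal O_X(D)-F$. This bundle is relatively globally generated by the defining section of $D$ and $tH^0(X,D)$, because $D$ is a member of $|D|$; very ampleness of $D$ is not needed for this. Stein factorization then gives a normal ample test configuration with metric $\phi_1=-\min\{v(D),1\}$. The computations \eqref{eq:dnc-energy} and \eqref{eq:dnc-envelope} used only the following facts:
\begin{itemize}
\item $D$ is reduced, so the components $D_i$ satisfy $A_X(\operatorname{ord}_{D_i})=\operatorname{ord}_{D_i}(D)=1$, and the Monge--Amp\`ere mass off $v_{\rm triv}$ sits at these valuations;
\item the discrepancy on the blowup skeleton is $a_Y=r_D$.
\end{itemize}
The second fact needs care when $D$ is not smooth, since then $D\times\{0\}$ is a singular center. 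I would pass to a log resolution dominating $Y$ and use Lemma~\ref{lem:full-discrepancy}, which says the envelope $P(\phi+hA_X)$ does not depend on the SNC model. This gives $P(\phi_c+A_X/\beta)=\phi_{(c-1/\beta)_+}$ directly from $A_X\ge r_D$. That inequality follows from log canonicity, $A_X(v)\ge v(D)$, together with $A_X(v)\ge0$ at the points where the minimum equals $1$. Equality at the divisorial valuations $\operatorname{ord}_{D_i}$ shows that $\phi_{c-1/\beta}$ is itself a competitor lying below the obstacle.

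With these inputs, \eqref{eq:divisor-example} and $\tau_{\phi_1}=1$ follow exactly as before. Log canonicity gives $A_X(v)/(-\phi_1(v))\ge v(D)/\min\{v(D),1\}\ge1$, and $\operatorname{ord}_{D_i}$ attains equality. Applying \eqref{eq:dnc-terminal-equality} with $k=1$ gives $H^{\NA}-e_\beta=G_1(\beta)J^{\NA}$ for $(X,\mathcal O(D))$.

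To pass to $L=dD$, I would view the same test configuration as one of exponent $d$ for $L$. Normalizing to $L$ replaces $\phi$ by $\phi/d$, $J^{\NA}$ by $J^{\NA}/d$, and $e_\beta$ by $e_{d\beta}$, while $H^{\NA}$ is unchanged; these are the scaling identities \eqref{eq:sharpness-class-scaling} with $\lambda=d$. The identity $d^{-1}G_1(d\beta)=G_{1/d}(\beta)$ then yields the first equation in \eqref{eq:discrete-sharpness}. Subtracting the identity at $\beta$ and at $B$ gives the second. These are equalities in Theorem~\ref{main:sharp-gap} because $\alpha(X,L)=1/d$ by hypothesis.

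For the example, $D=\sum\operatorname{pr}_i^*p_i$ is reduced SNC, so $(X,D)$ is log canonical. It is ample because each $p_i$ is ample on $C_i$, and $\alpha(X,L)=1/d$ by Lemma~\ref{lem:product-alpha} and the curve computation preceding the statement. Since each $C_i$ has genus at least two, $\Aut(C_i)$ is finite, so $\Aut_0(X)=\prod\Aut_0(C_i)$ is trivial. For the cscK metric, take on each $C_i$ a constant-curvature metric in $c_1(\mathcal O(dp_i))$; their product has constant scalar curvature. Uniqueness of cscK metrics in a class modulo $\Aut_0(X)$, due to Berman--Berndtsson and Chen--Tian, then gives uniqueness.

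I expect the main obstacle to be justifying the envelope formula and the model-independence of the entropy slope \eqref{eq:entropy-slope} when $D$ is singular but log canonical. This means checking that $A_X$ may replace $a_{\mathcal Y}$ there, which is what Lemma~\ref{lem:full-discrepancy} provides.
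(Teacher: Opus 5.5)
\textbf{Main gap: the envelope identity.} You flag the envelope formula as the main obstacle, but your argument does not close it.

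The inequality $A_X\ge r_D$ only shows that $\phi_{(c-1/\beta)_+}$ lies below the obstacle, so it only gives $P(\phi_c+A_X/\beta)\ge\phi_{(c-1/\beta)_+}$. Lemma~\ref{lem:full-discrepancy} removes the dependence on the SNC model, but it does not compute the envelope. In Proposition~\ref{prop:dimension-obstruction} the envelope was immediate only because $a_Y=r_D$ on the nose, so the obstacle was $\phi_{c-1/\beta}$ itself. With $A_X$ in its place the obstacle is never psh. It vanishes at $v_{\rm triv}$ but equals $n/\beta>0$ at $\operatorname{ord}_p$ for $p\notin D$, whereas psh functions are maximal at $v_{\rm triv}$.

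The reverse inequality needs the argument the paper uses. The obstacle coincides with $\chi=\phi_{c'}$ on $\operatorname{supp}\MA(\chi)\subset\{v_{\rm triv}\}\cup\{\operatorname{ord}_{D_i}\}$: both vanish at $v_{\rm triv}$, and $A_X=r_D=1$ at $\operatorname{ord}_{D_i}$ because $D$ is reduced. Hence every competitor is at most $\chi$ almost everywhere for $\MA(\chi)$, and the domination principle \cite[Corollary~10.6]{BJ} gives the bound everywhere. This is where equality at $\operatorname{ord}_{D_i}$ enters; it is not what makes $\phi_{c-1/\beta}$ a competitor.

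Alternatively, the one-sided bound suffices. It gives $H^{\NA}-e_\beta\le G_1(\beta)J^{\NA}$ for $\mathcal O(D)$. The reverse inequality is \eqref{eq:valuative-terminal-gap} with $\tau_{\phi_1}=1$, or Theorem~\ref{thm:gap} with $\alpha(X,\mathcal O(D))=1$.

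\textbf{Two further errors.} First, your global generation claim fails in exactly the example you must cover. For $X=\prod C_i$ with $g(C_i)\ge2$ one has $h^0(X,\mathcal O(D))=1$. Your generators are then $s_D$ and $ts_D$, and both vanish along the strict transform of $D\times\mathbf A^1$; on the generic fibre this is just $\operatorname{Bs}|D|=D$. What holds, and what the paper uses, is relative semiampleness. If $mD$ is base point free, then $s_D^m$ and $t^mH^0(X,mD)$ generate $m\mathcal L_1$, and you take the relative Proj and normalize. Normality of the blowup also needs an argument, or you can simply normalize.

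Second, your rescaling has the direction reversed. The $d$-th tensor power of your configuration is a configuration of exponent one for $L$, with $L$-normalized metric $d\phi_1$. Thus $J^{\NA}$ is multiplied by $d$, not divided, while $e^L_\beta=e^{\mathcal O(D)}_{d\beta}$. With this direction, $d^{-1}G_1(d\beta)=G_{1/d}(\beta)$ is exactly the identity you need.

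\textbf{Comparison with the paper.} With these repairs your route works and is somewhat more economical than the paper's. The paper works directly with $L$ for $0\le c\le d$, where the envelope formula is available only for $\beta>1$. It fills in $1/d<\beta\le1$ by sandwiching $e_B-e_A=\Gamma_A(A,B)J^{\NA}$ between the two inequalities of Theorem~\ref{thm:gap}, which uses $\alpha(X,L)=1/d$. In your normalization the envelope formula covers $d\beta>1$ and $\tau_{\phi_1}=1$ covers the rest. The hypothesis $\alpha(X,L)=1/d$ is then needed only to read the result as equality in Theorem~\ref{main:sharp-gap}.
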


\begin{proof}
Blow up $(\mathcal I_D,t)$ on $X\times\mathbf A^1$, and write
$\pi:Y\to X\times\mathbf A^1$ and $F$ for the blowup and its
exceptional divisor. If $D=(f=0)$ locally, the charts are
$t=fs$ and $f=tz$. The first chart is smooth. The second chart
is a hypersurface whose singular locus is contained in
$\operatorname{Sing}(D)\times\{t=z=0\}$, of codimension at least
three because $D$ is reduced. A hypersurface is Cohen--Macaulay,
so Serre's criterion shows that $Y$ is normal.
The central fiber is reduced, with strict-transform component
$X_0$ and one exceptional component $E_i$ for each prime component
$D_i$ of $D$. At the generic point of $E_i$, one has
$\operatorname{ord}_{E_i}(t)=1$ and
$v_{E_i}=\operatorname{ord}_{D_i}$.

For $0\le c\le d$, set
\[
 \mathcal A_c=\pi^*p_1^*L-cF,
 \qquad \phi_c=-cr,
 \qquad r(v)=\min\{v(D),1\}.
\]
Choose $m$ such that $mL$ is globally generated. The sections
$s_D^{md}$ and $t^{md}H^0(X,mL)$ generate $m\mathcal A_d$ on
the two charts, so $\mathcal A_d$ is relatively semiample.
Taking the relative Proj and then its normalization gives a normal
ample test configuration with metric $\phi_d$.
For $0\le c\le d$,
\[
 \mathcal A_c
 =\left(1-\frac cd\right)\mathcal A_0
  +\frac cd\,\mathcal A_d.
\]
Since $\mathcal A_0$ and $\mathcal A_d$ are relatively nef,
$\mathcal A_c$ is relatively nef.

Since $\mathcal A_c|_{X_0}=L-cD=(d-c)D$, the mass of
$\MA(\phi_c)$ at $v_{\rm triv}$ is $(1-c/d)^n$.
The remaining mass is supported at the valuations
$\operatorname{ord}_{D_i}$, where $r=A_X=1$.
Energy variation therefore gives
\[
 \frac{d}{dc}\ENA(\phi_c)
 =-\int_{X^{\NA}}r\,\MA(\phi_c)
 =-\bigl[1-(1-c/d)^n\bigr].
\]
Integrating from $c=0$ yields
\begin{equation}\label{eq:lc-divisor-energy}
 \ENA(\phi_c)
 =-c+\frac d{n+1}\left[1-(1-c/d)^{n+1}\right],
 \qquad
 H^{\NA}(\phi_c)=1-(1-c/d)^n.
\end{equation}
In particular, $\phi=\phi_d$ satisfies
$J^{\NA}(\phi)=nd/(n+1)$ and $H^{\NA}(\phi)=1$.

Choose a smooth SNC model $\mathcal Y$ as in \eqref{eq:envelope}
dominating the blowup, such that $r$ factors through the retraction
$r_{\mathcal Y}$. Put
\(
 a_{\mathcal Y}=A_X\circ r_{\mathcal Y}.
\)
Log canonicity gives $A_X(v)\ge v(D)\ge r(v)$; hence
$a_{\mathcal Y}(v)\ge r(r_{\mathcal Y}(v))=r(v)$ on all of
$X^{\NA}$.
For $\beta>1$, set $c'=(c-\beta^{-1})_+$ and $\chi=\phi_{c'}$.
Since $c-c'\le\beta^{-1}$, we have
$\chi\le\phi_c+a_{\mathcal Y}/\beta$.
If $c'>0$, equality holds on the support of $\MA(\chi)$:
both sides vanish at $v_{\rm triv}$, and
$a_{\mathcal Y}=r=1$ at each $\operatorname{ord}_{D_i}$.
If $c'=0$, the support consists only of $v_{\rm triv}$ and
equality holds there as well. Every psh competitor below the
obstacle is therefore at most $\chi$ almost everywhere with respect to
$\MA(\chi)$. The domination principle
\cite[Corollary~10.6]{BJ} shows that every such competitor is at
most $\chi$ on $X^{\NA}$. Since $\chi$ itself is a competitor,
\begin{equation}\label{eq:lc-divisor-envelope}
 P\!\left(\phi_c+\frac{a_{\mathcal Y}}{\beta}\right)
   =\phi_{(c-\beta^{-1})_+}.
\end{equation}
Combining \eqref{eq:lc-divisor-envelope} with
\eqref{eq:entropy-slope} and \eqref{eq:lc-divisor-energy} at $c=d$
gives
\begin{equation}\label{eq:lc-divisor-tail}
 e_\beta(\phi)=1-\frac1{(n+1)d^n\beta^n},\qquad \beta>1.
\end{equation}

Write $A=1/d$ and $J=J^{\NA}(\phi)=nd/(n+1)$.
Equation~\eqref{eq:alpha-cutoff} gives $e_A(\phi)=AJ$.
For a fixed $\beta>A$, choose $B>\max\{1,\beta\}$.
Equation~\eqref{eq:lc-divisor-tail} gives
\[
 e_B(\phi)-e_A(\phi)=\Gamma_A(A,B)J.
\]
Since $\mathcal R^{\NA}(\phi)$ is independent of the parameter,
Theorem~\ref{thm:gap} gives
\[
 \begin{aligned}
  e_\beta(\phi)-e_A(\phi)
  &\ge\Gamma_A(A,\beta)J,\\
  e_B(\phi)-e_\beta(\phi)
  &\ge\Gamma_A(\beta,B)J.
 \end{aligned}
\]
Since
\[
 \Gamma_A(A,B)
 =\Gamma_A(A,\beta)+\Gamma_A(\beta,B),
\]
both inequalities are equalities.
For $\beta>A$, we have
$e_\beta(\phi)=e_A(\phi)+\Gamma_A(A,\beta)J$.
Combining this identity with \eqref{eq:alpha-cutoff} gives
\begin{equation}\label{eq:lc-divisor-entropy}
 e_\beta(\phi)=
 \begin{cases}
  nd\beta/(n+1),&0<\beta\le1/d,\\[2mm]
  1-1/((n+1)d^n\beta^n),&\beta\ge1/d.
 \end{cases}
\end{equation}
Using $H^{\NA}(\phi)=1$ and $J=nd/(n+1)$ in
\eqref{eq:lc-divisor-entropy} proves \eqref{eq:discrete-sharpness}.

For the product of curves in the proposition, $D$ is reduced,
SNC, and ample, and Lemma~\ref{lem:product-alpha} gives $\alpha(X,L)=1/d$.
The factors have genus at least two, so the product has no
nonzero holomorphic vector fields and $\Aut_0(X)=\{1\}$.
The product of constant-curvature metrics in the prescribed
classes is cscK, and uniqueness follows from \cite{bb}.

Finally, scaling the K\"ahler class by any $\lambda>0$ preserves
the cscK property and the triviality of $\Aut_0(X)$.
The identities \eqref{eq:sharpness-class-scaling}, applied to the
associated rays, preserve equality in the parameter
bounds and replace the $\alpha$-invariant by $1/(d\lambda)$.
Thus these examples realize every prescribed positive value of
the $\alpha$-invariant in compact K\"ahler classes.
\end{proof}

This proves the optimality assertion in
Theorem~\ref{main:sharp-gap}. The optimal coefficient for an
individual polarization may be larger.
Proposition~\ref{prop:dimension-obstruction} shows that the
exponent $n$ is optimal on every fixed polarization.

\section{Stability criteria and quantitative testing}
\label{sec:criteria}\label{sec:analytic-gap}

We derive the existence criteria from Theorem~\ref{main:sharp-gap}
and the correspondences of Darvas--Zhang~\cite{DZ} and
Trusiani~\cite{Tru} for polarized manifolds. We then discuss analytic
coercivity and scaling. The correspondence for arbitrary K\"ahler
classes is proved in Section~\ref{ordinary:kahler}.

\subsection{The polarized correspondence}

\begin{proof}[Proof of Corollary~\ref{cor:polarized-correspondence}]
Darvas--Zhang~\cite[Theorem~1.1]{DZ} prove that conditions
\textup{(i)} and \textup{(iii)} are equivalent. If
\textup{(iii)} holds for some $B>0$ and $\gamma>0$, then
\[
 m_B(\phi_\Tcal)
 \ge\gamma J^{\NA}(\phi_\Tcal)\ge0,
\]
so \textup{(ii)} holds with $\beta=B$.

Conversely, assume \textup{(ii)} holds for some $\beta>0$.
For every $B>\beta$, Theorem~\ref{thm:gap} gives
\[
 m_B(\phi_\Tcal)
 \ge
 m_\beta(\phi_\Tcal)
 +\Gamma_A(\beta,B)J^{\NA}(\phi_\Tcal)
 \ge
 \Gamma_A(\beta,B)J^{\NA}(\phi_\Tcal).
\]
Since $\Gamma_A(\beta,B)>0$ is independent of $\Tcal$,
\textup{(iii)} holds with
\(
 \gamma=\Gamma_A(\beta,B).
\)

Assume that \textup{(iii)} holds with a constant $\gamma>0$.
Theorem~\ref{thm:gap} gives
\[
 \cM^{\NA}(\phi_\Tcal)
 \ge m_B(\phi_\Tcal)
 \ge\gamma J^{\NA}(\phi_\Tcal),
\]
while \eqref{ordinary:correction} gives
\(
 \mathrm{DF}(\Tcal)\ge\cM^{\NA}(\phi_\Tcal).
\)
Hence
\(
 \mathrm{DF}(\Tcal)
 \ge\gamma J^{\NA}(\phi_\Tcal),
\)
and \textup{(iv)} follows.

Conversely, assume \textup{(iv)} holds with a constant
$\delta>0$. For a normal ample test configuration $\Tcal$, let
$\Tcal^{(d)}$ be the normalization of a sufficiently divisible
degree-$d$ base change. By
\cite[Propositions~7.16 and~8.2]{BHJ},
\[
 \mathrm{DF}(\Tcal^{(d)})
 =\cM^{\NA}(\phi_{\Tcal^{(d)}})
 =d\,\cM^{\NA}(\phi_\Tcal),
 \qquad
 J^{\NA}(\phi_{\Tcal^{(d)}})
 =d\,J^{\NA}(\phi_\Tcal).
\]
Applying \textup{(iv)} to $\Tcal^{(d)}$ and dividing by $d$ gives
\begin{equation}\label{eq:uniform-MNA}
 \cM^{\NA}(\phi_\Tcal)
 \ge\delta J^{\NA}(\phi_\Tcal)
\end{equation}
for every normal ample test configuration $\Tcal$.

Let $\phi$ be a model metric in
Li's criterion~\cite[Theorem~1.10]{Li}. The special Fujita
approximation of
Trusiani~\cite[Corollary~A and its proof]{Tru}, applied with the
trivial group, gives test-configuration metrics $\phi_j$ such that
\[
 \phi_j\longrightarrow\phi
 \quad\text{strongly},\qquad
 H^{\NA}(\phi_j)\longrightarrow H^{\NA}(\phi).
\]
Strong convergence gives $J^{\NA}(\phi_j)\longrightarrow J^{\NA}(\phi)$,
while the strong continuity of $\mathcal R^{\NA}$
\cite[Proposition~2.17]{Li} gives
$\mathcal R^{\NA}(\phi_j)\longrightarrow\mathcal R^{\NA}(\phi)$.
Since $\cM^{\NA}=H^{\NA}-\mathcal R^{\NA}$, it follows that
$\cM^{\NA}(\phi_j)\longrightarrow\cM^{\NA}(\phi)$.
Passing to the limit in \eqref{eq:uniform-MNA} yields
$\cM^{\NA}(\phi)\ge\delta J^{\NA}(\phi)$.
By Li~\cite[Theorem~1.10]{Li}, the Mabuchi functional is coercive.
Darvas--Zhang~\cite[Theorem~4.4]{DZ} then give a unique cscK metric. Thus \textup{(iv)} implies \textup{(i)}.
\end{proof}

\begin{corollary}[Negative lower bounds]\label{cor:existence}
Let $(X,L)$ be a connected smooth polarized manifold and put
$A=\alpha(X,L)$. Suppose that, for fixed $\beta>0$ and
\(
 0\le\varepsilon<G_A(\beta),
\)
one has
\begin{equation}\label{eq:kahler-semistability}
 (\cM^\beta)^{\rad}(u^\Tcal)
 \ge-\varepsilon J^{\rad}(u^\Tcal)
 \qquad\text{for every }\Tcal
\end{equation}
in the family of all normal ample test configurations of arbitrary
positive exponent. Then $c_1(L)$ contains a unique cscK metric.

More precisely, for every $B>\beta$, one has
\begin{equation}\label{eq:negative-lower-gain}
 (\cM^B)^{\rad}(u^\Tcal)
 \ge
 \bigl(\Gamma_A(\beta,B)-\varepsilon\bigr)
 J^{\rad}(u^\Tcal)
 \qquad\text{for every }\Tcal.
\end{equation}
The coefficient in \eqref{eq:negative-lower-gain} is strictly positive
whenever $\Gamma_A(\beta,B)>\varepsilon$.
\end{corollary}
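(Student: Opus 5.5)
The plan is to derive \eqref{eq:negative-lower-gain} directly from Theorem~\ref{thm:gap}, and then obtain existence by feeding it into the implication (iii)$\Rightarrow$(i) of Corollary~\ref{cor:polarized-correspondence}.

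\textbf{Step 1: the parameter gain.} Fix a normal ample test configuration $\Tcal$ of arbitrary positive exponent, with ray $u^\Tcal$. For $B>\beta$, inequality \eqref{eq:main-gap} gives
\[
 (\cM^B)^{\rad}(u^\Tcal)\ge(\cM^\beta)^{\rad}(u^\Tcal)+\Gamma_A(\beta,B)J^{\rad}(u^\Tcal).
\]
Combining this with the hypothesis \eqref{eq:kahler-semistability} yields \eqref{eq:negative-lower-gain} immediately. No uniformity issue arises, because $\Gamma_A(\beta,B)$ depends only on $n$ and $A$.

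\textbf{Step 2: choosing $B$.} Next I would choose $B$ so that the coefficient in \eqref{eq:negative-lower-gain} is positive. Since $\Gamma_A(\beta,B)=G_A(\beta)-G_A(B)$ and $G_A(B)=A^{n+1}/(nB^n)\to0$ as $B\to\infty$, we get $\Gamma_A(\beta,B)\uparrow G_A(\beta)$. The assumption $\varepsilon<G_A(\beta)$ therefore lets us pick a finite $B>\beta$ with
\[
 \gamma:=\Gamma_A(\beta,B)-\varepsilon>0.
\]
The final sentence of the corollary is just the observation that this coefficient is positive whenever $\Gamma_A(\beta,B)>\varepsilon$.

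\textbf{Step 3: existence.} Using \eqref{eq:slope-bridge}, which gives $(\cM^B)^{\rad}(u^\Tcal)=m_B(\phi_\Tcal)$ and $J^{\rad}(u^\Tcal)=J^{\NA}(\phi_\Tcal)$, we obtain
\[
 m_B(\phi_\Tcal)\ge\gamma J^{\NA}(\phi_\Tcal)
\]
for every normal ample $\Tcal$. This is condition (iii) of Corollary~\ref{cor:polarized-correspondence}, so $c_1(L)$ contains a unique cscK metric. That conclusion rests on Darvas--Zhang~\cite[Theorem~1.1]{DZ}, or alternatively on the chain (iii)$\Rightarrow$(iv)$\Rightarrow$(i) proved above.

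\textbf{Main obstacle.} The only delicate point is a convention check. Condition (iii) must be stated over the same family as the hypothesis, namely all normal ample test configurations of arbitrary exponent, with invariants normalized to $L$. The slope identifications in \eqref{eq:slope-bridge} must also hold for every such configuration, including non-reduced central fibers and real $B$. I expect this to be routine: the paper's conventions already normalize exponents, and the radial definition of $m_B$ is used throughout.
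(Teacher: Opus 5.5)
Your proposal is correct and follows the paper's proof essentially verbatim: you combine \eqref{eq:main-gap} with the hypothesis to get \eqref{eq:negative-lower-gain}, use $\Gamma_A(\beta,B)\uparrow G_A(\beta)$ to fix a finite $B>\beta$, independent of $\Tcal$, with $\Gamma_A(\beta,B)>\varepsilon$, and conclude via condition \textup{(iii)} of Corollary~\ref{cor:polarized-correspondence}. The convention check you flag is indeed routine, since $m_B(\phi_\Tcal)$ is defined as $s_B(u^\Tcal)$ and \eqref{eq:slope-bridge} identifies $J^{\rad}(u^\Tcal)$ with $J^{\NA}(\phi_\Tcal)$ on the same testing family.
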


\begin{proof}
Combining \eqref{eq:kahler-semistability} with
Theorem~\ref{thm:gap} gives \eqref{eq:negative-lower-gain} for every
$B>\beta$. Since
\[
 \Gamma_A(\beta,B)\longrightarrow G_A(\beta)
 \qquad(B\to\infty)
\]
and $\varepsilon<G_A(\beta)$, there exists a finite $B>\beta$,
independent of $\Tcal$, such that $\Gamma_A(\beta,B)>\varepsilon$.
Thus condition \textup{(iii)} of
Corollary~\ref{cor:polarized-correspondence} holds, so $c_1(L)$
contains a unique cscK metric.

Taking $\varepsilon=\vartheta G_A(\beta)$, with
$0\le\vartheta<1$, in \eqref{eq:negative-lower-gain} gives
\eqref{intro:robust-gain}. Conversely, a unique cscK metric gives,
by condition \textup{(ii)} of
Corollary~\ref{cor:polarized-correspondence}, a fixed finite
$\beta>0$ such that
$(\cM^\beta)^{\rad}(u^\Tcal)\ge0$ for every configuration in that
family. Since $J^{\rad}(u^\Tcal)\ge0$, this implies
\eqref{intro:robust-unreduced} and proves
Corollary~\ref{intro:quantitative-correspondences}\textup{(i)}.
\end{proof}

If $\beta\ge A$, then
\[
 \Gamma_A(\beta,B)
 =
 \frac{A^{n+1}}{n}
 \bigl(\beta^{-n}-B^{-n}\bigr).
\]
Hence $\Gamma_A(\beta,B)>\varepsilon$ whenever
\[
 B>
 \left(
 \beta^{-n}-\frac{n\varepsilon}{A^{n+1}}
 \right)^{-1/n}.
\]
The assumption $\varepsilon<G_A(\beta)=A^{n+1}/(n\beta^n)$
gives $\beta^{-n}-n\varepsilon/A^{n+1}>0$.

\begin{remark}\label{finite:tolerance-endpoint}
On $(\mathbf P^1,\mathcal O(1))$, the Fubini--Study reference
has $A=1$ and $\mathcal R^{\NA}=2J^{\NA}$.
Lemma~\ref{lem:alpha-cutoff} and Theorem~\ref{thm:gap} give
\(
 m_\beta(\phi)\ge-G_1(\beta)J^{\NA}(\phi)
\) $(\beta>0)$
for all normal ample test configurations. Nonunitary projective
automorphisms produce distinct cscK forms in the class, so
the bound with $\varepsilon\ge G_1(\beta)$ does not imply uniqueness.
\end{remark}

\subsection{Scaling and analytic coercivity}

\begin{corollary}[A pointwise coercivity criterion]
\label{cor:pointwise-coercivity}
Let $\Omega=[\omega_0]$ be a K\"ahler class on a connected compact
K\"ahler manifold, and set $A=\alpha(X,\Omega)$. Suppose that, for
some $\beta>0$, $C\in\R$, and
$0\le\varepsilon<G_A(\beta)$, one has
$\cM^\beta(u)\ge-\varepsilon J(u)-C$ for every $u\in\cH$.
Then $\cM$ is coercive.
\end{corollary}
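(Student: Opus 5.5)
The plan is to combine the hypothesis with the pointwise entropy gap of Theorem~\ref{thm:analytic-gap}. That gives a coercive lower bound for $\cM$ directly at the level of potentials, with no passage to radial slopes or test configurations. Since $\cJ_{\Ric(\omega_0)}$ does not depend on the parameter, for every $u\in\cH$ we have
\[
 \cM(u)-\cM^\beta(u)=\Ent(u)-\Ent^\beta(u).
\]
Here $\Ent(u)$ is the ordinary entropy, which is the monotone limit of $\Ent^B(u)$ by \cite[Corollary~3.11]{DZ}.

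First fix the auxiliary parameters. Since $\varepsilon<G_A(\beta)$ and $a\mapsto G_a(\beta)$ is continuous and increasing with $G_a(\beta)\uparrow G_A(\beta)$ as $a\uparrow A$, choose $0<a<A$ with $G_a(\beta)>\varepsilon$. Then choose $c\in(a,A)$ and set $J_*=L_c/(c-a)$, as in Theorem~\ref{thm:analytic-gap}. By \eqref{eq:pointwise-gap}, every $u\in\cH$ satisfies
\[
 \cM(u)\ge\cM^\beta(u)+G_a(\beta)\bigl(J(u)-J_*\bigr)_+
 \ge-\varepsilon J(u)-C+G_a(\beta)\bigl(J(u)-J_*\bigr).
\]
Putting $\delta=G_a(\beta)-\varepsilon>0$, this reads
\[
 \cM(u)\ge\delta J(u)-C-G_a(\beta)J_*,
\]
and the constant is independent of $u$. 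This is coercivity in the usual sense, $\cM\ge\delta J-C'$ on $\cH$. If the intended notion is phrased with $d_1$ on the normalized slice $\mathcal E^1_0$ or on all of $\mathcal E^1$, I will convert it in two steps. First, use \eqref{eq:normalized-distance-J} to pass between $J$ and $d_1(0,\cdot)$ on $\mathcal E^1_0$, noting that $\cM$ and $J$ are invariant under constants. Second, extend from $\cH$ to $\mathcal E^1$ by the $d_1$-lower semicontinuous extension of $\cM$ \cite{bdl}: approximate with smooth potentials along which $\cM$ converges (entropy-continuous approximation) and use the $d_1$-continuity of $J$.

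The only delicate point is bookkeeping of the strict inequality $\varepsilon<G_A(\beta)$. Theorem~\ref{thm:analytic-gap} only provides the constant $G_a(\beta)$ for $a<A$, since no integrability is available at $A$ itself. So the parameter $a$ must be chosen close enough to $A$ \emph{before} the estimate is applied, exactly as in the proof of Theorem~\ref{thm:gap}. Once $a$ and $c$ are fixed, $J_*$ is a fixed finite constant, and the positive part in \eqref{eq:pointwise-gap} is harmless because $(x)_+\ge x$.
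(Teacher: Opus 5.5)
Your proposal is correct and is essentially the paper's proof. The paper likewise chooses $0<a<c<A$ with $G_a(\beta)>\varepsilon$, sets $J_*=L_c/(c-a)$, and combines \eqref{eq:pointwise-gap} with the hypothesis to obtain $\cM(u)\ge(G_a(\beta)-\varepsilon)J(u)-C-G_a(\beta)J_*$. Your extra remarks about converting to $d_1$ and extending to $\mathcal E^1$ go beyond what the statement requires, but they are harmless.
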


\begin{proof}
Choose $0<a<c<A$ such that $G_a(\beta)>\varepsilon$, and put
$J_*=L_c/(c-a)$ as in Theorem~\ref{thm:analytic-gap}.
Equation~\eqref{eq:pointwise-gap} and the assumed lower bound give
\[
 \cM(u)\ge
 \bigl(G_a(\beta)-\varepsilon\bigr)J(u)
 -C-G_a(\beta)J_*.
\]
Since $G_a(\beta)-\varepsilon>0$, $\cM$ is coercive.
\end{proof}

Let $(X,\omega_0)$ be a connected compact K\"ahler manifold and
set $\Omega=[\omega_0]$. Define
\[
 \lambda^{\rm Kah}_\beta(X,\Omega)=
 \inf_{\substack{\Tcal\text{ relatively K\"ahler}\\
                  J^{\rad}(u^\Tcal)>0}}
 \frac{(\cM^\beta)^{\rad}(u^\Tcal)}
      {J^{\rad}(u^\Tcal)},
\]
where $\Tcal$ ranges over the testing family in
Corollary~\ref{cor:kahler-correspondence}.

\begin{proposition}[Scaling of a compact K\"ahler class]
\label{prop:kahler-scale}
For every $r>0$ and $\beta>0$,
\begin{equation}\label{eq:kahler-scale}
 \lambda^{\rm Kah}_\beta(X,r\Omega)
 =\frac1r\lambda^{\rm Kah}_{r\beta}(X,\Omega),
 \qquad
 \alpha(X,r\Omega)=\frac1r\alpha(X,\Omega).
\end{equation}
Hence the semistable parameters for $r\Omega$ are precisely
$r^{-1}$ times those for $\Omega$. If the testing family contains
a configuration of positive norm and the set of semistable parameters
is nonempty, denote the infimum of the semistable parameters by
$\beta_{\rm crit}(\Omega)$.
Then
\[
 \beta_{\rm crit}(r\Omega)
 =r^{-1}\beta_{\rm crit}(\Omega),
 \qquad
 \frac{\beta_{\rm crit}(r\Omega)}
      {\alpha(X,r\Omega)}
 =
 \frac{\beta_{\rm crit}(\Omega)}
      {\alpha(X,\Omega)}.
\]
\end{proposition}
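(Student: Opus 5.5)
The plan is to prove everything by a single scaling substitution. Scaling the reference form matches the data of $(X,r\Omega)$ with the data of $(X,\Omega)$ at parameter $r\beta$. Put $\omega_0'=r\omega_0$ and identify potentials via $u'=ru$, so that $\omega_0'+dd^cu'=r\omega_u$. The normalized measures $d\nu_0$ and $d\mu_u$ are unchanged, since $V'=r^nV$. From \eqref{eq:AM} and the definitions of $I,J$ I expect $\AM'(ru)=r\AM(u)$ and $J'(ru)=rJ(u)$. In \eqref{eq:analytic-quantization}, the map $w\mapsto rw$ identifies $\cH$ with $\cH'$. Then
\[
 -\log\int_Xe^{\beta r(w-u)}d\nu_0+\beta r\bigl(\AM(w)-\AM(u)\bigr),
\]
so $\Ent'^{\beta}(ru)=\Ent^{r\beta}(u)$. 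Next, $\Ric(r\omega_0)=\Ric(\omega_0)$, and $\bar\theta$ scales by $r^{-1}$. Using \eqref{eq:AMtheta} and \eqref{eq:Jtheta}, the factors of $r$ in $n\AM_\theta$ and in $\bar\theta\AM$ cancel, so $\cJ'_{\Ric(\omega_0')}(ru)=\cJ_{\Ric(\omega_0)}(u)$. Hence $\cM'^\beta(ru)=\cM^{r\beta}(u)$, which is the identity \eqref{eq:sharpness-class-scaling} with $\lambda=r$ used earlier.

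The second step is to pass to rays and test configurations. A relatively K\"ahler test configuration for $(X,\Omega)$ becomes one for $(X,r\Omega)$ by scaling its relative K\"ahler form by $r$. Its geodesic ray is $ru_t$, because finite-energy geodesics are preserved by scaling. This correspondence is bijective on the testing family of Corollary~\ref{cor:kahler-correspondence}. Dividing by $t$ gives $(\cM'^\beta)^{\rad}=(\cM^{r\beta})^{\rad}$ and $J'^{\rad}=rJ^{\rad}$, and the condition $J^{\rad}>0$ is preserved. Taking the infimum of the quotient yields $\lambda_\beta^{\rm Kah}(X,r\Omega)=r^{-1}\lambda_{r\beta}^{\rm Kah}(X,\Omega)$.

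For the $\alpha$-invariant, I use the definition in Section~\ref{sec:alpha-invariant}. There, $\int e^{-a(u'-\sup u')}d\nu_0=\int e^{-ar(u-\sup u)}d\nu_0$, so the admissible $a$ for $r\Omega$ are exactly the admissible $ar$ for $\Omega$, giving $\alpha(X,r\Omega)=r^{-1}\alpha(X,\Omega)$.

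The remaining claims follow directly. A parameter $\beta$ is semistable for $r\Omega$ iff $\lambda^{\rm Kah}_\beta(X,r\Omega)\ge0$, which holds iff $\lambda^{\rm Kah}_{r\beta}(X,\Omega)\ge0$. So the semistable set for $r\Omega$ is $r^{-1}$ times the semistable set for $\Omega$. Nonemptiness and the presence of a configuration of positive norm transfer under the scaling, so the infima scale in the same way. Dividing by the scaled $\alpha$-invariant gives the invariance of the ratio.

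The main obstacle is the claim that the testing family is exactly preserved and that rays scale linearly. The family is defined via \cite[Definition~5.4]{DZ}, so I must check that multiplying the relative K\"ahler class by $r$ keeps the configuration in the family. I must also check that the associated ray, built as an envelope or weak solution, is $ru_t$. I would verify this by noting that the envelope defining the ray commutes with multiplication by $r>0$.
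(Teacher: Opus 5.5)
Your proposal is correct and follows the paper's proof essentially step for step. Both use the same scaling identities for $\AM$, $J$, $\Ent^\beta$ and the Ricci energy, the same bijection of test configurations with rays $u_t\mapsto ru_t$, and the same substitution in the $\alpha$-invariant. The one step you leave implicit is that $\beta$ is semistable exactly when $\lambda^{\rm Kah}_\beta\ge0$. This requires zero-norm configurations to have vanishing quantized slope, which holds by \eqref{eq:entropy-pointwise-bound} and \eqref{eq:ricci-energy-bound}; alternatively, apply $\cM^\beta_{r\omega_0}(ru)=\cM^{r\beta}_{\omega_0}(u)$ directly to each configuration.
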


\begin{proof}
Scaling the relatively K\"ahler class by $r$ gives a bijection
between the test configurations for $\Omega$ and $r\Omega$, with
associated rays related by $u_t\mapsto ru_t$. The normalized
reference measure is unchanged. From the definitions,
\[
 \AM_{r\omega_0}(ru)=r\AM_{\omega_0}(u),\qquad
 J_{r\omega_0}(ru)=rJ_{\omega_0}(u),\qquad
 \Ent^\beta_{r\omega_0}(ru)
 =\Ent^{r\beta}_{\omega_0}(u).
\]
Moreover, $\Ric(r\omega_0)=\Ric(\omega_0)$ and
$\bar S_{r\omega_0}=r^{-1}\bar S_{\omega_0}$, while
$\AM_{\Ric(r\omega_0),r\omega_0}(ru)
=\AM_{\Ric(\omega_0),\omega_0}(u)$. Hence
\[
 \cJ_{\Ric(r\omega_0),r\omega_0}(ru)
 =\cJ_{\Ric(\omega_0),\omega_0}(u),
 \qquad
 \cM^\beta_{r\omega_0}(ru)
 =\cM^{r\beta}_{\omega_0}(u).
\]
Taking radial slopes and then the infimum over the testing family
proves the first identity in \eqref{eq:kahler-scale}.

Under $u\mapsto ru$, normalized $\omega_0$-psh potentials
correspond to normalized $r\omega_0$-psh potentials, and
$e^{-au}=e^{-(a/r)(ru)}$. Therefore
$\alpha(X,r\Omega)=r^{-1}\alpha(X,\Omega)$.
Finally, $\beta$ is semistable for $r\Omega$ exactly when
$r\beta$ is semistable for $\Omega$. Taking infima gives the
identities for $\beta_{\rm crit}$.
\end{proof}

\section{Equivariant testing and torus twists}
\label{sec:groups}

Let $(X,L)$ be a connected smooth polarized manifold and set
$A=\alpha(X,L)$. Write
$\G=\Aut_0(X,L)$. Let $\HH\subset\G$ be a connected reductive
algebraic subgroup, let $\ZZ_\HH=Z(\HH)^\circ$, and fix a maximal
torus $\SSS\subset\HH$. When necessary, we pass to a finite cover
and a positive power of $L$ to define the actions and linearizations.
All invariants are normalized to $L$.

For an $\HH$-equivariant normal ample test configuration, define
\begin{align}
 J_{\HH}^{\NA}(\phi)
 &=\inf_{\xi\in N_\R(\ZZ_\HH)}
      J^{\NA}(\phi_\xi),
 \label{eq:central-norm}\\
 \RR_{\beta,\HH}^{\rm max}(\Tcal)
 &=\sup_{\xi\in N_\Q(\SSS)}
      (\cM^\beta)^{\rad}(u^{\Tcal_\xi}).
 \label{eq:full-sup}
\end{align}
Equation~\eqref{eq:central-norm} also defines $J_{\HH}^{\NA}$ on
$(\mathcal E^{1,\NA})^{K_\HH}$, where $K_\HH\subset\HH$ is
maximal compact; see \cite[Section~2.1.3]{Li}. When $\G$ is reductive and $\HH=\G$,
\eqref{eq:central-norm} is the central reduced norm. After clearing denominators and making a normalized base change,
a rational twist in \eqref{eq:full-sup} is again a normal ample test
configuration. Thus Theorem~\ref{thm:gap} applies.

\begin{lemma}[Weyl averaging of the reduced norm]
\label{lem:weyl}
Let $\phi$ be the metric of an $\HH$-equivariant normal ample test
configuration. Then
\begin{equation}\label{eq:weyl-norm}
 \inf_{\xi\in N_\R(\SSS)}J^{\NA}(\phi_\xi)
 =
 \inf_{\xi\in N_\R(\ZZ_\HH)}J^{\NA}(\phi_\xi)
 =
 J_{\HH}^{\NA}(\phi).
\end{equation}
The same infima are obtained over rational cocharacters.
\end{lemma}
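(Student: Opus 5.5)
We need to prove that the infimum of $J^{\NA}(\phi_\xi)$ over $N_\R(\SSS)$ equals the infimum over $N_\R(\ZZ_\HH)$. Since $\ZZ_\HH\subset\SSS$, we have $N_\R(\ZZ_\HH)\subset N_\R(\SSS)$, so the left infimum is at most the middle one. The second equality is the definition \eqref{eq:central-norm}. The substance is therefore the reverse inequality: for every $\xi\in N_\R(\SSS)$ there is $\zeta\in N_\R(\ZZ_\HH)$ with $J^{\NA}(\phi_\zeta)\le J^{\NA}(\phi_\xi)$.

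The plan uses convexity and Weyl invariance of the function $f(\xi)=J^{\NA}(\phi_\xi)$ on $N_\R(\SSS)$.

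\emph{Convexity.} Twisting by $\xi$ acts on $\phi$ through the torus action on $X^{\NA}$. Write $J^{\NA}=\sup-\ENA$. The energy $\ENA(\phi_\xi)$ is affine in $\xi$: it equals $\ENA(\phi)$ plus a linear term given by the moment map (Futaki-type) pairing with $\xi$, as in \cite[Section~2.1.3]{Li}. The term $\sup\phi_\xi=\sup_v(\phi(v)+\langle\xi,\cdot\rangle)$ is a supremum of affine functions of $\xi$, because the twist shifts values along the $\SSS$-orbit of a valuation linearly in $\xi$. Hence $f$ is convex on $N_\R(\SSS)$.

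\emph{Weyl invariance.} Let $W=N_\HH(\SSS)/\SSS$. For $w\in W$, choose a representative $g\in\HH$. Since $\Tcal$ is $\HH$-equivariant, $g$ acts on the configuration, commuting with the $\C^*$-action. It therefore identifies $\Tcal_\xi$ with $\Tcal_{w\cdot\xi}$ up to an equivariant isomorphism, so $f(w\xi)=f(\xi)$.

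\emph{Averaging.} Given $\xi$, set $\bar\xi=|W|^{-1}\sum_{w}w\xi$. Convexity and invariance give $f(\bar\xi)\le f(\xi)$. The $W$-fixed subspace of $N_\R(\SSS)$ is $N_\R(\ZZ_\HH)$. This holds because $\HH$ is connected reductive: the fixed part of the Cartan subalgebra under the Weyl group is the Lie algebra of the connected center. Thus $\bar\xi\in N_\R(\ZZ_\HH)$, which proves the reverse inequality.

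\emph{Rational cocharacters.} Since $f$ is convex and finite on a finite-dimensional space, it is continuous. Rational points are dense in both $N_\R(\SSS)$ and $N_\R(\ZZ_\HH)$, and the averaging map preserves rationality. Hence the infima over $N_\Q$ agree with those over $N_\R$.

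\emph{Main obstacle.} The delicate step is justifying convexity of $f$ for real, non-integral $\xi$. Twists are only defined directly for rational $\xi$ after normalized base change. For real $\xi$, $\phi_\xi$ must be interpreted as an element of $(\mathcal E^{1,\NA})^{K_\HH}$ via the torus action on $X^{\NA}$, following \cite[Section~2.1.3]{Li}. I would first prove convexity on $N_\Q$, using the explicit formula: the linear term in $\ENA$ plus the sup-of-affine term coming from the weight shift on Fubini--Study data. I would then pass to real $\xi$ by continuity of the twist in the strong topology. A secondary point is checking that the Weyl group representative actually conjugates twists. This uses that $\HH$ acts on the total space compatibly with the $\C^*$-action, so conjugating the composite action by $g$ yields $\Tcal_{w\xi}$.
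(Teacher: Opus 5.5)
Your proposal is correct and follows the paper's proof essentially step for step. Convexity of $\xi\mapsto J^{\NA}(\phi_\xi)$ comes from affineness of $\ENA(\phi_\xi)$ \cite[Lemma~2.23]{Li} together with $\sup\phi_\xi$ being a maximum of affine functions; Weyl invariance comes from conjugation; averaging lands in $N_\R(\SSS)^{W_\HH}=N_\R(\ZZ_\HH)$; and continuity plus density handle rational cocharacters. The only difference is minor: the paper reads $\sup\phi_\xi$ off an $\SSS$-weight presentation as a maximum of finitely many affine functions, which is valid directly for real $\xi$, so your detour through rational twists and strong continuity is unnecessary.
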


\begin{proof}
Set $F(\xi)=J^{\NA}(\phi_\xi)$ on $N_\R(\SSS)$.
We use the linearizations induced by twisting, without separately
normalizing $\sup\phi_\xi=0$. Since $J^{\NA}$ is unchanged by
adding constants, this choice does not affect $F$.
In an $\SSS$-weight presentation of $\phi$, the function
$\xi\mapsto\sup\phi_\xi$ is the maximum of finitely many affine
functions, while $\xi\mapsto\ENA(\phi_\xi)$ is affine by
\cite[Example~2.21 and Lemma~2.23]{Li}. Hence
$F(\xi)=\sup\phi_\xi-\ENA(\phi_\xi)$ is convex and continuous.

Let $W_\HH$ be the Weyl group of $(\HH,\SSS)$. Since $\phi$ is
$\HH$-equivariant, conjugation gives $F(w\xi)=F(\xi)$ for
$w\in W_\HH$. For
\[
 \bar\xi=\frac1{|W_\HH|}\sum_{w\in W_\HH}w\xi
 \in N_\R(\SSS)^{W_\HH}=N_\R(\ZZ_\HH),
\]
convexity gives $F(\bar\xi)\le F(\xi)$. Therefore the infimum over
$N_\R(\ZZ_\HH)$ is at most the infimum over $N_\R(\SSS)$, while
the reverse inequality follows from
$N_\R(\ZZ_\HH)\subset N_\R(\SSS)$. Continuity and density of rational cocharacters give the same
infima over $N_\Q(\SSS)$ and $N_\Q(\ZZ_\HH)$.
\end{proof}

\begin{lemma}[Equivariant strong and entropy approximation]
\label{lem:shared-entropy-approximation}
Let $\HH\subset\G$ be connected and reductive, and let
$K_\HH\subset\HH$ be maximal compact. For every finite-entropy
$\phi\in(\mathcal E^{1,\NA})^{K_\HH}$, there are normal ample
$\HH$-equivariant test configurations whose metrics $\phi_j$ satisfy
\begin{equation}\label{ordinary:entropy-approximation}
 d_1^{\NA}(\phi_j,\phi)\longrightarrow0,
 \qquad
 H^{\NA}(\phi_j)\longrightarrow H^{\NA}(\phi).
\end{equation}
Moreover,
$\cM^{\NA}(\phi_j)\to\cM^{\NA}(\phi)$ and
$J_{\HH}^{\NA}(\phi_j)\to J_{\HH}^{\NA}(\phi)$.
\end{lemma}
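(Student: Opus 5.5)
The plan is to reduce to the non-equivariant approximation already used in the proof of Corollary~\ref{cor:polarized-correspondence}, namely Trusiani's special Fujita approximation \cite[Corollary~A and its proof]{Tru}. The reason this should work is that the construction there can be carried out equivariantly with respect to the connected reductive group $\HH$. I will first recall that a finite-entropy $\phi\in(\mathcal E^{1,\NA})^{K_\HH}$ can be decreasingly approximated by model metrics coming from filtrations of the section rings $\bigoplus_m H^0(X,mL)$. Because $\phi$ is $K_\HH$-invariant, these filtrations are $K_\HH$-stable. Since $\HH$ is the complexification of $K_\HH$ and $H^0(X,mL)$ is an algebraic $\HH$-representation, they are then $\HH$-stable. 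The finitely generated truncations therefore give $\HH$-equivariant normal ample test configurations, after normalization, which is $\HH$-equivariant by functoriality. Trusiani's theorem \cite[Theorem~A]{Tru} is stated with a group action, so I will invoke it with the group $\HH$. This yields metrics $\phi_j$ of $\HH$-equivariant configurations satisfying \eqref{ordinary:entropy-approximation}.

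Next, I derive the convergence of $\cM^{\NA}(\phi_j)$. Strong convergence $d_1^{\NA}(\phi_j,\phi)\to0$ gives $\mathcal R^{\NA}(\phi_j)\to\mathcal R^{\NA}(\phi)$ by the strong continuity in \cite[Proposition~2.17]{Li}. Since $\cM^{\NA}=H^{\NA}-\mathcal R^{\NA}$, convergence of $\cM^{\NA}$ follows, exactly as in the proof of Corollary~\ref{cor:polarized-correspondence}.

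For the reduced norm, I write $F_j(\xi)=J^{\NA}((\phi_j)_\xi)$ and $F(\xi)=J^{\NA}(\phi_\xi)$ on $N_\R(\ZZ_\HH)$. Twisting by a fixed real cocharacter is a $d_1^{\NA}$-isometry, or at least uniformly Lipschitz, via the action of $\ZZ_\HH$ on $\mathcal E^{1,\NA}$ \cite[Section~2.1.3]{Li}. Together with $|J^{\NA}(\psi)-J^{\NA}(\chi)|\le C\,d_1^{\NA}(\psi,\chi)$, this should give $|F_j(\xi)-F(\xi)|\le C\,d_1^{\NA}(\phi_j,\phi)$ uniformly in $\xi$. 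Taking infima then gives $J_\HH^{\NA}(\phi_j)\to J_\HH^{\NA}(\phi)$. If only local uniformity in $\xi$ is available, I will instead use properness. The functions $F_j$ are convex (Lemma~\ref{lem:weyl}) and grow at least linearly in $|\xi|$, with uniform constants, because $J^{\NA}(\phi_\xi)\ge c|\xi|-C(J^{\NA}(\phi)+1)$. The infima are then attained on a common compact set, and local uniform convergence suffices there.

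I expect the equivariance step to be the main obstacle. Trusiani's approximation goes through multiplier-ideal or Fujita-type constructions, and one must check that each step commutes with the $\HH$-action and not merely with the compact group $K_\HH$. I will handle this by noting that multiplier ideals of $K_\HH$-invariant metrics are $K_\HH$-invariant ideal sheaves. Such ideal sheaves are automatically $\HH$-invariant, since the stabilizer of an ideal is a Zariski-closed subgroup containing the Zariski-dense subgroup $K_\HH$. Blowups of $\HH$-invariant ideals in $X\times\mathbf A^1$, with the product $\HH$-action commuting with $\C^*$, are $\HH$-equivariant.
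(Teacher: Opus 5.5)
Your proof has a genuine gap at the entropy convergence $H^{\NA}(\phi_j)\to H^{\NA}(\phi)$. That convergence is the real content of the lemma; the equivariance is comparatively routine.

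\textbf{Why your first step fails.} Truncations of the filtration of $\phi$, i.e.\ Fubini--Study approximants, do converge strongly to $\phi$. But $H^{\NA}$ is only lower semicontinuous in the strong topology. No general result gives entropy convergence along such truncations for an arbitrary finite-entropy $\phi$; this is exactly the entropy-approximation problem that the Yau--Tian--Donaldson arguments have to work around.

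\textbf{Why the citation does not fill it.} Citing Trusiani's Theorem~A does not supply the missing convergence. That theorem is the correspondence between cscK metrics and reduced uniform K-stability, not an approximation result. The approximation result in Trusiani is Corollary~A and its proof, the special Fujita approximation. It applies only to a \emph{model} metric. Given a big line bundle $\mathcal L$ on a fixed equivariant SNC model, it produces ample models $p_j^*\mathcal L=\mathcal L_j+E_j$ with $E_j\ge0$ and $(\mathcal L_j^{n+1})\to\operatorname{vol}(\mathcal L)$, whose metrics converge to the envelope $P(\phi_{\mathcal L})$ together with their entropies. It says nothing about your filtration approximants, or about a general $\phi\in(\mathcal E^{1,\NA})^{K_\HH}$.

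\textbf{The missing steps.} You need a reduction to model metrics, then a second approximation, then a diagonal argument.
\begin{enumerate}
\item First apply Li's entropy approximation (Proposition~6.3 in Li's paper, built from multiplier ideals of the maximal ray of $\phi$). This gives $\HH$-equivariant model metrics $\psi_k=P(\phi_{\mathcal L_k})$ with $d_1^{\NA}(\psi_k,\phi)\to0$ and $H^{\NA}(\psi_k)\to H^{\NA}(\phi)$. This is where your observation that $K_\HH$-invariant coherent ideals are $\HH$-invariant is relevant.
\item These $\mathcal L_k$ are big but not nef, so $\psi_k$ is not yet a test-configuration metric. For fixed $k$, translate $\mathcal L_k$ by a multiple of the central fiber so that it is big with augmented base locus inside the central fiber. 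Then the Fujita modifications are isomorphisms away from $t=0$, $E_j$ vanishes there, and the generic polarized fiber stays $(X,L)$.
\item Apply the equivariant special Fujita approximation and undo the translation. The resulting ample metrics satisfy $\phi_{k,j}\le\psi_k$, so $d_1^{\NA}(\phi_{k,j},\psi_k)=\ENA(\psi_k)-\ENA(\phi_{k,j})\to0$, and also $H^{\NA}(\phi_{k,j})\to H^{\NA}(\psi_k)$.
\item A diagonal choice $j(k)$ then gives both convergences in the statement.
\end{enumerate}

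\textbf{The remaining parts.} Once this approximation is in place, your derivation of $\cM^{\NA}(\phi_j)\to\cM^{\NA}(\phi)$ is exactly the paper's. Your argument for $J^{\NA}_\HH$ is a workable substitute for the paper's citation of Li's Lemma~6.4, provided you use the properness fallback, which restricts the infimum to a common compact set of twists. The uniform-in-$\xi$ Lipschitz bound you state first is not justified as written.
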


\begin{proof}
By \cite[Proposition~6.3]{Li}, choose $\HH$-equivariant model
metrics $\psi_k$ such that
$d_1^{\NA}(\psi_k,\phi)\to0$ and
$H^{\NA}(\psi_k)\to H^{\NA}(\phi)$.

Fix $k$ and write $\psi=\psi_k=P(\phi_{\mathcal L})$ on an
$\HH$-equivariant smooth dominating SNC model. After translating
by a multiple of the central fiber, we may assume that
$\mathcal L$ is big and that its augmented base locus is contained
in the central fiber. This translates the associated
non-Archimedean metric by a constant and leaves
$H^{\NA}$, $\cM^{\NA}$, and $J_{\HH}^{\NA}$ unchanged.

The equivariant special Fujita approximation
\cite[Corollary~A and its proof]{Tru} produces ample models
$p_j^*\mathcal L=\mathcal L_j+E_j$ with $E_j\ge0$,
$(\mathcal L_j^{n+1})\to\operatorname{vol}(\mathcal L)$, with
non-Archimedean entropy converging to $H^{\NA}(\psi)$. Away from
the central fiber, the modifications are isomorphisms and $E_j$
vanishes. Thus the generic polarized fiber remains $(X,L)$.
After reversing the central-fiber translation,
let $\phi_{k,j}$ be the associated metrics. Then
$\phi_{k,j}\le\psi$,
$\ENA(\phi_{k,j})\to\ENA(\psi)$, and the formula for
$d_1^{\NA}$ between ordered metrics gives
\[
 d_1^{\NA}(\phi_{k,j},\psi)
 =\ENA(\psi)-\ENA(\phi_{k,j})\longrightarrow0.
\]

Choose $j(k)$ so that
$d_1^{\NA}(\phi_{k,j(k)},\psi_k)
 +|H^{\NA}(\phi_{k,j(k)})-H^{\NA}(\psi_k)|<k^{-1}$.
The sequence $\phi_k=\phi_{k,j(k)}$ satisfies
\eqref{ordinary:entropy-approximation}. The strong continuity of
$\mathcal R^{\NA}$ \cite[Proposition~2.17]{Li} gives
$\cM^{\NA}(\phi_k)\to\cM^{\NA}(\phi)$, while
\cite[Lemma~6.4]{Li} gives
$J_{\HH}^{\NA}(\phi_k)\to J_{\HH}^{\NA}(\phi)$.
\end{proof}

\begin{theorem}[Equivariant testing with maximal-torus twists]
\label{thm:groups}
Assume that the Futaki character vanishes on $\Lie\ZZ_\HH$.
Suppose that, for some fixed finite $\beta>0$ and
$0\le\varepsilon<G_A(\beta)$,
\begin{equation}\label{eq:group-condition}
 \RR_{\beta,\HH}^{\rm max}(\Tcal)
 \ge
 -\varepsilon J_{\HH}^{\NA}(\phi_\Tcal)
\end{equation}
for every normal ample $\HH$-equivariant test configuration
$\Tcal$. Then $c_1(L)$ contains a cscK metric.
\end{theorem}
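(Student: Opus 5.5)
The plan is to reduce the hypothesis \eqref{eq:group-condition} to a uniform reduced lower bound for the non-Archimedean Mabuchi energy on $\HH$-equivariant test configurations. Then we invoke Li's equivariant model-metric criterion \cite[Theorem~1.10]{Li}, with reduced norm $J_\HH^{\NA}$, together with the approximation in Lemma~\ref{lem:shared-entropy-approximation}.

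\emph{Step 1 (parameter gain on each twist).} Fix an $\HH$-equivariant normal ample $\Tcal$ and $\eta>0$. By \eqref{eq:group-condition} and the definition \eqref{eq:full-sup} as a supremum, choose $\xi\in N_\Q(\SSS)$ with
\[
 m_\beta(\phi_{\Tcal_\xi})\ge-\varepsilon J_\HH^{\NA}(\phi_\Tcal)-\eta .
\]
After base change, $\Tcal_\xi$ is a normal ample configuration, so the second inequality of Theorem~\ref{thm:gap} gives
\[
 \cM^{\NA}(\phi_{\Tcal_\xi})\ge m_\beta(\phi_{\Tcal_\xi})+G_A(\beta)J^{\NA}(\phi_{\Tcal_\xi}).
\]
By Lemma~\ref{lem:weyl}, $J^{\NA}(\phi_{\Tcal_\xi})\ge J_\HH^{\NA}(\phi_\Tcal)$, since the infimum over $N_\R(\SSS)$ equals the central one. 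Hence
\[
 \cM^{\NA}(\phi_{\Tcal_\xi})\ge(G_A(\beta)-\varepsilon)J_\HH^{\NA}(\phi_\Tcal)-\eta .
\]

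\emph{Step 2 (untwisting).} We need $\cM^{\NA}(\phi_{\Tcal_\xi})=\cM^{\NA}(\phi_\Tcal)$. Twisting changes $\cM^{\NA}$ by the Futaki invariant of $\xi$. This requires two points. First, by Weyl averaging, $\xi$ may be replaced by its average $\bar\xi$ in $N_\Q(\ZZ_\HH)$. The averaging should be done on the $\cM^{\NA}$ side: $\xi\mapsto\cM^{\NA}(\phi_{\Tcal_\xi})$ is affine, since both $H^{\NA}$ and $\mathcal R^{\NA}$ are affine under twists (Li, Example~2.21, Lemma~2.23), and it is $W_\HH$-invariant by equivariance. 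So $\cM^{\NA}(\phi_{\Tcal_\xi})=\cM^{\NA}(\phi_{\Tcal_{\bar\xi}})$. Second, the slope on $\ZZ_\HH$ is $\Fut_{X,L}$, which vanishes by assumption. Letting $\eta\to0$ gives
\[
 \cM^{\NA}(\phi_\Tcal)\ge\delta J_\HH^{\NA}(\phi_\Tcal),\qquad \delta=G_A(\beta)-\varepsilon>0,
\]
for every $\HH$-equivariant normal ample $\Tcal$.

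\emph{Step 3 (passage to $(\mathcal E^{1,\NA})^{K_\HH}$ and existence).} For finite-entropy $K_\HH$-invariant $\phi$, Lemma~\ref{lem:shared-entropy-approximation} gives equivariant test configurations with $\cM^{\NA}(\phi_j)\to\cM^{\NA}(\phi)$ and $J_\HH^{\NA}(\phi_j)\to J_\HH^{\NA}(\phi)$. The inequality therefore passes to the limit; for infinite entropy it is trivial. Li's equivariant criterion then gives coercivity of $\cM$ modulo $\ZZ_\HH$ (hence reduced properness) and a cscK metric via \cite{Li}, \cite{DZ}.

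\emph{Main obstacle.} The delicate point is Step 2. One must justify that the affine-in-$\xi$ behaviour of $\cM^{\NA}$ under twisting holds for twists by $\SSS$, which is noncentral in general, and that the relevant linear term is the restriction of $\Fut$. I would first try to deduce this from Li's twist formulas for $\ENA$ and the Ricci energy, applied to each rational $\xi$, and from the identity $H^{\NA}(\phi_\xi)=H^{\NA}(\phi)$ for product twists. A second obstacle is that Li's criterion may require the reduced norm over the full center of a maximal reductive group. Since the theorem is stated for any connected reductive $\HH$, I would use the version of \cite[Theorem~1.10]{Li} for a reductive subgroup, with norm $J_\HH^{\NA}$.
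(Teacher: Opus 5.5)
Your proposal is essentially the paper's proof. The steps match: parameter gain from Theorem~\ref{thm:gap} on each rational $\SSS$-twist, the bound $J^{\NA}(\phi_\xi)\ge J_\HH^{\NA}(\phi)$ from Lemma~\ref{lem:weyl}, untwisting $\cM^{\NA}$, and then Lemma~\ref{lem:shared-entropy-approximation} followed by Li's model-metric criterion for the reductive group $\HH$ (the paper cites \cite[Definition~2.25 and Theorem~6.5]{Li}).

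The one thing to fix is the justification in Step~2. Affineness of $\xi\mapsto\cM^{\NA}(\phi_\xi)$ is exactly Li's twisting identity \cite[Lemma~2.24]{Li}, applied to the torus $\SSS$. That identity uses only $\SSS$-equivariance, so non-centrality of $\SSS$ is no obstacle. It cannot be derived the way you suggest: neither $H^{\NA}$ nor $\mathcal R^{\NA}$ is separately affine or twist-invariant. For the product twists of the trivial configuration of $(\mathbf P^1,\mathcal O(1))$, both are proportional to $|\xi|$. With the twisting identity in hand, your Weyl averaging just reproves $\Fut_{X,L}|_{\Lie\SSS}=0$, which the paper obtains from $\Lie\HH=\Lie\ZZ_\HH\oplus[\Lie\HH,\Lie\HH]$.
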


\begin{proof}
Since $\HH$ is reductive,
$\Lie\HH=\Lie\ZZ_\HH\oplus[\Lie\HH,\Lie\HH]$.
The Futaki character vanishes on the derived algebra, so the
assumption implies that it vanishes on $\Lie\HH$, and therefore
on $\Lie\SSS$.

Fix a normal ample $\HH$-equivariant test configuration $\Tcal$
and set $\phi=\phi_\Tcal$. For
$\xi\in N_\Q(\SSS)$, applying Li's twisting identity
\cite[Lemma~2.24]{Li} with the reductive group $\SSS$ gives
$\cM^{\NA}(\phi_\xi)=\cM^{\NA}(\phi)$.
After normalized base change, $\phi_\xi$ is the metric of a normal
ample test configuration. Hence Theorem~\ref{thm:gap} gives
\[
 \cM^{\NA}(\phi)
 \ge
 m_\beta(\phi_\xi)
 +G_A(\beta)J^{\NA}(\phi_\xi)
 \ge
 m_\beta(\phi_\xi)
 +G_A(\beta)J_{\HH}^{\NA}(\phi),
\]
where the last inequality follows from
Lemma~\ref{lem:weyl}. Taking the supremum over
$\xi\in N_\Q(\SSS)$ yields
\begin{equation}\label{eq:group-margin}
 \cM^{\NA}(\phi)
 \ge
 \RR_{\beta,\HH}^{\rm max}(\Tcal)
 +G_A(\beta)J_{\HH}^{\NA}(\phi)
 \ge
 \delta J_{\HH}^{\NA}(\phi),
 \qquad
 \delta=G_A(\beta)-\varepsilon>0.
\end{equation}

Let $\psi$ be an $\HH$-equivariant model metric with
$H^{\NA}(\psi)<\infty$. By
Lemma~\ref{lem:shared-entropy-approximation}, there are normal ample
$\HH$-equivariant test configurations with metrics $\phi_j$ such that
$\cM^{\NA}(\phi_j)\to\cM^{\NA}(\psi)$ and
$J_{\HH}^{\NA}(\phi_j)\to J_{\HH}^{\NA}(\psi)$.
Passing to the limit in \eqref{eq:group-margin} gives
$\cM^{\NA}(\psi)\ge\delta J_{\HH}^{\NA}(\psi)$.

If $H^{\NA}(\psi)=+\infty$, then
$\cM^{\NA}(\psi)=+\infty$, since $\mathcal R^{\NA}(\psi)$ is finite
on $\mathcal E^{1,\NA}$ by \cite[Proposition~2.17]{Li}. Thus
$\cM^{\NA}(\psi)\ge\delta J_{\HH}^{\NA}(\psi)$ holds for every
model metric in the equivariant finite-energy space.
The criterion for model metrics
\cite[Definition~2.25 and Theorem~6.5]{Li}, applied with the
reductive group $\HH$, gives a cscK metric in $c_1(L)$.
\end{proof}

\section{Geodesic-ray proofs and reduced parameter bounds}
\label{sec:center-direct}

We prove the central finite-parameter correspondence and
Theorem~\ref{main:levi}, then estimate the change in the reduced
quantized slopes as the parameter increases.

\subsection{The central criterion and its ray proof}

Throughout this section, $\G=\Aut_0(X,L)$ is the connected
polarized automorphism group defined in the Introduction.
When $\G$ is reductive, fix a maximal compact subgroup
$K\subset\G$, a $K$-invariant reference form, and set
$\mathbb T=Z(\G)^\circ$. Then
$J_{\mathbb T}^{\NA}(\phi)
=\inf_{\xi\in N_\R(\mathbb T)}J^{\NA}(\phi_\xi)$
is \eqref{eq:central-norm} with $\HH=\G$.

Let $\mathcal R_K$ denote the $K$-invariant finite-energy geodesic
rays $u=\{u_t\}_{t\ge0}$ from $0$, normalized by
$\AM(u_t)=0$. For $u,v\in\mathcal R_K$, set
\[
 \rho(u,v)=\lim_{t\to\infty}\frac{d_1(u_t,v_t)}{t},
 \qquad
 R(u)=\rho(u,0),
 \qquad
 s_\beta(u)=(\cM^\beta)^{\rad}(u).
\]
Thus
$s_\beta(u^\Tcal)=m_\beta(\phi_\Tcal)$
for a test configuration $\Tcal$.

For $\xi\in N_\R(\mathbb T)$, let $\sigma_\xi(t)$ denote the
corresponding real one-parameter subgroup and define
\[
 (\tau_\xi u)_t=\sigma_\xi(t)\cdot u_t,
\]
where the action is normalized to preserve $\AM=0$.
We choose the time normalization so that rational $\xi$ give the
corresponding normalized algebraic product twists. Define
\[
 \RR_\beta^{\rm cen}(u)
 =\sup_{\xi\in N_\Q(\mathbb T)}
   s_\beta(\tau_\xi u),
 \qquad
 N_{\mathbb T}(u)
 =\inf_{\xi\in N_\R(\mathbb T)}
   J^{\rad}(\tau_\xi u).
\]
For a test configuration $\Tcal$, we also write
$\RR_\beta^{\rm cen}(\Tcal)=\RR_\beta^{\rm cen}(u^\Tcal)$.

If $H\subset\G$ is connected and the Futaki character vanishes on
$\Lie H$, then
\begin{equation}\label{eq:finite-energy-futaki}
 \cM(g\cdot v)=\cM(v),
 \qquad g\in H,\quad v\in\mathcal E^1.
\end{equation}
For smooth potentials this is the Mabuchi--Futaki identity
\cite{mabuchi86}. It extends to $\mathcal E^1$ because the normalized
action is a $d_1$-isometry \cite[Lemma~5.9]{dr} and $\cM$ is the
greatest $d_1$-lower semicontinuous extension of its restriction to
smooth potentials \cite[Theorem~1.2]{bdl}: applying this property
to $v\mapsto\cM(g\cdot v)$ and then to $g^{-1}$ gives equality.

\begin{theorem}[The central finite-parameter correspondence]
\label{thm:center-direct}
The following are equivalent:
\begin{enumerate}[label=\textup{(\roman*)}]
\item $c_1(L)$ contains a cscK metric;
\item $\G$ is reductive, the Futaki character vanishes on
$\Lie\G$, and there exists a fixed finite $\beta>0$ such that
\[
 \RR_\beta^{\rm cen}(\Tcal)\ge0
\]
for every normal ample $\G$-equivariant test configuration $\Tcal$;
\item $\G$ is reductive, the Futaki character vanishes on
$\Lie\G$, and there exist $\beta_0>0$ and $\gamma>0$ such that
\[
 \RR_\beta^{\rm cen}(\Tcal)
 \ge
 \gamma J_{\mathbb T}^{\NA}(\phi_\Tcal)
\]
for every $\beta\ge\beta_0$ and every normal ample
$\G$-equivariant test configuration $\Tcal$.
\end{enumerate}
\end{theorem}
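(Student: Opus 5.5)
We prove the cycle (i)$\Rightarrow$(iii)$\Rightarrow$(ii)$\Rightarrow$(i). The implication (iii)$\Rightarrow$(ii) is immediate: take $\beta=\beta_0$ and use $J_{\mathbb T}^{\NA}\ge0$. For (ii)$\Rightarrow$(i) we apply Theorem~\ref{thm:groups} with $\HH=\G$. This is possible because $\G$ is reductive, so $\G$ is itself a connected reductive subgroup, its maximal torus $\SSS$ contains $\mathbb T=\ZZ_\G$, and the Futaki character vanishes on $\Lie\ZZ_\G\subset\Lie\G$. Since $N_\Q(\mathbb T)\subset N_\Q(\SSS)$, we have
\[
 \RR_{\beta,\G}^{\rm max}(\Tcal)\ge\RR_\beta^{\rm cen}(\Tcal)\ge0,
\]
so \eqref{eq:group-condition} holds with $\varepsilon=0<G_A(\beta)$. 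Theorem~\ref{thm:groups} then gives a cscK metric in $c_1(L)$.

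For (i)$\Rightarrow$(iii), first recall the classical consequences of cscK existence. Matsushima--Lichnerowicz makes $\G$ reductive, and the Futaki character vanishes. By Darvas--Rubinstein properness modulo automorphisms~\cite{dr}, there are $\delta,C>0$ with $\cM(u)\ge\delta\inf_{g\in\G}J(g\cdot u)-C$ on $\mathcal E^1$. We need this reduced to the central torus: by Hisamoto~\cite{His} and Li~\cite[Theorem~1.7 and Section~6]{Li}, one obtains the $K$-invariant radial form
\[
 \cM^{\rad}(u)\ge\delta N_{\mathbb T}(u)\qquad\text{for }u\in\mathcal R_K,
\]
and in particular $\cM^{\NA}(\phi_\Tcal)\ge\delta J_{\mathbb T}^{\NA}(\phi_\Tcal)$ for $\G$-equivariant $\Tcal$. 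Equivalently, this is Trusiani's reduced uniform K-stability~\cite[Theorem~A]{Tru} restricted to $\G$-equivariant configurations. We then transfer this to the quantized slopes. By \eqref{eq:finite-energy-futaki}, $\cM^{\NA}(\phi_\xi)=\cM^{\NA}(\phi)$ for $\xi\in N_\Q(\mathbb T)$. The difficulty is that Theorem~\ref{thm:gap} bounds $m_\beta$ from \emph{above} by $\cM^{\NA}$, whereas we need a \emph{lower} bound on $m_\beta$ for large $\beta$.

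The main obstacle is therefore the uniform convergence of $m_\beta$ to $\cM^{\NA}$ relative to $J^{\NA}$, up to twists. My plan is to use Darvas--Zhang's analytic result rather than slopes: in the reductive, Futaki-vanishing cscK case, \cite[Theorem~1.1 and Section~5]{DZ} (the reduced version) should give $\beta_0$ and $\gamma'>0$ with
\[
 \cM^{\beta}(u)\ge\gamma'\inf_{g\in\mathbb T}J(g\cdot u)-C_\beta
\]
for $\beta\ge\beta_0$, that is, reduced uniform $K^\beta$-stability. Alternatively, one can reduce to the unreduced statement on the quotient by $\mathbb T$-twists. Granting a single $\beta_0$, monotonicity in $\beta$ (Lemma~\ref{lem:parameter-concavity}, $e_b\ge e_a$) extends the bound to all $\beta\ge\beta_0$ with the same $\gamma$. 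This monotonicity is the key point making (iii) uniform in $\beta$.

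For the radial version, apply the bound along $\tau_\xi u^\Tcal$, divide by $t$, and take the supremum over $\xi$. This gives
\[
 \RR_\beta^{\rm cen}(\Tcal)\ge s_\beta(u^\Tcal)\ge\gamma N_{\mathbb T}(u^\Tcal)=\gamma J_{\mathbb T}^{\NA}(\phi_\Tcal),
\]
where the last identity uses Lemma~\ref{lem:weyl} and the density of rational cocharacters to compare the real infimum with twisted norms. If the reduced $K^\beta$-coercivity is not directly available in \cite{DZ}, the fallback is to prove it by contradiction via compactness of $\mathbb T$-normalized rays in $\mathcal R_K$. One uses $\Ent^\beta\nearrow\Ent$ together with the lower semicontinuity of entropy to show that a sequence with $s_\beta\le\beta^{-1}N_{\mathbb T}$, normalized so that $N_{\mathbb T}=1$, produces a ray with $\cM^{\rad}\le0$ and $N_{\mathbb T}=1$, contradicting reduced properness. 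This compactness step is where I expect most of the technical work.
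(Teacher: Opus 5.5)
Your (iii)$\Rightarrow$(ii) is the same as the paper's. Your (ii)$\Rightarrow$(i) is correct but takes a different route. With $\HH=\G$ one has $\ZZ_\HH=\mathbb T\subset\SSS$, so $\RR^{\rm max}_{\beta,\G}\ge\RR^{\rm cen}_\beta\ge0$, and Theorem~\ref{thm:groups} applies with $\varepsilon=0$. The paper instead argues directly with rays: Li's destabilizing $K$-invariant ray, approximation by equivariant configuration rays, and continuity of $\RR^{\rm cen}_\beta$ and $N_{\mathbb T}$ in $\rho$. That route bypasses the model-metric criterion and also gives Corollary~\ref{cor:center-negative}, but your shortcut is fine.

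The gap is in (i)$\Rightarrow$(iii), and it is not only a missing citation. The estimate you want to import,
\[
 \cM^\beta(u)\ge\gamma'\inf_{g\in\mathbb T}J(g\cdot u)-C_\beta ,
\]
is false whenever $\mathbb T\ne\{1\}$, even on $K$-invariant potentials. Take $0\ne\xi\in N_\Q(\mathbb T)$ and the orbit ray $p_\xi=\tau_\xi0$.
\begin{itemize}
\item Applying $g=\sigma_\xi(t)^{-1}$ shows that the reduced $J$ vanishes along $p_\xi$, so the right side stays $\ge-C_\beta$.
\item On the other hand, $\cM^{\rad}(p_\xi)=0$ by Futaki vanishing, and Theorem~\ref{thm:gap} gives $s_\beta(p_\xi)\le-G_A(\beta)J^{\rad}(p_\xi)<0$.
\item Hence $\cM^\beta$ tends to $-\infty$ linearly along a sequence of times. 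The quantized energy depends on $\nu_0$ and is not invariant under the central torus.
\end{itemize}
The same example defeats your radial inequality $s_\beta(u^\Tcal)\ge\gamma N_{\mathbb T}(u^\Tcal)$. For the product configuration of $\xi$ the right side is $0$ and the left side is negative. More generally, twisting any $\Tcal$ far in a central direction makes $s_\beta$ arbitrarily negative without changing $N_{\mathbb T}$; this is \eqref{eq:center-escape}. So the supremum over twists in $\RR^{\rm cen}_\beta$ is essential, and no reduced coercivity of $\cM^\beta$ itself can hold. Note also that the Darvas--Zhang correspondence the paper cites is the unreduced one (unique cscK metric).

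What is missing is a lower bound for $\cM^\beta$ whose loss relative to the reduced coercivity of $\cM$ is measured by the \emph{unreduced} $J$, with a coefficient tending to zero. The paper proves \eqref{eq:center-necessity-estimate},
\[
 A_\beta\cM^\beta(u)\ge\delta D_{\mathbb T}(\mathcal N(u))-b_\beta J(u)-C_\beta,\qquad A_\beta\to1,\quad b_\beta\to0,
\]
on $K$-invariant potentials, as follows.
\begin{itemize}
\item Hisamoto's coercivity modulo $\mathbb T$ is applied to the optimizer $w^\beta$ of $w=u-\sup_Xu$, which is $K$-invariant by uniqueness.
\item The Darvas--Zhang estimates bound $d_1(w,w^\beta)$ by $\beta^{-1}\bigl(\Ent^\beta(w)+C\log(d_1(0,w)+2)+C\log\beta+C\bigr)$ and give $(1+\epsilon_\beta)\cM^\beta(w)\ge\cM(w^\beta)-\epsilon_\beta d_1(0,w)-\epsilon_\beta$.
\item The Lipschitz bounds \eqref{eq:center-lipschitz} compare $D_{\mathbb T}(\mathcal N(w^\beta))$ with $D_{\mathbb T}(\mathcal N(u))$.
\end{itemize}
This estimate is then evaluated along $\tau_\xi u^\Tcal$ for a twist with $J^{\rad}(\tau_\xi u^\Tcal)$ close to $N=J^{\NA}_{\mathbb T}(\phi_\Tcal)$. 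The error term is then at most $b_\beta(N+\eta)$, and Hisamoto's reduced slope formula gives $t^{-1}J_{\mathbb T}((\tau_\xi u^\Tcal)_t)\to N$.

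You need that slope formula as well. Lemma~\ref{lem:weyl} and density of rational cocharacters only compare infima over cocharacter sets. They do not give $\liminf_t t^{-1}\inf_{g\in\mathbb T}J(g\cdot u_t)\ge N_{\mathbb T}(u)$.

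Choosing $\beta_0$ with $A_\beta\le2$ and $b_\beta\le\delta c/2$ yields (iii) with $\gamma=\delta c/4$. Your monotonicity remark would also give uniformity in $\beta$ once a single $\beta_0$ works.

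Your fallback does not close the gap either. Rays of bounded $\rho$-radius are not compact, and extracting a limit ray requires entropy bounds, which an upper bound on $s_\beta$ does not provide. In the paper, the comparison with the optimizer $w^\beta$ is what replaces compactness.
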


\begin{corollary}[Existence from a finite-parameter lower bound]
\label{cor:center-negative}
Suppose that $\G$ is reductive and its Futaki character vanishes.
Let $\beta>0$ and $0\le\varepsilon<G_A(\beta)$. If
\begin{equation}\label{eq:center-tolerance}
 \RR_\beta^{\rm cen}(\Tcal)
 \ge
 -\varepsilon J_{\mathbb T}^{\NA}(\phi_\Tcal)
\end{equation}
for every normal ample $\G$-equivariant test configuration
$\Tcal$, then $c_1(L)$ contains a cscK metric.
\end{corollary}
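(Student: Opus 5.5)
We prove Corollary~\ref{cor:center-negative} by reducing it to Theorem~\ref{thm:groups} with $\HH=\G$. Since $\G$ is assumed reductive, $\HH=\G$ is a connected reductive subgroup. Its connected center is $\ZZ_\HH=Z(\G)^\circ=\mathbb T$. By the remark after \eqref{eq:central-norm}, the reduced norm $J_{\HH}^{\NA}$ is then exactly $J_{\mathbb T}^{\NA}$. The Futaki character vanishes on $\Lie\G$, hence on $\Lie\ZZ_\HH$, so the hypothesis of Theorem~\ref{thm:groups} on the Futaki character holds. It remains to compare the central twist supremum $\RR_\beta^{\rm cen}$ with the maximal-torus supremum $\RR_{\beta,\HH}^{\rm max}$.

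Fix a maximal torus $\SSS\subset\G$. The center satisfies $\mathbb T\subset\SSS$, because the connected center of a connected reductive group lies in every maximal torus. Hence $N_\Q(\mathbb T)\subset N_\Q(\SSS)$. For rational $\xi\in N_\Q(\mathbb T)$, the twisted ray $\tau_\xi u^\Tcal$ is, by the chosen time normalization, the ray of the normalized algebraic product twist $\Tcal_\xi$. So $s_\beta(\tau_\xi u^\Tcal)=(\cM^\beta)^{\rad}(u^{\Tcal_\xi})$. Taking suprema gives
\[
 \RR_{\beta,\G}^{\rm max}(\Tcal)\ge\RR_\beta^{\rm cen}(\Tcal)
 \ge-\varepsilon J_{\mathbb T}^{\NA}(\phi_\Tcal)
 =-\varepsilon J_{\G}^{\NA}(\phi_\Tcal)
\]
for every normal ample $\G$-equivariant $\Tcal$. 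This is \eqref{eq:group-condition} with the same $\beta$ and $\varepsilon<G_A(\beta)$. Theorem~\ref{thm:groups} then yields a cscK metric in $c_1(L)$.

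The main point to check is the identification in the middle step. The normalization of $\tau_\xi$, which preserves $\AM=0$, must agree with the normalized base change and twist used to define $\Tcal_\xi$. The quantized slope is unchanged by adding a function of $t$ alone, and twisting commutes with the normalization $\AM=0$. So the slopes coincide once the parametrizations $t=-\log|\tau|^2$ match, including division by $N$ after the base change for rational $\xi$.

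If that matching caused trouble, we would argue directly instead. Theorem~\ref{thm:gap} applied to each $\Tcal_\xi$ with $\xi\in N_\Q(\mathbb T)$, together with Li's twisting identity $\cM^{\NA}(\phi_\xi)=\cM^{\NA}(\phi)$ under the vanishing of the Futaki character, gives
\[
 \cM^{\NA}(\phi)\ge\RR_\beta^{\rm cen}(\Tcal)+G_A(\beta)J_{\mathbb T}^{\NA}(\phi)
 \ge\bigl(G_A(\beta)-\varepsilon\bigr)J_{\mathbb T}^{\NA}(\phi).
\]
Here we use Lemma~\ref{lem:weyl} to pass from the infimum of $J^{\NA}(\phi_\xi)$ over real cocharacters of $\mathbb T$ to rational ones. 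We would then conclude through the approximation of Lemma~\ref{lem:shared-entropy-approximation} and Li's model-metric criterion, exactly as at the end of the proof of Theorem~\ref{thm:groups}.
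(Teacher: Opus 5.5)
Your argument is correct, but it takes a different route from the paper's proof of this corollary.

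\textbf{Your route.} You reduce the statement formally to Theorem~\ref{thm:groups} with $\HH=\G$. Since $\mathbb T=Z(\G)^\circ$ lies in every maximal torus $\SSS$, central twists are among the maximal-torus twists. Hence $\RR_{\beta,\G}^{\rm max}\ge\RR_\beta^{\rm cen}$ and $J_\G^{\NA}=J_{\mathbb T}^{\NA}$, and the hypothesis of Theorem~\ref{thm:groups} follows. The paper uses the same inclusion in the necessity half of Theorem~\ref{main:levi}.

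\textbf{The paper's route.} The paper argues directly on rays, by contradiction. If $\cM$ is not coercive modulo $\mathbb T$ on $\cH^K$, Li's construction gives a $K$-invariant ray $u$ with $\cM^{\rad}(u)\le0$ and $N_{\mathbb T}(u)>0$. The Darvas--Zhang approximation gives $\G$-equivariant configuration rays $u^j$ with $\rho(u^j,u)\to0$. The H\"older estimate \eqref{eq:center-holder}, together with confinement of the optimizing central twists to a compact ball, makes $\RR_\beta^{\rm cen}$ and $N_{\mathbb T}$ continuous, so the hypothesis passes to $u$. Finally, \eqref{eq:center-ray-gap} and Futaki invariance of $\cM$ under central twists give $\cM^{\rad}(u)\ge(G_A(\beta)-\varepsilon)N_{\mathbb T}(u)>0$, a contradiction.

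\textbf{What each buys.} Your reduction is shorter and shows that the weaker maximal-torus hypothesis already suffices. It inherits, however, all the non-Archimedean machinery behind Theorem~\ref{thm:groups}:
\begin{itemize}
\item Li's twisting identity for $\cM^{\NA}$;
\item Weyl averaging (Lemma~\ref{lem:weyl});
\item the equivariant Fujita approximation with convergence of entropy (Lemma~\ref{lem:shared-entropy-approximation});
\item Li's model-metric criterion.
\end{itemize}
The paper's ray proof needs only central twists. It replaces entropy approximation by the $\rho$-continuity of the quantized slope at a fixed finite parameter.

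\textbf{A small point on your fallback.} It does not actually bypass the identification $s_\beta(\tau_\xi u^\Tcal)=(\cM^\beta)^{\rad}(u^{\Tcal_\xi})$, because $\RR_\beta^{\rm cen}$ is defined through $\tau_\xi$. That identification is supplied by the paper's choice of time normalization for $\tau_\xi$, so neither version has a gap. Also, Lemma~\ref{lem:weyl} is not needed there: $N_\Q(\mathbb T)\subset N_\R(\mathbb T)$ gives $J^{\NA}(\phi_\xi)\ge J_{\mathbb T}^{\NA}(\phi)$ directly.
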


\subsubsection*{Continuity of quantized slopes and the radial gap}

Fix $\beta>0$ and set $a=2^{-n}$. The estimates in
\cite[Proposition~5.1, Theorem~5.2 and Lemma~5.3]{DZ} give
\begin{equation}\label{eq:center-holder}
 |s_\beta(u)-s_\beta(v)|
 \le
 C_\beta\bigl(
 \rho(u,v)^a(R(u)+R(v))^{1-a}+\rho(u,v)
 \bigr).
\end{equation}
Apply \cite[Proposition~5.1, Theorem~5.2 and Lemma~5.3]{DZ}
at time $t$ to $u_t-b_t$ and $v_t-b_t$, where
$b_t=\max\{\sup_Xu_t,\sup_Xv_t\}$.
Since $\AM(u_t)=\AM(v_t)=0$, \eqref{eq:sup-mean} and
\eqref{eq:normalized-distance-J} give
$0\le b_t\le Ct(R(u)+R(v))+C$.
Subtracting $b_t$ from both potentials changes neither their
$d_1$-distance nor the difference of their quantized Mabuchi energies. Dividing the
finite-time estimate by $t$ and using
$|\liminf A_t-\liminf B_t|\le\limsup|A_t-B_t|$
gives \eqref{eq:center-holder}.

The $d_1$-continuity estimates for $J$ give continuity of
$J^{\rad}$ in $\rho$; see \cite[proof of Theorem~1.1]{DZ}.
Dividing \eqref{eq:normalized-distance-J} by the ray parameter
gives constants $c_J,C_J>0$ such that
\begin{equation}\label{eq:center-J-distance}
 c_JR(u)\le J^{\rad}(u)\le C_JR(u).
\end{equation}

A ray $u\in\mathcal R_K$ with finite Mabuchi slope is maximal by
\cite[Theorem~1.2]{Li}. The test-configuration approximation in
\cite[proof of Theorem~10.1, equation~(71), and Remark~10.2]{DZ}
gives rays $u^j$ of normal ample $\G$-equivariant test configurations,
with $\AM(u_t^j)=0$ and $\rho(u^j,u)\to0$. The multiplier ideals are coherent and $K$-invariant, so they are
invariant under the complexification $\G$. Passing to the relative
ample model leaves the associated ray unchanged.

Theorem~\ref{thm:gap} applied to $u\in\mathcal R_K$ with finite
Mabuchi slope gives
\begin{equation}\label{eq:center-ray-gap}
 s_\beta(u)
 \le
 \cM^{\rad}(u)-G_A(\beta)J^{\rad}(u).
\end{equation}

\subsubsection*{Central twists and compactness of optimizers}

Let $T$ be an effective polarized algebraic torus with maximal
compact subgroup $K_T$, and choose the reference form to be
$K_T$-invariant. We use the same time normalization for real
cocharacters and rational algebraic product twists.

\begin{lemma}[Torus-orbit distance]
\label{lem:torus-orbit-distance}
Let $v\in\mathcal E^1_0$ be $K_T$-invariant. For every
$\xi\in N_\R(T)$, the energy-normalized orbit
$t\mapsto\sigma_\xi(t)\cdot v$ is a constant-speed
$d_1$-geodesic. There is a norm $\|\cdot\|_*$ on $N_\R(T)$,
independent of $v$, such that
\begin{equation}\label{eq:torus-orbit-distance}
 d_1\bigl(\sigma_\xi(t)\cdot v,\sigma_\zeta(t)\cdot v\bigr)
 =t\|\xi-\zeta\|_*,
 \qquad t\ge0.
\end{equation}
For smooth $v$,
$\|\xi\|_*=\int_X|h_{\xi,v}|\,d\mu_v$, where $h_{\xi,v}$ is the
mean-zero Hamiltonian generating the normalized orbit.
\end{lemma}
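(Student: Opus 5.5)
We first treat smooth $K_T$-invariant $v\in\cH$ with $\AM(v)=0$, and then pass to $\mathcal E^1_0$ by approximation. For $\xi\in N_\R(T)$, the real one-parameter subgroup $\sigma_\xi(t)=\exp(tJX_\xi)$ is generated by the gradient flow of a $K_T$-invariant Hamiltonian. Pulling back gives $\sigma_\xi(t)^*\omega_v=\omega_0+dd^cv_t$. Because $K_T$ preserves $v$, the velocity satisfies $\dot v_t=h_{\xi,v_t}$ up to a constant, and this constant is fixed by the normalization $\AM(v_t)=0$ together with the first variation formula $\frac{d}{dt}\AM(v_t)=\int_X\dot v_t\,d\mu_{v_t}$. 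Such orbits of holomorphic vector fields solve the geodesic equation $\ddot v-|\bar\partial\dot v|^2_{\omega_{v_t}}=0$; this is the classical fact that $\ddot v_t=|\nabla h|^2$ along a gradient flow. So the orbit is a smooth geodesic in $\cH$. Its $d_1$-speed is $\int_X|\dot v_t|\,d\mu_{v_t}$. Since $\sigma_\xi(t)$ pulls $(\omega_{v_t},h_{\xi,v_t})$ back to $(\omega_v,h_{\xi,v})$, this speed equals $\int_X|h_{\xi,v}|\,d\mu_v$ for all $t$, which is the constant speed.

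Next we show that $\|\xi\|_*:=\int_X|h_{\xi,v}|\,d\mu_v$ does not depend on $v$. The pushforward of $\omega_v^n$ under the moment map $m_v$ is the Duistermaat--Heckman measure on the moment polytope. This measure is independent of the $K_T$-invariant form in the class, up to a translation of the polytope, and the mean-zero normalization removes that translation. Since $h_{\xi,v}=\langle m_v,\xi\rangle-c$, the quantity $\int_X|h_{\xi,v}|\,d\mu_v$ is an integral against the Duistermaat--Heckman measure. Hence it depends only on $\xi$ and $T$. The resulting function is a seminorm, being convex and homogeneous. It is a norm because $T$ acts effectively, so $h_{\xi,v}\not\equiv0$ for $\xi\ne0$.

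For two cocharacters $\xi,\zeta$, we use that $T$ is abelian. Then $\sigma_\xi(t)\cdot v$ and $\sigma_\zeta(t)\cdot v$ differ by the action of $\sigma_{\xi-\zeta}(t)$ on $\sigma_\zeta(t)\cdot v$. The normalized action of $\sigma_\zeta(t)$ is a $d_1$-isometry by \cite[Lemma~5.9]{dr}, so the distance equals $d_1(\sigma_{\xi-\zeta}(t)\cdot v,v)$. This distance is at most $t\|\xi-\zeta\|_*$, since the path has that length.

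The \textbf{main obstacle} is the reverse inequality, namely that the orbit is $d_1$-length minimizing. Smooth geodesics in $\cH$ are $d_1$-geodesics by Darvas \cite{darvas}. The distance along a geodesic is $d_1(v_0,v_t)=\int_0^t\|\dot v_s\|_{1,v_s}ds$, which follows from Darvas's formula for the distance of smooth geodesic endpoints. This gives equality in \eqref{eq:torus-orbit-distance}.

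For general $K_T$-invariant $v\in\mathcal E^1_0$, we choose smooth $K_T$-invariant $v_j\to v$ in $d_1$. These can be obtained by averaging smooth approximants over $K_T$, which preserves $d_1$-convergence by convexity, and then renormalizing $\AM$. Because the normalized action is an isometry, $d_1(\sigma_\xi(t)\cdot v_j,\sigma_\xi(t)\cdot v)=d_1(v_j,v)\to0$ uniformly in $t$. Therefore both the identity \eqref{eq:torus-orbit-distance} and the geodesic property pass to the limit, since limits of $d_1$-geodesics with converging endpoints are geodesics.
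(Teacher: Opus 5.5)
Your proposal is correct and follows essentially the paper's route: the smooth case via the holomorphic-orbit geodesic and Darvas's length formula, independence of $v$, commutativity plus the $d_1$-isometric action for two cocharacters, and approximation. For independence you use invariance of the Duistermaat--Heckman measure, whereas the paper uses an equivariant Moser flow $f_s$ with $h_{\xi,s}\circ f_s=h_{\xi,0}$; the only soft spot is your claim that $K_T$-averaging preserves $d_1$-convergence ``by convexity''. The paper avoids this by averaging a \emph{decreasing} smooth sequence, so that $d_1(v_j,v)\to0$ follows from monotone continuity of $\AM$ and the ordered $d_1$ formula; alternatively, concavity of $\AM$ plus upper semicontinuity and the characterization of $d_1$-convergence as $L^1$-convergence together with convergence of $\AM$ makes your version rigorous.
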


\begin{proof}
Assume first that $v$ is smooth. The complexified
one-parameter orbit generated by $\xi$ is a smooth geodesic.
After energy normalization its velocity is the pullback of the
mean-zero Hamiltonian $h_{\xi,v}$. The $d_1$ path-length formula
\cite{darvas} therefore gives
$d_1(v,\sigma_\xi(t)\cdot v)
=t\int_X|h_{\xi,v}|\,d\mu_v$.

The integral $\int_X|h_{\xi,v}|\,d\mu_v$ does not depend on the
$K_T$-invariant potential $v$. Indeed, join two invariant
K\"ahler forms by an invariant path $\omega_s$. The equivariant
Moser flow $f_s$ satisfies $f_s^*\omega_s=\omega_0$, and the
mean-zero Hamiltonians satisfy
$h_{\xi,s}\circ f_s=h_{\xi,0}$. Hence their distributions and
their $L^1$ norms agree. Linearity of
$\xi\mapsto h_{\xi,v}$ gives homogeneity and the triangle
inequality, while effectivity of the $T$-action gives
$\|\xi\|_*=0$ only for $\xi=0$.

For $K_T$-invariant $v\in\mathcal E^1_0$, choose smooth strictly
$\omega_0$-psh potentials decreasing to $v$. Average them over
$K_T$ and subtract their Aubin--Yau energies. Denote the
normalized potentials by $v_j\in\mathcal E^1_0$. They are
$K_T$-invariant and satisfy
$d_1(v_j,v)\to0$ by monotone continuity of $\AM$ and
\eqref{eq:ordered-d1}. Since the $T$-action is by
$d_1$-isometries, the smooth orbit-distance formula passes to the
limit and shows that $t\mapsto\sigma_\xi(t)\cdot v$ has constant
$d_1$-speed.

Finally, applying the isometry $\sigma_{-\zeta}(t)$ and using
commutativity of $T$ gives \eqref{eq:torus-orbit-distance}.
\end{proof}

Set $T=\mathbb T$. Assume that $\G$ is reductive
and that its Futaki character vanishes, as in conditions
\textup{(ii)} and \textup{(iii)} of
Theorem~\ref{thm:center-direct}. Set $p_\xi=\tau_\xi0$.
The complexified torus action preserves the homogeneous
Monge--Amp\`ere equation and hence preserves invariant weak
geodesic rays. Since $\mathbb T$ is central, its twists also
preserve $K$-invariance. Thus, after energy normalization,
$\tau_\xi u\in\mathcal R_K$ for every $\xi\in N_\R(\mathbb T)$.
The $d_1$-isometric action and
Lemma~\ref{lem:torus-orbit-distance} give
\begin{equation}\label{eq:center-isometry}
 \rho(\tau_\xi u,\tau_\xi v)=\rho(u,v),\qquad
 \rho(\tau_\xi u,\tau_\zeta u)=\|\xi-\zeta\|_*.
\end{equation}
In particular, $\rho(p_\xi,0)=\|\xi\|_*$.

Since the Futaki character vanishes on $\Lie\mathbb T$,
$\cM^{\rad}(p_\xi)=0$. By
\eqref{eq:center-ray-gap} and \eqref{eq:center-J-distance},
\[
 s_\beta(p_\xi)
 \le-G_A(\beta)J^{\rad}(p_\xi)
 \le-k_\beta\|\xi\|_*,
 \qquad k_\beta:=c_JG_A(\beta)>0.
\]
Let $r=\|\xi\|_*$ and $R=R(u)$. By
\eqref{eq:center-isometry},
$\rho(\tau_\xi u,p_\xi)=R$ and
$R(\tau_\xi u)\le r+R$. Hence
\eqref{eq:center-holder}, after enlarging $C_\beta$, gives
\begin{equation}\label{eq:center-escape}
 s_\beta(\tau_\xi u)
 \le-k_\beta r
 +C_\beta R^a(r+R)^{1-a}
 +C_\beta R.
\end{equation}
Applying \eqref{eq:center-holder} to $u$ and the zero ray also gives
$s_\beta(u)\ge-C_\beta R$.

For $R>0$, put $s=r/R$. The right-hand side of
\eqref{eq:center-escape}, divided by $R$, is
$-k_\beta s+C_\beta(s+1)^{1-a}+C_\beta$.
Since $a<1$, it tends to $-\infty$ as $s\to\infty$.
Thus, for every $R_0>0$, there is $L_{\beta,R_0}>0$ such that,
whenever $R(u)\le R_0$ and $\|\xi\|_*>L_{\beta,R_0}$, one has
$s_\beta(\tau_\xi u)<s_\beta(u)$.
Hence the supremum over $N_\Q(\mathbb T)$ may be restricted to
the ball $\|\xi\|_*\le L_{\beta,R_0}$. If $R(u)=0$, then $u$ is
the zero ray. Continuity in $\xi$ and density of rational
cocharacters therefore give
\[
 \RR_\beta^{\rm cen}(u)
 =
 \max_{\xi\in N_\R(\mathbb T)}
 s_\beta(\tau_\xi u).
\]
For $R(u),R(v)\le R_0$, \eqref{eq:center-holder} and
\eqref{eq:center-isometry} give
\[
 |\RR_\beta^{\rm cen}(u)-\RR_\beta^{\rm cen}(v)|
 \le C_{\beta,R_0}
 \bigl(\rho(u,v)^a+\rho(u,v)\bigr).
\]

We next prove continuity of $N_{\mathbb T}$.
Indeed, if $r=\|\xi\|_*$, then
$R(\tau_\xi u)\ge r-R(u)$, and hence
\[
 J^{\rad}(\tau_\xi u)\ge c_J(r-R(u)),
 \qquad
 N_{\mathbb T}(u)\le J^{\rad}(u)\le C_JR(u).
\]
Thus, for $R(u)\le R_0$, the infimum defining
$N_{\mathbb T}(u)$ may be restricted to a common compact ball.
The continuity of $J^{\rad}$ and \eqref{eq:center-isometry}
then gives continuity of $N_{\mathbb T}$, and rational
cocharacters give the same infimum. For a test configuration $\Tcal$, apply
\eqref{eq:slope-bridge} at each fixed rational twist. Taking
infima gives
\begin{equation}\label{eq:center-norm-bridge}
 N_{\mathbb T}(u^\Tcal)
 =J_{\mathbb T}^{\NA}(\phi_\Tcal).
\end{equation}

\begin{proof}[Sufficiency and Corollary~\ref{cor:center-negative}]
Suppose that the Mabuchi functional is not coercive modulo
$\mathbb T$. The construction in
\cite[proof of Proposition~6.2, equations~(165) to~(177)]{Li}
produces a $K$-invariant ray $u$ of finite Mabuchi slope such that
\[
 \cM^{\rad}(u)\le0,\qquad
 J((\tau_\xi u)_t)\ge c_*t-C
\]
for every $t>0$ and $\xi\in N_\R(\mathbb T)$, where $c_*>0$ and
$C$ are independent of $t$ and $\xi$. After normalization by
$\AM(u_t)=0$, the radial $J$ and Mabuchi slopes are unchanged,
and hence $N_{\mathbb T}(u)\ge c_*>0$.

By \cite[proof of Theorem~10.1, equation~(71), and Remark~10.2]{DZ},
choose normal ample $\G$-equivariant configuration rays $u^j$ such that
$\rho(u^j,u)\to0$. By \eqref{eq:center-norm-bridge} and
\eqref{eq:center-tolerance},
\[
 \RR_\beta^{\rm cen}(u^j)
 \ge-\varepsilon N_{\mathbb T}(u^j).
\]
The continuity of $\RR_\beta^{\rm cen}$ and $N_{\mathbb T}$
therefore gives
\[
 \RR_\beta^{\rm cen}(u)
 \ge-\varepsilon N_{\mathbb T}(u).
\]

For every central twist,
Equation~\eqref{eq:finite-energy-futaki} gives
$\cM^{\rad}(\tau_\xi u)=\cM^{\rad}(u)$.
Applying \eqref{eq:center-ray-gap} to $\tau_\xi u$ and taking the
supremum over rational $\xi$ yields
\[
 \RR_\beta^{\rm cen}(u)
 \le
 \cM^{\rad}(u)-G_A(\beta)N_{\mathbb T}(u).
\]
Therefore
\[
 \cM^{\rad}(u)
 \ge
 \bigl(G_A(\beta)-\varepsilon\bigr)N_{\mathbb T}(u)>0,
\]
contradicting $\cM^{\rad}(u)\le0$.

Thus the Mabuchi functional is coercive on $\cH^K$ modulo
$\mathbb T$. By
\cite[proof of Proposition~6.2, equation~(164)]{Li},
this coercivity on invariant potentials implies the existence
of a cscK metric. This proves
Corollary~\ref{cor:center-negative}; taking $\varepsilon=0$
proves that condition \textup{(ii)} implies condition
\textup{(i)} in Theorem~\ref{thm:center-direct}.
\end{proof}

\begin{proof}[Necessity in Theorem~\ref{thm:center-direct}]
Assume that $c_1(L)$ contains a cscK metric. Then $\G$ is
reductive and its Futaki character vanishes. Choose a maximal
compact subgroup $K$ preserving a cscK reference.
We use Hisamoto's coercivity theorem \cite[Theorem~4.3]{His}
on $\cH^K$ and the reduced slope formula \cite[Theorem~B]{His}.

Set $\mathcal N(u)=u-\AM(u)$ and define
\[
 D_{\mathbb T}(v)
 =\inf_{\sigma\in\mathbb T}d_1(0,\sigma\cdot v),
 \qquad
 J_{\mathbb T}(u)
 =\inf_{\sigma\in\mathbb T}J(\sigma\cdot u).
\]
There are constants $\delta,c>0$ and $C$ such that
\[
 \cM(u)\ge
 \delta D_{\mathbb T}(\mathcal N(u))-C,
 \qquad
 D_{\mathbb T}(\mathcal N(u))
 \ge cJ_{\mathbb T}(u)-C.
\]
Moreover,
\begin{equation}\label{eq:center-lipschitz}
 \begin{aligned}
 |D_{\mathbb T}(v)-D_{\mathbb T}(w)|&\le d_1(v,w),\\
 d_1(\mathcal N(v),\mathcal N(w))&\le2d_1(v,w).
 \end{aligned}
\end{equation}

Fix $u\in\cH^K$, set $w=u-\sup_Xu$, and let
$w^\beta$ be the optimizer in \eqref{eq:quantized-optimizer}.
Write $L_w=d_1(0,w)$, $h=\Ent^\beta(w)$, and
$e=d_1(w,w^\beta)$. For sufficiently large $\beta$,
\cite[Proposition~4.1 and Corollary~4.5]{DZ} give
\begin{align*}
 e
 &\le
 \beta^{-1}
 \bigl(h+C\log(L_w+2)+C\log\beta+C\bigr),\\
 (1+\epsilon_\beta^{\mathrm{DZ}})\cM^\beta(w)
 &\ge
 \cM(w^\beta)
 -\epsilon_\beta^{\mathrm{DZ}}L_w
 -\epsilon_\beta^{\mathrm{DZ}},
 \qquad
 \epsilon_\beta^{\mathrm{DZ}}\longrightarrow0.
\end{align*}

Since $w$ and the reference data are $K$-invariant, uniqueness in
\eqref{eq:quantized-optimizer} implies that $w^\beta$ is
$K$-invariant. Since $\mathcal N(w)=\mathcal N(u)$,
\eqref{eq:center-lipschitz} gives
\[
 D_{\mathbb T}(\mathcal N(w^\beta))
 \ge
 D_{\mathbb T}(\mathcal N(u))-2e.
\]
Thus, with $D=D_{\mathbb T}(\mathcal N(u))$,
$\cM(w^\beta)\ge\delta D-2\delta e-C$.

Set $Q=\cM^\beta(u)=\cM^\beta(w)$. We have
$h=Q+\cJ_{\Ric(\omega_0)}(w)$, and
\eqref{eq:ricci-energy-bound} gives
$|\cJ_{\Ric(\omega_0)}(w)|\le C(L_w+1)$.
Also, $\log(L_w+2)\le L_w+\log2$. Substitute the bound for $e$ in
$\cM(w^\beta)\ge\delta D-2\delta e-C$ and use the lower bound
for $(1+\epsilon_\beta^{\mathrm{DZ}})\cM^\beta(w)$. We obtain
\[
 (1+\epsilon_\beta^{\mathrm{DZ}})Q
 \ge
 \delta D-\frac{2\delta}{\beta}Q
 -\left(
   \epsilon_\beta^{\mathrm{DZ}}+\frac{C\delta}{\beta}
  \right)L_w-C_\beta,
\]
where $C_\beta$ is independent of $u$.
Since $\sup_Xw=0$, \eqref{eq:sup-mean} gives
$L_w=-\AM(w)\le J(u)+C$.

Increasing $C$ if necessary, set
$A_\beta=1+\epsilon_\beta^{\mathrm{DZ}}+2\delta/\beta$ and
$b_\beta=\epsilon_\beta^{\mathrm{DZ}}+C\delta/\beta$.
Then
\begin{equation}\label{eq:center-necessity-estimate}
 A_\beta\cM^\beta(u)
 \ge
 \delta D_{\mathbb T}(\mathcal N(u))
 -b_\beta J(u)-C_\beta,
 \qquad
 A_\beta\longrightarrow1,\quad
 b_\beta\longrightarrow0.
\end{equation}
For each fixed parameter, the continuity of $\cM^\beta$ and $J$
and the Lipschitz continuity of
$D_{\mathbb T}\circ\mathcal N$ extend
\eqref{eq:center-necessity-estimate} to finite-energy
$K$-invariant potentials.

Fix a normal ample $\G$-equivariant test configuration $\Tcal$
and set $N=J_{\mathbb T}^{\NA}(\phi_\Tcal)$. For each rational
$\xi\in N_\Q(\mathbb T)$, apply
\eqref{eq:center-necessity-estimate} to the ray
$\tau_\xi u^\Tcal$. Hisamoto's reduced slope formula gives
\[
 \lim_{t\to\infty}
 \frac{J_{\mathbb T}((\tau_\xi u^\Tcal)_t)}{t}
 =N.
\]
Using
$D_{\mathbb T}(\mathcal N(v))\ge cJ_{\mathbb T}(v)-C$
and taking radial slopes gives
\[
 A_\beta s_\beta(\tau_\xi u^\Tcal)
 \ge
 \delta cN-b_\beta J^{\rad}(\tau_\xi u^\Tcal).
\]
Taking the supremum over rational $\xi$ and using
\eqref{eq:center-norm-bridge} gives
\[
 A_\beta\RR_\beta^{\rm cen}(\Tcal)
 \ge
 (\delta c-b_\beta)N.
\]

Choose $\beta_0$ so large that
$A_\beta\le2$ and $b_\beta\le\delta c/2$ whenever
$\beta\ge\beta_0$. Then
$\RR_\beta^{\rm cen}(\Tcal)\ge(\delta c/4)N$
for every $\beta\ge\beta_0$, so condition \textup{(iii)} holds
with $\gamma=\delta c/4$.
Since $J_{\mathbb T}^{\NA}\ge0$, condition \textup{(iii)}
implies condition \textup{(ii)} by taking $\beta=\beta_0$.
\end{proof}

\begin{proof}[Completion of Theorem~\ref{main:levi} and
Corollary~\ref{intro:quantitative-correspondences}\textup{(ii)}]
Take $\HH$ to be the maximal connected reductive subgroup fixed in Theorem~\ref{main:levi};
Lemma~\ref{lem:weyl} identifies $N_\HH(\Tcal)$ with
$J_\HH^{\NA}(\phi_\Tcal)$.
Theorem~\ref{thm:groups}, with $\varepsilon=0$ or
$\varepsilon=\vartheta G_A(\beta)<G_A(\beta)$, proves sufficiency
in Theorem~\ref{main:levi} and
Corollary~\ref{intro:quantitative-correspondences}\textup{(ii)},
respectively.
Conversely, if a cscK metric exists, then $\G$ is reductive and its
Futaki character vanishes, so its maximal connected reductive subgroup $\HH$ equals $\G$.
By Theorem~\ref{thm:center-direct}, there is a fixed finite
$\beta$ at which the central optimized slopes are nonnegative for every
normal ample $\G$-equivariant test configuration.
Since central twists are among the maximal-torus twists,
$\RR_{\beta,\HH}^{\rm max}\ge\RR_\beta^{\rm cen}\ge0$, proving
necessity in both statements.
Finally, Proposition~\ref{prop:reduced-parameter-bounds}, with
$\varepsilon=\vartheta G_A(\beta)$ and
$\Gamma_A(\beta,B)=G_A(\beta)-G_A(B)$, gives the stated lower bound for
every $B>\beta$.
\end{proof}

\subsection{Rays with nonpositive Mabuchi slope and product twists}

Let $\HH\subset\G$ be connected and reductive, with maximal torus
$\SSS$ and central torus $\ZZ_\HH$, and assume that the Futaki
character vanishes on $\Lie\ZZ_\HH$. Choose a maximal compact
subgroup $K_\HH\subset\HH$ such that
$K_\SSS=K_\HH\cap\SSS$ is maximal compact in $\SSS$, and fix a
$K_\HH$-invariant reference form. On the energy-normalized ray
space $\mathcal R_{K_\SSS}$, define
\[
 \RR_\beta^\SSS(u)
 =\sup_{\xi\in N_\Q(\SSS)}
   s_\beta(\tau_\xi u),
 \qquad
 N_\SSS(u)
 =\inf_{\xi\in N_\R(\SSS)}
   J^{\rad}(\tau_\xi u).
\]
For $u\in\mathcal R_{K_\HH}$, define
$N_{\ZZ_\HH}(u)
=\inf_{\xi\in N_\R(\ZZ_\HH)}J^{\rad}(\tau_\xi u)$.

\begin{proposition}[A ray with nonpositive Mabuchi slope]
\label{prop:subgroup-direct-ray}
For every $\beta>0$, $\RR_\beta^\SSS$ is locally H\"older
continuous in $\rho$. The functions $N_\SSS$ on
$\mathcal R_{K_\SSS}$ and $N_{\ZZ_\HH}$ on
$\mathcal R_{K_\HH}$ are continuous, and the infima defining them
are unchanged if the real cocharacters are replaced by rational
cocharacters.

If $c_1(L)$ contains no cscK metric, there exists
$u\in\mathcal R_{K_\HH}$ with finite Mabuchi slope such that
\[
 \cM^{\rad}(u)\le0,
 \qquad
 N_\SSS(u)=N_{\ZZ_\HH}(u)>0.
\]
Moreover, there are normal ample $\HH$-equivariant test
configurations $\Tcal_j$, with metrics
$\phi_j=\phi_{\Tcal_j}$ and associated energy-normalized rays
$u^j\in\mathcal R_{K_\HH}$, such that $\rho(u^j,u)\to0$ and
\[
 J_\HH^{\NA}(\phi_j)\longrightarrow N_\SSS(u),
 \qquad
 \RR_{\beta,\HH}^{\rm max}(\Tcal_j)
 \longrightarrow\RR_\beta^\SSS(u)
\]
for every fixed $\beta>0$.
\end{proposition}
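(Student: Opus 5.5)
The plan is to run the argument of Section~\ref{sec:center-direct} for the central torus $\mathbb T$ again, with the noncentral maximal torus $\SSS$ of $\HH$ in place of $\mathbb T$.

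\textbf{Continuity.} First I will prove local H\"older continuity of $\RR^\SSS_\beta$. Lemma~\ref{lem:torus-orbit-distance} holds for any effective torus, so \eqref{eq:center-isometry} holds for $\SSS$ on $\mathcal R_{K_\SSS}$: twists by $\SSS$ act by $d_1$-isometries and preserve $K_\SSS$-invariance. The Futaki character vanishes on $\Lie\SSS$, by the reductive splitting used in the proof of Theorem~\ref{thm:groups}. Hence $\cM^{\rad}(p_\xi)=0$, and \eqref{eq:center-ray-gap} gives $s_\beta(p_\xi)\le -c_JG_A(\beta)\|\xi\|_*$. Combined with \eqref{eq:center-holder}, the escape estimate \eqref{eq:center-escape} follows verbatim. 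Consequently, for $R(u)\le R_0$ the supremum defining $\RR^\SSS_\beta$ can be restricted to a compact ball and becomes a maximum over $N_\R(\SSS)$. Local H\"older continuity in $\rho$ then follows from \eqref{eq:center-holder} applied uniformly in $\xi$. The same argument as for $N_{\mathbb T}$, namely $J^{\rad}(\tau_\xi u)\ge c_J(\|\xi\|_*-R(u))$, yields compactness of minimizers and hence continuity of $N_\SSS$ and $N_{\ZZ_\HH}$. Density of rational cocharacters then gives the same infima over rational cocharacters.

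\textbf{Ray construction.} If there is no cscK metric, Li's criterion \cite[Theorem~6.5]{Li} fails for $\HH$. Through the contrapositive of his coercivity argument \cite[proof of Proposition~6.2]{Li}, applied with the reductive group $\HH$ and the torus $\ZZ_\HH$, this should produce a $K_\HH$-invariant ray $u$ with finite Mabuchi slope, $\cM^{\rad}(u)\le0$, and $J((\tau_\xi u)_t)\ge c_*t-C$ uniformly in $\xi\in N_\R(\ZZ_\HH)$. This gives $N_{\ZZ_\HH}(u)\ge c_*>0$ after energy normalization. For $N_\SSS=N_{\ZZ_\HH}$ I will repeat the Weyl averaging of Lemma~\ref{lem:weyl} at the level of rays. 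Since $u$ is $K_\HH$-invariant, $F(\xi)=J^{\rad}(\tau_\xi u)$ is $W_\HH$-invariant, using Weyl representatives in $K_\HH$. It should also be convex on $N_\R(\SSS)$, because $J^{\rad}$ of a twisted ray is the slope of a sup-type function minus an affine energy term. Averaging over $W_\HH$ then gives the equality. I expect convexity of $F$ for a general finite-energy ray to be the main obstacle. My first attempt will be to prove it for test-configuration rays, where Lemma~\ref{lem:weyl} already applies, and then pass to $u$ by the approximation in the next step, using the continuity of $N_\SSS$ and $N_{\ZZ_\HH}$ established above.

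\textbf{Approximation.} Following \cite[proof of Theorem~10.1, equation~(71), and Remark~10.2]{DZ}, I will take multiplier-ideal test configurations of $u$. These are $K_\HH$-invariant, hence $\HH$-invariant, and after passing to ample models they give normal ample $\HH$-equivariant $\Tcal_j$ whose energy-normalized rays satisfy $\rho(u^j,u)\to0$. By \eqref{eq:slope-bridge} applied to each rational twist and Lemma~\ref{lem:weyl}, $J_\HH^{\NA}(\phi_j)=N_{\ZZ_\HH}(u^j)=N_\SSS(u^j)$, and $\RR^{\rm max}_{\beta,\HH}(\Tcal_j)=\RR^\SSS_\beta(u^j)$. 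Continuity of these functionals, with $R(u^j)$ bounded, then yields both limits.
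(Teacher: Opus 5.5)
Your proposal is correct and takes essentially the same route as the paper: the escape estimate for $\SSS$-twists (using vanishing of the Futaki character on $\Lie\SSS$) confines the optimizing cocharacters to compact balls and gives the continuity statements; Li's non-coercivity construction produces $u$ with $\cM^{\rad}(u)\le0$ and $N_{\ZZ_\HH}(u)\ge c_*>0$; and the Darvas--Zhang multiplier-ideal approximation transfers both limits to test configurations. The paper proves $N_\SSS(u)=N_{\ZZ_\HH}(u)$ exactly by your fallback, applying Lemma~\ref{lem:weyl} to the approximating rays $u^j$ and passing to the limit by continuity of $N_\SSS$ and $N_{\ZZ_\HH}$, so the ray-level convexity you flag as the main obstacle is never needed.
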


\begin{proof}
Since the Futaki character vanishes on the derived algebra of
$\HH$, its vanishing on $\Lie\ZZ_\HH$ implies that it vanishes on
$\Lie\HH$, and hence on $\Lie\SSS$.

The estimates \eqref{eq:center-holder} and
\eqref{eq:center-J-distance} apply on
$\mathcal R_{K_\SSS}$ because their proofs use only the
energy normalization. Lemma~\ref{lem:torus-orbit-distance} gives
\[
 \rho(\tau_\xi u,\tau_\xi v)=\rho(u,v),
 \qquad
 \rho(\tau_\xi u,\tau_\zeta u)=\|\xi-\zeta\|_*.
\]
The $\SSS$-twists preserve $\mathcal R_{K_\SSS}$, although they
need not preserve $\mathcal R_{K_\HH}$.

Fix $\beta>0$ and set $\eta=2^{-n}$. For
$p_\xi=\tau_\xi0$, Equation~\eqref{eq:finite-energy-futaki} gives
$\cM^{\rad}(p_\xi)=0$. If
$r=\|\xi\|_*$, $R=R(u)$, and
$k_\beta=c_JG_A(\beta)>0$, then
\eqref{eq:center-ray-gap} and \eqref{eq:center-holder} give
\[
 s_\beta(\tau_\xi u)
 \le
 -k_\beta r
 +C_\beta R^\eta(r+R)^{1-\eta}
 +C_\beta R.
\]
The zero twist satisfies
$s_\beta(u)\ge-C_\beta R$.
Since $\eta<1$, there is $L_\beta>0$, independent of $u$, such
that for $R>0$ the supremum defining $\RR_\beta^\SSS(u)$ may be
restricted to $r\le L_\beta R$. If $R=0$, then $u$ is the zero
ray.

For the reduced norm,
$J^{\rad}(\tau_\xi u)\ge c_J(r-R)$, whereas
$J^{\rad}(u)\le C_JR$. Hence the infimum defining
$N_\SSS(u)$ may also be restricted to $r\le L_JR$ for a constant
$L_J$ independent of $u$. Applying these estimates with
$T=\ZZ_\HH$ also restricts the infimum defining
$N_{\ZZ_\HH}$ on $\mathcal R_{K_\HH}$ to a common compact ball.

For rays with bounded $R$, the supremum defining $\RR_\beta^\SSS$
and the infima defining $N_\SSS$ and $N_{\ZZ_\HH}$ can be taken
over fixed compact balls. Equation~\eqref{eq:center-holder} and
the torus isometries give a H\"older estimate for
$u\mapsto s_\beta(\tau_\xi u)$, uniformly for cocharacters
in these balls. Taking suprema proves the local H\"older continuity
of $\RR_\beta^\SSS$. The continuity of $J^{\rad}$ in $\rho$ and
the torus isometries similarly give joint continuity of
$J^{\rad}(\tau_\xi u)$ on the compact parameter balls. Taking
infima proves continuity of $N_\SSS$ and $N_{\ZZ_\HH}$.
Continuity in $\xi$ and density of rational cocharacters show that
the same infima are obtained over rational cocharacters.

Assume that $c_1(L)$ contains no cscK metric. By
\cite[proof of Proposition~6.2, equation~(164)]{Li},
the Mabuchi functional is not coercive modulo the center on
$\cH^{K_\HH}$.
The construction in
\cite[Section~2.1.3 and proof of Proposition~6.2,
equations~(165) to~(177)]{Li}, followed by energy normalization,
produces a ray $u\in\mathcal R_{K_\HH}$ with finite Mabuchi slope
such that
\[
 \cM^{\rad}(u)\le0,
 \qquad
 N_{\ZZ_\HH}(u)\ge c_*>0.
\]
By \cite[Theorem~1.2]{Li}, the ray $u$ is maximal.

Apply the test-configuration approximation in
\cite[proof of Theorem~10.1, equation~(71)]{DZ}. Its multiplier ideals are
$K_\HH$-invariant. Since they are coherent and
$\HH=K_\HH^\C$, the complexification argument in
\cite[Remark~10.2]{DZ} shows that they are $\HH$-invariant.
The full group $\G$ need not be reductive.
Replacing each semiample model by its relative ample model
preserves the associated ray. After energy normalization, we
obtain normal ample $\HH$-equivariant test configurations
$\Tcal_j$, with metrics $\phi_j$ and rays
$u^j\in\mathcal R_{K_\HH}$, such that
$\rho(u^j,u)\to0$.

For every $j$, Lemma~\ref{lem:weyl},
\eqref{eq:slope-bridge}, and the density of rational cocharacters give
\[
 N_\SSS(u^j)
 =N_{\ZZ_\HH}(u^j)
 =J_\HH^{\NA}(\phi_j).
\]
By continuity of $N_\SSS$ and $N_{\ZZ_\HH}$,
\[
 N_\SSS(u)
 =\lim_{j\to\infty}N_\SSS(u^j)
 =\lim_{j\to\infty}N_{\ZZ_\HH}(u^j)
 =N_{\ZZ_\HH}(u)\ge c_*>0.
\]
For every fixed $\beta>0$, the local H\"older continuity of
$\RR_\beta^\SSS$ and the identity
$\RR_{\beta,\HH}^{\rm max}(\Tcal_j)=\RR_\beta^\SSS(u^j)$ also give
\[
 \RR_{\beta,\HH}^{\rm max}(\Tcal_j)
 \longrightarrow
 \RR_\beta^\SSS(u).
\]
\end{proof}

\subsection{Quantitative growth of optimized slopes}

We compare the maximal-torus suprema at two parameters.
The radial gap bounds their difference from below; for the upper bound
we show that the real maximizing twists remain in a fixed ball as the
parameter varies over a compact interval.

\begin{proposition}[Reduced parameter bounds]
\label{prop:reduced-parameter-bounds}
Let $\HH\subset\G$ be connected and reductive, and suppose that
the Futaki character vanishes on $\Lie\ZZ_\HH$. For an
$\HH$-equivariant normal ample test configuration $\Tcal$, set
$N(\Tcal)=J_\HH^{\NA}(\phi_\Tcal)$.
For every compact interval $I=[a,b]\subset(0,\infty)$, there is
$C_I<\infty$, depending only on $(X,L)$, the fixed reference form,
$\HH$, and $I$, and independent of $\Tcal$, its exponent, and the
twist, such that
\begin{equation}\label{eq:reduced-size-bound}
 |\RR_{\beta,\HH}^{\rm max}(\Tcal)|\le C_I N(\Tcal),
 \qquad \beta\in I.
\end{equation}
For $\beta,B\in I$ with $\beta<B$,
\begin{equation}\label{eq:reduced-parameter-bounds}
 \Gamma_A(\beta,B)N(\Tcal)
 \le\RR_{B,\HH}^{\rm max}(\Tcal)
     -\RR_{\beta,\HH}^{\rm max}(\Tcal)
 \le C_I(B-\beta)N(\Tcal).
\end{equation}
In particular, if $N(\Tcal)=0$, then
$\RR_{\beta,\HH}^{\rm max}(\Tcal)=0$ for every $\beta>0$.

If, for some fixed $\beta>0$ and $\varepsilon\ge0$, every
configuration satisfies
$\RR_{\beta,\HH}^{\rm max}\ge-\varepsilon N$, then
$\RR_{B,\HH}^{\rm max}\ge
(\Gamma_A(\beta,B)-\varepsilon)N$ for every $B>\beta$.
Thus $\varepsilon=0$ gives a uniform positive lower bound in terms
of the reduced norm at every $B>\beta$. If $0\le\varepsilon<G_A(\beta)$,
such a lower bound holds whenever $\Gamma_A(\beta,B)>\varepsilon$.
\end{proposition}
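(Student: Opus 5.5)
We work throughout on the ray side, with the notation of Proposition~\ref{prop:subgroup-direct-ray}. For an $\HH$-equivariant $\Tcal$, write $u=u^\Tcal$, energy-normalized in $\mathcal R_{K_\SSS}$, so that $\RR_{\beta,\HH}^{\rm max}(\Tcal)=\RR^\SSS_\beta(u)$. By Lemma~\ref{lem:weyl} and \eqref{eq:slope-bridge}, $N(\Tcal)=N_\SSS(u)$. The first step is compactness of optimizers, uniform in $\beta\in I$. The escape estimate in the proof of Proposition~\ref{prop:subgroup-direct-ray} combines \eqref{eq:center-ray-gap}, \eqref{eq:center-holder}, $\cM^{\rad}(p_\xi)=0$ and the lower bound $s_\beta(u)\ge-C_\beta R$. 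The constants there are $k_\beta=c_JG_A(\beta)$, which is decreasing in $\beta$, and $C_\beta$, which comes from \cite[Proposition~5.1, Theorem~5.2]{DZ} and can be taken uniform for $\beta\in I$. Hence there is $L_I$ such that the supremum may be restricted to $\|\xi\|_*\le L_IR(u)$ for all $\beta\in I$. The difficulty is that the bounds must be in terms of $N(\Tcal)$, not $R(u)$. To handle this, replace $u$ by an optimal twist $u'=\tau_{\xi_0}u$ realizing $N_\SSS(u)=J^{\rad}(u')$; this minimum exists by the compactness shown in Proposition~\ref{prop:subgroup-direct-ray}. Since $\RR^\SSS_\beta$ and $N_\SSS$ are invariant under $\SSS$-twists (twists compose), we may assume $J^{\rad}(u)=N$. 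Then \eqref{eq:center-J-distance} gives $R(u)\le c_J^{-1}N$, and so $\|\xi\|_*\le L_Ic_J^{-1}N$ on the relevant ball.

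For the size bound \eqref{eq:reduced-size-bound}, the lower bound comes from the zero twist: $\RR_\beta^\SSS(u)\ge s_\beta(u)\ge -C_\beta R(u)\ge -C N$. For the upper bound, every twist in the ball satisfies $s_\beta(\tau_\xi u)\le s_\beta(p_\xi)+C_\beta(\ldots)$ by \eqref{eq:center-holder}, and $s_\beta(p_\xi)\le 0$. Using $R(\tau_\xi u)\le R+\|\xi\|_*\le CN$, we get $s_\beta(\tau_\xi u)\le C_IN$. As an alternative route for the upper bound, one can use $s_\beta\le \cM^{\rad}$ together with Lemma~\ref{lem:parameter-concavity}: $e_\beta\le\beta J^{\NA}$ and $|\mathcal R^{\NA}|\le CJ^{\NA}$ from \eqref{eq:ricci-slope-bound}, applied to each twist in the ball, where $J^{\NA}(\phi_\xi)\le C_JR(\tau_\xi u)\le CN$. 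This second route is cleaner and I would use it for both directions of the size bound. If $N=0$, then $R(u)=0$ after optimal twisting, so $u$ is the zero ray and everything vanishes.

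For \eqref{eq:reduced-parameter-bounds}, the lower bound follows from Theorem~\ref{thm:gap} applied to each rational twist $\Tcal_\xi$. This gives $m_B(\phi_\xi)\ge m_\beta(\phi_\xi)+\Gamma_A(\beta,B)J^{\NA}(\phi_\xi)$, and $J^{\NA}(\phi_\xi)\ge N$ by Lemma~\ref{lem:weyl}. Taking the supremum over $\xi$ on both sides gives the claim. For the upper bound, Lemma~\ref{lem:parameter-concavity} applied to $\Tcal_\xi$ gives $m_B(\phi_\xi)\le m_\beta(\phi_\xi)+(B-\beta)J^{\NA}(\phi_\xi)$. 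By the uniform compactness step, the supremum at parameter $B$ may be taken over $\|\xi\|_*\le CN$, where $J^{\NA}(\phi_\xi)\le C_J(R(u)+\|\xi\|_*)\le C'N$. Taking suprema then yields $\RR_B\le\RR_\beta+C'(B-\beta)N$. The main obstacle is exactly this step: ensuring the escape-ball radius is uniform over $\beta\in I$ and independent of $\Tcal$. This requires tracking that the Hölder constant in \eqref{eq:center-holder} depends continuously on $\beta$. I would check this directly from the Darvas--Zhang estimates, whose constants are explicit in $\beta$. Rational versus real twists are handled by density and continuity in $\xi$.

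The final assertions are immediate. Combining $\RR_\beta\ge-\varepsilon N$ with the lower bound gives $\RR_B\ge(\Gamma_A(\beta,B)-\varepsilon)N$. Since $\Gamma_A(\beta,B)\uparrow G_A(\beta)$, some finite $B$ makes the coefficient positive when $\varepsilon<G_A(\beta)$. This part needs no restriction to $I$.
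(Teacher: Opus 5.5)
Your proof follows the paper's route: twist so that $R(u)\lesssim N(\Tcal)$, confine the maximizing twists to a ball of radius $\lesssim R(u)$ by the escape estimate, then apply Theorem~\ref{thm:gap} to each twist for the lower bound and the secant bound on that ball for the upper bound. The paper twists by a rational central cocharacter with $J^{\rad}\le 2N$ rather than by your exact real minimizer over $\SSS$, which avoids real twists but is otherwise equivalent.

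The step you single out as the main obstacle, uniformity of the H\"older constant $C_\beta$ of \eqref{eq:center-holder} over $I$, is not needed. For $\beta\in I=[a,b]$, parameter monotonicity gives $s_\beta(\tau_\xi u)\le s_b(\tau_\xi u)$ and $s_\beta(u)\ge s_a(u)$. Hence the escape estimate and the bound $|s_\beta|\le C_IR$ only need the constants at the two endpoints, with $k_b=c_JG_A(b)$. This is how the paper obtains $L_I$ without tracking the $\beta$-dependence of the Darvas--Zhang constants.
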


\begin{proof}
On the $K_\SSS$-invariant ray space of
Proposition~\ref{prop:subgroup-direct-ray}, put
$Q_\beta(u)=\sup_{\xi\in N_\Q(\SSS)}
s_\beta(\tau_\xi u)$.
For the ray $u=u^\Tcal$, one has
$Q_\beta(u)=\RR_{\beta,\HH}^{\rm max}(\Tcal)$.
By Proposition~\ref{prop:subgroup-direct-ray} and
Lemma~\ref{lem:weyl},
\[
 N(\Tcal)
 =
 \inf_{\zeta\in N_\Q(\ZZ_\HH)}
 J^{\rad}(\tau_\zeta u).
\]
A rational central twist gives another $\HH$-equivariant test
configuration and leaves $Q_\beta$ unchanged, since it translates
$N_\Q(\SSS)$.

We first find a common ball containing the real maximizing twists for
$\beta\in I$. Put $R=R(u)$, $r=\|\xi\|_*$, and
$\eta=2^{-n}$. Since $\beta\le b$, parameter monotonicity,
\eqref{eq:center-ray-gap}, \eqref{eq:center-J-distance}, and
\eqref{eq:center-holder} give
\[
 s_\beta(\tau_\xi u)
 \le
 -k_b r+C_I R^\eta(r+R)^{1-\eta}+C_I R,
 \qquad k_b=c_JG_A(b)>0.
\]
Since $\beta\ge a$, parameter monotonicity and
\eqref{eq:center-holder}, applied to $u$ and the zero ray, give
$s_\beta(u)\ge-C_I R$.
Hence there is $L_I>0$, independent of $u$ and $\beta\in I$,
such that, whenever $R>0$ and $r>L_IR$,
$s_\beta(\tau_\xi u)<s_\beta(u)$.
Thus the supremum defining $Q_\beta(u)$ may be restricted to
$r\le L_IR$. If $R=0$, then $u$ is the zero ray and
$Q_\beta(u)=0$. Continuity in $\xi$ and density of rational
cocharacters show that the supremum is attained over
$N_\R(\SSS)$.

For $\gamma=a,b$, \eqref{eq:center-holder} gives
$|s_\gamma(v)|\le C_I R(v)$, and parameter monotonicity
gives the same bound for every $\gamma\in I$.
If $\xi_\beta$ is a real maximizer of $Q_\beta(u)$, then
$R(\tau_{\xi_\beta}u)\le(1+L_I)R(u)$. Consequently,
\begin{equation}\label{eq:reduced-optimizer-size}
 |Q_\beta(u)|\le C_I R(u),
 \qquad
 J^{\rad}(\tau_{\xi_\beta}u)\le C_I R(u),
 \qquad \beta\in I.
\end{equation}

Suppose first that $N:=N(\Tcal)>0$. Choose
$\zeta\in N_\Q(\ZZ_\HH)$ such that
$J^{\rad}(\tau_\zeta u)\le2N$, and replace $u$ by
$\tau_\zeta u$. This does not change $Q_\beta$.
By \eqref{eq:center-J-distance}, the twisted ray satisfies
$R(u)\le2N/c_J$. Hence \eqref{eq:reduced-optimizer-size} gives
\eqref{eq:reduced-size-bound} and
$J^{\rad}(\tau_{\xi_\gamma}u)\le C_IN$ for every real maximizer
$\xi_\gamma$ with $\gamma\in I$.

If $N=0$, choose rational central twists $\zeta_j$ such that
$J^{\rad}(\tau_{\zeta_j}u)\to0$. Then
$R(\tau_{\zeta_j}u)\to0$ by
\eqref{eq:center-J-distance}. Since $Q_\beta$ is invariant under
central twists, \eqref{eq:reduced-optimizer-size} gives
$Q_\beta(u)=0$.

Assume that $N>0$. For every
$\xi\in N_\Q(\SSS)$, Theorem~\ref{thm:gap} and
Lemma~\ref{lem:weyl} give
\[
 s_B(\tau_\xi u)
 \ge s_\beta(\tau_\xi u)
     +\Gamma_A(\beta,B)J^{\rad}(\tau_\xi u)
 \ge s_\beta(\tau_\xi u)+\Gamma_A(\beta,B)N.
\]
Taking suprema gives
$Q_B(u)\ge Q_\beta(u)+\Gamma_A(\beta,B)N$.

For the upper bound, let $\xi_B$ be a real maximizer of
$Q_B(u)$. The upper bound in \eqref{eq:radial-secant-bound}
extends from rational to real twists by
\eqref{eq:center-holder} and Lemma~\ref{lem:torus-orbit-distance}.
It gives
\[
 \begin{aligned}
 Q_B(u)-Q_\beta(u)
 &\le
 s_B(\tau_{\xi_B}u)
 -s_\beta(\tau_{\xi_B}u)\\
 &\le
 (B-\beta)J^{\rad}(\tau_{\xi_B}u)
 \le C_I(B-\beta)N.
 \end{aligned}
\]
This proves \eqref{eq:reduced-parameter-bounds}. Substituting the
assumed bound $\RR_{\beta,\HH}^{\rm max}\ge-\varepsilon N$ into
its lower inequality gives the assertions about negative lower bounds.
\end{proof}

\begin{corollary}[Reduced thresholds]
\label{cor:reduced-threshold}
Under the assumptions of
Proposition~\ref{prop:reduced-parameter-bounds}, suppose that the
$\HH$-equivariant testing family contains a configuration of
positive reduced norm. Define
\[
 \Lambda_{\beta,\HH}^{\rm red}
 =
 \inf_{N(\Tcal)>0}
 \frac{\RR_{\beta,\HH}^{\rm max}(\Tcal)}{N(\Tcal)}.
\]
Then $\beta\mapsto\Lambda_{\beta,\HH}^{\rm red}$ is finite,
locally Lipschitz, and strictly increasing. For $\beta<B$ in a
compact interval $I\subset(0,\infty)$,
\[
 \Gamma_A(\beta,B)
 \le
 \Lambda_{B,\HH}^{\rm red}
 -\Lambda_{\beta,\HH}^{\rm red}
 \le C_I(B-\beta).
\]
Consequently, $\Lambda_{\beta,\HH}^{\rm red}$ has at most one
positive zero. If such a zero $\beta_*$ exists, reduced
semistability holds exactly for $\beta\ge\beta_*$, while uniform
reduced positivity holds exactly for $\beta>\beta_*$.

If $\HH$ is a maximal connected reductive subgroup and the Futaki character vanishes on
$\Lie\ZZ_\HH$, then $c_1(L)$ contains a cscK metric if and only if
$\Lambda_{\beta,\HH}^{\rm red}\ge0$ for some finite $\beta>0$.
\end{corollary}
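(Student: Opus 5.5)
The plan is to deduce everything from Proposition~\ref{prop:reduced-parameter-bounds} by taking infima of ratios over the testing family, and then to read off the threshold structure and the cscK criterion.

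\textbf{Finiteness and the two-sided increment bound.} For $\beta$ in a compact interval $I$, the estimate \eqref{eq:reduced-size-bound} gives $|\RR_{\beta,\HH}^{\rm max}(\Tcal)/N(\Tcal)|\le C_I$ for every $\Tcal$ with $N(\Tcal)>0$. Such a $\Tcal$ exists by assumption, so the infimum $\Lambda_{\beta,\HH}^{\rm red}$ lies in $[-C_I,C_I]$. For the increments, fix $\beta<B$ in $I$ and divide \eqref{eq:reduced-parameter-bounds} by $N(\Tcal)>0$:
\[
 \frac{\RR_{\beta,\HH}^{\rm max}(\Tcal)}{N(\Tcal)}+\Gamma_A(\beta,B)
 \le\frac{\RR_{B,\HH}^{\rm max}(\Tcal)}{N(\Tcal)}
 \le\frac{\RR_{\beta,\HH}^{\rm max}(\Tcal)}{N(\Tcal)}+C_I(B-\beta).
\]
Taking the infimum over $\Tcal$ in each term gives $\Gamma_A(\beta,B)\le\Lambda_{B,\HH}^{\rm red}-\Lambda_{\beta,\HH}^{\rm red}\le C_I(B-\beta)$. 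This uses that the infimum of $f+c$ is $\inf f+c$, together with monotonicity of infima.

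\textbf{Monotonicity and continuity.} Since $\Gamma_A(\beta,B)=\int_\beta^Bk_A>0$, the function $\Lambda_{\beta,\HH}^{\rm red}$ is strictly increasing. The upper bound makes it locally Lipschitz, hence continuous, so it has at most one zero. Given a zero $\beta_*$, strict monotonicity gives $\Lambda_{\beta,\HH}^{\rm red}\ge0$ exactly for $\beta\ge\beta_*$ and $\Lambda_{\beta,\HH}^{\rm red}>0$ exactly for $\beta>\beta_*$. The remaining step is to match these with the notions in the statement:
\begin{enumerate}[label=\textup{(\alph*)}]
\item Reduced semistability, meaning $\RR_{\beta,\HH}^{\rm max}\ge0$ for all $\Tcal$, is equivalent to $\Lambda_{\beta,\HH}^{\rm red}\ge0$. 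Configurations with $N(\Tcal)=0$ contribute nothing, since for them $\RR_{\beta,\HH}^{\rm max}(\Tcal)=0$ by Proposition~\ref{prop:reduced-parameter-bounds}.
\item Uniform reduced positivity, meaning $\RR_{\beta,\HH}^{\rm max}\ge\gamma N$ for some $\gamma>0$, is equivalent to $\Lambda_{\beta,\HH}^{\rm red}>0$, with $\gamma=\Lambda_{\beta,\HH}^{\rm red}$ admissible.
\end{enumerate}

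\textbf{The cscK criterion.} By (a), the condition $\Lambda_{\beta,\HH}^{\rm red}\ge0$ for some finite $\beta$ is exactly the hypothesis of Theorem~\ref{main:levi}, given the Futaki vanishing on $\Lie\ZZ_\HH$. For a maximal connected reductive $\HH$, the equivalence with cscK existence then follows from Theorem~\ref{main:levi}; sufficiency is Theorem~\ref{thm:groups} with $\varepsilon=0$. I do not expect a substantial obstacle. The only point needing care is the degenerate configurations with $N(\Tcal)=0$, which the infimum ignores; they are handled by the final assertion of Proposition~\ref{prop:reduced-parameter-bounds}.
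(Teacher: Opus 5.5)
Your proposal is correct and follows the paper's proof essentially step for step: finiteness from \eqref{eq:reduced-size-bound}, the two-sided increment bound from dividing \eqref{eq:reduced-parameter-bounds} by $N(\Tcal)$ and taking infima, strict monotonicity from $\Gamma_A(\beta,B)>0$, zero-norm configurations handled by the vanishing assertion of Proposition~\ref{prop:reduced-parameter-bounds}, and the cscK criterion from Theorem~\ref{main:levi}. One small remark: uniqueness of the zero already follows from strict monotonicity, so continuity is not needed for it.
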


\begin{proof}
For $N(\Tcal)>0$, set
$f_\beta(\Tcal)
=\RR_{\beta,\HH}^{\rm max}(\Tcal)/N(\Tcal)$.
Equation~\eqref{eq:reduced-size-bound} gives
$|f_\beta(\Tcal)|\le C_I$ on every compact parameter interval,
so $\Lambda_{\beta,\HH}^{\rm red}$ is finite.

For $\beta<B$ in $I$,
\eqref{eq:reduced-parameter-bounds} gives
\[
 f_\beta(\Tcal)+\Gamma_A(\beta,B)
 \le f_B(\Tcal)
 \le f_\beta(\Tcal)+C_I(B-\beta).
\]
Taking infima over the same configurations of positive reduced
norm gives the stated bounds. Since
$\Gamma_A(\beta,B)>0$ for $\beta<B$, the threshold is strictly
increasing; the upper bound gives local Lipschitz continuity.
Strict increase determines the signs on either side of a positive
zero $\beta_*$.

By Proposition~\ref{prop:reduced-parameter-bounds},
the optimized invariant vanishes on configurations with zero
reduced norm. Such configurations therefore satisfy the
semistability inequality. For a maximal connected reductive subgroup, the
last equivalence is Theorem~\ref{main:levi}.
\end{proof}

Corollary~\ref{cor:reduced-threshold} does not assert concavity
or a global $1$-Lipschitz bound for $\Lambda_{\beta,\HH}^{\rm red}$.
If no configuration has positive reduced norm, the threshold is
undefined and every optimized invariant is zero. If a positive zero $\beta_*$ exists, then for
$\beta<\beta_*$ sufficiently close to $\beta_*$ one has
$0<-\Lambda_{\beta,\HH}^{\rm red}
\le C_I(\beta_*-\beta)<G_A(\beta)$
for any compact interval $I\Subset(0,\infty)$ containing
$\beta_*$ in its interior.

\subsection{Finite-parameter bounds on Fano manifolds}
\label{sec:fano-tolerance}

Let $\HH\subset\G$ be connected and reductive, with maximal torus
$\SSS$ and central torus $\ZZ_\HH$. For a normal ample
$\HH$-equivariant test configuration $\Tcal$, write
\[
 N(\Tcal)=J_\HH^{\NA}(\phi_\Tcal),
 \qquad
 Q_\beta(\Tcal)=\RR_{\beta,\HH}^{\rm max}(\Tcal).
\]

\begin{theorem}[Anticanonical tolerance and its endpoint]
\label{thm:fano-tolerance-endpoint}
Let $X$ be a smooth Fano manifold and let $L$ be ample with
$-K_X\sim_\Q\lambda L$ for some $\lambda>0$. Put
$A=\alpha(X,L)$, and assume that the Futaki character vanishes on
$\Lie\ZZ_\HH$.

If, for some fixed finite $\beta_0>0$,
\begin{equation}\label{eq:fano-tolerance-endpoint}
 Q_{\beta_0}(\Tcal)
 \ge
 -G_A(\beta_0)N(\Tcal)
\end{equation}
for every normal ample $\HH$-equivariant test configuration
$\Tcal$, then $c_1(L)$ contains a cscK metric.
If $\HH$ is a maximal connected reductive subgroup, the existence of a cscK metric is
equivalent to the vanishing of the Futaki character on the Lie
algebra of the central torus together with
\eqref{eq:fano-tolerance-endpoint} at some finite parameter.

For $0<A<\lambda$, define
\begin{equation}\label{eq:fano-finite-tolerance}
 F_{A,\lambda}(\beta)
 =
 \begin{cases}
  A\lambda/(\lambda-A)-\beta,
     &0<\beta\le A,\\[2pt]
  A^{n+2}/((\lambda-A)\beta^n),
     &\beta>A.
 \end{cases}
\end{equation}
If, for some fixed finite $\beta>0$ and
$0\le\varepsilon<F_{A,\lambda}(\beta)$,
\begin{equation}\label{eq:fano-stronger-test}
 Q_\beta(\Tcal)\ge-\varepsilon N(\Tcal)
\end{equation}
for every normal ample $\HH$-equivariant test configuration
$\Tcal$, then $c_1(L)$ contains a cscK metric.
Moreover,
$F_{A,\lambda}(\beta)>G_A(\beta)$ for every $\beta>0$ whenever
$\lambda/(n+1)<A<\lambda$.
We do not include the endpoint $\varepsilon=F_{A,\lambda}(\beta)$.
\end{theorem}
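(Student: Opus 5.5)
The plan has three parts. First I establish an improved Fano gap $\cM^{\NA}-m_\beta\ge F_{A,\lambda}(\beta)J^{\NA}$. Then I obtain existence from the tolerance $\varepsilon<F_{A,\lambda}(\beta)$ by the argument of Theorem~\ref{thm:groups}. Finally I treat the endpoint tolerance $G_A(\beta_0)$ by an equality analysis along a destabilizing ray.

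\emph{The Fano input.} Since $-K_X\sim_\Q\lambda L$, we may write $\Ric(\omega_0)=\lambda\omega_0+dd^cf$. Then $\cJ_{\Ric(\omega_0)}=\lambda J_0+O(1)$ along rays, so
\[
 \mathcal R^{\NA}=\lambda\bigl(I^{\NA}-J^{\NA}\bigr).
\]
Tian's argument gives a lower bound on the entropy. Take $u$ with $\sup_Xu=0$ and apply Jensen with respect to $d\mu_u$ to $e^{-au}\,d\nu_0$ for $a<A$. This gives
\[
 \Ent(u)\ge a\int_X(-u)\,d\mu_u-\log C_a\ge aI(u)-C .
\]
Taking slopes gives $H^{\NA}\ge A\,I^{\NA}$. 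I would run the same estimate with $\Ent^b$ in place of $\Ent$, using the parameter concavity of Lemma~\ref{lem:parameter-concavity} and the convexity of $x\mapsto\Ent^{x^{-1/n}}$ from Corollary~\ref{cor:rescaled-parameter-convexity}.

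The resulting lower bound for $\cM^{\NA}-m_\beta=H^{\NA}-e_\beta$ is expressed through both $J^{\NA}$ and $I^{\NA}$. The variable $I^{\NA}/J^{\NA}\in[1+1/n,\,n+1]$ is then eliminated against the Ricci term $\lambda(I^{\NA}-J^{\NA})$. The computation has two regimes:
\begin{itemize}
\item for $\beta\le A$, one uses the linear segment $e_\beta=\beta J^{\NA}$ of Lemma~\ref{lem:alpha-cutoff};
\item for $\beta>A$, one uses the $\beta^{-n}$ tail coming from monotonicity of $b^{n+1}E'(b)$.
\end{itemize}
I expect this to produce the kernel in \eqref{eq:fano-finite-tolerance}, namely $A\lambda/(\lambda-A)-\beta$ and $A^{n+2}/((\lambda-A)\beta^n)$. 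The comparison $F_{A,\lambda}>G_A$ is elementary. For $\beta\le A$ it reads $\lambda/(\lambda-A)>(n+1)/n$, which is equivalent to $A>\lambda/(n+1)$. For $\beta>A$ the same inequality appears after cancelling $A^{n+1}\beta^{-n}$.

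\emph{Existence for $\varepsilon<F_{A,\lambda}(\beta)$.} This copies the proof of Theorem~\ref{thm:groups} with $G_A$ replaced by $F_{A,\lambda}$. Twisting by $\xi\in N_\Q(\SSS)$ leaves $\cM^{\NA}$ unchanged, by Li's twisting identity and the vanishing of the Futaki character. Lemma~\ref{lem:weyl} then gives
\[
 \cM^{\NA}(\phi)\ge\bigl(F_{A,\lambda}(\beta)-\varepsilon\bigr)J_\HH^{\NA}(\phi).
\]
Lemma~\ref{lem:shared-entropy-approximation} extends this bound to equivariant model metrics, and Li's criterion then gives a cscK metric.

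\emph{The endpoint $\varepsilon=G_A(\beta_0)$.} This is the main obstacle. When $A>\lambda/(n+1)$ it is already covered by the previous paragraph, since then $G_A<F_{A,\lambda}$, so the real work is for small $A$. I would argue by contradiction. If no cscK metric exists, Proposition~\ref{prop:subgroup-direct-ray} provides a ray $u\in\mathcal R_{K_\HH}$ with
\[
 \cM^{\rad}(u)\le0,\qquad N_\SSS(u)>0,
\]
together with approximating configurations $\Tcal_j$. Passing to the limit in \eqref{eq:fano-tolerance-endpoint} gives $\RR^\SSS_{\beta_0}(u)\ge-G_A(\beta_0)N_\SSS(u)$. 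The gap \eqref{eq:center-ray-gap} applied to a real maximizing twist $\tau_\xi u$ gives
\[
 0\ge\cM^{\rad}(u)\ge\RR^\SSS_{\beta_0}(u)+G_A(\beta_0)N_\SSS(u)\ge0 .
\]
So every inequality is an equality. This forces $J^{\rad}(\tau_\xi u)=N_\SSS(u)$, and it forces equality in Theorem~\ref{thm:gap} along $\tau_\xi u$ at every $B>\beta_0$.

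To exclude this I would use the Fano structure, through the Ding functional. Taking $w=0$ in \eqref{eq:analytic-quantization} at $\beta=\lambda$ shows that $m_\lambda$ dominates the Ding slope up to a bounded correction, and the reduced uniform Ding-stability criterion (Berman--Boucksom--Jonsson and Li) applies to $u$. Equality throughout the gap forces the rigid profile of Proposition~\ref{prop:discrete-sharpness}: $e_b$ is linear on $(0,A]$ and then has the $b^{-n}$ tail. With $\mathcal R^{\NA}=\lambda(I^{\NA}-J^{\NA})$ this profile is incompatible with the Tian bound $H^{\NA}\ge A\,I^{\NA}$ unless $N_\SSS(u)=0$, which contradicts $N_\SSS(u)>0$.

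I am least sure that this equality case closes in the regime $A\le\lambda/(n+1)$. There I would first try to run the same limit argument on the Ding slope instead of the Mabuchi slope. The converse direction, for $\HH$ maximal, follows from Theorem~\ref{main:levi}, since $-G_A(\beta_0)N\le0$.
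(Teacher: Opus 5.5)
\textbf{The ``improved Fano gap'' is false, so your second step does not follow as written.} You aim at $\cM^{\NA}-m_\beta\ge F_{A,\lambda}(\beta)J^{\NA}$. The left side equals $H^{\NA}-e_\beta$ and contains no Ricci term. Eliminating $I^{\NA}$ against $\mathcal R^{\NA}=\lambda(I^{\NA}-J^{\NA})$ therefore cannot bound that difference by itself.

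Here is a concrete failure. Take $X=\mathbf P^1\times\mathbf P^1$ and $L=\mathcal O(1,1)$, so $n=2$, $\lambda=2$, $A=1$, and $\lambda/(n+1)<A<\lambda$. Proposition~\ref{prop:dimension-obstruction} with $k=1$ and a smooth conic $D\in|L|$ gives a configuration with $J^{\NA}=2/3$, $I^{\NA}=H^{\NA}=1$, and $H^{\NA}-e_\beta=G_1(\beta)J^{\NA}$ for all $\beta>0$. This is strictly smaller than $F_{1,2}(\beta)J^{\NA}$.

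What the Tian bound $\Ent\ge aI-C_a$ actually yields, after the elimination you describe, is
\[
 \cM^\beta\le C_{a,\lambda}(\beta)\,\cM-F_{a,\lambda}(\beta)\,J+C,
 \qquad
 C_{a,\lambda}(\beta)=1+\frac{a}{\lambda-a}\min\{1,(a/\beta)^n\}>1,
\]
for $0<a<\min\{A,\lambda\}$. In the example this is an equality: $\cM^{\NA}=1/3$, $C=2$, and $m_\beta=\tfrac23(\beta-1)$ for $\beta\le1$.

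So $\cM^{\NA}(\phi)\ge(F_{A,\lambda}(\beta)-\varepsilon)J^{\NA}_\HH(\phi)$ is not available. The step can be repaired in either of two ways:
\begin{itemize}
\item Take slopes along the twisted rays to get $C_{a,\lambda}(\beta)\cM^{\NA}(\phi)\ge(F_{a,\lambda}(\beta)-\varepsilon)J^{\NA}_\HH(\phi)$. This still feeds the argument of Theorem~\ref{thm:groups} once $a$ is close to $A$.
\item Argue as the paper does, by contradiction along the ray of Proposition~\ref{prop:subgroup-direct-ray}. There $\cM^{\rad}(u)\le0$ makes the factor $C_{a,\lambda}(\beta)$ harmless.
\end{itemize}

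\textbf{The endpoint for $A\le\lambda/(n+1)$ is not proved.} Your equality analysis yields a ray with $\cM^{\rad}(u)=0$ along which the gap of Theorem~\ref{thm:gap} is attained for $b\ge\beta_0$. Nothing you list turns this into a contradiction:
\begin{itemize}
\item The ``rigid profile'' and its incompatibility with $H^{\NA}\ge AI^{\NA}$ are asserted, not derived. With $\cM^{\rad}=0$, Tian's bound only forces $I\ge\frac{\lambda}{\lambda-A}J$ asymptotically. That is consistent with $I\le(n+1)J$ throughout this regime.
\item The Ding comparison is off. Taking $w=0$ bounds $\Ent^\lambda$ from below by $\lambda$ times the Ding energy, not $\cM^\lambda$, so the Ricci energy remains.
\end{itemize}

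The missing input is algebraic. Your own chain, run for test configurations with Li's twisting identity and \eqref{eq:main-infinite}, gives $\cM^{\NA}(\phi_\Tcal)\ge Q_{\beta_0}(\Tcal)+G_A(\beta_0)N(\Tcal)\ge0$. Hence $\mathrm{DF}\ge0$ on every $\HH$-equivariant configuration. The paper then proceeds as follows:
\begin{itemize}
\item Zhuang's equivariant criterion gives K-semistability of $(X,-K_X)$.
\item The Fujita--Odaka bound $\alpha(X,-K_X)\ge1/(n+1)$ then gives $A\ge\lambda/(n+1)$, so the regime $A<\lambda/(n+1)$ is empty.
\item At $A=\lambda/(n+1)$, Jiang's rigidity theorem gives $X\simeq\mathbf P^n$, which carries the Fubini--Study metric.
\end{itemize}
You also omit the case $A\ge\lambda$, where $F_{A,\lambda}$ is undefined. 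There one fixes $a<\lambda$ and uses $F_{a,\lambda}(\beta_0)\to\infty$ as $a\uparrow\lambda$.
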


\begin{proof}
Fix $0<a<\min\{A,\lambda\}$.
Gibbs' inequality and $\alpha$-integrability give
$\Ent(w)\ge aI(w)-C_a$ for every finite-entropy
$w\in\mathcal E^1$.
Write $\Ric(\omega_0)=\lambda\omega_0+dd^cf$, with $f$ smooth.
Integration by parts in \eqref{eq:Jtheta} gives
\[
 \cJ_{\Ric(\omega_0)}(w)
 =
 \lambda(I(w)-J(w))
 +\int_X f\,(d\mu_w-d\nu_0).
\]
Since $f$ is smooth and both measures have mass one, the integral is
uniformly bounded. Thus
$I(w)=J(w)+\lambda^{-1}\cJ_{\Ric(\omega_0)}(w)+O(1)$.
Substituting into
$\cM(w)\ge aI(w)-\cJ_{\Ric(\omega_0)}(w)-C_a$ gives
\begin{equation}\label{eq:fano-ricci-lower-bound}
 \cJ_{\Ric(\omega_0)}(w)
 \ge
 \frac{a\lambda}{\lambda-a}J(w)
 -\frac{\lambda}{\lambda-a}\cM(w)-C_a'.
\end{equation}

If $\beta\le a$, use $\Ent^\beta\le\beta J$.
If $\beta>a$, Corollary~\ref{cor:rescaled-parameter-convexity},
followed by $B\to\infty$ at fixed $w$, gives
$\Ent^\beta(w)\le
t\,aJ(w)+(1-t)\Ent(w)$, where $t=(a/\beta)^n$.
Subtracting $\cJ_{\Ric(\omega_0)}(w)$ and using
\eqref{eq:fano-ricci-lower-bound}, we obtain
\[
\begin{aligned}
 \cM^\beta(w)
 &\le (1-t)\cM(w)
       +t\bigl(aJ(w)-\cJ_{\Ric(\omega_0)}(w)\bigr)\\
 &\le\left(1+\frac{at}{\lambda-a}\right)\cM(w)
       -\frac{a^2t}{\lambda-a}J(w)+C_{a,\beta}.
\end{aligned}
\]
Set $C_{a,\lambda}(\beta)
=1+\frac{a}{\lambda-a}\min\{1,(a/\beta)^n\}$.
The two parameter ranges give
\begin{equation}\label{eq:fano-pointwise-control}
 \cM^\beta(w)
 \le C_{a,\lambda}(\beta)\cM(w)
    -F_{a,\lambda}(\beta)J(w)+C_{a,\beta},
\end{equation}
where $F_{a,\lambda}$ is defined by
\eqref{eq:fano-finite-tolerance} with $A$ replaced by $a$.

We first prove existence under \eqref{eq:fano-stronger-test}.
Suppose, to the contrary, that no cscK metric exists.
Proposition~\ref{prop:subgroup-direct-ray} gives a
$K_\HH$-invariant ray $u$ with finite Mabuchi slope
$m=\cM^{\rad}(u)\le0$ and
$N:=N_\SSS(u)>0$.
The configurations $\Tcal_j$ in
Proposition~\ref{prop:subgroup-direct-ray} satisfy
$Q_\beta(\Tcal_j)\to\RR_\beta^\SSS(u)$ and $N(\Tcal_j)\to N$.
Hence \eqref{eq:fano-stronger-test} implies
$\RR_\beta^\SSS(u)\ge-\varepsilon N$.

The Futaki character vanishes on $\Lie\ZZ_\HH$ by assumption
and on the derived algebra because it is a character. Thus it
vanishes on $\Lie\HH$, and Equation~\eqref{eq:finite-energy-futaki} gives
$\cM^{\rad}(\tau_\xi u)=m$ for every
$\xi\in N_\Q(\SSS)$.
Apply \eqref{eq:fano-pointwise-control} to
$(\tau_\xi u)_t$, divide by $t$, and take upper limits as
$t\to\infty$.
Since
$J^{\rad}(\tau_\xi u)\ge N$, taking the supremum over rational
$\xi$ gives
\[
 -\varepsilon N
 \le
 \RR_\beta^\SSS(u)
 \le
 C_{a,\lambda}(\beta)m
 -F_{a,\lambda}(\beta)N
 \le
 -F_{a,\lambda}(\beta)N.
\]
This is impossible if
$\varepsilon<F_{a,\lambda}(\beta)$.
Since $F_{a,\lambda}(\beta)\to F_{A,\lambda}(\beta)$ as
$a\uparrow A<\lambda$, the assumption
$\varepsilon<F_{A,\lambda}(\beta)$ gives the required
contradiction for $a$ sufficiently close to $A$.

We assume \eqref{eq:fano-tolerance-endpoint} and prove
existence.
Since the Futaki character vanishes on $\Lie\SSS$, Li's twisting
identity \cite[Lemma~2.24]{Li}, applied with reductive group
$\SSS$, gives, for every rational maximal-torus twist,
$\cM^{\NA}(\phi_\xi)=\cM^{\NA}(\phi_\Tcal)$.
Equation~\eqref{eq:main-infinite} gives
\[
 \cM^{\NA}(\phi_\Tcal)
 \ge
 m_{\beta_0}(\phi_\xi)
 +G_A(\beta_0)J^{\NA}(\phi_\xi).
\]
Since
$J^{\NA}(\phi_\xi)\ge N(\Tcal)$, taking the supremum over rational
$\xi$ and using \eqref{eq:fano-tolerance-endpoint} yields
\[
 \cM^{\NA}(\phi_\Tcal)
 \ge
 Q_{\beta_0}(\Tcal)
 +G_A(\beta_0)N(\Tcal)
 \ge0.
\]
Since $\mathrm{DF}\ge\cM^{\NA}$
\cite[Definition~7.13 and Proposition~7.15]{BHJ}, every normal
ample $\HH$-equivariant test configuration has nonnegative
Donaldson--Futaki invariant. After rescaling the polarization to
$-K_X$, Zhuang's equivariant criterion
\cite[Theorem~1.1\textup{(1)}]{ZhuangEq} gives K-semistability of
$(X,-K_X)$.

The Fujita--Odaka $\alpha$-invariant bound
\cite[Theorem~1.1]{JiangAlpha} and
$\alpha(X,-K_X)=A/\lambda$ therefore give
\[
 \frac{A}{\lambda}\ge\frac1{n+1}.
\]
If $A=\lambda/(n+1)$, Jiang's rigidity theorem
\cite[Theorem~1.2]{JiangAlpha} gives
$X\simeq\mathbf P^n$, and the scaled Fubini--Study metric gives a
cscK metric in $c_1(L)$.

Suppose next that
$\lambda/(n+1)<A<\lambda$.
Equations~\eqref{eq:fano-finite-tolerance} and~\eqref{eq:constants}
give, for every $\beta>0$,
\[
 F_{A,\lambda}(\beta)-G_A(\beta)
 =\frac{(n+1)A-\lambda}{\lambda-A}
   G_A(\max\{A,\beta\})>0.
\]
Choose $a<A$ sufficiently close to $A$ so that
$F_{a,\lambda}(\beta_0)>G_A(\beta_0)$.
Then \eqref{eq:fano-tolerance-endpoint} is the strict tolerance
condition with
$\varepsilon=G_A(\beta_0)$, contradicting nonexistence by the
first part of the proof.

Finally, suppose $A\ge\lambda$. For $0<a<\lambda\le A$,
the preceding estimates apply. Since
$F_{a,\lambda}(\beta_0)\to\infty$ as $a\uparrow\lambda$, we can
fix such an $a$ with $F_{a,\lambda}(\beta_0)>G_A(\beta_0)$.
The argument for \eqref{eq:fano-stronger-test}, with this $a$
and $\varepsilon=G_A(\beta_0)$, gives a contradiction.

For a maximal connected reductive subgroup, necessity follows from
Theorem~\ref{main:levi}, which gives the Futaki condition and
$Q_\beta(\Tcal)\ge0$ at one fixed finite parameter for every
normal ample $\HH$-equivariant test configuration.
Since $G_A(\beta)N(\Tcal)\ge0$, this implies
\eqref{eq:fano-tolerance-endpoint}.
\end{proof}

\begin{remark}[Failure of the endpoint for a log pair]
\label{rem:log-endpoint}
The endpoint conclusion of Theorem~\ref{thm:fano-tolerance-endpoint}
fails for the analogous quantized energy of a log pair. Consider
\[
 X=\mathbf P^1,\qquad
 \Delta=\tfrac12[\infty]+\tfrac14[0]+\tfrac14[1],\qquad
 L=-(K_X+\Delta)\sim_\Q\mathcal O(1).
\]
The identity component of $\Aut(X,\Delta)$ is trivial.
Let $\omega_{\rm FS}\in c_1(\mathcal O(1))$ be the Fubini--Study
form of mass one, and put $d\nu=\omega_{\rm FS}$. Write $d_p$ for
the coefficient of $[p]$ in $\Delta$, and let $s_p$ be a section
of $\mathcal O(1)$ vanishing at $p$, with its Fubini--Study norm.
The probability measure
\[
 d\mu=C\prod_{p\in\{\infty,0,1\}}|s_p|^{-2d_p}\,d\nu
\]
satisfies $\Ric(\mu)-[\Delta]=\omega_{\rm FS}$.
Its density is bounded below by a positive constant $c$ and
belongs to $L^p(\nu)$ for $1<p<2$. Since
$\alpha(\mathbf P^1,\mathcal O(1))=1$, H\"older's inequality gives
\begin{equation}\label{eq:log-alpha-integrability}
 \sup_{\substack{u\in\operatorname{PSH}(X,\omega_{\rm FS})\\
                 \sup_Xu=0}}
 \int_Xe^{-au}\,d\mu<\infty,\qquad 0<a<\tfrac12.
\end{equation}
Smooth approximations of a logarithmic pole at $\infty$ show that
this uniform bound fails for $a\ge\tfrac12$. Thus the
$\alpha$-invariant relative to $\mu$ is $A_\Delta=\tfrac12$.

Define $\Ent_\mu^\beta$ by the supremum over smooth competitors in
\eqref{eq:analytic-quantization}, with $\omega_0=\omega_{\rm FS}$
and reference measure $\mu$. For this log pair the Ricci energy is
$\cJ_{\omega_{\rm FS}}=J$, so put
$\cM_\Delta^\beta=\Ent_\mu^\beta-J$.
Both $\Ent_\mu^\beta$ and $J$ are invariant under additive
constants, so it suffices to assume $\sup_Xu=0$.
For $0<\beta<\tfrac12$, taking $w=0$ in
\eqref{eq:analytic-quantization} with reference measure $\mu$
and using \eqref{eq:log-alpha-integrability} gives
\begin{equation}\label{eq:log-endpoint-entropy}
 \beta J(u)-C_\beta
 \le\Ent_\mu^\beta(u)
 \le\Ent_\nu^\beta(u)-\log c
 \le\beta J(u)-\log c,
 \qquad u\in\mathcal E^1.
\end{equation}
Here the upper bounds follow from $\mu\ge c\nu$ and
\eqref{eq:entropy-pointwise-bound}. No regularity of an optimizer
for $\mu$ is needed.
Consequently, for every finite-energy geodesic ray $u$ from a
smooth potential, the radial limit exists and
\[
 (\cM_\Delta^\beta)^{\rad}(u)=(\beta-1)J^{\rad}(u).
\]
In particular, at $\beta_0=\tfrac14$,
\begin{equation}\label{eq:log-endpoint-equality}
 (\cM_\Delta^{1/4})^{\rad}(u)
 =-\tfrac34J^{\rad}(u)
 =-G_{1/2}(1/4)J^{\rad}(u),
\end{equation}
where $G$ is evaluated in dimension one. This holds for the ray
of every normal ample test configuration of the log pair.
Nevertheless, the pair has no conical K\"ahler--Einstein metric.
Indeed, its angles divided by $2\pi$ are
$\theta=(\tfrac12,\tfrac34,\tfrac34)\in(0,1)^3$, and the necessary strict
inequality
$2+\sum_j(\theta_j-1)<2\min_j\theta_j$
has both sides equal to $1$. This contradicts the necessary
condition of Luo--Tian~\cite{LuoTian} for spherical metrics with
three or more cone angles in $(0,2\pi)$.
\end{remark}

\section{Critical parameters and polarized examples}\label{sec:bounds}

We study the finite-parameter threshold and prove
Theorem~\ref{main:ruled}. We first compute the threshold for
finite-group-equivariant test configurations on $\mathbf P^1$ and
then construct a fixed ruled surface with varying polarizations.
For these polarizations, we determine the critical parameter for
smooth-divisor deformations and bound the full critical parameter
from below.

\subsection{Concavity and strict growth}
\label{sec:threshold-examples}

Let $(X,L)$ be a connected smooth polarized manifold of dimension
$n\ge1$, and put $A=\alpha(X,L)$. Unless a smaller testing family is
specified, $\Tcal$ ranges over all normal ample test configurations,
with arbitrary positive exponent and invariants normalized to $L$.
Define
\begin{equation}\label{eq:two-thresholds}
 \lambda_\beta^{\NA}(X,L)
 :=\inf_{J^{\NA}(\phi_\Tcal)>0}
       \frac{m_\beta(\phi_\Tcal)}{J^{\NA}(\phi_\Tcal)}.
\end{equation}
By \eqref{eq:slope-bridge}, the quotient in
\eqref{eq:two-thresholds} equals
$(\cM^\beta)^{\rad}(u^\Tcal)/J^{\rad}(u^\Tcal)$.
The parameter $\beta$ is real; intersection formulas are first used
at rational parameters and then extended by continuity.
The threshold in \eqref{eq:two-thresholds} differs from the optimal
coercivity constants for Donaldson's $J$-functional studied in
\cite{Sjo}. If $J^{\NA}(\phi)=0$, then
\eqref{eq:radial-entropy-bound}, \eqref{eq:ricci-slope-bound}, and
\eqref{eq:radial-mabuchi-splitting} give $m_\beta(\phi)=0$.
Thus $\lambda_\beta^{\NA}\ge0$ is equivalent to nonnegativity of
$m_\beta$ on the full testing family.

\begin{proposition}[Concavity and strict growth]
\label{prop:threshold-growth}
The function $\beta\mapsto\lambda_\beta^{\NA}$ is finite and concave
on $(0,\infty)$. For every $0<\beta<B$,
\begin{equation}\label{eq:threshold-growth-v2}
 \Gamma_A(\beta,B)
 \le\lambda_B^{\NA}-\lambda_\beta^{\NA}\le B-\beta.
\end{equation}
Consequently, it is strictly increasing and $1$-Lipschitz, and has
at most one positive zero.
\end{proposition}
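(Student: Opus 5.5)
The plan is to view $\lambda_\beta^{\NA}$ as an infimum, over the fixed family of configurations with $J^{\NA}(\phi_\Tcal)>0$, of the functions
\[
 f_\Tcal(\beta)=\frac{m_\beta(\phi_\Tcal)}{J^{\NA}(\phi_\Tcal)},
\]
and to transfer properties of the individual $f_\Tcal$ to the infimum. Lemma~\ref{lem:parameter-concavity} says that each $\beta\mapsto m_\beta(\phi_\Tcal)$ is finite and concave on $(0,\infty)$. Dividing by the positive constant $J^{\NA}(\phi_\Tcal)$ preserves concavity, so each $f_\Tcal$ is concave. An infimum of concave functions is concave, provided it is finite.

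Finiteness comes from the splitting \eqref{eq:radial-mabuchi-splitting} together with \eqref{eq:radial-entropy-bound} and \eqref{eq:ricci-slope-bound}. These give
\[
 -C J^{\NA}\le m_\beta=e_\beta-\mathcal R^{\NA}\le(\beta+C)J^{\NA},
\]
so $-C\le f_\Tcal(\beta)\le\beta+C$ uniformly in $\Tcal$. This requires that the family contain at least one configuration of positive norm; any deformation to the normal cone as in Proposition~\ref{prop:dimension-obstruction} provides one. Hence $\lambda_\beta^{\NA}\in[-C,\beta+C]$, and concavity follows.

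For \eqref{eq:threshold-growth-v2}, fix $0<\beta<B$ and a configuration $\Tcal$ with $J^{\NA}(\phi_\Tcal)>0$. Theorem~\ref{thm:gap} and the upper increment bound of Lemma~\ref{lem:parameter-concavity} give
\[
 f_\Tcal(\beta)+\Gamma_A(\beta,B)\le f_\Tcal(B)\le f_\Tcal(\beta)+(B-\beta).
\]
Taking the infimum over $\Tcal$ in the left inequality gives $\lambda_B^{\NA}\ge\lambda_\beta^{\NA}+\Gamma_A(\beta,B)$. For the right inequality, choose $\Tcal$ with $f_\Tcal(\beta)\le\lambda_\beta^{\NA}+\epsilon$. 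Then $\lambda_B^{\NA}\le f_\Tcal(B)\le\lambda_\beta^{\NA}+\epsilon+B-\beta$, and letting $\epsilon\to0$ gives the upper bound.

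The consequences are then immediate. Since $k_A>0$, we have $\Gamma_A(\beta,B)=\int_\beta^Bk_A>0$, so $\lambda^{\NA}$ is strictly increasing. The upper bound in \eqref{eq:threshold-growth-v2}, combined with monotonicity, shows that $\lambda^{\NA}$ is $1$-Lipschitz. Strict monotonicity allows at most one zero.

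The only delicate point is finiteness, namely that the infimum does not collapse to $-\infty$; concavity of an infimum would be vacuous otherwise. That is handled by the uniform Ricci-slope bound \eqref{eq:ricci-slope-bound} together with $e_\beta\ge0$. Everything else is pointwise-in-$\Tcal$ inequalities passed to the infimum.
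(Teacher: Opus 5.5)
Your proposal is correct and follows the paper's proof essentially step for step. The uniform bounds $-C\le f_\beta\le\beta+C$ from \eqref{eq:radial-entropy-bound}, \eqref{eq:ricci-slope-bound} and \eqref{eq:radial-mabuchi-splitting}, together with a positive-norm configuration from Proposition~\ref{prop:dimension-obstruction}, give finiteness; concavity from Lemma~\ref{lem:parameter-concavity} passes to the infimum; and Theorem~\ref{thm:gap} with the secant bound \eqref{eq:radial-secant-bound} yields \eqref{eq:threshold-growth-v2}, where your explicit $\epsilon$-argument just spells out the paper's ``taking infima.''
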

\begin{proof}
For $J^{\NA}(\phi)>0$, set
$f_\beta(\phi)=m_\beta(\phi)/J^{\NA}(\phi)$.
Equations~\eqref{eq:radial-entropy-bound},
\eqref{eq:radial-mabuchi-splitting}, and
\eqref{eq:ricci-slope-bound} give
$-C\le f_\beta(\phi)\le\beta+C$, with $C$ independent of $\phi$.
Proposition~\ref{prop:dimension-obstruction} gives a configuration of
positive norm. Thus the infimum in \eqref{eq:two-thresholds} is finite.

For $a,b>0$ and $0\le\theta\le1$, parameter concavity in
Lemma~\ref{lem:parameter-concavity} gives
\[
 f_{\theta a+(1-\theta)b}(\phi)
 \ge\theta f_a(\phi)+(1-\theta)f_b(\phi)
 \ge\theta\lambda_a^{\NA}+(1-\theta)\lambda_b^{\NA}.
\]
Taking the infimum over $\phi$ proves concavity.
Equations~\eqref{eq:main-gap} and \eqref{eq:radial-secant-bound}
give
\[
 f_\beta(\phi)+\Gamma_A(\beta,B)
 \le f_B(\phi)\le f_\beta(\phi)+B-\beta.
\]
Taking infima proves \eqref{eq:threshold-growth-v2}.
Since $\Gamma_A(\beta,B)>0$, the function is strictly increasing;
the upper bound proves the $1$-Lipschitz property.
\end{proof}

For the reduced threshold, Corollary~\ref{cor:reduced-threshold}
gives a Lipschitz constant on each compact parameter interval.
The constant is $1$ in the unreduced estimate
\eqref{eq:threshold-growth-v2}.

\begin{corollary}[The exact initial threshold]
\label{cor:initial-threshold}
Set $\mathfrak R_L=\sup_{J^{\NA}(\phi_\Tcal)>0}
\mathcal R^{\NA}(\phi_\Tcal)/J^{\NA}(\phi_\Tcal)$.
Then $\mathfrak R_L$ is finite and
\begin{equation}\label{eq:initial-threshold}
 \lambda_\beta^{\NA}=\beta-\mathfrak R_L,
 \qquad 0<\beta\le A.
\end{equation}
For every $\beta>0$, $\lambda_\beta^{\NA}\le\beta-\mathfrak R_L$.
If $0<\mathfrak R_L\le A$, the unique positive zero is
$\beta_{\rm crit}=\mathfrak R_L$. If $\mathfrak R_L\le0$, then
$\lambda_\beta^{\NA}>0$ for every $\beta>0$.
\end{corollary}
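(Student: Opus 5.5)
The plan is to combine the splitting $m_\beta=e_\beta-\mathcal R^{\NA}$ from \eqref{eq:radial-mabuchi-splitting} with the linearity of $e_\beta$ below the $\alpha$-invariant, namely Lemma~\ref{lem:alpha-cutoff}.

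\emph{Step 1: finiteness of $\mathfrak R_L$.} By \eqref{eq:ricci-slope-bound}, $|\mathcal R^{\NA}(\phi)|\le CJ^{\NA}(\phi)$ for every normal ample configuration metric. Proposition~\ref{prop:dimension-obstruction} supplies at least one configuration with $J^{\NA}>0$, so the supremum defining $\mathfrak R_L$ is taken over a nonempty set and lies in $[-C,C]$.

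\emph{Step 2: the formula on $(0,A]$.} Fix $0<\beta\le A$ and any $\Tcal$ with $J^{\NA}(\phi_\Tcal)>0$. Lemma~\ref{lem:alpha-cutoff} gives $e_\beta(\phi)=\beta J^{\NA}(\phi)$. Hence
\[
 \frac{m_\beta(\phi)}{J^{\NA}(\phi)}
 =\beta-\frac{\mathcal R^{\NA}(\phi)}{J^{\NA}(\phi)}.
\]
Taking the infimum over $\phi$ converts the infimum into $\beta-\sup\mathcal R^{\NA}/J^{\NA}$, which is $\beta-\mathfrak R_L$. This proves \eqref{eq:initial-threshold}.

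\emph{Step 3: the upper bound for all $\beta>0$.} By Lemma~\ref{lem:parameter-concavity}, $e_\beta\le\beta J^{\NA}$. Therefore
\[
 \frac{m_\beta}{J^{\NA}}\le\beta-\frac{\mathcal R^{\NA}}{J^{\NA}}
\]
for each $\phi$. Taking the infimum over $\phi$ gives $\lambda_\beta^{\NA}\le\beta-\mathfrak R_L$.

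\emph{Step 4: the zero and the sign statements.} If $\mathfrak R_L\le0$, fix $\beta>0$ and pick $\beta'=\min\{\beta,A\}/2$. Then $\lambda_{\beta'}^{\NA}=\beta'-\mathfrak R_L>0$, and strict monotonicity from Proposition~\ref{prop:threshold-growth} gives $\lambda_\beta^{\NA}\ge\lambda_{\beta'}^{\NA}>0$.

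If $0<\mathfrak R_L\le A$, then \eqref{eq:initial-threshold} at $\beta=\mathfrak R_L$ gives $\lambda_{\mathfrak R_L}^{\NA}=0$. By Proposition~\ref{prop:threshold-growth} this is the unique positive zero, with $\lambda^{\NA}_\beta<0$ for $\beta<\mathfrak R_L$ and $\lambda^{\NA}_\beta>0$ for $\beta>\mathfrak R_L$.

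Recall that $\lambda_\beta^{\NA}\ge0$ is equivalent to nonnegativity of $m_\beta$ on the whole family, since configurations with $J^{\NA}=0$ have $m_\beta=0$. So the set of semistable parameters is $[\mathfrak R_L,\infty)$, and its infimum is $\beta_{\rm crit}=\mathfrak R_L$.

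\emph{Main obstacle.} No step is genuinely hard. The only point needing care is matching the definition of $\beta_{\rm crit}$ with the zero of $\lambda^{\NA}$, which relies on the remark after \eqref{eq:two-thresholds} that configurations with $J^{\NA}=0$ impose no condition.
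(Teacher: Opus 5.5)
Your proposal is correct and follows the paper's proof essentially step by step. You get finiteness of $\mathfrak R_L$ from \eqref{eq:ricci-slope-bound}, the exact formula on $(0,A]$ from Lemma~\ref{lem:alpha-cutoff} and the splitting $m_\beta=e_\beta-\mathcal R^{\NA}$, the global upper bound from $e_\beta\le\beta J^{\NA}$, and the zero and sign statements from the strict monotonicity in Proposition~\ref{prop:threshold-growth}. Your remarks on nonemptiness via Proposition~\ref{prop:dimension-obstruction}, and that zero-norm configurations impose no condition on $\beta_{\rm crit}$, make explicit points the paper leaves implicit.
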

\begin{proof}
Equation~\eqref{eq:ricci-slope-bound} gives
$\mathfrak R_L\in\R$.
For $0<\beta\le A$, \eqref{eq:alpha-cutoff} and
\eqref{eq:radial-mabuchi-splitting} give
\[
 \lambda_\beta^{\NA}
 =\inf_{J^{\NA}(\phi)>0}
   \left(\beta-\frac{\mathcal R^{\NA}(\phi)}{J^{\NA}(\phi)}\right)
 =\beta-\mathfrak R_L.
\]
For arbitrary $\beta>0$, the upper bound in
\eqref{eq:radial-entropy-bound} gives the inequality
$\lambda_\beta^{\NA}\le\beta-\mathfrak R_L$.
If $0<\mathfrak R_L\le A$, then
$\lambda_{\mathfrak R_L}^{\NA}=0$; strict increase makes this the
unique zero $\beta_{\rm crit}$. If $\mathfrak R_L\le0$, \eqref{eq:initial-threshold} gives
$\lambda_\beta^{\NA}>0$ on $(0,A]$.
Equation~\eqref{eq:threshold-growth-v2} gives the same inequality
for $\beta>A$.
\end{proof}

\begin{proposition}[Change of polarization scale]
\label{prop:polarization-scale}
For every positive integer $k$ and every $\beta>0$,
\begin{equation}\label{eq:polarization-scale}
 \lambda_\beta^{\NA}(X,L^{\otimes k})
 =\frac1k\lambda_{k\beta}^{\NA}(X,L),\qquad
 \alpha(X,L^{\otimes k})=\frac1k\alpha(X,L).
\end{equation}
A positive critical parameter exists for one polarization if and
only if it exists for the other. In that case,
$\beta_{\rm crit}(L^{\otimes k})=\beta_{\rm crit}(L)/k$, and
$\beta_{\rm crit}/\alpha$ has the same value for both polarizations.
\end{proposition}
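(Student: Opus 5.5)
The plan is to mirror the proof of Proposition~\ref{prop:kahler-scale}, now at the level of test configurations and non-Archimedean invariants normalized to the polarization. First I will fix the correspondence between test configurations. A normal ample test configuration $(\mathcal X,\mathcal L)$ of exponent $r$ for $(X,L^{\otimes k})$ is the same polarized family as a test configuration of exponent $kr$ for $(X,L)$. Conversely, every test configuration of exponent $r$ for $(X,L)$ yields one of exponent $r$ for $L^{\otimes k}$ after replacing $\mathcal L$ by $\mathcal L^{\otimes k}$. This does not change the normalized $\Q$-line bundle up to the factor $k$. So the testing families coincide: the normalized polarization for $L^{\otimes k}$ is $k$ times that for $L$, and both families are closed under positive tensor powers. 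On the analytic side, choose the reference forms $k\omega_0\in c_1(L^{\otimes k})$ and $\omega_0\in c_1(L)$. Then the associated geodesic rays correspond by $u_t\mapsto ku_t$, since the Phong--Sturm construction is compatible with powers of the polarization.

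Second, I will invoke the scaling identities already verified in the proof of Proposition~\ref{prop:kahler-scale}, with $r=k$:
\[
 J_{k\omega_0}(ku)=kJ_{\omega_0}(u),\qquad
 \cM^\beta_{k\omega_0}(ku)=\cM^{k\beta}_{\omega_0}(u).
\]
I then take radial slopes, using that the lower limit is compatible with these pointwise identities. By \eqref{eq:slope-bridge}, this gives $J^{\NA}_{L^{\otimes k}}(\phi')=kJ^{\NA}_L(\phi)$ and $m^{L^{\otimes k}}_\beta(\phi')=m^L_{k\beta}(\phi)$ for corresponding metrics. Here $\phi$ and $\phi'$ are the metrics of the same configuration viewed for $L$ and for $L^{\otimes k}$, so that $\phi'=k\phi$. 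Dividing and taking the infimum over configurations with positive norm gives the first identity in \eqref{eq:polarization-scale}. The $\alpha$-invariant identity follows either from the same potential rescaling $e^{-au}=e^{-(a/k)(ku)}$ or from Demailly's formula, because $\Delta\sim_\Q L^{\otimes k}$ if and only if $\Delta/k\sim_\Q L$.

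Third, I will deduce the statements about critical parameters. By Proposition~\ref{prop:threshold-growth}, each threshold function is strictly increasing and has at most one positive zero. Nonnegativity of $\lambda^{\NA}$ is equivalent to semistability, because configurations with $J^{\NA}=0$ have $m_\beta=0$. Hence $\beta$ is semistable for $L^{\otimes k}$ exactly when $k\beta$ is semistable for $L$. Taking infima gives $\beta_{\rm crit}(L^{\otimes k})=\beta_{\rm crit}(L)/k$, and one of these is positive and finite exactly when the other is. Combining this with $\alpha(X,L^{\otimes k})=\alpha(X,L)/k$ shows that $\beta_{\rm crit}/\alpha$ is unchanged.

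I expect the main obstacle to be the first step, namely making the identification of testing families and normalizations airtight. Concretely, the radial slopes computed with reference $k\omega_0$ for $L^{\otimes k}$ must agree with the intersection-theoretic invariants normalized to $L^{\otimes k}$. This includes the relation $\phi'=k\phi$ together with the convention $\sup\phi=0$, and it needs the slope formulas \eqref{eq:slope-bridge} applied to each polarization separately. The remaining steps are formal once this dictionary is fixed.
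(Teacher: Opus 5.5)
Your proposal is correct and matches the paper's proof: the bijection $\phi\mapsto k\phi$ between the normalized testing families, obtained from tensor powers and multiplication of the exponent, and the scaling identities from Proposition~\ref{prop:kahler-scale} transferred through \eqref{eq:slope-bridge}. It then uses the observation that semistability at $\beta$ for $L^{\otimes k}$ is the same as semistability at $k\beta$ for $L$. The appeal to strict monotonicity from Proposition~\ref{prop:threshold-growth} is harmless but not needed.
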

\begin{proof}
Apply the proof of Proposition~\ref{prop:kahler-scale} with
reference forms $\omega_0$ and $k\omega_0$. It gives
\[
 m_\beta^{kL}(k\phi)=m_{k\beta}^L(\phi),\qquad
 J_{kL}^{\NA}(k\phi)=kJ_L^{\NA}(\phi).
\]
For a configuration of $(X,L)$, taking the $k$th tensor power of
its polarization gives a configuration for $(X,L^{\otimes k})$ with metric $k\phi$.
Conversely, a configuration for $(X,L^{\otimes k})$ of exponent $r$
is a configuration for $(X,L)$ of exponent $kr$, with normalized
metric $\psi/k$ if its original metric is $\psi$.
Since all positive exponents are allowed, $\phi\mapsto k\phi$
is a bijection between the two families of normalized metrics.
Taking infima proves the identity for $\lambda_\beta^{\NA}$.
Proposition~\ref{prop:kahler-scale} gives the identity for $\alpha$.
Finally, $\lambda_\beta^{\NA}(X,L^{\otimes k})\ge0$ exactly when
$\lambda_{k\beta}^{\NA}(X,L)\ge0$, proving the claims about
$\beta_{\rm crit}$.
\end{proof}

\subsection{Finite-group-equivariant thresholds on the projective line}

The irreducible representation $Q_8\subset\mathrm{SU}(2)$ induces
an action on $\mathbf P^1$ with image
$Q_8/\{\pm1\}\simeq(\mathbf Z/2\mathbf Z)^2$.
The projective image is generated by $z\mapsto-z$ and $z\mapsto1/z$.
It has the two-point orbit $\{0,\infty\}$ and no common fixed point,
so its smallest orbit has cardinality two.

\begin{proposition}[The threshold on $\mathbf P^1$ with a finite group action]
\label{prop:fibre-threshold}
Let $\Gamma\subset\mathrm{PGL}_2$ be finite with no common fixed
point, and let $r\ge2$ be its smallest orbit cardinality.
Define $\lambda_{\beta,\Gamma}^{\rm fib}$ by the infimum in
\eqref{eq:two-thresholds}, restricted to normal ample
$\Gamma$-equivariant configurations of $(\mathbf P^1,\mathcal O(1))$
of positive norm and arbitrary linearizable positive exponent.
Then
\begin{equation}\label{eq:fibre-threshold}
 \lambda_{\beta,\Gamma}^{\rm fib}=
 \begin{cases}
  \beta-2,&0<\beta\le r,\\
  2r-2-r^2/\beta,&\beta\ge r.
 \end{cases}
\end{equation}
For the projective image of $Q_8$, $r=2$ and the unique zero is
$\beta=2$.
\end{proposition}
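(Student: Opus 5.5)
The plan is to reduce everything to the entropy part and then run the arguments of Sections~\ref{sec:proof} and~\ref{sec:sharp-gap} with $A$ replaced by a $\Gamma$-equivariant $\alpha$-invariant equal to $r$. On $(\mathbf P^1,\mathcal O(1))$ with the Fubini--Study reference we have $n=1$ and $\Ric(\omega_0)=2\omega_0$. By Remark~\ref{finite:tolerance-endpoint}, $\mathcal R^{\NA}=2J^{\NA}$, so \eqref{eq:radial-mabuchi-splitting} gives $m_\beta/J^{\NA}=e_\beta/J^{\NA}-2$. It therefore suffices to show
\[
 \inf_{\Gamma\text{-equiv.},\,J^{\NA}>0}\frac{e_\beta(\phi)}{J^{\NA}(\phi)}
 =\begin{cases}\beta,&0<\beta\le r,\\ 2r-r^2/\beta,&\beta\ge r.\end{cases}
\]
For $n=1$, the right-hand side equals $2r-G_r(\beta)$, i.e.\ $\beta$ on $(0,r]$ and $r+\Gamma_r(r,\beta)$ for $\beta\ge r$.

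\emph{Lower bound.} I would first establish the equivariant $\alpha$-invariant $\alpha_\Gamma(\mathbf P^1,\mathcal O(1))=r$. Algebraically, a $\Gamma$-invariant effective $\Q$-divisor $\Delta\sim_\Q\mathcal O(1)$ has coefficients that are constant on orbits, and every orbit has at least $r$ points. Hence each coefficient is at most $1/r$, so $\operatorname{lct}(\Delta)\ge r$. Equality holds for $\Delta=\frac1r(\text{smallest orbit})$. The analytic counterpart is uniform integrability
\[
 \sup\int e^{-c(u-\sup u)}\,d\nu_0<\infty
\]
over $\Gamma$-invariant potentials for $c<r$. This is Tian's $\alpha_G$ with Demailly's identification in its $G$-equivariant form; on $\mathbf P^1$ it could also be checked directly from Lelong numbers, since an invariant $\omega_0$-psh function has Lelong numbers summing to at most $1$ over each orbit.

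Next I would check that the proofs of Lemma~\ref{lem:alpha-cutoff}, Theorem~\ref{thm:analytic-gap} and Theorem~\ref{thm:gap} only use $\alpha$-integrability of the potential $u$ itself, with $w=0$ as the competitor, while Lemma~\ref{lem:parameter-second-derivative} is pointwise in $u$. Rays of $\Gamma$-equivariant configurations are $\Gamma$-invariant when the reference form is $\Gamma$-invariant, because uniqueness of the maximal ray forces invariance. Hence these results hold on the restricted family with $A$ replaced by $r$. This gives $e_\beta=\beta J^{\NA}$ for $\beta\le r$, and $e_\beta\ge e_r+\Gamma_r(r,\beta)J^{\NA}$ for $\beta\ge r$.

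\emph{Upper bound.} Take $D$ to be the smallest orbit, a reduced $\Gamma$-invariant divisor of degree $r$ in $|\mathcal O(r)|=|kL|$ with $k=r$. Apply the deformation to the normal cone of Proposition~\ref{prop:dimension-obstruction}, which is exactly the case $n=1$, $D$ reduced. The blowup of $D\times\{0\}$ and the Stein factorization are $\Gamma$-equivariant because $D$ is invariant. Linearizability of the polarization holds after passing to a suitable exponent, which is allowed in the family. Then \eqref{eq:dnc-terminal-equality} gives $H^{\NA}-e_\beta=G_r(\beta)J^{\NA}$ with $H^{\NA}=1$ and $J^{\NA}=1/(2r)$. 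This means $e_\beta/J^{\NA}=2r-G_r(\beta)$, matching the lower bound for every $\beta>0$. For the $Q_8$ image, $r=2$ and $\lambda_{\beta,\Gamma}^{\rm fib}=\beta-2$ on $(0,2]$, so the zero is at $\beta=2$. Uniqueness of this zero follows from strict increase, via the same argument as in Proposition~\ref{prop:threshold-growth} on the restricted family.

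The main obstacle I expect is the equivariant $\alpha$-invariant step: the uniform analytic integrability for invariant potentials at every $c<r$, together with the verification that every use of $A$ in Section~\ref{sec:proof} only involves invariant potentials. I would first try the direct Lelong-number and Skoda argument on $\mathbf P^1$ rather than citing a general equivariant Demailly theorem.
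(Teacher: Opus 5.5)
Your proposal is correct, but the lower bound takes a different route from the paper's.

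The paper never introduces an equivariant $\alpha$-invariant. It works configuration by configuration:
\begin{itemize}
\item For each $\Gamma$-equivariant $\phi$, the base divisor $B_k$ of the top-weight subspace is $\Gamma$-invariant of degree at most $k$, so its multiplicities are at most $k/r$.
\item Lemma~\ref{lem:top-weight-threshold} then gives $\tau_\phi=k\operatorname{lct}(\mathbf P^1;B_k)\ge r$.
\item Proposition~\ref{prop:valuative-parameter-refinement}, already proved for every single configuration, gives $e_\beta=\beta J^{\NA}$ on $(0,r]$ and $e_\beta\ge(r+\Gamma_r(r,\beta))J^{\NA}$ beyond. The monotonicity of $\Gamma_a(r,\beta)$ in $a$ handles $\tau_\phi>r$.
\end{itemize}
This is purely algebraic, and nothing in Section~\ref{sec:proof} has to be rechecked on invariant potentials.

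Your route needs three inputs the paper avoids:
\begin{itemize}
\item A uniform constant $L_c$ over all normalized $\Gamma$-invariant potentials for $c<r$. The Lelong/Skoda observation only gives integrability of each $e^{-cu}$ separately. For the uniform bound you need Demailly's equivariant identification, or compactness of the normalized invariant potentials combined with Demailly--Koll\'ar semicontinuity.
\item $\Gamma$-invariance of the rays.
\item $\Gamma$-invariant smooth approximants in the extension steps of Lemma~\ref{lem:alpha-cutoff} and Theorem~\ref{thm:analytic-gap}. Decreasing regularization followed by averaging over $\Gamma$ suffices.
\end{itemize}
In return, your route shows that Theorem~\ref{main:sharp-gap} holds on the equivariant family with $A$ replaced by $\alpha_\Gamma=r$. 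The formula then appears as an instance of equality in the sharp bound.

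For the upper bound, the paper stays in the open ample range $0<c<1/r$ of the deformation to the normal cone and lets $c\uparrow1/r$. You use instead the endpoint configuration of Proposition~\ref{prop:dimension-obstruction} with $k=r$ and $D$ the smallest orbit. This is legitimate: in dimension one that proof only uses reducedness of $D$, and the blowup and Stein factorization are $\Gamma$-equivariant because $D$ is invariant. Your version also shows that the infimum is attained for every $\beta>0$, whereas the paper's argument only approaches it when $\beta>r$.
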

\begin{proof}
The inverse image of $\Gamma$ in $\mathrm{SL}_2$ is finite and
linearizes $\mathcal O(1)$; equivalently, one may use sufficiently
divisible $\Gamma$-linearizable exponents. Choose a
$\Gamma$-invariant Fubini--Study reference. Since
$\Ric(\omega_0)=2\omega_0$ and $J_0=J$ in dimension one,
\eqref{eq:Jtheta} gives $\mathcal R^{\NA}=2J^{\NA}$.

Fix $\phi$ with $J^{\NA}(\phi)>0$ and $\sup\phi=0$.
Choose a degree-$k$ weight presentation, and let $B_k$ be the base
divisor of its top-weight subspace. The divisor $B_k$ is nonzero
and $\Gamma$-invariant, and $\deg B_k\le k$.
Its multiplicities are constant on each orbit, so
$\operatorname{mult}_pB_k\le k/r$ for every $p$.
By \eqref{eq:valuative-top-ideal},
$\tau_\phi=k\operatorname{lct}(\mathbf P^1;B_k)\ge r$.
Thus \eqref{eq:valuative-initial-segment} gives
$e_\beta(\phi)=\beta J^{\NA}(\phi)$ for $0<\beta\le r$.
For $\beta>r$, \eqref{eq:valuative-finite-increment} and the
monotonicity of $\Gamma_a(r,\beta)$ in $a$ give
\[
 e_\beta(\phi)
 \ge\bigl(r+\Gamma_r(r,\beta)\bigr)J^{\NA}(\phi)
 =\left(2r-\frac{r^2}{\beta}\right)J^{\NA}(\phi).
\]
Subtracting $2J^{\NA}(\phi)$ proves the lower bound in
\eqref{eq:fibre-threshold}.

For the reverse inequality, let $D$ be a reduced orbit divisor of
degree $r$. Its deformation to the normal cone, with rational
$0<c<1/r$, is $\Gamma$-equivariant and ample. The calculation in
\eqref{eq:dnc-energy}, with $n=1$ and $k=r$, gives
$\phi_c(v)=-c\min\{v(D),1\}$ and
$J^{\NA}(\phi_c)=rc^2/2$.
Since $D$ is reduced, $A_X(v)\ge v(D)$, with equality at the
valuation of each point of $D$. Hence $\tau_{\phi_c}=1/c$ and
$e_\beta(\phi_c)=\beta J^{\NA}(\phi_c)$ when $\beta c\le1$.
When $\beta c>1$, one has $\beta>r\ge2$, so
\eqref{eq:dnc-envelope} and \eqref{eq:entropy-slope} give
$e_\beta(\phi_c)=\beta\bigl(J^{\NA}(\phi_c)
-J^{\NA}(\phi_{c-1/\beta})\bigr)=rc-r/(2\beta)$.
Therefore
\[
 \frac{m_\beta(\phi_c)}{J^{\NA}(\phi_c)}
 =\begin{cases}
   \beta-2,&\beta c\le1,\\
   2/c-1/(\beta c^2)-2,&\beta c>1.
  \end{cases}
\]
For $\beta\le r$, every such $c$ attains the lower bound.
For $\beta>r$, letting rational $c\uparrow1/r$ gives
$2r-2-r^2/\beta$.
\end{proof}

\subsection{A fixed cscK surface with unbounded critical parameters}

We use the stable-bundle correspondence of
Narasimhan--Seshadri~\cite{NS} and Nagata's bound~\cite{Nagata}.
For related constructions on projective bundles, see
\cite{acgt08,acgt11}.

\begin{theorem}[The ruled-surface threshold estimate]
\label{thm:q8-effective}
There is a smooth ruled surface $X$ with $\Aut_0(X)=\{1\}$ and
ample line bundles $L_d$, for integers $d\ge4$, such that each
$c_1(L_d)$ contains a cscK metric and
$\lambda_\beta^{\NA}(X,L_d)$ has a unique zero
$\beta_{\rm crit}(d)$. Let $R_d$ be the positive root of
\begin{equation}\label{eq:q8-effective-v2}
 2(2d+1)R_d^2-3d(d-1)R_d-2d=0,\qquad
 \beta_{\rm crit}(d)\ge R_d>2.
\end{equation}
Then $R_d/d\to3/4$. In particular,
$\beta_{\rm crit}(40)\ge(130+22\sqrt{35})/9>28.9059$.
There is no single finite parameter at which every $(X,L_d)$ is
finite-parameter semistable.
\end{theorem}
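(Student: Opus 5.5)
We construct $X$ as a projectively flat ruled surface and obtain the lower bound on $\beta_{\rm crit}$ by testing against deformations to the normal cone of smooth curves, using the explicit formulas of Proposition~\ref{prop:dimension-obstruction} and \eqref{eq:dnc-envelope}.

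\emph{Construction.} Take a curve $C$ of genus $g\ge2$ and a surjection $\pi_1(C)\to Q_8$. Compose with the irreducible representation $Q_8\subset\mathrm{SU}(2)$. By Narasimhan--Seshadri~\cite{NS} this gives a stable rank-two bundle $E$. Put $X=\mathbf P(E)$. Stability of $E$ and $g\ge2$ give $\Aut_0(X)=\{1\}$: a holomorphic vector field would preserve the ruling, would be vertical because $g\ge2$, and would then be a traceless endomorphism of the stable bundle $E$, hence zero. The flat $\mathrm{PU}(2)$-structure gives locally product metrics. These are a constant-curvature metric on $C$ times the Fubini--Study metric on the fibres, and the flat connection pastes them together. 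Such a metric is cscK in every class $c_1(L)$, where $L$ has fibre degree one and $L=\mathcal O_{\mathbf P(E)}(1)\otimes\pi^*M_d$ with $\deg M_d$ growing linearly in $d$. This gives the cscK assertion, and the quantity playing the role of $\alpha(X,L_d)$ is $1/d$ in the normalization of Theorem~\ref{main:ruled}. Existence of a unique zero $\beta_{\rm crit}(d)$ then follows from Corollary~\ref{cor:polarized-correspondence}(ii), which makes $\lambda^{\NA}_\beta\ge0$ for some $\beta$, together with Proposition~\ref{prop:threshold-growth}. Finiteness of the infimum gives $\lambda^{\NA}_\beta<0$ for small $\beta$.

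\emph{Lower bound.} Since $\lambda^{\NA}_\beta$ is increasing, it suffices to exhibit, for each $\beta<R_d$, a test configuration with $m_\beta<0$. We use the deformation to the normal cone of a smooth curve $D\subset X$ with parameter $c$. As in Proposition~\ref{prop:dimension-obstruction}, the metric is $\phi_c=-c\min\{v(D),1\}$ and $e_\beta(\phi_c)=\beta\bigl(\ENA(\phi_{(c-1/\beta)_+})-\ENA(\phi_c)\bigr)$. On a surface, $\ENA(\phi_c)$ and $\mathcal R^{\NA}(\phi_c)$ are explicit polynomials in $c$ given by intersection numbers on the blowup. These numbers are $L^2$, $L\cdot D$, $D^2$ and $K_X\cdot D$, and the last is computed by adjunction. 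The constraint $c<\varepsilon(D)$ comes from ampleness of $L-cD$.

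The natural choice for $D$ is a section, or a low-degree multisection, of minimal self-intersection. Nagata's bound~\cite{Nagata} controls the self-intersection of sections and hence the available range of $c$. Substituting these quantities, $m_\beta(\phi_c)<0$ becomes a polynomial inequality in $(\beta,c)$. Optimizing over $c$ near the nef threshold should produce exactly the quadratic $2(2d+1)R^2-3d(d-1)R-2d=0$. Its positive root satisfies $R_d\sim 3d(d-1)/(2(2d+1))\sim 3d/4$, and $R_d>2$ for $d\ge4$. The value at $d=40$ is the explicit root of $162R^2-4680R-80=0$, namely $(130+22\sqrt{35})/9$.

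\emph{Conclusion.} Since $\beta_{\rm crit}(d)\to\infty$, no single finite parameter works for all $d$.

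\emph{Main obstacle.} The hard step is the intersection bookkeeping. One must choose $D$, the fibre degree and $M_d$ so that the nef threshold and the Ricci term combine to give precisely the stated quadratic; in particular $\mathcal R^{\NA}$ must be computed including the $K_X$ contribution. One must also verify that the envelope formula \eqref{eq:dnc-envelope} still holds at the chosen $c$, which requires $\beta>1$ and $c\beta>1$. I would first try $D$ a minimal section with $D^2$ as Nagata allows, and adjust the twist $M_d$ to match the stated polynomial.
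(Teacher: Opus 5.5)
Your architecture is the paper's: a $Q_8$-flat ruled surface, the Nagata section, its deformation to the normal cone with envelope $\phi_{c-1/\beta}$, the limit at the nef threshold, and then the correspondence plus Proposition~\ref{prop:threshold-growth} to get a unique zero. Your vector-field argument via simplicity of the stable bundle $E$ is a valid substitute for the paper's Schur-lemma argument on $\mathfrak{sl}_2$.

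The genuine gap is that the quantitative content of the theorem is never established. That content is $\lambda^{\NA}_\beta(X,L_d)<0$ for $\beta<R_d$, with \emph{this} $R_d$. You stop at ``optimizing over $c$ should produce exactly the quadratic'' and propose adjusting $M_d$ afterwards. This cannot be deferred, because with the freedom you leave (arbitrary $g\ge2$, unspecified $M_d$) the claim is false in general.

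The stated quadratic needs all of the following:
\begin{itemize}
\item genus $g=2$ and $L_d\equiv\xi+df$;
\item the section $D\equiv\xi+f$. Nagata's bound plus stability force the maximal subbundle to have degree $-1$, which produces it.
\end{itemize}
For these choices $D^2=2$, $L_d\cdot D=d+1$, $V=2d$, and crucially $K_X\cdot D=0$, because $K_X\equiv-2\xi+2f$. Hence $(\varpi^*p_1^*K_X)\cdot\mathcal L_c^2=-c^2K_X\cdot D$ vanishes, and $\cM^{\NA}(\phi_c)=H(c)-\bar SJ(c)$ with $\bar S=2-2/d$. Here
\[
J(c)=\frac{d+1}{2d}c^2-\frac{c^3}{3d},\qquad H=J',
\]
the ample range is $0<c<1$, and $e_\beta=\beta\bigl(J(c)-J(c-1/\beta)\bigr)$ for $\beta c\ge1$. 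Letting $c\uparrow1$ gives
\[
 \lim_{c\uparrow1}\frac{m_\beta(\phi_c)}{J(c)}
 =\frac{\frac{2d+1}{3d^2}-\frac{d-1}{2d\beta}-\frac1{3d\beta^2}}
       {\frac12+\frac1{6d}},
 \qquad \beta>1.
\]
Its numerator times $6d^2\beta^2$ is $2(2d+1)\beta^2-3d(d-1)\beta-2d$. At $\beta=2$ the numerator is $(-3d^2+10d+4)/(12d^2)$, which is negative exactly when $d\ge4$.

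By contrast, the same recipe with $g=3$ gives $K_X\cdot D=2$ and $\bar S=2-4/d$. The extra term $-c^2K_X\cdot D/V$ is then nonzero, and one gets the different quadratic $4(d+1)R^2-3d(d-1)R-2d=0$. So you must fix the genus, the twist and the divisor and carry out this computation. Everything downstream rests on it: $\beta_{\rm crit}(d)\ge R_d$, $R_d/d\to3/4$, the value at $d=40$, and the absence of a common parameter.

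Two smaller points.
\begin{itemize}
\item Invoking Corollary~\ref{cor:polarized-correspondence} requires a \emph{unique} cscK metric. You should derive uniqueness from $\Aut_0(X)=\{1\}$ via Berman--Berndtsson~\cite{bb}; you only assert existence.
\item Your inference ``finiteness of the infimum gives $\lambda^{\NA}_\beta<0$ for small $\beta$'' is invalid. By Corollary~\ref{cor:initial-threshold}, $\lambda^{\NA}_\beta=\beta-\mathfrak R_L$ for small $\beta$, which is negative only if $\mathfrak R_L>0$. Negativity has to come from the explicit configurations, which is again the missing computation.
\end{itemize}
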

\begin{proof}
Let $C$ be a genus-two curve. In the presentation
$\pi_1(C)=\langle a_1,b_1,a_2,b_2\mid[a_1,b_1][a_2,b_2]=1\rangle$,
send $a_1,a_2$ to $i\in Q_8$ and $b_1,b_2$ to $j\in Q_8$.
Since $[i,j]=-1$, this defines a surjection onto $Q_8$.
Let $p:C'\to C$ be the connected finite \'etale cover determined
by its kernel. The irreducible representation
$Q_8\subset\mathrm{SU}(2)$ defines a flat unitary bundle $E$ of
rank two with trivial determinant. With the quotient-line
convention, set
\begin{equation}\label{eq:q8-surface-polarization}
 X=\mathbf P(E)=(\mathbf P^1\times C')/Q_8,\qquad
 L_d=\mathcal O_X(1)\otimes\pi^*A_1^d,
 \qquad \deg A_1=1,
\end{equation}
where $\pi:X\to C$ is the projection.
The diagonal action is free, and the linear representation
makes $\mathcal O(1)$ descend to the quotient.
Choose a constant-curvature form $\omega_C\in c_1(A_1)$.
The invariant product form
$p_1^*\omega_{\rm FS}+d\,p_2^*p^*\omega_C$
descends to a cscK form in $c_1(L_d)$, so $L_d$ is ample for $d>0$.

The curve $C'$ has no holomorphic vector fields. Thus the vector
fields on $\mathbf P^1\times C'$ form $\mathfrak{sl}_2$.
Their $Q_8$-invariant subspace is zero by Schur's lemma, because
the representation on $\C^2$ is irreducible.
Consequently, $\Aut_0(X)=\{1\}$.

By \cite{NS}, $E$ is stable of degree zero. Nagata's bound
\cite{Nagata} gives
$\deg E-2\max_{M\subset E}\deg M\le g(C)=2$,
where $M$ ranges over saturated line subbundles.
Stability and $\deg E=0$ make the left-hand side a positive even
integer, hence it equals two. There is therefore a quotient
$E\twoheadrightarrow B$ of degree one with kernel $B^{-1}$.
The quotient defines a smooth section $D$ with
$\mathcal O_X(D)=\mathcal O_X(1)\otimes\pi^*B$.
Write $\xi=c_1(\mathcal O_X(1))$ and let $f$ be the numerical
fiber class. Then
\begin{equation}\label{eq:q8-intersections}
 \xi^2=f^2=0,\quad \xi\cdot f=1,\quad
 L_d\equiv\xi+df,\quad D\equiv\xi+f,\quad
 K_X\equiv-2\xi+2f.
\end{equation}
In particular, $V=2d$, $L_d\cdot D=d+1$, $D^2=2$,
$K_X\cdot D=0$, and $\bar S=-2K_X\cdot L_d/V=2-2/d$.

Blow up $D\times\{0\}$ in $X\times\mathbf P^1$, with map
$\varpi:Y\to X\times\mathbf P^1$ and exceptional divisor $F$.
Put $\mathcal L_c=\varpi^*p_1^*L_d-cF$ for rational $0<c<1$.
The class $L_d-cD\equiv(1-c)\xi+(d-c)f$ is ample: its pullback
to the finite product cover is a sum of positive classes from
the two factors. The deformation-to-the-normal-cone criterion
\cite{rossthomas} gives a normal ample test configuration.
Its normalized metric is $\phi_c(v)=-c\min\{v(D),1\}$.
Both central components are reduced, and the model discrepancy
is $a_Y(v)=\min\{v(D),1\}$.

Set $J(c)=J^{\NA}(\phi_c)$ and $H(c)=H^{\NA}(\phi_c)$.
The blowup identities
$(\varpi^*p_1^*L_d)^2F=0$,
$(\varpi^*p_1^*L_d)F^2=-(d+1)$, and $F^3=-2$ give
$\mathcal L_c^3=-3(d+1)c^2+2c^3$ and
$\mathcal L_c^2F=2(d+1)c-2c^2$.
Moreover,
$(\varpi^*p_1^*K_X)\mathcal L_c^2=-c^2K_X\cdot D=0$.
The energy, entropy, and Mabuchi intersection formulas therefore give
\begin{equation}\label{eq:q8-slopes-v2}
 \begin{aligned}
 J(c)&=\frac{d+1}{2d}c^2-\frac{c^3}{3d},&
 H(c)&=J'(c),\\
 \cM^{\NA}(\phi_c)&=H(c)-\bar S J(c),&
 \bar S&=2-\frac2d.
 \end{aligned}
\end{equation}
For $\beta c\ge1$, the envelope in \eqref{eq:envelope} is
$\phi_{c-1/\beta}$. Thus \eqref{eq:entropy-slope} gives
\begin{equation}\label{eq:q8-quantized-slopes}
 e_\beta(\phi_c)=\beta\bigl(J(c)-J(c-1/\beta)\bigr),\qquad
 m_\beta(\phi_c)=e_\beta(\phi_c)-\bar S J(c).
\end{equation}
For fixed $\beta>1$, letting rational $c\uparrow1$ yields
\begin{equation}\label{eq:q8-threshold-upper}
 \lambda_\beta^{\NA}(X,L_d)\le h_d(\beta):=
 \frac{\displaystyle
   \frac{2d+1}{3d^2}-\frac{d-1}{2d\beta}
                      -\frac1{3d\beta^2}}
      {\displaystyle\frac12+\frac1{6d}}.
\end{equation}
The function $h_d$ is strictly increasing. Its value at $\beta=2$
is negative for $d\ge4$, since its numerator there is
$(-3d^2+10d+4)/(12d^2)$.
Multiplying its numerator by $6d^2\beta^2$ gives
$2(2d+1)\beta^2-3d(d-1)\beta-2d$, whose positive zero is $R_d>2$.
For every $1<\beta<R_d$, a rational $c<1$ sufficiently close to
one has $m_\beta(\phi_c)<0$.

Since $\Aut_0(X)=\{1\}$, the cscK metric is unique
by \cite{bb}. The criterion of
Darvas--Zhang~\cite[Theorem~1.1]{DZ} then gives
$\lambda_B^{\NA}(X,L_d)>0$ at some finite $B$.
Proposition~\ref{prop:threshold-growth}, together with
$\lambda_2^{\NA}<0$, gives a unique zero and
$\beta_{\rm crit}(d)\ge R_d$.
For $d=40$, the root equation reduces to
$81R^2-2340R-40=0$, with positive root
$(130+22\sqrt{35})/9$.
Solving \eqref{eq:q8-effective-v2} for its positive root gives
$R_d/d\to3/4$.
\end{proof}

All configurations used as $c\uparrow1$ have $c<1$ and are ample.
We do not require ampleness at $c=1$ or attainment of the full threshold.

\begin{proposition}[The $\alpha$-invariant of the ruled examples]
\label{prop:q8-alpha}
For the surface and polarizations in
\eqref{eq:q8-surface-polarization}, with any degree-one line bundle
$A_1$ on $C$, one has $\alpha(X,L_d)=1/d$ for every integer $d\ge1$.
Consequently, $e_\beta(\phi)=\beta J^{\NA}(\phi)$ for every normal
ample configuration metric of $(X,L_d)$ and $0<\beta\le1/d$.
\end{proposition}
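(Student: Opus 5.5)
We compute $\alpha(X,L_d)$ through Demailly's formula $\alpha=\inf\{\operatorname{lct}(X;\Delta):\Delta\ge0,\ \Delta\sim_\Q L_d\}$ from Section~\ref{sec:alpha-invariant}, and prove the two inequalities separately. The second assertion then follows at once from Lemma~\ref{lem:alpha-cutoff}, which gives $e_\beta(\phi)=\beta J^{\NA}(\phi)$ for $0<\beta\le A=1/d$.

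\emph{Upper bound.} We exhibit a divisor with small log canonical threshold. Choose a point $q\in C$ with $A_1\simeq\mathcal O_C(q)$, and a section of $\mathcal O_X(1)\otimes\pi^*M$ for some line bundle $M$ on $C$. A convenient choice is the section $D$ constructed in the proof of Theorem~\ref{thm:q8-effective}, with $\mathcal O_X(D)=\mathcal O_X(1)\otimes\pi^*B$ and $\deg B=1$. Then
\[
 L_d\sim_\Q D+\pi^*(A_1^d\otimes B^{-1}),
\]
and the second summand has degree $d-1$. Since only $\Q$-linear equivalence matters, we may replace $\pi^*(A_1^d\otimes B^{-1})$ by $(d-1)F_q$, where $F_q=\pi^{-1}(q)$ is a fiber; the difference has degree zero, and a multiple of it is effective after the adjustment described below. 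This gives $\Delta=D+(d-1)F_q$. At the point $D\cap F_q$ the two components are transversal, so
\[
 \operatorname{lct}(X;\Delta)=\min\{1,1/(d-1)\},
\]
which yields only the weaker bound $\alpha\le1/(d-1)$. To reach $1/d$, we instead write $L_d\equiv\xi+df$, pick any $\Q$-divisor $\Delta_0\sim_\Q\mathcal O_X(1)$ (it is effective, being numerically $D-f$ up to torsion adjustments), and use $\Delta=\Delta_0'+dF_q$ with $\Delta_0'\ge0$ and $\Delta_0'\sim_\Q\xi$. A simpler route is to pass to the finite étale cover $\mathbf P^1\times C'\to X$. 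The log canonical threshold is unchanged under étale pullback, so take the $Q_8$-invariant divisor
\[
 \tilde\Delta=\frac1{|Q_8|}\sum_{g\in Q_8}g^*\bigl(\{0\}\times C'\bigr)+d\,p^*\pi^*(q)/\deg p.
\]
Here $p^*q$ is a reduced fiber configuration, and the fiber coefficient is $d$, with the multiplicity measured on $X$. Its descent is a $\Q$-divisor $\Delta\sim_\Q L_d$ containing the fiber $F_q$ with coefficient $d$, so $\operatorname{lct}\le1/d$ and hence $\alpha(X,L_d)\le1/d$. The point to verify is that the descended horizontal part is $\Q$-linearly equivalent to $\xi$ and is effective; averaging the $Q_8$-orbit of $\{0\}\times C'$ gives an invariant divisor of class $\xi$ pulled back.

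\emph{Lower bound.} This is the main obstacle. We prove $\alpha(X,L_d)\ge1/d$ analytically, using the étale cover and the product structure. The pullback of $L_d$ to $Y=\mathbf P^1\times C'$ is $\mathcal O(1)\boxtimes p^*A_1^d$, and pullback of a $Q_8$-invariant potential identifies $\omega$-psh functions on $X$ with invariant psh functions on $Y$ while preserving normalized measures. Hence $\alpha(X,L_d)\ge\alpha(Y,\mathcal O(1)\boxtimes p^*A_1^d)$, since the supremum is taken over a smaller family. Lemma~\ref{lem:product-alpha} then gives
\[
 \alpha(Y,\mathcal O(1)\boxtimes p^*A_1^d)=\min\{\alpha(\mathbf P^1,\mathcal O(1)),\alpha(C',p^*A_1^d)\}=\min\{1,1/(d\deg p)\},
\]
and this bound is too weak because it loses the factor $\deg p=8$. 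We therefore need the curve computation in invariant form. For a $Q_8$-invariant divisor on $C'$ of class $p^*(mdA_1)$, the multiplicity at any point is at most $md$ times the multiplicity of the corresponding point downstairs, giving $\operatorname{lct}\ge1/(md)$ along the curve direction. Equivalently, we apply Lemma~\ref{lem:product-alpha}'s fibration argument directly on $X\to C$: for $u$ with $\sup u=0$, set $v(x)=\sup_{\pi^{-1}(x)}u$, which is $d\,\omega_C$-psh on $C$. The fiber integrals are bounded by $\alpha(\mathbf P^1,\mathcal O(1))=1>1/d$. The base integral $\int_Ce^{-cv}$ is bounded for $c<\alpha(C,dA_1)=1/d$, by the curve computation preceding Proposition~\ref{prop:discrete-sharpness}. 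The fibration is locally trivial with unitary transition functions, so the local product argument of Lemma~\ref{lem:product-alpha} applies in flat charts. Verifying that the slice supremum $v$ is psh in the correct class, namely that fiberwise normalization is compatible with the flat connection, is the delicate step.
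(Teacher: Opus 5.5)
\textbf{The gap: the upper bound.} Your final divisor is the descent of the $Q_8$-average of $\{0\}\times C'$ plus $dF_q$. It is $\Q$-linearly equivalent to $L_d=\mathcal O_X(1)\otimes\pi^*A_1^d$ only if $A_1\otimes\mathcal O_C(-q)$ is torsion. You secure this by choosing $q$ with $A_1\simeq\mathcal O_C(q)$, but that choice is not available.

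The proposition is stated for every degree-one $A_1$. On a genus-two curve, the bundles $\mathcal O_C(q)\otimes T$, with $q\in C$ and $T$ torsion, fill only a countable union of curves in the two-dimensional $\operatorname{Pic}^1(C)$. So a very general $A_1$ admits no such $q$. The same problem sinks your first attempt: a non-torsion degree-zero bundle has no effective multiple.

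The paper avoids needing fiber coefficient exactly $d$. Let $s_m\in H^0(X,\mathcal O_X(m))$ come from a $Q_8$-invariant, as in your horizontal part, for $m$ sufficiently divisible. The bundle $A_1^{md}(-(md-2)p)$ has degree two on a genus-two curve, so Riemann--Roch gives a nonzero section $\sigma_m$. Then $m^{-1}\operatorname{div}(s_m\pi^*\sigma_m)\sim_\Q L_d$ has coefficient at least $d-2/m$ along $\pi^{-1}(p)$, and letting $m\to\infty$ gives $\alpha(X,L_d)\le1/d$. Alternatively, the analytic $\alpha$-invariant depends only on $c_1(L_d)\in H^2(X,\R)$, which is independent of $A_1$, so you could reduce to $A_1=\mathcal O_C(q)$. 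Either way, some such step must be supplied.

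Two smaller repairs are also needed:
\begin{enumerate}[label=\textup{(\alph*)}]
\item To descend $\Q$-linear equivalence of the horizontal part from $\mathbf P^1\times C'$, you need pullback under the degree-eight étale cover $\kappa\colon\mathbf P^1\times C'\to X$ to be injective on $\operatorname{Pic}\otimes\Q$. This follows, for example, from $\operatorname{Nm}(\kappa^*M)\simeq M^{\otimes 8}$.
\item The vertical term upstairs should be $\kappa^*(dF_q)=d\,(\mathbf P^1\times p^{-1}(q))$, with no division by $\deg p$.
\end{enumerate}

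\textbf{The lower bound is correct, by a different route.} The paper argues algebraically. Nefness of $\xi$ bounds the fiber coefficient of $\Delta\sim_\Q L_d$ by $\xi\cdot\Delta=d$. The residual part $\Delta'$ meets a fiber $F$ in degree one, and inversion of adjunction together with $\Delta/d\le F+\Delta'$ gives $\operatorname{lct}(X;\Delta)\ge1/d$.

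Your fiberwise integration, modeled on Lemma~\ref{lem:product-alpha}, also works, and the step you call delicate is routine. Over a small disc $U\subset C$ one has $\pi^{-1}(U)\cong\mathbf P^1\times U$ with $\omega_0=\omega_{\rm FS}+d\,\omega_C$, because the transition maps lie in $Q_8/\{\pm1\}\subset PU(2)$. Restricting $\omega_0+dd^cu\ge0$ to the slices $\{z\}\times U$ shows that $v=\sup_{\pi^{-1}(x)}u$ is $d\,\omega_C$-psh.

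For the base you need only $\alpha(C,dA_1)\ge1/d$. This holds for every degree-one $A_1$, since an effective divisor of degree $md$ has all multiplicities at most $md$; the computation you cite is written only for $\mathcal O_C(dp)$. Your route avoids inversion of adjunction, at the cost of regularizing the continuous function $v$ as in Lemma~\ref{lem:product-alpha}.
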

\begin{proof}
The class $\xi$ is nef, since its pullback to the finite product
cover is the hyperplane class from $\mathbf P^1$.
Let $\Delta\sim_\Q L_d$ be effective. For a fiber $F$, write
$\Delta=aF+\Delta'$ with $F\not\subset\operatorname{Supp}\Delta'$.
Nefness gives $a\le\xi\cdot\Delta=d$, and
$\deg(\Delta'|_F)=\Delta'\cdot F=1$.
Every coefficient of $\Delta'|_F$ is at most one, so
$(F,\Delta'|_F)$ is log canonical. Inversion of adjunction
\cite[Theorem~1.1]{FujinoAdjunction} implies that $(X,F+\Delta')$ is log
canonical near $F$. Since $d\ge1$ and
$\Delta/d\le F+\Delta'$, the pair $(X,\Delta/d)$ is log canonical
near $F$. The fibers cover $X$, so Demailly's characterization
\cite[Theorem~A.3]{DemaillyAlpha} gives $\alpha(X,L_d)\ge1/d$.

Since the monodromy group is finite, there is a nonzero invariant
in a symmetric power of the representation space: for $v\ne0$, the product
$\prod_{g\in Q_8}g\cdot v$ is nonzero in the symmetric algebra
and is invariant under $Q_8$. Let $m_0$ be its degree.
For each positive multiple $m$ of $m_0$, the bundle
$\operatorname{Sym}^mE$ has a nonzero parallel section. Under the
quotient-line convention, it defines
$s_m\in H^0(X,\mathcal O_X(m))$.
Fix $p\in C$. For sufficiently large such $m$, Riemann--Roch gives
$0\ne\sigma_m\in H^0(C,A_1^{md}(-(md-2)p))$, since this line
bundle has degree two on a genus-two curve. Hence
$\Delta_m=m^{-1}\operatorname{div}(s_m\pi^*\sigma_m)\sim_\Q L_d$
has coefficient at least $d-2/m$ along $\pi^{-1}(p)$.
Therefore $\alpha(X,L_d)\le(d-2/m)^{-1}$, and letting $m\to\infty$
gives $\alpha(X,L_d)\le1/d$.
The entropy identity, including $\beta=1/d$, follows from
\eqref{eq:alpha-cutoff}.
\end{proof}

\begin{proof}[Proof of Theorem~\ref{main:ruled}]
\label{proof:main-ruled}
Take $X$ and $L_d$ from Theorem~\ref{thm:q8-effective}.
Proposition~\ref{prop:q8-alpha} gives $\alpha(X,L_d)=1/d$, and
\eqref{eq:q8-effective-v2} gives $\beta_{\rm crit}(X,L_d)\ge R_d$
with $R_d/d\to3/4$. Thus
\[
 \liminf_{d\to\infty}\frac{\beta_{\rm crit}(X,L_d)}d\ge\frac34,
 \qquad
 \frac{\beta_{\rm crit}(X,L_d)}{\alpha(X,L_d)}\ge dR_d\to\infty.
\]
Also $\beta_{\rm crit}(X,L_d)\ge R_d\to\infty$.
The line bundles are integral and have fiber degree one.
Equation~\eqref{eq:q8-intersections} shows that their numerical
classes are pairwise nonproportional.
\end{proof}

The pure fiber family consists of products of $Q_8$-equivariant
configurations of $\mathbf P^1$ with $C'$, followed by the finite
quotient. For a potential pulled back from the fiber, the product
Monge--Amp\`ere measure and uniqueness in
\eqref{eq:quantized-optimizer} show that the quantized solution
is the pullback of the fiber solution. The normalized energies
satisfy $J_X=J_{\mathbf P^1}$ and
$\Ent_X^\beta=\Ent_{\mathbf P^1}^\beta$; in
\eqref{eq:Jtheta}, the base Ricci contribution cancels the base
scalar-curvature contribution, giving
$\cJ_{\Ric,X}=2J_{\mathbf P^1}$.
These identities also hold after the finite quotient.
Proposition~\ref{prop:fibre-threshold} therefore gives the exact
critical parameter $2$ for this family. Equation~\eqref{eq:q8-effective-v2} gives
$\beta_{\rm crit}(X,L_d)\ge R_d>2$ for $d\ge4$.
We do not determine the full critical parameter.

\begin{proposition}[Optimality among smooth divisor deformations]
\label{prop:smooth-divisor-optimality}
Fix $(X,L_d)$ from Theorem~\ref{thm:q8-effective}, with $d\ge4$.
Among deformations to the normal cone of nonzero smooth reduced
divisors, using all rational parameters in the open ample interval,
the smallest parameter at which every quantized Mabuchi invariant
is nonnegative is $R_d$.
At $\beta=R_d$, the normalized invariants for the degree-one section
tend to zero as the deformation parameter $c$ increases to one.
Smooth multisections and vertical divisors give no larger critical
parameter. The infimum of $m_\beta/J^{\NA}$ over this family is
zero at $R_d$ and positive for $\beta>R_d$.
\end{proposition}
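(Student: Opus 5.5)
The plan is to compute, for each nonzero smooth reduced divisor $D'\subset X$, the quantized slopes of its deformation to the normal cone in closed form, exactly as was done for the degree-one section $D$ in the proof of Theorem~\ref{thm:q8-effective}. We then minimize $m_\beta(\phi_c)/J^{\NA}(\phi_c)$ over $c$ in the ample interval and over the numerical class of $D'$.

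Write $D'\equiv p\xi+qf$ with $p,q$ integers. Since $\xi$ is nef with $\xi^2=0$ and $D'$ is effective, we have $p\ge0$ and $q\ge0$. The three cases are vertical divisors ($p=0$, unions of fibers), the section $D$ ($p=1$), and multisections ($p\ge2$). For rational $0<c<c_{\max}(D')$, where $L_d-cD'$ stays ample, the blowup of $D'\times\{0\}$ gives $\phi_c=-c\min\{v(D'),1\}$. The model discrepancy is $\min\{v(D'),1\}$ because $D'$ is smooth and reduced. The envelope identity \eqref{eq:dnc-envelope} holds verbatim, and \eqref{eq:entropy-slope} gives
\[
 e_\beta(\phi_c)=\beta\bigl(J(c)-J(c-1/\beta)\bigr)\quad(\beta c\ge1),
\]
with $e_\beta=\beta J$ for $\beta c\le1$. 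The energy $J(c)$ is a cubic in $c$ determined by $L_d\cdot D'=p d+q$ and $D'^2=2pq$ through the blowup identities used for \eqref{eq:q8-slopes-v2}. The Ricci term involves $K_X\cdot D'=2p-2q$ through $(\varpi^*p_1^*K_X)\mathcal L_c^2$, which gives $\mathcal R^{\NA}(\phi_c)$ explicitly.

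For each class we then get an explicit rational function $g_{p,q}(\beta,c)=m_\beta(\phi_c)/J(c)$. For $D$ (that is, $p=q=1$), we already know $\lim_{c\uparrow1}g_{1,1}(\beta,c)=h_d(\beta)$ from \eqref{eq:q8-threshold-upper}, and $h_d(R_d)=0$. This gives the second assertion once we check that $g_{1,1}(R_d,c)\ge0$ for all $c<1$, so that the limit is an infimum. For that, it is enough to show $\partial_c g_{1,1}\le0$ on $(1/\beta,1)$. The region $\beta c\le1$ gives $\beta-\bar S\cdot(\text{Ricci ratio})$, which is positive for $\beta\ge R_d>2$.

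The main work, and the expected obstacle, is showing that no other class gives a negative value at $\beta=R_d$. For vertical divisors the Ricci contribution is small: $K_X\cdot f=-2$ and $\bar S\approx2$, so the computation reduces to the fiber-type estimate of Proposition~\ref{prop:fibre-threshold} or is directly positive. For multisections with $p\ge2$, stability of $E$ and Nagata's bound constrain the classes of smooth multisections, forcing $q\ge p$ (roughly, $D'\cdot D\ge0$ and nefness). The ample interval is $c<\min\{1/p,\,d/q\}$. I would show that $\inf_c g_{p,q}(\beta,c)$ is increasing in $p$ at fixed $\beta\ge R_d$, using a scaling comparison $c\mapsto pc$ to reduce to the section case.

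Finally, strict positivity for $\beta>R_d$ follows from the formula $\partial_\beta g>0$ (Lemma~\ref{lem:parameter-concavity} together with the gap of Theorem~\ref{thm:gap}, giving increments $\ge\Gamma_A$) and a uniform positive lower bound on $J$-normalized slopes. Combined with zero at $R_d$, this identifies $R_d$ as the critical parameter of the family.
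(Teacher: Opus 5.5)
Your overall plan (closed-form slopes for each class $p\xi+qf$, then minimization over $c$ and over the class) is the paper's, but the case analysis has two genuine holes. First, $p=1$ does not mean only the section $D$. Every quotient line bundle $E\twoheadrightarrow B'$ of degree $q$ gives a smooth section $\equiv\xi+qf$, and stability of $E$ only forces $q\ge1$. Such sections exist for every large $q$, since a general map $E\to B'$ is surjective once $\deg B'\gg0$, and your proposal never treats them. You need a comparison in $q$ at fixed $c$. Write $J_q(c)=c^2/2+qP(c)$ with $P(c)=c^2/(2d)-c^3/(3d)$, which is nondecreasing on $[0,1]$. Then $\partial_q e_\beta\ge0$ on both branches of the entropy formula, and $\partial_q(m_\beta-e_\beta)=c^2\bigl(3+2(d-1)c\bigr)/(3d^2)>0$. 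So $m_\beta$ is minimized at $q=1$, and nonnegativity at $R_d$ reduces to $\xi+f$.

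Second, the multisection step rests on the claim $q\ge p$, and this is false. Take the two-point orbit $\{0,\infty\}$ noted before Proposition~\ref{prop:fibre-threshold}. The image in $X$ of the $Q_8$-invariant curve $\{0,\infty\}\times C'$ is a smooth irreducible bisection. Its pullback to $\mathbf P^1\times C'$ is twice the hyperplane class, so it is $\equiv2\xi$, with $p=2$ and $q=0$. Your heuristic $D'\cdot D\ge0$ only gives $p+q\ge0$.

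For such classes the rescaling $c'=pc$ does not land in a case you control. It turns $J$ and the Ricci term of $p\xi+qf$ into $p^{-1}$ times those of the section-type formula with real slope $q/p$. By concavity in $\beta$, it therefore bounds $m_\beta/J$ below by the section-type quotient at slope $q/p$. For $q/p<1$ that quotient is negative near $c'=1$: at slope $0$ and $\beta>1$ it tends to $-1/\beta$. So the reduction cannot give nonnegativity.

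What works is simpler. For $p\ge2$ the ample interval forces $c<1/p\le1/2$, so $\beta=2$ lies in the linear branch and $e_2=2J$. The explicit formulas then give $m_2=\frac{qc^2}{d}\bigl(1+\frac1d-\frac{2pc}{3d}\bigr)\ge0$. Monotonicity of $m_\beta$ in $\beta$ (Lemma~\ref{lem:parameter-concavity}) gives nonnegativity for every $\beta\ge2$, hence at $R_d>2$.

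It is this $2\xi$ bisection, not the vertical divisors, to which the fibre estimate of Proposition~\ref{prop:fibre-threshold} is relevant: its family has threshold exactly $2$. Vertical divisors are handled directly, since $\frac qd c^2=2J$ gives $m_\beta=e_\beta+\frac2dJ\ge0$. Your remaining steps match the paper: monotonicity of $e_\beta/J_1$ in $c$ for $\xi+f$, which follows from $(\log J_1)''<0$, and the gap $\Gamma_A(R_d,\beta)$ for $\beta>R_d$.
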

\begin{proof}
Write $D\equiv a\xi+bf$. Nefness of $f$ and $\xi$ gives
$a,b\in\mathbf Z_{\ge0}$. If $a=1$, the horizontal component is
a section, and stability of $E$ gives $b\ge1$.
The open ample interval is given by $c>0$, $ac<1$, and $bc<d$,
as follows by pulling $L_d-cD$ back to the product cover.
Write $J_D(c)=J^{\NA}(\phi_{D,c})$ and
$e_{\beta,D}(c)=e_\beta(\phi_{D,c})$ for
$\phi_{D,c}(v)=-c\min\{v(D),1\}$.
The intersections $L_d\cdot D=ad+b$, $D^2=2ab$, and
$K_X\cdot D=2a-2b$ give
\begin{equation}\label{eq:smooth-divisor-slopes}
 \begin{aligned}
 J_D(c)&=\frac{ad+b}{2d}c^2-\frac{ab}{3d}c^3,\\
 m_\beta(\phi_{D,c})
 &=e_{\beta,D}(c)-\left(2-\frac2d\right)J_D(c)
                         +\frac{b-a}{d}c^2.
 \end{aligned}
\end{equation}
If $a=0$, then $m_\beta(\phi_{D,c})=e_{\beta,D}(c)+(2/d)J_D(c)
\ge0$ for every $\beta>0$.

Assume $a\ge1$, so $c<1$.
Smoothness and reducedness of $D$ give $\tau_{\phi_{D,c}}=1/c$:
$A_X(v)\ge v(D)$ for all divisorial $v$, with equality at every
component of $D$. Thus \eqref{eq:valuative-initial-segment}
gives the first branch of
\begin{equation}\label{eq:smooth-divisor-entropy}
 e_{\beta,D}(c)=
 \begin{cases}
  \beta J_D(c),&\beta c\le1,\\
  \beta\bigl(J_D(c)-J_D(c-1/\beta)\bigr),&\beta c\ge1.
 \end{cases}
\end{equation}
The second branch follows from \eqref{eq:entropy-slope} and the
envelope $\phi_{D,c-1/\beta}$; here $\beta c\ge1$ implies
$\beta>1$ because $c<1$.
If $a\ge2$, then $c<1/2$, and \eqref{eq:smooth-divisor-slopes}
and \eqref{eq:smooth-divisor-entropy} give
\[
 m_2(\phi_{D,c})
 =\frac{bc^2}{d}\left(1+\frac1d-\frac{2ac}{3d}\right)\ge0.
\]
Equation~\eqref{eq:radial-secant-bound} gives nonnegativity for
all $\beta\ge2$.

It remains to treat $a=1$ and $b\ge1$.
Write $J_b(c)=c^2/2+bP(c)$, where
$P(c)=c^2/(2d)-c^3/(3d)$.
Treat $b$ as a real variable in the explicit formulas for $J_b(c)$
and $e_{\beta,D}(c)$.
Since $P'(c)=c(1-c)/d\ge0$ on $[0,1]$,
\eqref{eq:smooth-divisor-entropy} gives
$\partial_b e_{\beta,D}(c)\ge0$ in both branches.
Furthermore,
\[
 \partial_b\bigl(m_\beta(\phi_{D,c})-e_{\beta,D}(c)\bigr)
 =\frac{c^2}{d}-\left(2-\frac2d\right)P(c)
 =\frac{c^2\bigl(3+2(d-1)c\bigr)}{3d^2}>0.
\]
For every fixed admissible $c$, the quantized Mabuchi invariant is
therefore minimized at $b=1$.

For $a=b=1$, one has
$J_1(c)=c^2(d+1-2c/3)/(2d)$ and
\[
 (\log J_1)''(c)
 =-\frac2{c^2}-\frac{(2/3)^2}{(d+1-2c/3)^2}<0.
\]
For $1/\beta<c<1$, this implies
$\frac{d}{dc}\log\bigl(J_1(c-1/\beta)/J_1(c)\bigr)>0$.
Equation~\eqref{eq:smooth-divisor-entropy} shows that
$e_{\beta,D}(c)/J_1(c)$ is nonincreasing in $c$ and is constant
when $\beta c\le1$.
Consequently, for $\beta>1$, the infimum of
$m_\beta(\phi_{D,c})/J_1(c)$ for the degree-one section is
$h_d(\beta)$ from \eqref{eq:q8-threshold-upper}.

At $\beta\ge R_d>2$, the degree-one section has nonnegative
invariant, and the comparisons for $a=0$, $a\ge2$, and $a=1$
prove nonnegativity for every smooth-divisor deformation.
For $1<\beta<R_d$, $h_d(\beta)<0$, so a rational $c<1$
sufficiently close to one gives a negative invariant.
For $0<\beta\le1$, the degree-one section has
$m_\beta(\phi_{D,c})/J_1(c)=\beta-(2-2/d)<0$.
At $R_d$, all normalized invariants are nonnegative, and those for
the degree-one section tend to zero as $c\uparrow1$.
Thus the infimum over the whole family is zero. For $\beta>R_d$, \eqref{eq:main-gap} gives
$m_\beta(\phi_{D,c})\ge\Gamma_A(R_d,\beta)J_D(c)$ for the whole
family, so its normalized infimum is positive.
\end{proof}

\subsection{Negative lower bounds below the semistability threshold}

\begin{corollary}[A negative lower bound below $\beta_{\rm crit}$]
\label{cor:strict-negative-lower-bound}
Fix $(X,L_d)$ from Theorem~\ref{thm:q8-effective}, and put
$A=\alpha(X,L_d)$. For every
\begin{equation}\label{eq:strict-tolerance-interval}
 \max\{0,\beta_{\rm crit}-G_A(\beta_{\rm crit})\}
 <\beta<\beta_{\rm crit},
\end{equation}
the number $\varepsilon=\beta_{\rm crit}-\beta$ satisfies
$0<-\lambda_\beta^{\NA}\le\varepsilon<G_A(\beta)$, and
\[
 (\cM^\beta)^{\rad}(u^\Tcal)
 \ge-\varepsilon J^{\rad}(u^\Tcal)
 \qquad\text{for every normal ample }\Tcal.
\]
At least one configuration has negative quantized Mabuchi slope.
Thus the common negative lower bound does not imply nonnegativity
on all configurations at the same parameter $\beta$.
\end{corollary}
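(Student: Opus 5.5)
We derive the statement from the properties of the threshold function $\beta\mapsto\lambda_\beta^{\NA}=\lambda_\beta^{\NA}(X,L_d)$ established in Proposition~\ref{prop:threshold-growth}. By Theorem~\ref{thm:q8-effective}, $\lambda^{\NA}$ has a unique zero $\beta_{\rm crit}=\beta_{\rm crit}(d)$. Proposition~\ref{prop:threshold-growth} says it is strictly increasing and $1$-Lipschitz. Fix $\beta$ in the interval \eqref{eq:strict-tolerance-interval}. Strict increase and $\beta<\beta_{\rm crit}$ give $\lambda_\beta^{\NA}<\lambda_{\beta_{\rm crit}}^{\NA}=0$. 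The upper bound in \eqref{eq:threshold-growth-v2} at the pair $\beta<\beta_{\rm crit}$ gives
\[
 0=\lambda_{\beta_{\rm crit}}^{\NA}\le\lambda_\beta^{\NA}+(\beta_{\rm crit}-\beta).
\]
Hence $0<-\lambda_\beta^{\NA}\le\varepsilon$.

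For $\varepsilon<G_A(\beta)$, we use that $G_A$ is strictly decreasing: its derivative $-\min\{1,(A/b)^{n+1}\}$ is negative. So $G_A(\beta)>G_A(\beta_{\rm crit})$. The left endpoint condition $\beta>\beta_{\rm crit}-G_A(\beta_{\rm crit})$ is exactly $\varepsilon<G_A(\beta_{\rm crit})$. Combining these gives $\varepsilon<G_A(\beta)$. The value of $A$ plays no role, but $A=1/d$ by Proposition~\ref{prop:q8-alpha}.

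For the lower bound, take any normal ample $\Tcal$. If $J^{\NA}(\phi_\Tcal)>0$, the definition \eqref{eq:two-thresholds} and the bridge \eqref{eq:slope-bridge} give
\[
 (\cM^\beta)^{\rad}(u^\Tcal)=m_\beta(\phi_\Tcal)\ge\lambda_\beta^{\NA}J^{\NA}(\phi_\Tcal)\ge-\varepsilon J^{\rad}(u^\Tcal).
\]
If $J^{\NA}(\phi_\Tcal)=0$, the remark after \eqref{eq:two-thresholds} gives $m_\beta(\phi_\Tcal)=0$, so the inequality holds trivially.

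It remains to produce a configuration with negative slope. Since $\lambda_\beta^{\NA}<0$ is an infimum, some $\Tcal$ with $J^{\NA}>0$ has $m_\beta(\phi_\Tcal)/J^{\NA}(\phi_\Tcal)<0$. If $\beta\in(1,R_d)$, the deformations to the normal cone of the section $D$ with $c\uparrow1$ from the proof of Theorem~\ref{thm:q8-effective} give explicit examples, but these are not needed.

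No step is a serious obstacle. The one point to check carefully is the direction of the Lipschitz inequality, which only uses $\lambda_B^{\NA}-\lambda_\beta^{\NA}\le B-\beta$. The interval \eqref{eq:strict-tolerance-interval} is nonempty because $G_A(\beta_{\rm crit})>0$.
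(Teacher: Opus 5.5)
Your proposal is correct and follows essentially the same argument as the paper. Both proofs use the two-sided growth bound \eqref{eq:threshold-growth-v2} between $\beta$ and $\beta_{\rm crit}$, together with the monotonicity of $G_A$, to get $0<-\lambda_\beta^{\NA}\le\varepsilon<G_A(\beta_{\rm crit})\le G_A(\beta)$; the radial lower bound, the zero-norm case, and the existence of a configuration with negative slope then follow from the definition of $\lambda_\beta^{\NA}$ as an infimum, exactly as you describe.
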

\begin{proof}
Since $\lambda_{\beta_{\rm crit}}^{\NA}=0$,
\eqref{eq:threshold-growth-v2} and
\eqref{eq:strict-tolerance-interval} give
\[
 0<-\lambda_\beta^{\NA}
 \le\beta_{\rm crit}-\beta=\varepsilon
 <G_A(\beta_{\rm crit})\le G_A(\beta).
\]
For $J^{\NA}(\phi_\Tcal)>0$, \eqref{eq:two-thresholds} gives
$m_\beta(\phi_\Tcal)/J^{\NA}(\phi_\Tcal)
\ge\lambda_\beta^{\NA}\ge-\varepsilon$.
Zero-norm configurations have $m_\beta=0$.
Equation~\eqref{eq:slope-bridge} gives the radial inequality.
The strict inequality $\lambda_\beta^{\NA}<0$ implies that some
configuration has $m_\beta<0$, whether or not the infimum is attained.
\end{proof}

\subsection{Comparison with the ordinary threshold}
\label{subsec:ordinary-separation}

Define
\begin{equation}\label{eq:ordinary-threshold-definition}
 \lambda_{\mathrm{ord}}^{\NA}
 :=\inf_{J^{\NA}(\phi_\Tcal)>0}
       \frac{\cM^{\NA}(\phi_\Tcal)}{J^{\NA}(\phi_\Tcal)}.
\end{equation}
The testing family is the full family in \eqref{eq:two-thresholds}.
The bound \eqref{eq:ricci-slope-bound} and $H^{\NA}\ge0$ give a
finite lower bound. Any fixed configuration of positive norm gives a
finite upper bound. Thus $\lambda_{\mathrm{ord}}^{\NA}\in\R$.

If $(X,L)$ is ordinarily K-semistable but not uniformly K-stable,
then $\lambda_{\mathrm{ord}}^{\NA}=0$; the equivalence of the DF
and non-Archimedean Mabuchi formulations follows from normalized
base change \cite[Propositions~7.16 and~8.2]{BHJ}.
Thus \eqref{eq:main-infinite} gives
$\lambda_\beta^{\NA}\le-G_A(\beta)<0$ for every finite $\beta>0$.
Ordinary K-semistability cannot replace finite-parameter
semistability in Corollary~\ref{cor:polarized-correspondence}.

Papazachariou~\cite[Theorem~4.8]{Pap} proves convergence with an
$O(\beta^{-1/2})$ error under ordinary K-semistability, using an
$L^1$-normalized threshold. The following estimate holds for either
sign of $\lambda_{\mathrm{ord}}^{\NA}$.

\begin{proposition}[Convergence of stability thresholds]
\label{prop:all-sign-threshold}
Let $\delta=\lambda_{\mathrm{ord}}^{\NA}$. There are constants
$C_R,C>0$, depending only on $(X,L)$, such that
$S:=\delta+C_R>0$ and, for every $\beta>1$,
\begin{equation}\label{eq:all-sign-threshold}
 G_A(\beta)\le\delta-\lambda_\beta^{\NA}
 \le\frac{\delta_+S}{\beta}+C\sqrt{\frac S\beta},
 \qquad \delta_+=\max\{\delta,0\}.
\end{equation}
In particular, $\lambda_\beta^{\NA}$ increases strictly to
$\lambda_{\mathrm{ord}}^{\NA}$ without a semistability assumption.
\end{proposition}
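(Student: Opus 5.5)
The lower bound $G_A(\beta)\le\delta-\lambda_\beta^{\NA}$ comes from \eqref{eq:main-infinite}. For every configuration of positive norm it gives $m_\beta(\phi)/J^{\NA}(\phi)\le\cM^{\NA}(\phi)/J^{\NA}(\phi)-G_A(\beta)$. Taking the infimum over $\phi$ on both sides yields $\lambda_\beta^{\NA}\le\delta-G_A(\beta)$. Strict increase of $\lambda_\beta^{\NA}$ is Proposition~\ref{prop:threshold-growth}, and the convergence $\lambda_\beta^{\NA}\to\delta$ follows once the upper bound is proved, since its right-hand side tends to zero. The remaining work is the upper bound on $\delta-\lambda_\beta^{\NA}$, which amounts to a uniform lower bound
\[
 m_\beta(\phi)\ge\delta J^{\NA}(\phi)-\Bigl(\frac{\delta_+S}{\beta}+C\sqrt{S/\beta}\Bigr)J^{\NA}(\phi)
\]
for every normal ample $\phi$.

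\textbf{Comparing quantized and ordinary entropy.} Since $m_\beta=e_\beta-\mathcal R^{\NA}$ and $\cM^{\NA}=H^{\NA}-\mathcal R^{\NA}$, the task is to bound $H^{\NA}(\phi)-e_\beta(\phi)$ from above. I would use the envelope formula \eqref{eq:entropy-slope}. Let $\psi=P(\phi+A_X/\beta)$; by Lemma~\ref{lem:full-discrepancy} it does not matter whether $A_X$ or $a_{\mathcal Y}$ is used. Then $\psi\ge\phi$, and
\[
 e_\beta=\beta\bigl(\ENA(\psi)-\ENA(\phi)\bigr)\ge\beta\int(\psi-\phi)\,\MA(\psi).
\]
On the contact set, which carries $\MA(\psi)$ by the orthogonality property of envelopes, one has $\psi-\phi=A_X/\beta$. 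This gives $e_\beta(\phi)\ge H^{\NA}(\psi)$. Thus $e_\beta(\phi)$ is bounded below by the entropy of a nearby metric $\psi$, which satisfies $0\le\psi-\phi\le A_X/\beta$. The next step is to apply the definition of $\delta$ to $\psi$:
\[
 H^{\NA}(\psi)\ge\delta J^{\NA}(\psi)+\mathcal R^{\NA}(\psi).
\]
This requires extending the inequality $\cM^{\NA}\ge\delta J^{\NA}$ from configuration metrics to finite-entropy metrics such as $\psi$. I would do this with the strong-approximation argument of Lemma~\ref{lem:shared-entropy-approximation}, applied with the trivial group, or with the Trusiani approximation used in Corollary~\ref{cor:polarized-correspondence}. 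Combining,
\[
 m_\beta(\phi)\ge\delta J^{\NA}(\psi)+\mathcal R^{\NA}(\psi)-\mathcal R^{\NA}(\phi).
\]

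\textbf{Controlling $d_1^{\NA}(\phi,\psi)$ — the main obstacle.} It remains to estimate $|J^{\NA}(\psi)-J^{\NA}(\phi)|$ and $|\mathcal R^{\NA}(\psi)-\mathcal R^{\NA}(\phi)|$ in terms of $d:=d_1^{\NA}(\phi,\psi)=\ENA(\psi)-\ENA(\phi)=e_\beta/\beta$. First, $e_\beta\le H^{\NA}(\phi)$. Second, by the same argument applied to $\phi$ itself, $H^{\NA}(\phi)$ may be assumed to be at most $(\delta+C_R)J^{\NA}(\phi)=SJ^{\NA}(\phi)$; otherwise $m_\beta(\phi)$ already satisfies the bound, using \eqref{eq:ricci-slope-bound} with $C_R$ the constant there. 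Together these give $d\le SJ^{\NA}(\phi)/\beta$. The change in $J^{\NA}$ is $O(d)$, which produces the $\delta_+S/\beta$ term; only $\delta_+$ appears because the term $\delta J^{\NA}(\psi)$ matters only when $\delta>0$, and $J^{\NA}(\psi)\le J^{\NA}(\phi)$ up to $O(d)$. The Ricci energy is only Hölder continuous in $d_1^{\NA}$ with exponent $\tfrac12$ in the relevant form, namely
\[
 |\mathcal R^{\NA}(\psi)-\mathcal R^{\NA}(\phi)|\le C\,d^{1/2}\bigl(J^{\NA}(\phi)+J^{\NA}(\psi)\bigr)^{1/2},
\]
by the Boucksom--Jonsson quasi-triangle/Hölder estimates for mixed energies. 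Substituting $d\le SJ^{\NA}(\phi)/\beta$ gives $C\sqrt{S/\beta}\,J^{\NA}(\phi)$. The main difficulty is obtaining this exponent-$\tfrac12$ estimate with a constant independent of $\phi$, which needs the ordered relation $\psi\ge\phi$ and homogeneity. Dividing by $J^{\NA}(\phi)$ and taking the infimum then yields \eqref{eq:all-sign-threshold}.
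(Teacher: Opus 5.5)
The step that fails is your reduction to $d\le SJ^{\NA}(\phi)/\beta$. You bound $d=e_\beta(\phi)/\beta\le H^{\NA}(\phi)/\beta$ and then assert that if $H^{\NA}(\phi)>SJ^{\NA}(\phi)$, ``$m_\beta(\phi)$ already satisfies the bound''. It does not.

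Together with \eqref{eq:ricci-slope-bound}, $H^{\NA}(\phi)>SJ^{\NA}(\phi)$ only gives $\cM^{\NA}(\phi)>\delta J^{\NA}(\phi)$. At a fixed finite $\beta$ it gives no lower bound of the required form on $m_\beta(\phi)=e_\beta(\phi)-\mathcal R^{\NA}(\phi)$, because $e_\beta(\phi)\le\beta J^{\NA}(\phi)$ can lie far below $H^{\NA}(\phi)$. For such $\phi$ your estimates give only $d\le J^{\NA}(\phi)$. The Ricci error $C\sqrt{dJ^{\NA}(\phi)}$ is then of order $J^{\NA}(\phi)$ rather than $\sqrt{S/\beta}\,J^{\NA}(\phi)$, so no bound decaying in $\beta$ follows.

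The case split must be on $e_\beta(\phi)$ (equivalently, on $m_\beta(\phi)$ versus $\delta J^{\NA}(\phi)$), as in the paper. If $e_\beta(\phi)\ge SJ^{\NA}(\phi)$, then $m_\beta(\phi)\ge(S-C_R)J^{\NA}(\phi)=\delta J^{\NA}(\phi)$ and there is nothing to prove. Otherwise $\beta d=e_\beta(\phi)=m_\beta(\phi)+\mathcal R^{\NA}(\phi)<SJ^{\NA}(\phi)$. The inequality $e_\beta\le H^{\NA}(\phi)$ plays no role.

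There are two smaller points. First, replace ``$J^{\NA}(\psi)\le J^{\NA}(\phi)$ up to $O(d)$'' by the exact identity $J^{\NA}(\psi)=J^{\NA}(\phi)-d$. It holds because $\phi\le\psi\le\phi+a_{\mathcal Y}/\beta$ and $a_{\mathcal Y}(v_{\rm triv})=0$ give $\sup\psi=\sup\phi=0$. This identity is what produces exactly the term $\delta_+S/\beta$:
\begin{itemize}
\item for $\delta\ge0$, $\delta J^{\NA}(\psi)\ge\delta J^{\NA}(\phi)-\delta SJ^{\NA}(\phi)/\beta$;
\item for $\delta<0$, simply $\delta J^{\NA}(\psi)\ge\delta J^{\NA}(\phi)$.
\end{itemize}
A merely two-sided $O(d)$ bound would leave a term of size $|\delta|S/\beta$ when $\delta<0$. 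Second, choose $C_R$ strictly larger than $\sup|\mathcal R^{\NA}|/J^{\NA}$. A priori one only knows $\delta\ge-\sup|\mathcal R^{\NA}|/J^{\NA}$, and the statement requires $S>0$.

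With these repairs your argument coincides with the paper's, with one difference. The paper imports the comparison $m_\beta(\phi)\ge\cM^{\NA}(\psi)-C\sqrt{dJ^{\NA}(\phi)}$ from Papazachariou~\cite[Theorem~3.10]{Pap}. You instead rebuild it from $e_\beta(\phi)\ge H^{\NA}(\psi)$ (which is \cite[Corollary~9.6]{DZ}) and the exponent-$\tfrac12$ estimate for $\mathcal R^{\NA}$, which you correctly identify as the technical core.
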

\begin{proof}
Choose $C_R>1+\sup_{J^{\NA}(\phi)>0}
|\mathcal R^{\NA}(\phi)|/J^{\NA}(\phi)$ over the full testing family.
Equation~\eqref{eq:ricci-slope-bound} makes this choice possible,
and $H^{\NA}\ge0$ gives $S>0$.
Fix rational $\beta>1$ and $\phi$ with $J:=J^{\NA}(\phi)>0$ and
$\sup\phi=0$. Let $\psi=\psi_\beta$ be the envelope in
\eqref{eq:envelope}. Then $\phi\le\psi\le0$, both suprema are
zero, and \eqref{eq:entropy-slope} gives
\begin{equation}\label{eq:ordinary-envelope-energy-gap}
 D:=\ENA(\psi)-\ENA(\phi)=\frac{e_\beta(\phi)}\beta,
 \qquad J^{\NA}(\psi)=J-D.
\end{equation}
By \cite[Corollary~9.6]{DZ}, $H^{\NA}(\psi)\le e_\beta(\phi)<\infty$.
Lemma~\ref{lem:shared-entropy-approximation}, applied with the
trivial group, extends
$\cM^{\NA}\ge\delta J^{\NA}$ from ample configuration metrics
to $\psi$. Under this approximation, both $\cM^{\NA}$ and $J^{\NA}$ converge.
No sign assumption on $\delta$ is needed.
Papazachariou's estimate \cite[Theorem~3.10]{Pap} therefore gives
\begin{equation}\label{eq:ordinary-quantized-comparison}
 m_\beta(\phi)
 \ge\cM^{\NA}(\psi)-C\sqrt{DJ}
 \ge\delta J-\delta D-C\sqrt{DJ}.
\end{equation}
If $m_\beta(\phi)\ge\delta J$, then
$m_\beta(\phi)\ge(\delta-\delta_+S/\beta-C\sqrt{S/\beta})J$.
Otherwise, \eqref{eq:ordinary-envelope-energy-gap} and
\eqref{eq:radial-mabuchi-splitting} give
$\beta D=m_\beta(\phi)+\mathcal R^{\NA}(\phi)<SJ$.
Since $D\ge0$, \eqref{eq:ordinary-quantized-comparison} implies
\[
 m_\beta(\phi)
 \ge\left(\delta-\frac{\delta_+S}{\beta}
                   -C\sqrt{\frac S\beta}\right)J.
\]
Taking infima proves the upper bound in
\eqref{eq:all-sign-threshold} for rational $\beta>1$;
\eqref{eq:threshold-growth-v2} extends it to real $\beta>1$.
For the lower bound, \eqref{eq:main-infinite} gives
$\cM^{\NA}(\phi)/J^{\NA}(\phi)
\ge m_\beta(\phi)/J^{\NA}(\phi)+G_A(\beta)$.
Taking infima gives $\delta\ge\lambda_\beta^{\NA}+G_A(\beta)$.
\end{proof}

In \eqref{eq:all-sign-threshold}, the lower bound has order
$\beta^{-n}$, and the upper bound has order $\beta^{-1/2}$.

\begin{corollary}[Critical parameter and the ordinary stability constant]
\label{cor:critical-ordinary-margin}
Suppose that $\delta=\lambda_{\mathrm{ord}}^{\NA}>0$.
Then $\beta_{\rm crit}(X,L)<\infty$. If $0<\delta\le A/n$, then
\begin{equation}\label{eq:critical-ordinary-margin}
 \frac{\beta_{\rm crit}(X,L)}{A}
 \ge \left(\frac{A}{n\delta}\right)^{1/n}.
\end{equation}
\end{corollary}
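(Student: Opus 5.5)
Both assertions follow from Proposition~\ref{prop:all-sign-threshold}, Proposition~\ref{prop:threshold-growth} and the explicit form of $G_A$. The idea is to squeeze the critical parameter from both sides between $\lambda_\beta^{\NA}$ and $\delta=\lambda_{\mathrm{ord}}^{\NA}$.

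\emph{Finiteness.} Assume $\delta>0$. The upper bound in \eqref{eq:all-sign-threshold} gives
\[
 \lambda_\beta^{\NA}\ge\delta-\frac{\delta S}{\beta}-C\sqrt{\frac S\beta},
 \qquad \beta>1.
\]
The right-hand side tends to $\delta>0$ as $\beta\to\infty$, so $\lambda_\beta^{\NA}>0$ for all sufficiently large finite $\beta$. Next we check that the set of semistable parameters is nonempty and bounded below away from zero. By Proposition~\ref{prop:threshold-growth}, $\lambda^{\NA}$ is strictly increasing and continuous. Hence the semistable set $\{\beta:\lambda_\beta^{\NA}\ge0\}$ is a closed half-line. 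Its infimum is $\beta_{\rm crit}$, which is therefore finite. If $\lambda^{\NA}$ has no positive zero, then $\lambda_\beta^{\NA}>0$ for every $\beta>0$ and $\beta_{\rm crit}=0$. This case has to be excluded for the quantitative bound, as explained below.

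\emph{Lower bound.} Suppose $0<\delta\le A/n$. The lower bound in \eqref{eq:all-sign-threshold}, which is just \eqref{eq:main-infinite} after taking infima (so it holds for all $\beta>0$), gives
\[
 \lambda_\beta^{\NA}\le\delta-G_A(\beta).
\]
Thus $\lambda_\beta^{\NA}<0$ whenever $G_A(\beta)>\delta$. We split according to the two branches of $G_A$.
\begin{itemize}
\item For $\beta\le A$ we have $G_A(\beta)\ge G_A(A)=A/n\ge\delta$.
\item For $\beta\ge A$ we have $G_A(\beta)=A^{n+1}/(n\beta^n)$, which exceeds $\delta$ exactly when $\beta<\beta_*:=A(A/(n\delta))^{1/n}$. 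Note that $\beta_*\ge A$ precisely because $\delta\le A/n$.
\end{itemize}
Since $G_A$ is strictly decreasing, $G_A(\beta)>G_A(\beta_*)=\delta$ for every $\beta<\beta_*$. Hence $\lambda_\beta^{\NA}<0$ on $(0,\beta_*)$. It follows that the semistable parameters lie in $[\beta_*,\infty)$, so $\beta_{\rm crit}\ge\beta_*$. Dividing by $A$ gives \eqref{eq:critical-ordinary-margin}. This argument also excludes the degenerate case $\beta_{\rm crit}=0$, since $\lambda^{\NA}$ is negative near zero.

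\emph{Main obstacle.} The real input is the existing estimate \eqref{eq:all-sign-threshold}. The only subtleties are, first, that the upper bound there is stated for $\beta>1$, which is harmless because we only need it as $\beta\to\infty$. Second, we must make sure the lower comparison holds for all $\beta>0$ and not just $\beta>1$. For that I would cite \eqref{eq:main-infinite} directly rather than Proposition~\ref{prop:all-sign-threshold}.
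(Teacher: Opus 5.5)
Your proposal is correct and follows the paper's proof. Finiteness comes from $\lambda_\beta^{\NA}\to\delta>0$ via Proposition~\ref{prop:all-sign-threshold}, and the lower bound comes from $\delta\ge\lambda_\beta^{\NA}+G_A(\beta)$ (infima in \eqref{eq:main-infinite}, valid for all $\beta>0$) together with the strict decrease and explicit tail of $G_A$; the paper phrases this last step contrapositively (if $\lambda_\beta^{\NA}\ge0$ then $G_A(\beta)\le\delta\le G_A(A)$, so $\beta\ge A$ and $A^{n+1}/(n\beta^n)\le\delta$), and your appeal to continuity from Proposition~\ref{prop:threshold-growth} is not needed for finiteness, since nonemptiness of the semistable set already suffices.
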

\begin{proof}
Proposition~\ref{prop:all-sign-threshold} gives
$\lambda_\beta^{\NA}\to\delta>0$, so the set defining
$\beta_{\rm crit}(X,L)$ is nonempty.
Taking infima in \eqref{eq:main-infinite} gives
$\delta\ge\lambda_\beta^{\NA}+G_A(\beta)$ for every $\beta>0$.
If $\lambda_\beta^{\NA}\ge0$ and $\delta\le A/n$, then
$G_A(\beta)\le\delta\le G_A(A)$, hence $\beta\ge A$.
Thus $A^{n+1}/(n\beta^n)\le\delta$.
Taking the infimum over such $\beta$ proves
\eqref{eq:critical-ordinary-margin}.
\end{proof}

Thus, for a family of fixed dimension, $\delta/A\to0^+$ implies
$\beta_{\rm crit}/A\to\infty$.

\section{Ordinary uniform K-stability}
\label{ordinary:section}

\subsection{Polarized manifolds}
\label{ordinary:polarized}

For polarized classes, Trusiani~\cite[Theorem~A and
Definition~2.3]{Tru} proves the following theorem, written here
with our normalization.

\begin{theorem}[Reduced uniform K-stability and cscK metrics]
\label{ordinary:reduced}
Let $(X,L)$ be a connected smooth polarized manifold.
The class $c_1(L)$ contains a cscK metric if and only if the
connected polarized automorphism group
$\G=\Aut_0(X,L)$ is reductive and there is $\delta>0$ such that
\begin{equation}\label{ordinary:reduced-DF}
 \mathrm{DF}(\Tcal)\ge\delta J_{\G}^{\NA}(\phi_\Tcal)
\end{equation}
for every normal ample $\G$-equivariant test configuration of
arbitrary positive exponent, with invariants normalized to $L$.
Here $\mathbb T=Z(\G)^\circ$ and
$J_{\G}^{\NA}(\phi)=
\inf_{\xi\in N_\R(\mathbb T)}J^{\NA}(\phi_\xi)$,
as in \eqref{eq:central-norm} with $\HH=\G$.
\end{theorem}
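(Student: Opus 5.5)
This theorem is Trusiani's result, restated in the paper's normalization. The plan is therefore a translation argument rather than a new proof. First I will match conventions. Trusiani's reduced uniform K-stability \cite[Definition~2.3]{Tru} is formulated with the Donaldson--Futaki invariant, or equivalently the non-Archimedean Mabuchi functional, against a reduced norm obtained by infimizing over twists by the center $\mathbb T=Z(\G)^\circ$. The paper instead uses $J_\G^{\NA}$ from \eqref{eq:central-norm} with $\HH=\G$.

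The first step is to show that the two reduced norms agree up to a positive multiplicative constant. This follows because $J^{\NA}$ and the $L^1$/$d_1$-type norms are uniformly comparable: divide \eqref{eq:normalized-distance-J} by the ray parameter, as in \eqref{eq:center-J-distance}. This comparison is compatible with central twists, so the infima are comparable with constants depending only on $(X,L)$. Lemma~\ref{lem:weyl} lets me pass freely between real and rational cocharacters, and between $\mathbb T$ and a maximal torus if Trusiani's formulation uses one.

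The second step handles the normalizations. I will normalize all invariants to $L$ so that exponent-$r$ configurations are divided by $r$, exactly as in Section~2.3. I will then use normalized base change \cite[Propositions~7.16 and~8.2]{BHJ}, as in the proof of Corollary~\ref{cor:polarized-correspondence}. This identifies a DF lower bound over all exponents with the lower bound $\cM^{\NA}\ge\delta J_\G^{\NA}$, using $\mathrm{DF}\ge\cM^{\NA}$ and equality after base change. With these identifications, the equivalence is exactly \cite[Theorem~A]{Tru}. Reductivity of $\G$ is part of the hypothesis in one direction, and in the other it follows from the Matsushima--Lichnerowicz theorem once a cscK metric exists.

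The main obstacle I expect is bookkeeping: checking that Trusiani's testing family, with its equivariance, normality, ampleness and exponent conventions, coincides with ours, and that the constant $\delta$ transfers. If his definition uses a norm $\inf_\xi\|\phi_\xi\|$ different from $J^{\NA}$, I will invoke the comparison from the first step. If his testing family is smaller, such as $\G$-equivariant models or only $K$-invariant metrics, I will use the equivariant approximation of Lemma~\ref{lem:shared-entropy-approximation} to pass between families, since both $\cM^{\NA}$ and $J_\G^{\NA}$ converge along it.
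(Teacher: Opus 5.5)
Your plan matches the paper's: the theorem is quoted from Trusiani's Theorem~A, and the only work is translating conventions. Concretely, that means integral configurations of arbitrary exponent versus normalized $\Q$-line bundles, real versus rational central cocharacters via Lemma~\ref{lem:weyl}, and $\mathrm{DF}$ versus $\cM^{\NA}$ via the central-fibre correction \eqref{ordinary:correction} and normalized base change.

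One check should be stated explicitly, since the proof of Corollary~\ref{cor:polarized-correspondence} covers only the unreduced norm: $J_{\G}^{\NA}(\phi_{\Tcal^{(d)}})=d\,J_{\G}^{\NA}(\phi_\Tcal)$. It follows from $J^{\NA}((\phi_{\Tcal^{(d)}})_\xi)=d\,J^{\NA}((\phi_\Tcal)_{\xi/d})$, the bijection $\xi\mapsto\xi/d$ of $N_\Q(\mathbb T)$, and the fact that base change preserves ampleness and $\G$-equivariance. Your norm-comparison and approximation hedges are harmless. Note, however, that models or $K$-invariant metrics would form a larger testing family, not a smaller one.
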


By Lemma~\ref{lem:weyl}, the infimum defining the reduced norm can
also be taken over rational cocharacters. The normalized $\Q$-line bundles in
\cite[Section~2.5]{Tru} correspond to integral configurations of
arbitrary positive exponent by clearing denominators.
For a configuration of exponent $r$, let
$\overline{\mathcal L}$ be its compactified integral polarization,
so $\mathcal L|_{\mathcal X_1}\simeq L^{\otimes r}$.
Applying \cite[Definition~7.13 and Proposition~7.15]{BHJ} to
$r^{-1}\overline{\mathcal L}$ gives
\begin{equation}\label{ordinary:correction}
 \cM^{\NA}(\Tcal)=\mathrm{DF}(\Tcal)
 +\frac{(\mathcal X_{0,\mathrm{red}}-\mathcal X_0)
                  \cdot\overline{\mathcal L}^{\,n}}{r^nV}
 \le\mathrm{DF}(\Tcal),
 \qquad V=L^n.
\end{equation}

Let $\Tcal^{(d)}$ be the normalization of a degree-$d$ base
change, with $d$ sufficiently divisible.
By \cite[Proposition~7.16]{BHJ},
\begin{equation}\label{ordinary:base-change}
 \mathrm{DF}(\Tcal^{(d)})
 =\cM^{\NA}(\Tcal^{(d)})=d\cM^{\NA}(\Tcal).
\end{equation}
For $\xi\in N_\Q(\mathbb T)$, normalized base change and the
homogeneity of $J^{\NA}$ \cite[Proposition~7.8]{BHJ} give
\[
 J^{\NA}\bigl((\phi_{\Tcal^{(d)}})_\xi\bigr)
 =dJ^{\NA}\bigl((\phi_\Tcal)_{\xi/d}\bigr).
\]
Since $\xi\mapsto\xi/d$ is a bijection of $N_\Q(\mathbb T)$,
taking infima yields
$J_\G^{\NA}(\phi_{\Tcal^{(d)}})=dJ_\G^{\NA}(\phi_\Tcal)$.
Normalized base change preserves ampleness and
$\G$-equivariance. Applying \eqref{ordinary:reduced-DF} to
$\Tcal^{(d)}$ and using \eqref{ordinary:base-change} gives
$\cM^{\NA}(\Tcal)\ge\delta J_\G^{\NA}(\phi_\Tcal)$.
Conversely, this Mabuchi bound implies
\eqref{ordinary:reduced-DF} by \eqref{ordinary:correction}.
Without twists, the same base-change calculation gives the
unreduced equivalence in \cite[Proposition~8.2]{BHJ}.

\begin{corollary}[A sequence with vanishing Donaldson--Futaki ratios]
\label{ordinary:zero-margin-sequence}
Let $(X,L)$ be a connected smooth polarized manifold, set
$A=\alpha(X,L)$, and let $S\subset\G$ be an algebraic torus.
Let $\Tcal_j$ be normal ample $S$-equivariant test configurations,
and write $\Tcal_{j,\xi}=(\Tcal_j)_\xi$ for their rational product
twists. Assume that the Futaki character vanishes on $\Lie S$,
that $N_j:=\inf_{\xi\in N_\Q(S)}
J^{\NA}(\phi_{\Tcal_{j,\xi}})>0$, and that
$\mathrm{DF}(\Tcal_j)/N_j\to0$.
Then, for every fixed $\beta>0$,
\begin{equation}\label{ordinary:zero-margin-bound}
 \limsup_{j\to\infty}\frac1{N_j}
    \sup_{\xi\in N_\Q(S)}
       (\cM^\beta)^{\rad}(u^{\Tcal_{j,\xi}})
 \le-G_A(\beta)<0.
\end{equation}
\end{corollary}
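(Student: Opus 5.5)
The plan is to run the argument of Theorem~\ref{thm:groups} backwards, one configuration at a time. Fix $\beta>0$ and $j$. For each $\xi\in N_\Q(S)$, the twist $\Tcal_{j,\xi}$ becomes a normal ample test configuration after normalized base change. Theorem~\ref{thm:gap}, in the form \eqref{eq:main-infinite} combined with \eqref{eq:slope-bridge}, then gives
\[
 (\cM^\beta)^{\rad}(u^{\Tcal_{j,\xi}})
 \le\cM^{\NA}(\phi_{\Tcal_{j,\xi}})
   -G_A(\beta)J^{\NA}(\phi_{\Tcal_{j,\xi}}).
\]
The Futaki character vanishes on $\Lie S$. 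Li's twisting identity \cite[Lemma~2.24]{Li}, applied with the torus $S$, therefore gives $\cM^{\NA}(\phi_{\Tcal_{j,\xi}})=\cM^{\NA}(\phi_{\Tcal_j})$ for every rational $\xi$. By the definition of $N_j$ as an infimum over rational twists, $J^{\NA}(\phi_{\Tcal_{j,\xi}})\ge N_j$. Taking the supremum over $\xi$ yields
\[
 \sup_{\xi\in N_\Q(S)}(\cM^\beta)^{\rad}(u^{\Tcal_{j,\xi}})
 \le\cM^{\NA}(\phi_{\Tcal_j})-G_A(\beta)N_j.
\]

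The remaining step is to compare $\cM^{\NA}(\phi_{\Tcal_j})$ with $\mathrm{DF}(\Tcal_j)$. By \eqref{ordinary:correction}, $\cM^{\NA}(\Tcal_j)\le\mathrm{DF}(\Tcal_j)$, because the correction term $(\mathcal X_{0,\mathrm{red}}-\mathcal X_0)\cdot\overline{\mathcal L}^n$ is nonpositive. Here $\mathcal X_0-\mathcal X_{0,\mathrm{red}}$ is effective and $\overline{\mathcal L}$ is relatively nef on the central fiber. We also need $\cM^{\NA}(\Tcal_j)=\cM^{\NA}(\phi_{\Tcal_j})$. This holds because the non-Archimedean Mabuchi invariant depends only on the metric, as in \cite{BHJ}. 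Dividing by $N_j>0$ gives
\[
 \frac1{N_j}\sup_{\xi}(\cM^\beta)^{\rad}(u^{\Tcal_{j,\xi}})
 \le\frac{\mathrm{DF}(\Tcal_j)}{N_j}-G_A(\beta).
\]
Letting $j\to\infty$ and using $\mathrm{DF}(\Tcal_j)/N_j\to0$ proves \eqref{ordinary:zero-margin-bound}. The strict negativity follows from $G_A(\beta)>0$, which can be read off \eqref{intro:gap-function}.

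The main point to check is the rational-twist bookkeeping. After the base change $\tau\mapsto\tau^N$ and the twist by $N\xi$, one divides by $N$. We must confirm that the associated ray, the quantized slope and $J^{\NA}$ all scale by exactly $N$, and that $\cM^{\NA}$ does as well, by \cite[Proposition~7.16]{BHJ}. Then Theorem~\ref{thm:gap} and the twisting identity apply verbatim at rational $\xi$. This is the same normalization convention already used in Theorem~\ref{thm:groups}, so I expect no new difficulty there.
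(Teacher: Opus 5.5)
Your proposal is correct and follows the paper's proof essentially verbatim. Both arguments combine Li's twisting identity for the torus $S$, the terminal gap \eqref{eq:main-infinite}, the bound $J^{\NA}(\phi_{\Tcal_{j,\xi}})\ge N_j$, and $\cM^{\NA}\le\mathrm{DF}$ from \eqref{ordinary:correction}, and then divide by $N_j$ and take the upper limit.
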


\begin{proof}
For each $j$ and $\xi\in N_\Q(S)$, Li's twisting identity
\cite[Lemma~2.24]{Li}, applied with reductive group $S$, gives
$\cM^{\NA}(\phi_{\Tcal_{j,\xi}})=
\cM^{\NA}(\phi_{\Tcal_j})$.
Equation~\eqref{eq:main-infinite} and
\eqref{ordinary:correction} therefore give
\[
 \begin{aligned}
 m_\beta(\phi_{\Tcal_{j,\xi}})
 &\le\cM^{\NA}(\phi_{\Tcal_j})
       -G_A(\beta)J^{\NA}(\phi_{\Tcal_{j,\xi}})\\
 &\le\mathrm{DF}(\Tcal_j)-G_A(\beta)N_j.
 \end{aligned}
\]
For each $j$, the supremum over $\xi$ is finite: the upper bound
is independent of $\xi$, and the value at $\xi=0$ is finite.
Taking the supremum, dividing by $N_j>0$, and using
\eqref{eq:slope-bridge} gives
\[
 \frac1{N_j}\sup_{\xi\in N_\Q(S)}
 (\cM^\beta)^{\rad}(u^{\Tcal_{j,\xi}})
 \le\frac{\mathrm{DF}(\Tcal_j)}{N_j}-G_A(\beta).
\]
Taking the upper limit as $j\to\infty$ proves
\eqref{ordinary:zero-margin-bound}.
\end{proof}

\subsection{K\"ahler classes}
\label{ordinary:kahler}

The analytic parameter bounds also give the finite-parameter
correspondence for arbitrary compact K\"ahler classes. This part uses
the transcendental results of Trusiani~\cite{TruK}.

\begin{corollary}[The compact K\"ahler correspondence]
\label{main:correspondence}\label{cor:iff}\label{cor:kahler-correspondence}
Let $\Omega$ be a K\"ahler class on a connected compact K\"ahler
manifold $X$. The following are equivalent:
\begin{enumerate}[label=\textup{(\roman*)}]
\item $\Omega$ contains a unique cscK metric;
\item for some finite $\beta>0$,
$s_\beta(u^\Tcal)\ge0$ for every normal relatively
K\"ahler test configuration $\Tcal$;
\item for some finite $B>0$ and $\gamma>0$,
$s_B(u^\Tcal)\ge\gamma J^{\rad}(u^\Tcal)$ for every
such $\Tcal$;
\item $(X,\Omega)$ is uniformly K-stable: for some $\delta>0$,
$\mathrm{DF}(\Tcal)\ge\delta J^{\NA}(\Tcal)$ for every smooth
dominating K\"ahler test configuration $\Tcal$.
\end{enumerate}
\end{corollary}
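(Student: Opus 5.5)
The plan is to follow the polarized proof of Corollary~\ref{cor:polarized-correspondence} and replace each algebraic input by its transcendental analogue. The parameter bounds of Theorem~\ref{thm:gap} apply to every finite-energy geodesic ray from a smooth potential. For relatively K\"ahler test configurations we use the slope identities of Darvas--Zhang~\cite[Section~5]{DZ} and Trusiani~\cite{TruK} in place of \eqref{eq:slope-bridge}. The implications will be proved in the order (ii)$\Rightarrow$(iii)$\Rightarrow$(i)$\Rightarrow$(ii), and then (iii)$\Rightarrow$(iv)$\Rightarrow$(i).

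\textbf{(ii)$\Rightarrow$(iii) and (iii)$\Rightarrow$(ii).} Assume $s_\beta(u^\Tcal)\ge0$ for every normal relatively K\"ahler $\Tcal$. Fix $B>\beta$. Then \eqref{eq:main-gap} applied to $u^\Tcal$ gives
\[
 s_B(u^\Tcal)\ge\Gamma_A(\beta,B)J^{\rad}(u^\Tcal).
\]
This is (iii) with $\gamma=\Gamma_A(\beta,B)>0$, a constant independent of $\Tcal$. Conversely, (iii) implies (ii) at $\beta=B$, because $J^{\rad}\ge0$.

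\textbf{(i)$\Leftrightarrow$(iii).} This is the K\"ahler-class version of Darvas--Zhang~\cite[Theorem~1.1]{DZ}. Their quantized Yau--Tian--Donaldson theorem is formulated for relatively K\"ahler test configurations in the sense of \cite[Definition~5.4]{DZ}, and we invoke it directly.

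\textbf{(iii)$\Rightarrow$(iv).} By Theorem~\ref{thm:gap}, $\cM^{\rad}(u^\Tcal)\ge s_B(u^\Tcal)\ge\gamma J^{\rad}(u^\Tcal)$. For a smooth dominating K\"ahler test configuration we then need two facts:
\[
 \mathrm{DF}(\Tcal)\ge\cM^{\rad}(u^\Tcal)\qquad\text{and}\qquad J^{\rad}(u^\Tcal)=J^{\NA}(\Tcal).
\]
Both should come from the transcendental slope formulas of Sj\"ostr\"om Dyrefelt and Trusiani~\cite{TruK}. They play the role of \eqref{ordinary:correction} and \eqref{eq:slope-bridge}, and they provide the non-reduced-fiber correction with the right sign. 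If the configuration classes in (ii)--(iii) and in (iv) differ, we first pass to a normal model and then to a smooth dominating resolution. This leaves the associated ray and $J^{\NA}$ unchanged.

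\textbf{(iv)$\Rightarrow$(i).} Here we cite Trusiani~\cite{TruK}: uniform K-stability in the smooth dominating sense for a compact K\"ahler class implies coercivity of $\cM$. Uniqueness then follows from the trivial automorphism component, which coercivity forces, together with \cite{bb}. Existence follows from Chen--Cheng~\cite{CC1,CC2}, or from \cite[Theorem~4.4]{DZ}.

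The \textbf{main obstacle} is exactly this last step, together with the bookkeeping between the two classes of test configurations. A Kähler class has no algebraic base change, so the polarized argument through \eqref{eq:uniform-MNA} and Li's model-metric criterion is not available. Instead we must pass from $\mathrm{DF}\ge\delta J^{\NA}$ on smooth dominating configurations to the transcendental non-Archimedean Mabuchi bound, and then to an analytic coercivity argument; this is where we rely on \cite{TruK}.

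If only the weaker bound $\cM^{\NA}\ge\delta J^{\NA}$ is available, the fallback is a contradiction argument in the style of Corollary~\ref{cor:center-negative}. Non-coercivity would produce a destabilizing finite-energy ray $u$ with $\cM^{\rad}(u)\le0$ and $J^{\rad}(u)>0$. Approximating $u$ by configuration rays in the chordal metric $\rho$ and using \eqref{eq:center-holder} together with the continuity of $J^{\rad}$ then yields the contradiction.
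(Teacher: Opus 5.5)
Your argument is correct. Its only nontrivial step, (ii)$\Rightarrow$(iii) from \eqref{eq:main-gap} with $\gamma=\Gamma_A(\beta,B)$, is exactly the paper's. The paper gets everything else by citation:
\begin{itemize}
\item (i)$\Leftrightarrow$(iii) from Darvas--Zhang~\cite[Theorem~5.5]{DZ}, which is the K\"ahler-class statement; Theorem~1.1 there is the polarized one.
\item (i)$\Leftrightarrow$(iv), in both directions, from Trusiani~\cite[Theorem~A]{TruK}.
\end{itemize}
You use only the direction (iv)$\Rightarrow$(i) of Trusiani's theorem. You then prove (iii)$\Rightarrow$(iv) directly, via $\mathrm{DF}\ge\cM^{\rad}\ge s_B\ge\gamma J^{\rad}=\gamma J^{\NA}$. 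This is valid, and it is what the paper does separately in Corollary~\ref{ordinary:unreduced}; see \eqref{ordinary:ray-bound}. It gives an explicit constant $\delta\ge\gamma$, and more generally $\delta_\Omega\ge\lambda_\beta^{\rm Kah}+G_A(\beta)$. The price is that you need the transcendental slope formulas with central-fiber correction. These come from Sj\"ostr\"om Dyrefelt~\cite{SDtrans}, not from \cite{TruK}, and their validity on the weak associated ray must be checked. No change of model is needed in that step: a smooth dominating K\"ahler configuration is already normal and relatively K\"ahler, so (iii) applies to it as it stands.

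Two side remarks are inaccurate, although neither affects your main line.
\begin{itemize}
\item Coercivity does not force $\Aut_0(X)$ to be trivial. On a flat complex torus $\cM$ is coercive while $\Aut_0(X)$ contains the translations. Uniqueness of the cscK form still follows, for instance from \cite[Theorem~4.4]{DZ}.
\item Your fallback for (iv)$\Rightarrow$(i) would not work as sketched. Estimate \eqref{eq:center-holder} gives $\rho$-continuity of the quantized slopes $s_\beta$, not of $\cM^{\rad}$. So a bound $\cM^{\NA}\ge\delta J^{\NA}$ on the approximating configuration rays controls neither $\cM^{\rad}(u)$ on the limit ray nor $s_\beta(u^j)$ from below. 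To close the argument you would need further input, such as an approximation in which the entropy slopes converge, or a uniform comparison of $s_\beta$ with $\cM^{\NA}$. That input is exactly what citing Trusiani's theorem supplies.
\end{itemize}
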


Theorem~\ref{main:sharp-gap} shows that condition \textup{(ii)}
implies condition \textup{(iii)}. Uniqueness means uniqueness of the K\"ahler form in its class.
The test configurations in conditions \textup{(ii)} and \textup{(iii)}
are those of Darvas--Zhang~\cite[Definition~5.4]{DZ}; those in
condition \textup{(iv)} are those of
Trusiani~\cite[Section~2.9]{TruK}.
Darvas--Zhang~\cite[Theorem~5.5]{DZ} prove that conditions
\textup{(i)} and \textup{(iii)} are equivalent, and
Trusiani~\cite[Theorem~A]{TruK} proves that conditions
\textup{(i)} and \textup{(iv)} are equivalent.

\begin{proof}[Proof of Corollary~\ref{main:correspondence}]
By Darvas--Zhang~\cite[Theorem~5.5]{DZ} and
Trusiani~\cite[Theorem~A]{TruK}, and since \textup{(iii)} implies
\textup{(ii)}, it suffices to prove that \textup{(ii)} implies
\textup{(iii)}.

Assume \textup{(ii)} holds for some $\beta>0$. For every
$B>\beta$, Theorem~\ref{thm:gap} gives
\[
 s_B(u^\Tcal)
 \ge
 s_\beta(u^\Tcal)
 +\Gamma_A(\beta,B)J^{\rad}(u^\Tcal)
 \ge
 \Gamma_A(\beta,B)J^{\rad}(u^\Tcal).
\]
Since
\(
 \Gamma_A(\beta,B)>0
\)
is independent of $\Tcal$, condition \textup{(iii)} holds with
\(
 \gamma=\Gamma_A(\beta,B).
\)
\end{proof}

Let $\Omega$ be a K\"ahler class on a connected compact K\"ahler
manifold $X$ of dimension $n\ge1$, and set
$A=\alpha(X,\Omega)$ and $V=\int_X\Omega^n$. We use the smooth dominating K\"ahler and big
test configurations of Trusiani~\cite[Sections~2.9 and~2.10]{TruK}.
Both $\mathrm{DF}$ and $J^{\NA}$ are divided by $V$, so their
quotient is unchanged. Define
\begin{equation}\label{ordinary:thresholds}
 \delta_\Omega^{\rm Kah}
 =\inf_{\substack{\Tcal\text{ K\"ahler}\\J^{\NA}(\Tcal)>0}}
       \frac{\mathrm{DF}(\Tcal)}{J^{\NA}(\Tcal)},\qquad
 \delta_\Omega^{\rm big}
 =\inf_{\substack{\Tcal\text{ big}\\J^{\NA}(\Tcal)>0}}
       \frac{\mathrm{DF}(\Tcal)}{J^{\NA}(\Tcal)}.
\end{equation}
For a configuration with structure map $q:\mathcal X\to\mathbf P^1$,
adding $q^*\eta$ to its polarization class, with $\eta$ sufficiently
positive on $\mathbf P^1$, makes a relatively K\"ahler class
K\"ahler without changing $\mathrm{DF}$ or $J^{\NA}$.
We also set
\begin{equation}\label{ordinary:finite-threshold}
 \lambda_\beta^{\rm Kah}
 =\inf_{\substack{\Tcal\text{ normal relatively K\"ahler}\\
                  J^{\rad}(u^\Tcal)>0}}
      \frac{s_\beta(u^\Tcal)}{J^{\rad}(u^\Tcal)},
\end{equation}
using the family in Corollary~\ref{cor:kahler-correspondence}.
This family contains
all the smooth dominating K\"ahler configurations in
\eqref{ordinary:thresholds}.

\begin{corollary}[Finite parameters and ordinary K-stability]
\label{ordinary:unreduced}
For every compact K\"ahler class,
\begin{equation}\label{ordinary:optimal-margins}
 \delta_\Omega:=\delta_\Omega^{\rm Kah}
       =\delta_\Omega^{\rm big},\qquad
 \delta_\Omega\ge\lambda_\beta^{\rm Kah}+G_A(\beta)
       \quad(\beta>0).
\end{equation}
For every $c\in\R$, the bound $\mathrm{DF}\ge cJ^{\NA}$ on all
smooth dominating K\"ahler configurations is equivalent to the
same bound on all smooth dominating big configurations.
In particular, if \eqref{eq:kahler-semistability} holds on the
compact K\"ahler testing family for some $\beta>0$ and
$0\le\varepsilon<G_A(\beta)$, then
\begin{equation}\label{ordinary:DF-bound}
 \mathrm{DF}(\Tcal)\ge
 \bigl(G_A(\beta)-\varepsilon\bigr)J^{\NA}(\Tcal)
\end{equation}
for every configuration in either ordinary testing family.
Moreover, $\delta_\Omega>0$ is equivalent to each condition of
Corollary~\ref{cor:kahler-correspondence}. If $\delta_\Omega=0$,
then $\lambda_\beta^{\rm Kah}\le-G_A(\beta)<0$ for every $\beta>0$.
\end{corollary}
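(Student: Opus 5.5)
We prove the three assertions of Corollary~\ref{ordinary:unreduced} in turn: the equality $\delta_\Omega^{\rm Kah}=\delta_\Omega^{\rm big}$, the lower bound $\delta_\Omega\ge\lambda_\beta^{\rm Kah}+G_A(\beta)$, and the consequences for the existence criteria.

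\emph{Equality of the two thresholds.} Every smooth dominating K\"ahler configuration is big, so $\delta_\Omega^{\rm big}\le\delta_\Omega^{\rm Kah}$. For the reverse inequality we approximate a big configuration by K\"ahler ones. Fix a smooth dominating big configuration with class $\mathcal A$. Choose a K\"ahler class $\mathcal B$ on the same total space with the same generic fiber class, for instance a pullback class plus a small multiple of a relatively ample class, corrected by $q^*\eta$. The classes $\mathcal A_s=\mathcal A+s\mathcal B$, rescaled so that the generic fiber class is $\Omega$, are K\"ahler for $s>0$. Since DF and $J^{\NA}$ are intersection numbers, they are polynomial in $s$, and DF$/J^{\NA}$ converges as $s\to0$ whenever $J^{\NA}>0$. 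This is the step where we rely on Trusiani~\cite[Sections~2.9 and~2.10]{TruK}: the energy and entropy parts of DF for big classes are defined through non-pluripolar products, and we must check that these are continuous along such perturbations. We expect this continuity to be the main obstacle. If direct continuity is unavailable, we will instead invoke Trusiani's result that the transcendental uniform K-stability conditions for the big and K\"ahler families coincide, applied to the shifted bound $\mathrm{DF}-cJ^{\NA}\ge0$ for each $c\in\R$. The same approximation gives the stated equivalence of the bounds $\mathrm{DF}\ge cJ^{\NA}$ on the two families for every $c\in\R$.

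\emph{The lower bound.} Fix a smooth dominating K\"ahler configuration $\Tcal$ with $J^{\NA}(\Tcal)>0$. It lies in the relatively K\"ahler family of \eqref{ordinary:finite-threshold}. We need the transcendental slope identities
\[
 \mathrm{DF}(\Tcal)\ge\cM^{\rad}(u^\Tcal),\qquad J^{\rad}(u^\Tcal)=J^{\NA}(\Tcal),
\]
which follow from the slope formula for the Mabuchi functional along geodesic rays of K\"ahler configurations~\cite{TruK,DZ}, with the nonreduced central fiber correction term nonpositive as in \eqref{ordinary:correction}. Given these, \eqref{eq:main-infinite} yields
\[
 \mathrm{DF}(\Tcal)\ge s_\beta(u^\Tcal)+G_A(\beta)J^{\rad}(u^\Tcal)\ge\bigl(\lambda_\beta^{\rm Kah}+G_A(\beta)\bigr)J^{\NA}(\Tcal).
\]
Dividing by $J^{\NA}(\Tcal)$ and taking the infimum over $\Tcal$ gives $\delta_\Omega\ge\lambda_\beta^{\rm Kah}+G_A(\beta)$.

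\emph{Consequences.} Assume \eqref{eq:kahler-semistability} holds on the compact K\"ahler testing family. Then $\lambda_\beta^{\rm Kah}\ge-\varepsilon$, and the lower bound gives \eqref{ordinary:DF-bound} on the K\"ahler family. The first part transfers this bound to the big family. Trusiani's theorem~\cite[Theorem~A]{TruK} shows that $\delta_\Omega>0$ is condition~(iv) of Corollary~\ref{cor:kahler-correspondence}, and hence is equivalent to every condition there. Finally, if $\delta_\Omega=0$, the lower bound reads $0\ge\lambda_\beta^{\rm Kah}+G_A(\beta)$, that is, $\lambda_\beta^{\rm Kah}\le-G_A(\beta)<0$ for every $\beta>0$.
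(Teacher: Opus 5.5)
Your derivation of $\delta_\Omega\ge\lambda_\beta^{\rm Kah}+G_A(\beta)$ and of the consequences follows the paper. The paper takes the slope formula and the central-fiber correction from \cite{SDtrans}, and checks that the reparametrization $t=2s$ and the passage from a smooth compatible ray to the weak ray $u^\Tcal$ do not change the quotient.

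The genuine gap is in your comparison of the big and K\"ahler families: the classes $\mathcal A_s=\mathcal A+s\mathcal B$ are not K\"ahler in general. Restrict to a component $E$ of the central fiber. There $q^*\eta$ vanishes, and $(\mathcal A+s\mathcal B)|_E$ can be K\"ahler for arbitrarily small $s>0$ only if $\mathcal A|_E$ is nef, because the nef cone is the closure of the K\"ahler cone. A general big configuration class does not have this property, so no small perturbation and no base correction puts it into the K\"ahler family. Moreover, for big classes $\mathrm{DF}$ and $J^{\NA}$ are defined through positive (non-pluripolar) intersection products, which are not multilinear, so you cannot claim polynomiality in $s$. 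The obstacle is therefore not continuity of non-pluripolar products along your path; the path leaves the K\"ahler testing family.

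The ingredient you need is Trusiani's special K\"ahler Fujita approximation \cite[Section~5, equations~(5.2) and~(5.3)]{TruK}. For a smooth dominating big configuration $\Tcal$, it produces smooth dominating K\"ahler configurations $\Tcal_j$ on higher models of the total space. Along these, $\mathrm{DF}(\Tcal_j)\to\mathrm{DF}(\Tcal)$ and $J^{\NA}(\Tcal_j)\to J^{\NA}(\Tcal)$. Convergence of both quantities does two things:
\begin{itemize}
\item It transfers $\mathrm{DF}\ge cJ^{\NA}$ from the K\"ahler family to the big family for every $c\in\R$, including $c\le0$ and configurations with $J^{\NA}(\Tcal)=0$.
\item It gives $\delta_\Omega^{\rm Kah}\le\delta_\Omega^{\rm big}$, because the approximating norms are eventually positive when $J^{\NA}(\Tcal)>0$.
\end{itemize}
Your fallback is an equivalence of uniform K-stability for the two families. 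That is a statement about some $\delta>0$ and does not by itself give the bound for an arbitrary real $c$; you have to use the convergence statement itself.

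One smaller omission: to get \eqref{ordinary:DF-bound} on the whole K\"ahler family, you also need that a K\"ahler configuration with $J^{\NA}=0$ is trivial and has $\mathrm{DF}=0$ \cite[Section~2.9]{TruK}.
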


\begin{proof}
Let $\Tcal$ be a smooth dominating K\"ahler configuration with
$J^{\NA}(\Tcal)>0$. The cohomological slope formulas and
central-fiber correction
\cite[Definition~3.15, Corollary~4.14, Theorem~5.6 and
Remark~5.7]{SDtrans}, together with \eqref{eq:main-infinite}, give
\begin{equation}\label{ordinary:ray-bound}
 \begin{aligned}
 \frac{\mathrm{DF}(\Tcal)}{J^{\NA}(\Tcal)}
 &\ge\frac{\cM^{\rad}(u^\Tcal)}{J^{\rad}(u^\Tcal)}\\
 &\ge\frac{s_\beta(u^\Tcal)}{J^{\rad}(u^\Tcal)}
       +G_A(\beta)
 \ge\lambda_\beta^{\rm Kah}+G_A(\beta).
 \end{aligned}
\end{equation}
The formulas in \cite{SDtrans} use $s=-\log|\tau|$; we use
$t=-\log|\tau|^2=2s$. Both analytic slopes are multiplied by the
same factor, so their quotient is unchanged.
The $J$-slope formula extends to the associated weak ray because
a smooth compatible ray differs from it by a uniformly bounded
potential. Indeed, $\|v-w\|_\infty\le C$ implies
$|J(v)-J(w)|\le2C$ by monotonicity and translation covariance
of $\AM$. The Mabuchi slope formula applies to the weak ray by
\cite[Remark~5.7]{SDtrans}.
Taking the infimum in \eqref{ordinary:ray-bound} gives
$\delta_\Omega^{\rm Kah}\ge\lambda_\beta^{\rm Kah}+G_A(\beta)$.
A zero-norm K\"ahler configuration is trivial and has
$\mathrm{DF}=0$ \cite[Section~2.9]{TruK}.

For every smooth dominating big configuration $\Tcal$,
Trusiani's special K\"ahler Fujita approximation
\cite[Section~5, equations~(5.2) and~(5.3)]{TruK} gives smooth
dominating K\"ahler configurations $\Tcal_j$ satisfying
\begin{equation}\label{ordinary:fujita-convergence}
 \mathrm{DF}(\Tcal_j)\longrightarrow\mathrm{DF}(\Tcal),\qquad
 J^{\NA}(\Tcal_j)\longrightarrow J^{\NA}(\Tcal).
\end{equation}
If $\mathrm{DF}\ge cJ^{\NA}$ on the K\"ahler family for a fixed
$c\in\R$, then
\[
 \mathrm{DF}(\Tcal)-cJ^{\NA}(\Tcal)
 =\lim_{j\to\infty}
   \bigl(\mathrm{DF}(\Tcal_j)-cJ^{\NA}(\Tcal_j)\bigr)\ge0.
\]
This limit also applies when $J^{\NA}(\Tcal)=0$ or $c<0$.
The reverse implication follows because every K\"ahler
configuration is big.
If $J^{\NA}(\Tcal)>0$, the approximating norms in
\eqref{ordinary:fujita-convergence} are eventually positive,
and the quotients converge. Hence
$\delta_\Omega^{\rm Kah}\le
\mathrm{DF}(\Tcal)/J^{\NA}(\Tcal)$.
Taking the infimum over positive-norm big configurations gives
$\delta_\Omega^{\rm Kah}\le\delta_\Omega^{\rm big}$; inclusion of
the K\"ahler family gives the reverse inequality.

Under \eqref{eq:kahler-semistability}, one has
$\lambda_\beta^{\rm Kah}\ge-\varepsilon$.
Thus \eqref{ordinary:ray-bound} gives \eqref{ordinary:DF-bound}
on positive-norm K\"ahler configurations. The zero-norm case and
\eqref{ordinary:fujita-convergence} extend it to both families.
Finally, $\delta_\Omega>0$ is ordinary uniform K-stability, which
is equivalent to the existence of a unique cscK metric by
\cite[Theorem~A]{TruK}. Corollary~\ref{cor:kahler-correspondence}
then gives the finite-parameter equivalences. The case
$\delta_\Omega=0$ follows from \eqref{ordinary:optimal-margins}.
\end{proof}

\section*{Acknowledgements}

The authors thank Yaxiong Liu for helpful suggestions and discussions.

The first named author is partially supported by the National Key
Research and Development Program of China (NKPs)
[Grant Number~2024YFA1013201], the Natural Science Foundation of
Chongqing (NSFCQ) [Grant Number~CSTB2024NSCQ-LZX0040], and the Special
Project of Chongqing Municipal Science and Technology Bureau
[Grant Number~2025CCZ015].

The second named author is supported by the National Natural Science
Foundation of China [grant number~12601080]; the Scientific Research
Foundation of Chongqing University of Technology
[grant number~2026ZDZ012]; and the Youth Project of the Science and
Technology Research Program of Chongqing Education Commission of China
[grant number~KJQN202601108].

\par\medskip
\noindent\textbf{Declaration of AI use}
\par\nobreak\smallskip
\noindent
The authors used OpenAI ChatGPT and Codex for exploratory mathematical checks, manuscript organization, and language editing. They reviewed and edited the resulting content and take full responsibility for this article.

\end{document}